\newcommand{\ignore}[1]{}
\newenvironment{proofof}[1]{\par{\noindent \bf Proof of #1:}}{\qed\par}
\renewcommand{\subsubsection}{\@startsection{subsubsection}{3}{0pt}{-12pt}{-5pt}{\normalsize\bf}}
\newcommand{\Inner}{\mathrm{In}}
\newcommand{\Outer}{\mathrm{Out}}
\newcommand{\num}{\mathrm{num}}
\newcommand{\blue}[1]{{{#1}}}
\newtheorem{claim}{Claim}
\newcommand{\grade}[1]{{{#1}}}
\newtheorem{proposition}[claim]{Proposition}
\newtheorem{lemma}[claim]{Lemma}
\newtheorem*{theorem*}{Theorem}
\newtheorem*{lemma*}{Lemma}
\newtheorem{theorem}[claim]{Theorem}
\newtheorem{obs}[claim]{Observation}
\newtheorem{cond}[claim]{Condition}
\newtheorem{definition}{Definition}
\newtheorem{corollary}[claim]{Corollary}
\newtheorem{question}[claim]{Question}
\newtheorem{fact}[claim]{Fact}
\newtheorem*{definition*}{Definition}
\newcommand{\dtv}{d_{\mathrm TV}}
\newcommand{\R}{\mathbb{R}}
\newcommand{\bP}{{\bf P}}
\newcommand{\bX}{{\bf X}}
\newcommand{\bY}{{\bf Y}}
\newcommand{\bQ}{{\bf Q}}
\newcommand{\bR}{{\bf R}}
\newcommand{\E}{{\bf E}}
\newcommand{\fh}{{f_{\mathsf{junta}}}}
\newcommand{\fhv}{{f_{\mathsf{junta,v}}}}
\newcommand{\Var}{\mathrm{Var}}
\newcommand{\Stab}{\mathsf{Stab}}
\newcommand{\inr}[2]{\langle #1, #2 \rangle}
\begin{document}
 \title{Noise Stability is Computable and approximately Low-Dimensional}
\author{Anindya De\\ 
Northwestern University \\{\tt de.anindya@gmail.com} \and Elchanan Mossel \\ MIT \\ {\tt elmos@mit.edu} \and Joe Neeman \\ UT Austin \\ {\tt neeman@iam.uni-bonn.de}}
 \maketitle

\begin{abstract}
Questions of noise stability play an important role in hardness of approximation in computer science as well as in the theory of voting. 
In many applications, the goal is to find an optimizer of noise stability among all possible partitions of $\R^n$ for $n \geq 1$ to $k$ parts with given Gaussian measures $\mu_1,\ldots,\mu_k$. We call a partition $\epsilon$-optimal, if its noise stability is optimal up to an additive $\epsilon$. 
In this paper, we give an explicit, computable function $n(\epsilon)$
such that an $\epsilon$-optimal partition exists in $\R^{n(\epsilon)}$.
This result has implications for the computability of certain problems
in non-interactive simulation, which are addressed in a subsequent work.

 \end{abstract}
\newpage

\section{Introduction} 

\paragraph{Isoperimetric Theory and Noise Stability.}
Isoperimetric problems have been studied in mathematics since antiquity.
The solution to the isoperimetric problem in the two dimensional Euclidean plane was known in ancient Greece. 
The study of isoperimetric problems is central in modern mathematics and theoretical computer science. Some central examples include the study of expanders and mixing of Markov chains. 

Our interest in this work is in a central modern isoperimetric problem, i.e, the problem of noise stability. This problem, originally studied by Borell in relation to the study of the heat equation in mathematical physics \cite{Borell:85}, has emerged as a central problem in theoretical computer science~\cite{KKMO:07}, as well as in combinatorics and in voting theory~~\cite{Kalai:02} as we elaborate below. 


The fundamental question in this area is to find a partition of Gaussian space (with prescribed measures) which maximizes the noise stability of the partition. We equip $\mathbb{R}^n$ with the standard Gaussian measure, denoted by $\gamma_n$. The Ornstein-Uhlenbeck operator $P_t$ is defined for $t \in [0, \infty)$ and $f: \mathbb{R}^n \rightarrow \mathbb{R}$ by
$$
(P_t f)(x) = \int_{y \in \mathbb{R}^n} f(e^{-t} \cdot x + \sqrt{1- e^{-2t}} \cdot y) d \gamma_n(y).
$$
The resulting notion of noise stability is defined by $\mathsf{Stab}_t(f) := \mathbf{E}[f P_t f]$, where the expectation is with respect to the $\gamma_n$. If $f$ denotes the indicator function of a set (call it $\mathcal{A}_f$), then $\mathsf{Stab}_t(f)$ is the probability that two $e^{-t}$-correlated Gaussians $x,y$ both fall in $\mathcal{A}_f$. In the limit $t \rightarrow 0$, noise stability captures the Gaussian surface area of the set $\mathcal{A}_f$~\cite{Ledoux:94}. In particular, we have the following relation for any set: 
\[
\mathsf{GAS}(\mathcal{A}_f) = \lim_{t \rightarrow 0}  \ \frac{\sqrt{2\pi}}{\arccos (e^{-t})} \cdot \mathbf{E}[ f \cdot (f - P_t f)],
\]
where $\mathsf{GAS}(\mathcal{A}_f)$ denotes the Gaussian surface area of the set $\mathcal{A}_f$.

\paragraph{Halfspaces are most stable sets.}
For both noise stability and surface area, halfspaces are known to be the
optimal sets~\cite{Borell:75, ST:78, Borell:85, Bobkov:97}. We will state this
fact in a slightly convoluted way, in order to more easily generalize it.  Let
$\Delta_k$ denote the probability simplex in $\R^k$ (i.e. the convex hull
formed by the standard unit vector $\{\mathbf{e}_1, \ldots, \mathbf{e}_k\}$).
Any function with range $[k]$ naturally embeds in $\Delta_k$ by identifying $i
\in [k]$ with $\mathbf{e}_i$.
Moreover, we extend the Ornstein-Uhlenbeck operator to act on vector valued
functions in the obvious way: if $f = (f_1, \dots, f_k): \R^n \to \R^k$
then $P_t f = (P_t f_1, \dots, P_t f_k)$.
Finally, say that $f = (f_1, f_2): \R^n \to \Delta_2$ is a \emph{halfspace} if
there exist $a, b \in \R^n$ such that $f_1(x) = 1_{\{\langle x - a, b\rangle \le 0\}}$.
\begin{theorem}[Borell]~\label{thm:Borell}
For any $g: \mathbb{R}^n \rightarrow \Delta_2$, a halfspace $f: \mathbb{R}^n \rightarrow \Delta_2$ with $\mathbf{E}[f] = \mathbf{E}[g]$ satisfies 
$$
\mathbf{E}[\langle f, P_t f \rangle] \ge \mathbf{E}[\langle g, P_t g \rangle]. 
$$
\end{theorem}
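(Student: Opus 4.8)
The plan is to prove the equivalent scalar statement by the Ornstein--Uhlenbeck semigroup (``heat flow'') method, pushing all the work onto one explicit Hessian identity for the bivariate Gaussian CDF. \textbf{Reduction.} For $h\colon\R^n\to\Delta_2$ write $h=(h_1,1-h_1)$; since $P_t$ fixes constants, $P_t(1-h_1)=1-P_th_1$, and expanding gives $\E[\langle h,P_th\rangle]=1-2\,\E[h_1]+2\,\E[h_1P_th_1]$. As $\E[f]=\E[g]$ forces $\E[f_1]=\E[g_1]=:\mu$, the theorem is equivalent to $\E[g_1P_tg_1]\le\E[f_1P_tf_1]$. Set $\rho=e^{-t}\in[0,1]$ and let $\overline\Lambda_\rho(a,b)=\mathbf{Pr}[A\le\Phi^{-1}(a),\,B\le\Phi^{-1}(b)]$ for $(A,B)$ a $\rho$-correlated standard Gaussian pair and $\Phi$ the standard Gaussian CDF. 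Since OU commutes with orthogonal maps, after rotating the given halfspace we may take $\mathcal{A}_{f_1}=\{x:x_1\le\Phi^{-1}(\mu)\}$, whence $\E[f_1P_tf_1]=\mathbf{Pr}[X_1\le\Phi^{-1}(\mu),\,Y_1\le\Phi^{-1}(\mu)]=\overline\Lambda_\rho(\mu,\mu)$ for $(X,Y)$ a $\rho$-correlated pair in $\R^n$. So it remains to show $\E[g_1P_tg_1]\le\overline\Lambda_\rho(\mu,\mu)$ for every $g_1\colon\R^n\to[0,1]$ with $\E[g_1]=\mu$.

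\textbf{The flow.} Define $\Psi(s)=\E_{(X,Y)\sim\rho}\big[\overline\Lambda_\rho\big((P_sg_1)(X),(P_sg_1)(Y)\big)\big]$. There are two endpoint facts. First, $\E[g_1P_tg_1]=\E[g_1(X)g_1(Y)]$, and because $\rho\ge0$ the elementary monotonicity of the bivariate CDF in the correlation ($\partial_\rho\overline\Lambda_\rho\ge0$, i.e. $\overline\Lambda_\rho(a,b)\ge ab$) gives $\E[g_1(X)g_1(Y)]\le\E[\overline\Lambda_\rho(g_1(X),g_1(Y))]=\Psi(0)$. Second, $P_sg_1\to\mu$ as $s\to\infty$, so $\Psi(\infty)=\overline\Lambda_\rho(\mu,\mu)$. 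Hence the theorem follows once $\Psi$ is shown to be non-decreasing.

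\textbf{Monotonicity --- the crux.} Differentiate, using $\tfrac{d}{ds}P_sg_1=LP_sg_1$ with $L=\Delta-x\cdot\nabla$ the OU generator, and integrate by parts against the $\rho$-correlated measure (writing $Y=\rho X+\sqrt{1-\rho^2}\,Z$ and integrating by parts in $X$, the chain rule bringing a factor $\rho$ from the $Y$-slot) to obtain
$$\Psi'(s)=-\,\E_{(X,Y)\sim\rho}\!\Big[\partial_{11}\overline\Lambda_\rho\,|\nabla h(X)|^2+2\rho\,\partial_{12}\overline\Lambda_\rho\,\langle\nabla h(X),\nabla h(Y)\rangle+\partial_{22}\overline\Lambda_\rho\,|\nabla h(Y)|^2\Big],$$
where $h=P_sg_1$ and the partials of $\overline\Lambda_\rho$ (in its two arguments) are evaluated at $(h(X),h(Y))$. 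Now compute the Hessian of $\overline\Lambda_\rho$: with $x=\Phi^{-1}(a)$, $y=\Phi^{-1}(b)$ and $\varphi_\rho$ the bivariate density, a change of variables gives $\partial_{11}\overline\Lambda_\rho=-\rho\,\varphi_\rho(x,y)/\varphi(x)^2$, $\partial_{22}\overline\Lambda_\rho=-\rho\,\varphi_\rho(x,y)/\varphi(y)^2$, $\partial_{12}\overline\Lambda_\rho=\varphi_\rho(x,y)/(\varphi(x)\varphi(y))$. Substituting, the bracket collapses to a perfect square:
$$\Psi'(s)=\rho\,\E_{(X,Y)\sim\rho}\!\Big[\varphi_\rho(x,y)\,\Big|\tfrac{\nabla h(X)}{\varphi(x)}-\tfrac{\nabla h(Y)}{\varphi(y)}\Big|^2\Big]\ \ge\ 0,$$
since $\rho\ge0$. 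Thus $\Psi$ is non-decreasing, and the theorem follows.

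\textbf{Main obstacle, and an alternative.} The one genuine subtlety is regularity: the partials of $\overline\Lambda_\rho$ blow up like $1/\varphi(\Phi^{-1}(\cdot))^2$ near the boundary of $[0,1]^2$, while $P_sg_1$ approaches $0$ and $1$ in the tails, so differentiating $\Psi$ under the integral sign and justifying the integration by parts both need care. This is handled in the standard way: first replace $g_1$ by the smoothed $P_\delta g_1$, prove the bound there, and let $\delta\to0$; the perfect-square form of $\Psi'$ means nothing is lost to cancellation and only integrability must be checked. A completely different route is Borell's original argument by Gaussian (Ehrhard) symmetrization --- symmetrize $\mathcal{A}_{g_1}$ fiber-by-fiber in a direction $e$ into a halfline of equal Gaussian measure, show via a two-dimensional rearrangement inequality that $\Stab_t$ does not decrease, iterate over a dense set of directions, and pass to the limit by compactness --- but there the effort migrates into the two-dimensional rearrangement step and into showing the iterated symmetrizations actually converge to a halfspace, which I find messier than concentrating all the work in the single Hessian identity above.
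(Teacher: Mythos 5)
The paper does not prove this statement at all: Theorem~\ref{thm:Borell} is quoted as a classical result with citations to Borell, Sudakov--Tsirelson, and Bobkov, and the rest of the paper simply uses it. So there is nothing in the paper to compare against line by line; what you have written is a self-contained proof sketch, and it is the Mossel--Neeman semigroup proof of Borell's inequality rather than Borell's original Ehrhard-symmetrization argument. Your computations check out: the reduction $\E[\langle h,P_th\rangle]=1-2\E[h_1]+2\E[h_1P_th_1]$ is right; the Hessian entries of $\overline\Lambda_\rho$ are correct (one gets $\partial_1\overline\Lambda_\rho=\Phi\bigl((y-\rho x)/\sqrt{1-\rho^2}\bigr)$ with $x=\Phi^{-1}(a)$, and differentiating again yields exactly your three formulas); the quadratic form with off-diagonal weight $\rho\,\partial_{12}$ (which is what integration by parts against the correlated measure produces) does collapse to $-\rho\,\varphi_\rho\,\bigl|\nabla h(X)/\varphi(x)-\nabla h(Y)/\varphi(y)\bigr|^2$; and the endpoint identifications $\Psi(0)\ge\E[g_1(X)g_1(Y)]$ (via $\overline\Lambda_\rho(a,b)\ge\overline\Lambda_0(a,b)=ab$ for $\rho\ge 0$, with equality when $g_1$ is an indicator) and $\Psi(\infty)=\overline\Lambda_\rho(\mu,\mu)$ are correct. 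You have also correctly isolated the one genuine technical burden --- the blow-up of the second derivatives of $\overline\Lambda_\rho$ near $\partial[0,1]^2$, which makes the differentiation under the integral and the integration by parts require an approximation argument --- and the fix you describe (pre-smooth by $P_\delta$, use that the derivative is a manifest square so no cancellation is lost, then let $\delta\to 0$) is the standard one. Two small remarks: your argument in fact proves the stronger two-function statement $\E[\overline\Lambda_\rho(f(X),g(Y))]\le\overline\Lambda_\rho(\E f,\E g)$, which is more than the theorem asks for but is also what makes this route so clean; and it only covers $\rho=e^{-t}\ge 0$, which is all the paper ever needs since $t\ge 0$ throughout.
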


Theorem~\ref{thm:Borell} has many applications in computational complexity.
Most famously, it can be combined with an
\emph{invariance principle}~\cite{MOO10}
in order to prove a number of tight hardness-of-approximation results
under the unique games conjecture~\cite{KKMO04}.


\paragraph{More parts?}
It is straightforward to extend the notion of noise stability to partitions
with many parts. Namely, a partition of $\R^n$ can be described by $f:
\mathbb{R}^n \rightarrow [k]$. Identifying $[k]$ with $\{\mathbf{e}_1, \dots,
\mathbf{e}_k\}$ as above, we define the noise stability of such a partition by
$\mathbf{E}[\langle f, P_t f \rangle]$.
One may then ask for an analogue of Theorem~\ref{thm:Borell}, but where
$\Delta_2$ is replaced by $\Delta_k$ for some $k \ge 3$.

Even the three-part version of Theorem~\ref{thm:Borell} turns out to be
rather harder than the two-part one. We will try to explain why, by analogy
with isoperimetric problems. First of all, the Euclidean, isoperimetric
analogue for $k=3$ is known as the ``double bubble'' problem.
The well-known  ``double bubble conjecture'' states that the minimum
total surface area of two bodies separating and enclosing two given (Lebesgue)
volumes is achieved by two spheres meeting at $120^\circ$. After being open
for more than a century, this problem was settled rather recently in
$\R^2$, $\R^3$, and $\R^4$~\cite{HMRR:00, HMRR:02, RHLS:03}.
For the Gaussian space,~\cite{Corwin} showed that for some small but positive
constant $c>0$, the Gaussian surface area of three partitions is minimized by
the ``standard simplex partition" as long as the measures of all the three
parts is within $1/3 \pm c$. 

Since halfspaces both maximize noise stability and minimize Gaussian surface
area, and since the standard simplex partition is known to minimize multi-part
Gaussian surface area in certain cases, it seems natural to guess that the
standard simplex partition also maximizes multi-part noise stability.  This was
explicitly conjectured in~\cite{KKMO04}, in the special case that all of the
parts in the partition have equal measure.  However, a somewhat surprising (at
least to the authors) recent work~\cite{heilman2014} showed that the standard
simplex partition fails to maximize multi-part noise stability \emph{unless}
all of the parts have equal measure.  On the other hand, there is also some
support for the conjecture in the equal-measure case: Heilman~\cite{Heilman:14}
showed that the conjecture is true if the dimension is bounded by a function of
$t$. 

\paragraph{Approximate noise stability of multipartitions?}
In light of the uncertainty about optimal partitioning for $k \ge 3$, one can
ask a more modest question. Given $k \ge 3$, $t>0$, and prescribed measures for
the $k$ parts, let $\alpha_n$ be the optimal noise stability that can
be obtained in $\R^n$ under these constraints. Since the Gaussian measure
is a product measure, $\alpha_n$ is clearly non-increasing in $n$.
Since it is bounded below by zero, it has a limit as $n \to \infty$.

Our main result is that there is an explicitly computable
$n_0=n_0(k,t,\epsilon)$ such that $\alpha_{n_0} \ge \alpha_m - \epsilon$ for
all $m \in \mathbb{N}$. Although the bound on $n_0$ that we give is not
particularly good, the key point is that it is explicitly computable.
As a consequence, up to error $\epsilon$, the noise stable partition is
also explicitly computable.
We conclude the introduction by an open question: 
\begin{question}
Does there exists $n_0$ such that $\alpha_{n_0} = \alpha_n$ for $n > n_0$? 
\end{question}
Our current techniques are not suitable for addressing the question above. 

\subsection{Acknowledgments} 
E.M. acknowledges the support of grant N00014-16-1-2227 from Office of Naval Research and of 
NSF award CCF 1320105.  A.D. acknowledges the support of a start-up grant from Northwestern University.
The authors started this research at the workshop on ``Information theory and concentration phenomena" at the IMA, Minneapolis in 2015. The authors would like to thank the organizers of this workshop and the IMA for their hospitality. 

\section{Main theorem and overview of proof technique} \label{sec:theorem}
In order to state the main theorem, we first need to recall the notion of a polynomial threshold function. A function  $f: \mathbb{R}^n \rightarrow \{0,1\}$ is said to be a degree-$d$ PTF if there exists a polynomial $p: \mathbb{R}^n \rightarrow \mathbb{R}$ of degree $d$ such that $f(x) = 1$ if and only if $p(x) > 0$. We will need a $k$-ary generalization of this definition. We note that there are several possible ways to generalize the notion of PTFs to $k$-ary PTFs and our particular choice is dictated by the convenience of using the relevant results from \cite{DS14}. 

\begin{definition}
A function $f: \mathbb{R}^n \rightarrow [k]$ is said to be a multivariate PTF is there exists polynomials $p_1, \ldots, p_k : \mathbb{R}^n \rightarrow \mathbb{R}$ such that 
$$
f(x) = \begin{cases}
j  & \textrm{if} \  p_j(x)>0 \ \textrm{ and  for } i \not =j, \  p_{i}(x) \le 0 \\
1 & \textrm{otherwise}\end{cases}
$$
In this case, we denote $f = \mathsf{PTF}(p_1, \ldots, p_k)$. Further, $f$ is said to be a degree-$d$ multivariate PTF if   $p_1, \ldots, p_k$ are  degree $d$ polynomials.
\end{definition}
We now state the main theorem of this paper. We set the convention, that unless explicitly mentioned otherwise, the underlying distribution is $\gamma_n$, the standard $n$-dimensional Gaussian measure. 
Likewise, given any random variable $X$ over $\mathbb{R}^k$, $\mathbf{E}[X]$ denotes its (vector-valued) expectation and $\mathsf{Var}(X)$ denotes its covariance matrix.  
\begin{theorem}~\label{thm:main-stab}
Let $f: \mathbb{R}^n \rightarrow [k]$ such that $\mathbf{E}[f] = \mu \in \mathbb{R}^k$. Then, given any $t>0, \epsilon>0$, there exists an explicitly computable $n_0 = n_0(t, k, \epsilon)$ and $d= d(t, k, \epsilon)$ such that there is a degree-$d$ PTF $g: \mathbb{R}^{n_0} \rightarrow [k]$ such that 
\begin{enumerate}
\item $\Vert \mathbf{E}[f] - \mathbf{E}[g] \Vert_1 \le \epsilon$. 
\item $\mathbf{E}[\langle g, P_t g \rangle] \ge \mathbf{E}[\langle f, P_t f \rangle] - \epsilon$.
\end{enumerate}

\end{theorem}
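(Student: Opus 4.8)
The plan is to take an arbitrary $f:\mathbb{R}^n\to[k]$ and push it through three reductions --- smoothing, dimension reduction, and $\mathsf{PTF}$-ification --- each of which moves $\mathbf{E}[f]$ by at most $O(\epsilon)$ in $\ell_1$ and decreases $\mathsf{Stab}_t$ by at most $O(\epsilon)$, all the while keeping the dimension and degree that one accumulates explicitly computable from $t,k,\epsilon$ alone. First I would apply the Ornstein--Uhlenbeck semigroup. Setting $g:=P_\delta f:\mathbb{R}^n\to\Delta_k$ for a small $\delta=\delta(t,\epsilon)$, one has $\mathbf{E}[g]=\mathbf{E}[f]$ and $\mathbf{E}[\langle g,P_tg\rangle]=\mathbf{E}[\langle f,P_{t+2\delta}f\rangle]$ by self-adjointness and the semigroup property; a routine Hermite computation (using $\max_m m\,e^{-tm}=O(1/t)$) shows $\mathbf{E}[\langle f,P_tf\rangle]-\mathbf{E}[\langle f,P_{t+2\delta}f\rangle]=O(\delta/t)$, so taking $\delta\asymp\epsilon t$ costs only $\epsilon/4$ in noise stability. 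Because $g$ lies in the image of $P_\delta$, its Hermite tail decays exponentially: truncating $g$ to Hermite degree $d=O(t^{-1}\log(1/\epsilon))$ yields a vector $g'=(g'_1,\dots,g'_k)$ of degree-$d$ Gaussian polynomials with $\mathbf{E}[g']=\mathbf{E}[f]$ and (the degree-$\le d$ and degree-$>d$ parts being $P_t$-orthogonal, and $P_t$ a contraction) $\mathbf{E}[\langle g',P_tg'\rangle]\ge\mathbf{E}[\langle g,P_tg\rangle]-\epsilon/4$. After this reduction the object of interest is a bounded-degree vector of polynomials.

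The heart of the argument is the dimension reduction: to replace $g'$ by a function $\tilde g$ that, after an orthogonal (hence $\gamma$-preserving) change of coordinates, depends on only $n_0=n_0(t,k,\epsilon)$ coordinates, while changing $\mathbf{E}[\cdot]$ and $\mathbf{E}[\langle\cdot,P_t\cdot\rangle]$ by at most $\epsilon/4$. Here I would invoke the structure/CLT theory for Gaussian polynomials of \cite{DS14}: each $g'_j$ can be written as a bounded-depth decision tree in a bounded number $M=M(d,k,\epsilon)$ of ``head'' directions, whose leaves are \emph{regular} degree-$\le d$ polynomials, and a regular polynomial is --- in a metric stable under thresholding and under passing to $e^{-t}$-correlated pairs --- close to a single Gaussian, i.e.\ to a linear form. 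Substituting for every regular leaf polynomial its Gaussian (linear) surrogate leaves a function of the $M$ head directions together with the leaf linear forms, hence of $n_0$ directions in all; a harmless pointwise projection returns it to a $\Delta_k$-valued map $\tilde g$. To check that the \emph{two-point} functional $\mathbf{E}[\langle\cdot,P_t\cdot\rangle]$ --- and not merely the one-point law --- survives this surgery, one needs the noise/invariance version of the CLT, controlling the joint law of $(g'(x),g'(y))$ for $e^{-t}$-correlated $x,y$; this is the invariance principle of \cite{MOO10} adapted to the polynomial setting. Essentially all of the quantitative work, and the (admittedly weak, but computable) form of $n_0$, lives in this step.

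It remains to turn the bounded-dimensional map $\tilde g:\mathbb{R}^{n_0}\to\Delta_k$ into a bona fide multivariate PTF $g_\star:\mathbb{R}^{n_0}\to[k]$ of degree $d_\star=d_\star(t,k,\epsilon)$ with $\|\mathbf{E}[g_\star]-\mathbf{E}[f]\|_1\le\epsilon$ and $\mathbf{E}[\langle g_\star,P_tg_\star\rangle]\ge\mathbf{E}[\langle f,P_tf\rangle]-\epsilon$. First I would round: the map $p\mapsto\mathbf{E}[\langle p,P_tp\rangle]$ is convex (as $P_t\succeq0$) and weak-$\ast$ continuous (as $P_t$ is compact on $L^2(\gamma_{n_0})$) on the weak-$\ast$ compact convex set $\{p:\mathbb{R}^{n_0}\to\Delta_k,\ \mathbf{E}[p]=\mathbf{E}[\tilde g]\}$, so by Bauer's maximum principle its maximum is attained at an extreme point, and the extreme points of this set are exactly the partitions; hence there is a partition $h^\star:\mathbb{R}^{n_0}\to[k]$ with $\mathbf{E}[h^\star]=\mathbf{E}[\tilde g]$ and $\mathbf{E}[\langle h^\star,P_th^\star\rangle]\ge\mathbf{E}[\langle\tilde g,P_t\tilde g\rangle]$. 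Finally I would approximate $h^\star$ in $L^1(\gamma_{n_0})$ by a bounded-degree multivariate PTF --- smooth $h^\star$ slightly to obtain analytic separating surfaces, truncate their Hermite expansions, and use the PTF regularity results of \cite{DS14} to bound the measure of the thin region on which the rounded truncation disagrees with $h^\star$ --- obtaining $g_\star$ with degree depending only on $n_0$ and $\epsilon$, hence only on $t,k,\epsilon$; an $O(\epsilon)$ perturbation of the thresholds fixes up the mean.

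The main obstacle, as indicated, is the dimension-reduction step: making the regularity-plus-CLT decomposition quantitative with \emph{explicitly computable} bounds, and --- crucially --- proving an invariance principle strong enough to preserve the correlated two-point functional $\mathbf{E}[\langle\cdot,P_t\cdot\rangle]$ rather than just a one-dimensional marginal. A secondary nuisance is that every error we introduce must be shown to push the noise stability \emph{down} by at most $\epsilon$ (one-sided estimates, not two-sided), which forces care in the smoothing and rounding steps; and the final $\mathsf{PTF}$-ification must be carried out so that no dependence on $n$ sneaks back in through the combinatorial complexity of the intermediate partition $h^\star$.
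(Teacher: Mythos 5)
The dimension-reduction step has a genuine gap that the paper is at pains to avoid, and that it even calls out explicitly in the proof sketch. You propose to apply the regularity/CLT machinery of \cite{DS14} and then ``substitute for every regular leaf polynomial its Gaussian (linear) surrogate.'' The CLT of \cite{DS14} does say that a $\delta$-eigenregular polynomial is close \emph{in law} to a Gaussian, and this is enough for the one-point problem (approximating $\E[\langle g', P_t g'\rangle]$ would \emph{not} survive). The problem is that the correlated two-point law is controlled by the Wiener chaos level, not just the marginal distribution: if $p$ is a homogeneous degree-$q$ polynomial of variance $1$ and $(x,y)$ are $e^{-t}$-correlated, then $\E[p(x)p(y)]=e^{-qt}$, whereas a linear surrogate $\ell$ has $\E[\ell(x)\ell(y)]=e^{-t}$. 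So the joint law of $(p(x),p(y))$ converges (by the CLT) to a bivariate Gaussian with correlation $e^{-qt}$, while $(\ell(x),\ell(y))$ is exactly bivariate Gaussian with correlation $e^{-t}$. These disagree for every $q\ge 2$, and no amount of strengthening the invariance principle fixes this, because the discrepancy is already present in the Gaussian limit. Your hedge about a ``metric stable under passing to $e^{-t}$-correlated pairs'' is exactly the property that fails. The paper sidesteps this by \emph{not} Gaussianizing: Condition~\ref{cond:junta} insists that the replacement inner polynomials $\Inner(r_{s,q})_\ell$ live in the \emph{same} Wiener chaos $\mathcal{W}^j$ as the originals, and the bounded-dimensional family is built (Lemma~\ref{lem:junta-construct}) to match both the chaos grading and the covariance, so that passing to the correlated pair scales both sides by the same $e^{-jt}$ (Claim~\ref{clm:const-corr}, item 2). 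The heavy lifting is then to show that matching chaos level, covariance, and eigenregularity already pins down the required functionals, which is what Lemmas~\ref{lem:monomial}, \ref{lem:sign-match} and the CLT are combined for.

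A smaller but real issue is your first reduction: after truncating $P_\delta f$ you have an $\R^k$-valued (not $\Delta_k$-valued) polynomial map, and ``a harmless pointwise projection'' back to $\Delta_k$ destroys the polynomial structure and, more importantly, you have not argued that the projection only \emph{decreases} the $L^2$-distance quantities needed to control noise stability and measures one-sidedly. The paper takes a different route here (Lemma~\ref{lem:1}): it thresholds $P_t f$ to an honest $[k]$-valued partition $h$, then uses Lemma~\ref{lem:rounding-stability} to show thresholding cannot \emph{decrease} $\Stab_t$, and controls $\E[|\nabla h|]$ (hence the Hermite tail via Ledoux, Corollary~\ref{corr:gradient}) via a smoothed choice of threshold, the co-area formula, and the Bakry--Ledoux gradient bound on $P_t f$. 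Your Bauer-maximum-principle rounding at the end is a nice idea and is arguably cleaner than the paper's construction, but it leaves you with no control on the geometric complexity of the extremal partition $h^\star$, which you then need in order to approximate $h^\star$ by a PTF of degree bounded in terms of $t,k,\epsilon$ alone. The paper avoids this by producing the PTF first (Theorem~\ref{thm:1}) and only then reducing dimension (Theorem~\ref{thm:reduction}), so the degree is already under control when the junta is built.
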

Note that the above theorem automatically implies that a function $g$ satisfying the above properties can be explicitly computed (up to some additional error $\epsilon$). This is because the set of degree-$d$ PTFs on $\mathbb{R}^{n_0}$ admits 
a finite sized explicitly enumerable $\epsilon$-cover.  

\paragraph{Proof Sketch}
The proof of Theorem~\ref{thm:main-stab} consists of the following main steps: 

\paragraph{From general partitions to PTF.} 
The first step in the proof is to show that given any $f: \mathbb{R}^n \rightarrow [k]$, there is a multivariate PTF $g': \mathbb{R}^n \rightarrow [k]$ which meets the two criteria in Theorem~\ref{thm:main-stab} and has degree $d= d(t, k, \epsilon)$, for some explicit function $d(t, k ,\epsilon)$.  
In other words, $g'$ satisfies $\Vert \mathbf{E}[f] - \mathbf{E}[g'] \Vert_1 \le \epsilon$ and  $\mathsf{Stab}_t(g') \ge \mathsf{Stab}_t(f')  - \epsilon$. 
 This is done in Section~\ref{sec:reduction}. Note that main difference between the desired conclusion of Theorem~\ref{thm:main-stab} and what is accomplished in this step is that the ambient dimension remains $n$ as opposed to a bounded dimension $n_0$. 
 
Why is this true? The basic intuition is that if $f$ is noise stable then it should have most of its Hermite expansion weight at low degree. 
Therefore we should be able to replace $f$ with the PTF where the polynomial is the truncated expansion of $f$. 
There are a number of challenges in formalizing this intuition: 1. We cannot rule out that a positive fraction of the weight of $f$ is at high degrees (perhaps as large as $n$).
2. It is not clear that the PTF obtained this way is noise stable nor that 3. It has the right expected value. 

Our analysis proceeds as follows. We would like to construct $g'$ from $f$ by ``rounding''
$P_t f$ for some small $t$. The advantage of $P_t f$ over $f$ is that $P_t f$ is guaranteed to have decaying tails. 
The rounding of $P_t f$ can be performed given some $a \in \mathbb{R}^n$ by considering the function
$g_a: \mathbb{R}^n \to [k]$ which takes the value $i$ whenever $i$ is the largest
coordinate of $P_t f - a$. It is not hard to prove that it is possible to choose $a$ such
that $\E[g_a] = \E[f]$; moreover, one can show that this function $g_a$ has better noise
stability than $f$ does. The main obstacle is that the function $g_a$ is not a PTF. 
Unfortunately the Hermite decay of $P_t f$ does not translate to Hermite decay of $g_a$. 
Instead we use smoothed analysis to show that for most $a$'s, $g_a$ has Hermite decay and can therefore be 
well approximated by PTF. The smoothed analysis argument uses the co-area formula and gradient bounds and draws on ideas from \cite{Neeman14, KNOW14}.

\paragraph{From PTF in dimension $n$ to a small PTF of bounded degree polynomials}

Given the function $g': \mathbb{R}^n \rightarrow [k]$ of degree $d= d(t, k, \epsilon)$, our next goal is show it is possible to obtain a PTF $g$ on some $n_0 = n_0 (t, k, \epsilon)$ variables such that (i) $\Vert \mathbf{E}[g] - \mathbf{E}[g'] \Vert_1 \le \epsilon$ and (ii) $|\langle g, P_t g \rangle - \langle g', P_t g' \rangle| \le \epsilon$. This part builds on and extends the theory and results of~\cite{DS14}. The key notion introduced in \cite{DS14} is that of an \emph{eigenregular} polynomial. Namely, a polynomial is said to be $\delta$-eigenregular if for the canonical tensor $\mathcal{A}_p$ associated with the polynomial, the ratio of  the maximum singular value to its Frobenius norm is at most $\delta$ (the tensor notions are explicitly defined later). 

  The key advantage of this definition is that as shown in \cite{DS14}, when $\delta \rightarrow 0$, the distribution of $p$ (under $\gamma_n$) converges to a normal.  In other words, eigenregular polynomials obey a central limit theorem.  In fact, given $k$ polynomials $p_1, \ldots, p_k$ which are $\delta$-eigenregular, they also obey a multidimensional central limit theorem.  
  
The \emph{regularity lemma} from \cite{DS14} implies that the polynomials $p_1,\ldots,p_k$ can be jointly expressed as bounded (in terms of $t,k$ and $\epsilon$)  size polynomials in 
eigenregular homogenous polynomials $\{\Inner(p_{s, q, \ell })\}$ where $1 \le s \le k$, $1 \le q \le d$ and $1 \le \ell \le \num(s,q)$. In other words, we may write 
 $p_s = \Outer(p_s) (\{\Inner(p_{s,q, \ell})\}_{q \in [d], \ell \in [\num(s,q)]})$, where $\num(s) = \sum_{q=1}^d \num(s,q)$ is bounded in terms of  $t,k$ and $\epsilon$ and all the Inner polynomials are $\delta$-eigenregular. 

\cite{DS14} used the statement above to conclude that the joint distribution of $p_1,\ldots,p_k$ can be approximated in a bounded dimension as we can replace each of the inner polynomials by a one dimensional Gaussian. For our application things are more delicate, as we are not only interested in the joint distribution of $p_1,\ldots,p_k$ but also in the noise stability of $p_1,\ldots,p_k$.  For this reason it is important for us to maintain the degrees of the inner polynomials (each of which is homogenous) and not replace them with Gaussians. 

\paragraph{A small PTF representation}
In the final step of the proof, we maintain $\Outer(p_s)$ and show how that polynomials 
$\{\Inner(p_{s,q, \ell})\}$ can be replaced by a collection of polynomials  $\{\Inner(r_{s,q, \ell})\}$ in bounded dimensions (in $t,k$ and $\epsilon$) thus completing the proof. 
The fact that a collection of homogenous polynomials can be replaced by polynomials in bounded dimensions is a tensor analogue of the fact that for any $k$  vectors in $\mathbb{R}^n$, there exist $k$ vectors in $\mathbb{R}^k$ with the same matrix of inner products. Once such polynomials are found, it is not hard to construct eigenregular polynomials from them by averaging the polynomials over independent copies of random variables.

\section{Applications} 

Given the wide applicability of Borell's isoperimetric result to combinatorics and theoretical computer science, we believe that Theorem~\ref{thm:main-stab} will also be widely applicable. We will now point out some applications of this theorem. First, by combining Theorem~\ref{thm:main-stab} with the invariance principle~\cite{MOO10}, we  
derive a weak $k$-ary analogue of ``Majority is Stablest". 
The analogue of the Ornstein-Uhlenbeck operator is the so-called \emph{Bonami-Beckner operator} defined as follows: for $\rho \in [-1,1]$ and $x \in \{-1,1\}^n$, let $\mathcal{D}_{\rho}(x)$ be the product distribution over $\{-1,1\}^n$ such that for $y \sim \mathcal{D}_{\rho}(x)$, for all $1 \le i \le n$, $\mathbf{E}[x_i y_i] =\rho$. 
 Then, for any $f: \{-1,1\}^n \rightarrow \mathbb{R}^k$, $T_{\rho} f(x) = \mathbf{E}_{y \sim D_{\rho}(x)} [f (y)]$. Likewise, for any $i \in [n]$ and $z \in \{-1,1\}^{n-1}$, let $f_{z,-i}: \{-1,1\} \rightarrow \mathbb{R}^k $ denote the function obtained by restricting all but the $i^{th}$ coordinate to $z$. Then, $\mathsf{Var}(f_{z,-i})  = \mathbf{E}_{z}[\Vert f_{z,-i} - \mathbf{E}_{z}[f_{z,-i}] \Vert_2^2]$. 
 Define the influence of the $i^{th}$ coordinate on $f$ by
$$
\mathsf{Inf}_i(f)= \mathbf{E}_{z \in \{-1,1\}^{n-1}} [\mathsf{Var}(f_{z,-i})]. 
$$

\begin{theorem}~\label{thm:Plurality}
Given any $k \in \mathbb{N}$ and $\rho \in [0,1],\epsilon>0$, $\exists n_0 = n_0(k,\epsilon,\rho), C = C(k,\epsilon,\rho)$ (which is explicitly computable) such that the following holds: For any $\boldsymbol{\mu} = (\mu_1, \ldots, \mu_k) \in \Delta_k$ and $n \geq n_0$, there is an explicitly computable $g = g_{\boldsymbol{\mu}, \rho, \epsilon}: \{-1,1\}^{n} \rightarrow [k]$ such that $\max_i \mathsf{Inf}_i(g) \le C/\sqrt{n}$ and $|\Pr_{x \in \{-1,1\}^{n_0}}[g(x) = i ] - \mu_i| \leq  C / \sqrt{n}$ and for any $f: \{-1,1\}^n \rightarrow [k]$ such that   $|\Pr_{x \in \{-1,1\}^n} [f(x) = i ] - \mu_i| \leq \epsilon$ and $\max_i \mathsf{Inf}_i(f) \le \epsilon$, 
\[
\mathbf{E}[\langle f, T_{\rho} f \rangle] \le \mathbf{E}[\langle g, T_{\rho} g \rangle] + 2 k \epsilon. 
\]
\end{theorem}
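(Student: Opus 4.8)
The plan is to derive Theorem~\ref{thm:Plurality} from Theorem~\ref{thm:main-stab} via the invariance principle of \cite{MOO10}, translating back and forth between the Boolean cube and Gaussian space. First I would take the $\gamma_n$-setting: given $\bs\mu \in \Delta_k$ and $\rho \in [0,1]$, consider the optimal Gaussian partition $f^\star : \R^m \to [k]$ with $\E[f^\star] = \bs\mu$ achieving (up to $\epsilon$) the supremum of $\Stab_\rho$ over all dimensions, where I set $t$ via $\rho = e^{-t}$. Apply Theorem~\ref{thm:main-stab} to $f^\star$ to obtain an explicitly computable $n_0 = n_0(t,k,\epsilon)$, a degree $d = d(t,k,\epsilon)$, and a degree-$d$ multivariate PTF $h = \mathsf{PTF}(p_1,\dots,p_k) : \R^{n_0} \to [k]$ with $\|\E[h] - \bs\mu\|_1 \le \epsilon$ and $\Stab_\rho(h) \ge \Stab_\rho(f^\star) - \epsilon \ge \sup_n \alpha_n - 2\epsilon$. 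Since $\alpha_n$ dominates the Gaussian noise stability achievable by \emph{any} low-influence Boolean function through the invariance principle, $h$ will be essentially optimal for the Boolean problem too.

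Next I would Booleanize $h$. Using the standard construction behind the invariance principle, replace each of the $n_0$ Gaussian coordinates by a block of $n/n_0$ Boolean coordinates scaled by $\sqrt{n_0/n}$, obtaining a degree-$d$ Boolean (multilinear) approximation $\tilde g$ of the polynomials $p_j$, and let $g := \mathsf{PTF}(\tilde p_1,\dots,\tilde p_k) : \bits^n \to [k]$. The invariance principle (in the multidimensional, CDF form suitable for PTFs, as in \cite{MOO10} and its refinements) guarantees that for $n$ large compared to $n_0, d, \epsilon$, the joint distribution of $(\tilde p_1,\dots,\tilde p_k)$ under the uniform measure on $\bits^n$ is close in the relevant (Lévy/CDF) metric to that of $(p_1,\dots,p_k)$ under $\gamma_{n_0}$; this transfers both the marginal probabilities $\Pr[g = i]$ and the noise stability $\E[\la g, T_\rho g\ra]$ to within $O(k)\cdot o_n(1)$ of their Gaussian counterparts, with an explicitly computable rate, yielding the claimed $C/\sqrt n$ bounds on $|\Pr[g=i]-\mu_i|$ and — by the block structure, each coordinate has influence $O(d^2 \cdot n_0/n)$ — on $\max_i \Inf_i(g)$. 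Explicit computability of $g$ follows because $h$ is explicitly computable (Theorem~\ref{thm:main-stab}) and the Booleanization is an explicit finite procedure; one may further round to a finite net as in the remark after Theorem~\ref{thm:main-stab}.

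Finally I would close the loop on optimality: given any $f : \bits^n \to [k]$ with $|\Pr[f = i] - \mu_i| \le \epsilon$ and $\max_i \Inf_i(f) \le \epsilon$, the invariance principle applied to $f$ produces a Gaussian partition $F : \R^N \to [k]$ (for some $N$) with $\|\E[F] - \E[f]\|_1 = O(k\epsilon)$ and $\Stab_\rho(F) \ge \E[\la f, T_\rho f\ra] - O(\text{poly}(\epsilon))$; adjusting $\E[F]$ to exactly $\bs\mu$ costs another $O(k\epsilon)$ in stability. Then $\E[\la f, T_\rho f\ra] \le \Stab_\rho(F) + O(k\epsilon) \le \alpha_N + O(k\epsilon) \le \Stab_\rho(h) + O(k\epsilon) \le \E[\la g, T_\rho g\ra] + 2k\epsilon$, after absorbing the transfer errors into the constant and re-scaling $\epsilon$ at the outset. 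The main obstacle I anticipate is bookkeeping the invariance-principle error terms in a two-sided, quantitative way: one direction (Gaussian $\to$ Boolean, for $g$) needs a CDF-type invariance principle for \emph{systems} of low-degree PTFs with an explicit rate in $n$, and the other direction (Boolean $\to$ Gaussian, for the arbitrary $f$) needs the standard low-influence invariance principle together with a controlled ``mollification'' so that the PTF rounding does not blow up the error near the decision boundaries — this is exactly where the regularity/anti-concentration of the relevant polynomials must be invoked, and where the factor $2k\epsilon$ (rather than a clean $\epsilon$) comes from.
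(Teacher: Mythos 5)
Your proposal matches the paper's (omitted) proof exactly: the paper describes it as the standard two-sided reduction in which the invariance principle bounds the discrete stability of any low-influence $f$ by the Gaussian optimum, while the explicit $g$ is obtained from the $\epsilon$-optimal bounded-dimensional Gaussian partition of Theorem~\ref{thm:main-stab} by replacing each Gaussian coordinate with a normalized sum of independent $\pm 1$ variables. Your outline, including the block construction for the influence bound and the bookkeeping of transfer errors, is precisely this argument.
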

The proof of the theorem, which is omitted, is by now a standard reduction from a Gaussian noise stability result to a discrete one, where in one direction the invariance principle immediately bounds the discrete stability by the Gaussian stability. In the other direction, starting for an $\epsilon$-optimal Gaussian partition, each Gaussian is replaced by a normalized sum of independent variables to obtain a discrete partition.  
The same result holds for other domains, for example for $f,g : [k]^n \to [k]$. In particular, the case where $(\mu_1,\ldots,\mu_k) = (1/k,\ldots,1/k)$ implies that in a tied elections between $k$ alternatives, we can find an $\epsilon$-optimally robust noise stable voting rule, where the stability is with respect of each candidate randomizing their vote independently with probability $1-\rho$. 

\subsection{Relationship to rounding of SDPs}
To the best of our knowledge our results are independent of the the results of Raghavendra and Steurer~\cite{RaghavendraSteurer:09} who showed that for any CSP, there is an a rounding algorithm that is optimal up to $\epsilon$, whose running time is polynomial in the instance size and doubly exponential in $1/\epsilon$. 
It is natural to suspect that the two results are related, since there are well-known connections between SDP rounding and Gaussian noise stability.

However, the usual analysis relating rounding to Gaussian noise stability seems
to require that halfspaces maximize noise stability for all possible values of
the noise. In our results, this property does not necessarily hold.

In the other direction, it is tempting to try to cast the noise stability problem as an optimization problem on Gaussian graphs and then apply the results of Steurer and Raghavendra to obtain explicit bounds on the dimension where an almost optimal solution can be achieved. It is hard to implement this approach for two reasons: first, we do not know the SDP solution for the Gaussian graph; second, we are interested in the optimal solution and it is not clear what is the relation between the best integral solution and the SDP solution for the Gaussian graph. 

While we do not see how to formally relate the two works, connecting the two
(if possible) would surely yield important insights.

\subsection{Non-interactive correlation distillation}
Next, we talk about a basic problem in information theory and communication complexity which was recently considered in the work of Ghazi, Kamath and Sudan~\cite{GKS:16}.  Let there be two non-communicating players Alice and Bob who have access to independent samples from a joint distribution $\bP = (\bX,\bY)$ on the set $\mathcal{A} \times \mathcal{B}$. In other words, Alice (resp. Bob) have access to $(x_1, x_2, \ldots, )$ (resp. $(y_1, y_2, \ldots)$) such that $x_i \in \mathcal{A}$, $y_i \in \mathcal{B}$ and for each $i \in \mathbb{N}$, $(x_i, y_i)$ is distributed according to $\bP$, and the random variables $\{(x_i, y_i)\}_{i=1}^n$ are mutually independent. Let $\boldsymbol{\mu} = (\mu_1, \ldots, \mu_k) \in \Delta_k$ and $\boldsymbol{\nu} = (\nu_1, \ldots, \nu_k) \in \Delta_k$.  What is the maximum $\kappa \in [0,1]$ such that Alice and Bob can non-interactively jointly sample 
a distribution $\bQ$ on $[k] \times [k]$ such that the distribution of the marginals of Alice and Bob are $\boldsymbol{\mu}$ and $\boldsymbol{\nu}$ respectively and they sample the same output with probability $\kappa$? 

To formulate this problem more precisely, we introduce some notation.
Let $\bX^n = (\bX_1, \dots, \bX_n)$ and $\bY^n = (\bY_1, \dots, \bY_n)$, where $(\bX_i, \bY_i)$ are independently
drawn from $\bP$.
Now, 
note that a non-interactive protocol for Alice and Bob is equivalent to a pair $(f,g)$ where 
$f: \mathcal{A}^n \rightarrow [k]$ and $g: \mathcal{B}^n \rightarrow [k]$ (for some $n \in \mathbb{N}$). 
In this terminology, the question now becomes the following: given $\boldsymbol{\mu}$, $\boldsymbol{\nu}$, do there exist $n$ and $f: \mathcal{A}^n \rightarrow [k]$ and $g: \mathcal{B}^n \rightarrow [k]$ such that $f(\bX^n) \sim \boldsymbol{\mu}$, $g(\bY^n) \sim \boldsymbol{\nu}$, and $\Pr(f(\bX^n) = g(\bY^n)) = \kappa$?


Before we state the main result of \cite{GKS:16} and our extension, we consider a motivating example. Let $\mathcal{A}= \mathcal{B}= \mathbb{R}$ and let $\bP = (\bX, \bY)$ be two $\rho$-correlated standard Gaussians. Let $k=2$ and $\boldsymbol{\mu} = \boldsymbol{\nu}$. 
Then, Borell's isoperimetric theorem (Theorem~\ref{thm:Borell})
states that the maximum achievable $\kappa$ is given by 
\[
\kappa = \Pr_{\bX,\bY}[f(\bX) = f(\bY)],
\]
where $f : \mathbb{R} \rightarrow \Delta_2$ is a halfspace with measure $\boldsymbol{\mu}$. Thus, in the above case, $n=1$ suffices and $f=g$ is the halfspace whose measure is $\boldsymbol{\mu}$. We now state the main result of \cite{GKS:16}. For the result below, for probability distribution $\bP$, we let $|\bP|$ denote the size of some standard encoding of $\bP$. 

\begin{theorem}~\label{thm:GKS}[Ghazi-Kamath-Sudan]
Let $(\mathcal{A} \times \mathcal{B}, \bP)$ be a probability space, and let $\bX^n$ and $\bY^n$ be as in Theorem~\ref{thm:GKS}.
There is an algorithm running in time $O_{| \bP|, \delta}(1)$ such that
given $\boldsymbol{\mu}$ and $\boldsymbol{\nu}$ in $\Delta_2$ and a parameter $\kappa \in [0,1]$, it distinguishes between the following two cases: 
\begin{enumerate}
  \item There exist $n \in \mathbb{N}$ and
    $f: \mathcal{A}^n \rightarrow \{0, 1\}$ and
    $g: \mathcal{B}^n \rightarrow \{0, 1\}$ such that $f(\bX^n) \sim
    \boldsymbol{\mu}$, $g(\bY^n) \sim \boldsymbol{\nu}$, and $\Pr(f(\bX^n) =
    g(\bY^n)) \ge \kappa - \delta$.  In this case, there is an explicit $n_0 =
    n_0(|\bP|, \delta)$ such that we may choose $n \le n_0$. Further, the
    functions $f$ and $g$ are explicitly computable.  

  \item For any $n \in \mathbb{N}$ and $f: \mathcal{A}^n \rightarrow \{0, 1\}$
    and $g: \mathcal{B}^n \rightarrow \{0, 1\}$, if
    $g: \mathcal{B}^n \rightarrow \{0, 1\}$ satisfy $f(\bX^n) \sim
    \boldsymbol{\mu}$ and $g(\bY^n) \sim \boldsymbol{\nu}$ then $\Pr(f(\bX^n) =
    g(\bY^n)) \le \kappa - 8\delta$.
\end{enumerate}
\end{theorem}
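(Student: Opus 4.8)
This is a decidability-with-explicit-dimension statement, so the plan is: (a) prove a \emph{dimension reduction} --- if some protocol achieves agreement $\kappa$ in \emph{any} dimension, then some protocol achieves agreement $\kappa-\delta$ in a dimension $n_0=n_0(|\bP|,\delta)$ that is explicitly computable --- and then (b) brute-force over the boundedly many protocols of dimension $n_0$. Once $n_0$ is in hand, the algorithm discretizes $\mathcal{A},\mathcal{B}$ into $O_{|\bP|,\delta}(1)$ cells (if they are infinite), enumerates all Boolean $f'\colon\mathcal{A}^{n_0}\to\{0,1\}$ and $g'\colon\mathcal{B}^{n_0}\to\{0,1\}$, computes $\mathbf{E}[f']$, $\mathbf{E}[g']$ and $\Pr(f'(\bX^{n_0})=g'(\bY^{n_0}))$ exactly (each is a finite sum of products of entries of $\bP$), and accepts iff some pair has marginals within $\delta$ of $\boldsymbol{\mu},\boldsymbol{\nu}$ and agreement at least $\kappa-2\delta$; the gap between ``$-\delta$'' and ``$-8\delta$'' is precisely the room to absorb the approximation losses. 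One preliminary step: if the maximal correlation $\rho(\bP)=1$, i.e.\ $\bP$ carries a nonconstant function common to $\bX$ and $\bY$, one conditions on (a coarsening of) that common function and recurses; I expect this to be routine but a bit fiddly, so below I assume $\rho(\bP)=\rho<1$.

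\paragraph{The dimension reduction.}
I would carry this out in three moves. \emph{(i) Smoothing to bounded effective degree.} Writing $T\colon L^2(\mathcal{B})\to L^2(\mathcal{A})$ for the Markov operator of $\bP$, one has $\Pr(f(\bX^n)=g(\bY^n))=\sum_{j}\langle\mathbf{1}_{\{f=j\}},\,T^{\otimes n}\mathbf{1}_{\{g=j\}}\rangle$, and $T^{\otimes n}$ contracts the Efron--Stein level $\ell$ by $\rho^\ell$. Hence applying coordinatewise noise operators (resample each coordinate independently with probability $\eta$) to $f$ and $g$ changes the objective by only $O(\eta d+\rho^{d})$, which is $O(\delta)$ for $d=O(\log(1/\delta)/\log(1/\rho))$ and $\eta=\Theta(\delta/d)$. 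The resulting $\tilde f,\tilde g$ are $[0,1]$-valued (not Boolean) and carry almost all Fourier weight on levels $\le d$; since the objective is bilinear, in any fixed ambient dimension its maximum over $[0,1]$-valued functions with prescribed means is attained at Boolean functions (threshold-round $T^{\otimes n}$ applied to one argument, adjusting a measure-zero tie set), so re-rounding at the end is free. \emph{(ii) Killing low-influence mass via invariance.} Split the coordinates into the ``heavy'' ones --- those carrying influence $>\tau$ on $\tilde f$ or $\tilde g$, of which there are only $O(d/\tau)$ since the total influence is $O(d)$ --- and the rest; condition on the heavy coordinates (a bounded ``outer'' function) and apply a $\bP$-adapted invariance principle in the style of~\cite{MOO10} to the low-influence coordinates, replacing their contribution, up to $O(\delta)$, by that of a Gaussian analogue. \emph{(iii) Bounded dimension on the Gaussian side.} The Gaussian instance is built from the (at most $|\bP|$-many, by rank) maximal-correlation coefficients of $\bP$; there the objective is again bilinear, its optimum over $[0,1]$-valued functions with prescribed means is Boolean, and a Borell-type symmetrization --- exactly Theorem~\ref{thm:Borell} and its two-set extension (cf.~\cite{Borell:85}) in the single-coefficient case --- lets one take the optimizing sets to be halfspaces, which live in dimension $O(1)$; in the general anisotropic case one instead regularizes the defining degree-$d$ polynomials and applies a central-limit/dimension-reduction argument in the spirit of~\cite{DS14} and the present paper. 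Running the invariance principle in reverse on this $O(1)$-dimensional optimizer --- replacing each Gaussian coordinate by a normalized sum of $N=N(|\bP|,\delta)$ i.i.d.\ copies of the relevant $\bP$-functionals --- yields an \emph{explicit} discrete protocol on $n_0=n_0(|\bP|,\delta)$ coordinates with essentially the same value and marginals, which both proves the dimension reduction and, in the first case of the theorem, produces the promised explicit $f,g$.

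\paragraph{Correctness and running time.}
If the first case holds, the dimension reduction yields a bounded-dimensional protocol with marginals $\delta$-close to $\boldsymbol{\mu},\boldsymbol{\nu}$ and value $\ge\kappa-2\delta$ (choosing the internal parameters $d,\eta,\tau,N$ so the total loss stays below $\delta$), so the algorithm accepts and outputs that pair. Conversely, if the algorithm accepts it has a bounded-dimensional pair with marginals within $\delta$ and value $\ge\kappa-2\delta$; correcting its marginals exactly with $O(1)$ auxiliary coordinates --- at a cost of at most $\delta$ more in value --- gives a genuine protocol of value $\ge\kappa-3\delta>\kappa-8\delta$, contradicting the second case; hence the two cases are distinguished. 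The running time is dominated by enumerating the $2^{|\mathcal{A}|^{n_0}}\cdot2^{|\mathcal{B}|^{n_0}}$ candidate pairs with $n_0=n_0(|\bP|,\delta)$, i.e.\ $O_{|\bP|,\delta}(1)$, each evaluated exactly in time $O_{|\bP|,\delta}(1)$.

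\paragraph{Main obstacle.}
The crux is moves (ii) and (iii): making the regularization-plus-invariance argument and the ensuing Gaussian dimension reduction go through for an \emph{arbitrary} source $\bP$ with an explicitly computable bound. One must simultaneously control the bounded family of heavy coordinates and the Gaussianized low-influence part via a hybrid argument, verify a $\bP$-adapted invariance principle with explicit error terms, and --- the subtlest point --- establish that the resulting \emph{anisotropic} Gaussian non-interactive-simulation problem has a bounded-dimensional optimum (either a Borell-type statement that halfspaces are optimal for such sources, or a polynomial-regularity/CLT reduction), with every intermediate quantity bounded uniformly in $|\bP|$ and $\delta$. The auxiliary points --- extracting the common part to reduce $\rho(\bP)=1$ to $\rho<1$, and correcting an approximate-marginal protocol to an exact one with $O(1)$ extra coordinates when the probabilities in $\bP$ are incommensurable with $\boldsymbol{\mu},\boldsymbol{\nu}$ --- are less conceptual but still need care.
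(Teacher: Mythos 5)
This statement is not proved in the paper at all: Theorem~\ref{thm:GKS} is quoted verbatim from Ghazi--Kamath--Sudan~\cite{GKS:16} as background for the application section, so there is no in-paper argument to compare your proposal against. Judged on its own, your outline does reconstruct the broad strategy of the GKS proof for the binary case: smooth the strategies with coordinatewise noise so that they become effectively low-degree, isolate the $O(d/\tau)$ influential coordinates, apply an invariance principle to Gaussianize the rest, observe that the resulting Gaussian problem with two prescribed marginals in $\Delta_2$ is solved by (parallel) halfspaces via the two-function form of Borell's theorem, transfer the one-dimensional optimizer back to the discrete source by a CLT, and then brute-force over the resulting bounded-dimensional protocols, using the $\delta$ versus $8\delta$ gap to absorb all losses. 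That is the right skeleton, and your ``main obstacle'' paragraph honestly identifies where the work lies.

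Three places where the sketch is materially thinner than a proof. First, the degenerate case $\rho(\bP)=1$ (nontrivial common information) is not ``routine'': when Alice and Bob share a common random variable, the achievable region changes qualitatively, and one must either extract the common part exactly or argue it can be assumed away; GKS treat this explicitly. Second, your claim that re-rounding $[0,1]$-valued strategies to Boolean ones ``is free'' because the objective is bilinear needs the analogue of Lemma~\ref{lem:rounding-correlation}/Lemma~\ref{lem:rounding-existence}: one must round so as to hit the prescribed marginals \emph{exactly} (or correct them afterwards with a quantified loss), and the tie-breaking set need not have measure zero for a general finite source. Third, in move (iii) you hedge between ``Borell gives a halfspace'' and ``regularize and apply a \cite{DS14}-style CLT''; for $k=2$ only the former is needed and available, and invoking the latter would make your argument circular with respect to the present paper, whose whole point is that the Borell-type input is missing for $k\ge 3$. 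None of these is a fatal flaw in the plan, but each is a step that must be carried out, not asserted.
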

In other words, the above theorem states that there is an algorithm which, given $\bP$, target marginals $\boldsymbol{\mu}$, $\boldsymbol{\nu}$, and correlation $\kappa$, can distinguish between two cases: (a) In the first case, it is possible for Alice and Bob to non-interactively simulate a distribution $\bR$ which has the correct marginals and achieves the correlation $\kappa$ up to $\delta$. In this case, 
there is an explicit bound on the number of copies of $\bP$ required and the algorithm also outputs the functions $f,g$ used for the non-interactive simulation. 
(b) In the second case, no non-interactive protocol between Alice and Bob can simulate a distribution $\bR$ such that it has the correct marginals and target correlation to error at most $8 \delta$. 

We remark that the requirement for the marginals to match \emph{exactly} is
unimportant. Indeed, any protocol where the marginals match approximatly
can be ``fixed'' to have exact marginals, with a small loss in the correlation.



The main restriction of Theorem~\ref{thm:GKS} is that the output of
the non-interactive protocol is a pair of bits. Theorem~\ref{thm:main-stab}
immediately implies the following modification of Theorem~\ref{thm:GKS}:
we may replace $\{0, 1\}$ by $[k]$ wherever it appears, provided that we
also assume that $\bP$ is a Gaussian measure.
In the best of both worlds, we would be able to replace $\{0, 1\}$ by $[k]$
without adding the assumption that $\bP$ is a Gaussian measure.
Using the methods we develop here, this also turns out to be possible; we
provide details in a follow-up work.

\section{Preliminaries} 
We will start by defining some technical preliminaries which will be useful for the rest of the paper. 
\begin{definition}
For $k \in \mathbb{N}$ and $1\le i \le k$, let $\mathbf{e}_i$ be the unit vector along coordinate $i$ and let $\Delta_k$ be the convex hull formed by $\{ \mathbf{e}_i \}_{1\le i \le k}$. 
\end{definition}
In this paper, we will be working on the space of functions $f: \mathbb{R}^n \rightarrow \mathbb{R}$ where the domain is equipped with the standard $n$ dimensional normal measure (denoted by $\gamma_n(\cdot)$). Unless explicitly mentioned otherwise, all the functions considered in this paper will be in $L^2(\gamma_n)$. A key property of such functions is that they admit the so-called Hermite expansion. Let us define a family of polynomials $H_q: \mathbb{R} \rightarrow \mathbb{R}$ (for $q \ge 0$) as 
$$
H_0(x) = 1 ; \ H_1(x) = x ; \  H_q(x) = \frac{(-1)^q}{\sqrt{q!}}  \cdot e^{x^2/2}  \cdot \frac{d^q}{dx^q} e^{-x^2/2}. 
$$
Let $\mathbb{Z}^{\ast}$ denote the subset of non-negative integers and $S \in \mathbb{Z}^{\ast n}$. Define $H_S: \mathbb{R}^n \rightarrow \mathbb{R}$ as
$$
H_S(z) = \prod_{i=1}^n H_{S_i}(z_i). 
$$
It is well known that the set $\{H_S\}_{S \in \mathbb{Z}^{\ast n}}$ forms an orthonormal basis for $L^2(\gamma_n)$. In other words, every $f \in L^2(\gamma_n)$
may be written as
$$
f = \sum_{S \in \mathbb{Z}^{\ast n}} \widehat{f}(S) \cdot H_S,
$$
where $\widehat{f}(S)$ are typically referred to as the \emph{Hermite coefficients} and expansion is referred to as the \emph{Hermite expansion}. The notion of Hermite expansion can be easily extended to $f: R^n \rightarrow \mathbb{R}^k$ as follows: Let $f = (f_1, \ldots, f_k)$ and let 
$$
f_i  =  \sum_{S \in \mathbb{Z}^{\ast n}} \widehat{f_i}(S) \cdot H_S. 
$$
Then, the Hermite expansion of $f$ is given by $\sum_{S \in \mathbb{Z}^{\ast n}} \widehat{f}(S) \cdot H_S$ where $\widehat{f}(S) = (\widehat{f_1}(S), \ldots, \widehat{f_k}(S))$. 
In this setting, we also have Parseval's identity:
\begin{equation}\label{eq:parseval}
\int \Vert f (x) \Vert_2^2  \ \gamma_n(x) dx = \sum_{S \in \mathbb{Z}^{\ast n}} \Vert \widehat{f}(S) \Vert_2^2
\end{equation}
For $f: \mathbb{R}^n \rightarrow \mathbb{R}^k$ and $d \in \mathbb{N}$, define
$f_{\le d} : \mathbb{R}^n \rightarrow \mathbb{R}^k$ by
$$
f_{\le d} (x) = \sum_{S: |S| \le d} \widehat{f}(S) \cdot H_S(x). 
$$
We will define $\mathsf{W}^{\le d} [f] = \Vert f_{\le d} \Vert_2^2$ and $\mathsf{W}^{> d} [f] = \sum_{|S|>d} \Vert \widehat{f}(S) \Vert_2^2$. 
\subsubsection*{Ornstein-Uhlenbeck operator} 
\begin{definition}
The Ornstein-Uhlenbeck  operator $P_t$ is defined for $t \in [0, \infty)$ such that for any $f: \mathbb{R}^n \rightarrow \mathbb{R}^k$, 
$$
(P_t f)(x) = \int_{y \in \mathbb{R}^n} f(e^{-t} \cdot x + \sqrt{1- e^{-2t}} \cdot y) d \gamma_n(y).
$$
\end{definition}
Note that if $f : \mathbb{R}^n \rightarrow \Delta_k$, then so is $P_t f$ for every $t>0$. 
A basic fact about  the Ornstein-Uhlenbeck operator is that the functions $\{H_S\}$ are eigenfunctions of this operator. We leave the proof of the next proposition to the reader. 
\begin{proposition}
For $S \in \mathbb{Z}^{\ast n}$,
$P_t H_S = e^{-t \cdot |S|} \cdot H_S$. 
\end{proposition}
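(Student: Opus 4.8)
The plan is to reduce to the one-dimensional statement and exploit the generating function of the Hermite polynomials. \textbf{Step 1 (one dimension).} First I would record the generating-function identity, which follows directly from the Rodrigues-type formula defining $H_q$:
$$
\sum_{q=0}^{\infty} \frac{\lambda^q}{\sqrt{q!}}\, H_q(x) = e^{\lambda x - \lambda^2/2}, \qquad x,\lambda \in \mathbb{R},
$$
the power series converging absolutely and locally uniformly. Writing $P_t^{(1)}$ for the one-dimensional Ornstein–Uhlenbeck operator and using the Gaussian moment identity $\int_{\mathbb{R}} e^{cy}\, d\gamma_1(y) = e^{c^2/2}$ with $c = \lambda\sqrt{1-e^{-2t}}$, I would compute
$$
\left(P_t^{(1)}\, e^{\lambda(\cdot) - \lambda^2/2}\right)(x) = e^{\lambda e^{-t} x}\, e^{-\lambda^2/2}\, e^{\lambda^2(1-e^{-2t})/2} = e^{(\lambda e^{-t})x - (\lambda e^{-t})^2/2} = \sum_{q=0}^{\infty} \frac{\lambda^q e^{-tq}}{\sqrt{q!}}\, H_q(x).
$$
On the other hand, absolute convergence lets me pull $P_t^{(1)}$ through the sum on the left, giving $\sum_q \frac{\lambda^q}{\sqrt{q!}}\,(P_t^{(1)} H_q)(x)$. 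Matching the coefficient of $\lambda^q$ yields $P_t^{(1)} H_q = e^{-tq} H_q$.

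\textbf{Step 2 (tensorization).} For a general multi-index $S = (S_1,\dots,S_n) \in \mathbb{Z}^{\ast n}$ I would use that $\gamma_n = \gamma_1^{\otimes n}$ together with the factorization $H_S(z) = \prod_{i=1}^n H_{S_i}(z_i)$. Substituting $e^{-t}x + \sqrt{1-e^{-2t}}\,y$ coordinatewise and applying Fubini,
$$
(P_t H_S)(x) = \int_{\mathbb{R}^n} \prod_{i=1}^n H_{S_i}\!\left(e^{-t} x_i + \sqrt{1-e^{-2t}}\, y_i\right) d\gamma_n(y) = \prod_{i=1}^n \left(P_t^{(1)} H_{S_i}\right)(x_i).
$$
By Step 1 each factor equals $e^{-t S_i} H_{S_i}(x_i)$, so $(P_t H_S)(x) = e^{-t\sum_i S_i}\prod_i H_{S_i}(x_i) = e^{-t|S|} H_S(x)$, which is exactly the claim.

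\textbf{Main obstacle.} There is no genuine difficulty here; the only steps needing care are deriving the generating-function identity from the Rodrigues formula as stated in the text and justifying the interchange of the infinite sum with the integral defining $P_t$ — both handled by absolute, locally uniform convergence of the relevant power series and dominated convergence. An alternative route is to verify that $H_q$ is an eigenfunction of the Ornstein–Uhlenbeck generator $L = \frac{d^2}{dx^2} - x\frac{d}{dx}$ with eigenvalue $-q$ and that $P_t = e^{tL}$; but establishing the semigroup identity $P_t = e^{tL}$ is itself more work than the direct computation above, so I would prefer the generating-function argument.
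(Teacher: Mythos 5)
The paper gives no proof of this proposition---it explicitly defers it to the reader---so there is nothing to compare against. Your argument is correct: the normalized generating function $\sum_q \lambda^q H_q(x)/\sqrt{q!} = e^{\lambda x - \lambda^2/2}$ does follow from the stated Rodrigues formula, the Gaussian moment computation correctly shows $P_t^{(1)}$ rescales the generating parameter $\lambda \mapsto e^{-t}\lambda$, matching coefficients gives the one-dimensional eigenrelation, and the tensorization step using $\gamma_n = \gamma_1^{\otimes n}$ and $H_S = \prod_i H_{S_i}$ is exactly the standard way to lift it to $\mathbb{R}^n$. The justification of the interchange of sum and integral via locally uniform absolute convergence is the right remark to make and poses no genuine difficulty.
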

We define the noise stability of $f: \mathbb{R}^n \rightarrow \Delta_k$ with noise rate $t \in [0, \infty)$ by
\[
\mathsf{Stab}_t(f)  = \langle f , P_t f \rangle. 
\]
\subsubsection*{Multivariate polynomial threshold functions}
We recall the definition of polynomial threshold functions from the introduction (primarily to define an associated quantity which shall be useful later). 
\begin{definition*}
A function $f: \mathbb{R}^n \rightarrow [k]$ is said to be a multivariate PTF is there exists polynomials $p_1, \ldots, p_k : \mathbb{R}^n \rightarrow \mathbb{R}$ such that 
$$
f(x) = \begin{cases}
j  & \textrm{if} \  p_j(x)>0 \ \textrm{ and  for } i \not =j, \  p_{i}(x) \le 0 \\
1 & \textrm{otherwise}\end{cases}
$$
We then denote $f$ as $f = \mathsf{PTF}(p_1, \ldots, p_k)$. Further, $f$ is said to be a degree-$d$ multivariate PTF if   $p_1, \ldots, p_k$ are  degree $d$ polynomials. For the rest of this paper, we will use PTFs to mean multivariate PTFs. The underlying $k$ will be clear from the context. Also, let us define $\mathsf{Collision} (f ) \subseteq \mathbb{R}^n$ defined as 
\[
\mathsf{Collision} (f) = \big\{x \in \mathbb{R}^n: |\{ j: p_j (x) > 0\}| \not =1 \big\}. 
\]
\end{definition*}

\subsubsection*{Convention} We also adopt the convention that for all subsequent statements, unless explicitly mentioned otherwise, the domain is equipped with the standard Gaussian measure over the relevant dimensions. {Also, at several places in the paper, we will establish bounds on one quantity in terms of others without the explicit mention of the dependence. Unless mentioned otherwise, these bounds can be made explicit by working through the proof but we do not do so in the interest of clarity. For example, if we state that a quantity $d= d(k,\epsilon)$, we mean that that $d$ can be bounded as an explicit function of $k$ and $\epsilon$ but we hide the explicit dependence.  }

\section{Reduction from arbitrary functions to PTFs}\label{sec:reduction}
In this section, we will prove that given any $k$-ary function with a given set of measures for each of the $k$-partitions, there is a multivariate PTF with nearly the same measures for the induced partitions which is a multivariate PTF and (up to an error $\epsilon$), no less noise stable at a fixed noise rate $t$.
This is the first step (``from general partitions to PTF'') of the proof
sketch in Section~\ref{sec:theorem}.

We will need the following concentration bound for low-degree polynomials, which may be
proved using the Gaussian hypercontractive inequality. See \cite{Janson:97} for a reference. 
\begin{theorem}\label{thm:hyper}
Let $p: \mathbb{R}^n \rightarrow \mathbb{R}$ be a degree-$d$ polynomial. Then, for any $t>0$, 
$$
\Pr_{x} \big[|p(x) - \mathbf{E}[p(x)]| \ge t \cdot \sqrt{\mathsf{Var}[p]}\big]  \leq d \cdot e^{-t^{2/d}}.
$$
\end{theorem}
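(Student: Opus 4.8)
The plan is to run the standard argument that passes from Gaussian hypercontractivity to a moment bound and then to a tail bound via Markov's inequality. First I would normalize: set $q := (p - \mathbf{E}[p])/\sqrt{\mathsf{Var}[p]}$, so that $q$ is a polynomial of degree at most $d$ with $\mathbf{E}[q] = 0$ and $\|q\|_2 = 1$; the claim reduces to $\Pr_x[\,|q(x)| \ge t\,] \le d\,e^{-t^{2/d}}$.

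The analytic heart is the hypercontractive moment estimate: for every real $r \ge 2$,
\[
\|q\|_r \;\le\; (r-1)^{d/2}\,\|q\|_2 \;=\; (r-1)^{d/2}.
\]
I would derive this from the Gaussian hypercontractivity inequality $\|P_\rho g\|_r \le \|g\|_2$ (valid for $\rho \le 1/\sqrt{r-1}$): decompose $q = \sum_{j=0}^d q^{=j}$ into its homogeneous Hermite parts $q^{=j} = \sum_{|S|=j}\widehat{q}(S)H_S$, and observe that $q = P_\rho h$ for $\rho = (r-1)^{-1/2}$ and $h := \sum_{j=0}^d (r-1)^{j/2} q^{=j}$, since $P_\rho H_S = \rho^{|S|}H_S$. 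Then $\|q\|_r = \|P_\rho h\|_r \le \|h\|_2 = \big(\sum_j (r-1)^j\|q^{=j}\|_2^2\big)^{1/2} \le (r-1)^{d/2}\|q\|_2$, using $r-1 \ge 1$. This is exactly the estimate attributed to hypercontractivity in \cite{Janson:97}.

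Given this, Markov's inequality applied to $|q|^r$ gives, for any $r \ge 2$,
\[
\Pr[\,|q| \ge t\,] \;\le\; \frac{\mathbf{E}[|q|^r]}{t^r} \;\le\; \Big(\frac{(r-1)^{d/2}}{t}\Big)^{r} \;\le\; \Big(\frac{r^{d/2}}{t}\Big)^{r}.
\]
Optimizing the last expression over $r$ leads to the choice $r = \Theta(t^{2/d})$; substituting $r = t^{2/d}/e$ (legitimate once $t^{2/d}\ge 2e$) collapses the bound to $e^{-(d/2e)t^{2/d}}$. In the complementary range $t^{2/d} < 2e$ — where the optimal $r$ would violate $r\ge 2$ — one takes $r=2$, giving the trivial bound $t^{-2}$, or simply notes that $d\,e^{-t^{2/d}} \ge 1$ whenever $t^{2/d} \le \ln d$, so there is nothing to prove there.

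The only real work is the final bookkeeping: reconciling the optimized exponent $\tfrac{d}{2e}t^{2/d}$ with the clean target exponent $t^{2/d}$ and absorbing the residual constant factor (together with the contribution of the constrained small-$t$ regime) into the $d$ prefactor — which is the sense in which the stated inequality holds, consistent with the paper's stated convention of not tracking explicit constants. I expect this constant-chasing to be essentially the only obstacle; the structural skeleton (hypercontractive moment bound, Markov, optimize in $r$) is routine.
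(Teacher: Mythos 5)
Since the paper gives no proof of its own and only points to Janson's book, there is no internal argument to compare against; your route --- hypercontractive moment bound, Markov, optimize in $r$ --- is exactly what that reference supplies, and the derivation of $\|q\|_r \le (r-1)^{d/2}\|q\|_2$ by writing $q = P_\rho h$ with $\rho = (r-1)^{-1/2}$, $h = \sum_j (r-1)^{j/2} q^{=j}$, and using $P_\rho H_S = \rho^{|S|}H_S$ is the standard one. Those steps are fine.

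The gap is in the final ``bookkeeping'' step, and it is not a constant-chasing issue. Your optimization yields $\Pr[\,|q| \ge t\,] \le e^{-(d/(2e))t^{2/d}}$ up to a benign prefactor, but for $d < 2e$ the ratio between this and the claimed bound $d\,e^{-t^{2/d}}$ is $d^{-1}\exp\bigl((1-\tfrac{d}{2e})t^{2/d}\bigr)$, which diverges as $t \to \infty$. A loss in the \emph{rate} of the exponential cannot be repaired by a multiplicative prefactor, so ``absorbing into the $d$ prefactor'' does not work, and the complementary small-$t$ regime does not cover the failure. Indeed the inequality as literally stated is false for small degrees: for $d=1$ a standard Gaussian has tail $\sim \sqrt{2/\pi}\,t^{-1}e^{-t^2/2}$, which exceeds $e^{-t^2}$ for all large $t$; for $d=2$ the polynomial $(x_1^2-1)/\sqrt{2}$ has tail $\sim e^{-t/\sqrt{2}}$, which exceeds $2e^{-t}$. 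What your argument (and Janson) actually establishes is $\Pr[\,|q| \ge t\,] \le C_d\, e^{-c_d\,t^{2/d}}$ with $c_d$ of order $d/e$; the paper's statement should be read in that spirit given its stated convention of not tracking constants, but your proposal does not prove the inequality as written, and the ``absorption'' claim is precisely where it would break.
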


For technical reasons, we will also require the PTFs to satisfy a property which we refer to as $(d, \delta)$-\emph{balanced} defined below. 
\begin{definition} A degree-$d$ multivariate PTF $f = \mathsf{PTF}(p^{(1)}, \ldots, p^{(k)})$ is said to be $(d, \delta)$-balanced if each $p^{(i)}$ has variance $1$ and $|\mathbf{E}[p^{(i)}]| \le \log^{d/2} (k \cdot d/\delta)$. 
\end{definition}
We ask the reader to observe that the first condition (namely, $\mathsf{Var}(p^{(i)}) =1$) can be achieved without loss of generality by simply scaling all the polynomials to have variance $1$. This scaling does not change the value of the PTF at any point $x$. While the condition on expectation is non-trivial, the next proposition says that any multivariate PTF can be assumed to be $(d,\delta)$-balanced while only changing the value of the PTF at $\delta$-fraction of places. 
\begin{proposition}\label{prop:balanced}
Let $f : \mathbb{R}^n \rightarrow [k]$ defined as $f = \mathsf{PTF}(p^{(1)}, \ldots, p^{(k)})$. Then, there is a $(d,\delta)$-balanced multivariate PTF $g: \mathbb{R}^n \rightarrow [k]$ defined as $g = \mathsf{PTF}(q^{(1)}, \ldots, q^{(k)})$ such that $\Pr_{x} [g(x) \not = f(x) ] \le \delta$.   Further, $\Pr_{x}[ x \in \mathsf{Collision}(g)] \le \Pr_{x} [x \in \mathsf{Collision}(f)] + \delta$. In fact, the polynomials $\{q^{(i)}\}$ are linear translations of the polynomials $\{p^{(i)}\}$.  
\end{proposition}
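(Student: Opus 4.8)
The goal is to replace each polynomial $p^{(i)}$ by a translate $q^{(i)} = c_i(p^{(i)} - a_i)$ so that after rescaling to unit variance the means become bounded by $\log^{d/2}(kd/\delta)$, while the value of the PTF changes on at most a $\delta$-fraction of inputs. The plan is to first normalize: since scaling $p^{(i)}$ by a positive constant does not change the sign pattern, we may assume $\mathsf{Var}(p^{(i)}) = 1$ for every $i$ from the outset; this handles the first condition in the definition of $(d,\delta)$-balanced and costs nothing. So the entire content is in controlling the means.

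For the means, I would split into two cases for each index $i$. If $|\mathbf{E}[p^{(i)}]| \le \log^{d/2}(kd/\delta)$ already, set $q^{(i)} = p^{(i)}$. Otherwise, $\mathbf{E}[p^{(i)}]$ is large in absolute value, and I claim that $p^{(i)}$ is then $\delta/k$-close to a constant sign. Indeed, apply the hypercontractive concentration bound (Theorem~\ref{thm:hyper}): with $\tau = \log^{d/2}(kd/\delta)$ we get $\Pr_x[|p^{(i)}(x) - \mathbf{E}[p^{(i)}]| \ge \tau] \le d\, e^{-\tau^{2/d}} = d\,e^{-\log(kd/\delta)} = \delta/k$. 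Hence, except on a set of measure $\delta/k$, $p^{(i)}(x)$ has the same sign as $\mathbf{E}[p^{(i)}]$. If $\mathbf{E}[p^{(i)}] < 0$, replace $p^{(i)}$ by the constant polynomial $q^{(i)} \equiv -1$ (which trivially has the required mean, though variance $0$ — see below). If $\mathbf{E}[p^{(i)}] > 0$, this is the problematic case because a positive constant would make $f \equiv i$; instead I would translate $p^{(i)}$ down to $q^{(i)} = p^{(i)} - \mathbf{E}[p^{(i)}]$, which now has mean $0$ and variance $1$, hence is $(d,\delta)$-balanced, and by the same concentration estimate $\{x: q^{(i)}(x) > 0\}$ differs from $\{x: p^{(i)}(x) > 0\}$ on a set of measure at most $\delta/k$ (both agree with ``$p^{(i)}$ exceeds its mean'' up to the tail).

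Doing this for each of the $k$ coordinates and taking a union bound, the set of $x$ where some $q^{(i)}(x) > 0$ disagrees with $p^{(i)}(x) > 0$ has measure at most $\delta$; off this set $g(x) = f(x)$, giving $\Pr_x[f(x)\ne g(x)] \le \delta$, and likewise $\mathsf{Collision}(g)$ differs from $\mathsf{Collision}(f)$ only within this bad set, giving the stated bound on $\Pr_x[x \in \mathsf{Collision}(g)]$. Finally I would note that the polynomials $q^{(i)}$ are each a translate of $p^{(i)}$ (the constant-replacement case can be folded in: e.g.\ $q^{(i)} = p^{(i)} - \mathbf{E}[p^{(i)}]$ always works once variances are unit, and even when $\mathbf{E}[p^{(i)}]<0$ this is a valid $(d,\delta)$-balanced choice, so one never actually needs the degenerate constant). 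The main subtlety — and the thing to get right — is the positive-mean case: one must argue that translating to mean zero does not move the zero set by more than the concentration tail, which is exactly where Theorem~\ref{thm:hyper} is used, and one should double-check the arithmetic of the exponent $t^{2/d}$ so that the choice $\tau = \log^{d/2}(kd/\delta)$ produces the clean bound $\delta/k$.
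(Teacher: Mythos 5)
Your setup (normalize to unit variance, split on whether $|\mathbf{E}[p^{(i)}]|$ already satisfies the bound, and use Theorem~\ref{thm:hyper} with $\tau = \log^{d/2}(kd/\delta)$ to get a $\delta/k$ tail) matches the paper's proof exactly. But the step you flag as "the main subtlety" is in fact where your argument breaks. You claim that in the large-positive-mean case, setting $q^{(i)} = p^{(i)} - \mathbf{E}[p^{(i)}]$ changes the sign only on a $\delta/k$-measure set, because "both agree with `$p^{(i)}$ exceeds its mean' up to the tail." That is false: $\{x : q^{(i)}(x) > 0\} = \{x : p^{(i)}(x) > \mathbf{E}[p^{(i)}]\}$ is the set where $p^{(i)}$ exceeds its mean, which for a unit-variance polynomial has measure bounded away from $1$ (for a linear $p^{(i)}$ it is exactly $1/2$), whereas $\{x : p^{(i)}(x) > 0\}$ has measure at least $1 - \delta/k$ precisely because the mean is large. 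The symmetric difference is the set $\{0 < p^{(i)} \le \mathbf{E}[p^{(i)}]\}$, which has measure close to $\Pr[p^{(i)} \le \mathbf{E}[p^{(i)}]]$ --- a constant, not $\delta/k$. So re-centering to mean zero destroys the sign on a constant fraction of inputs, and the same problem kills the negative-mean case if you translate to mean zero there (your fallback constant $-1$ preserves the sign but has variance $0$ and is not a translate of $p^{(i)}$, so it fails the balancedness and the last claim of the proposition).

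The fix, which is what the paper does, is to translate the mean not to $0$ but to the largest value the balancedness condition permits, \emph{with the original sign}: set $b_i = \mathsf{sign}(\mathbf{E}[p^{(i)}])$ and
\[
q^{(i)} = p^{(i)} - \mathbf{E}[p^{(i)}] + b_i \cdot \log^{d/2}(kd/\delta).
\]
Then $|\mathbf{E}[q^{(i)}]| = \log^{d/2}(kd/\delta)$ meets the balancedness requirement with equality, and by Theorem~\ref{thm:hyper} both $p^{(i)}$ and $q^{(i)}$ have sign equal to $b_i$ outside sets of measure $\delta/k$ each, so $\Pr[\mathsf{sign}(q^{(i)}) \ne \mathsf{sign}(p^{(i)})] \le O(\delta/k)$. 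The rest of your argument (union bound over $i$, and the observation that both the value of the PTF and membership in $\mathsf{Collision}$ can only change inside the bad set) then goes through unchanged.
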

\begin{proof}
As we have observed, we can assume without loss of generality that the polynomials $p^{(i)}$ have variance $1$. We define the polynomials $q^{(i)}$ as follows. If $|\mathbf{E}[p^{(i)}] | \leq \log^{d/2}(d \cdot k / \delta)$, then we set $q^{(i)} = p^{(i)}$. Else, let $b_i  = \mathsf{sign} (\mathbf{E}[p^{(i)}])$ and we define 
\[
q^{(i)} = p^{(i)}  -\mathbf{E}[p^{(i)}]  + b_i \cdot \log^{d/2}(d \cdot k / \delta). 
\]
First, note that as claimed, $q^{(i)}$ are indeed affine translations of $p^{(i)}$. Next, note that by Theorem~\ref{thm:hyper}, whenever $q^{(i)} \not = p^{(i)}$,  the sign of $p^{(i)}(x)$ (for $x \sim \gamma_n$) is $\delta/k$-close to being fixed. Applying Theorem~\ref{thm:hyper} again, 
\[
\Pr_{x \sim \gamma_n} \big[ \mathsf{sign}(q^{(i)}(x)) \not = \mathsf{sign}(p^{(i)}(x)) \big] \le \delta/k. 
\]
From this, it immediately follows that $$\Pr_{x } [f(x) \not = g(x)] \le \delta \ \textrm{and} \Pr_{x} [ x \in \mathsf{Collision}(f) ] \le \Pr_{x}[x \in \mathsf{Collision}(g)] + \delta. $$
\end{proof}

The next theorem is the main result of this section namely, that given any function $f: \mathbb{R}^n \rightarrow [k]$, it is possible to obtain a multivariate $(d,\epsilon)$ balanced PTF $g$ (for some explicit $d= O_{\epsilon, k}(1)$) such that the resulting PTF $g$ has nearly the same partition sizes and noise stability. Further, $\mathsf{Collision}(g)$ has small probability.

\begin{theorem}\label{thm:1}
  Let $f: \mathbb{R}^n \rightarrow [k]$ satisfy $\E[f] = \boldsymbol{\mu}$ where $\boldsymbol{\mu} = (\mu_1, \ldots, \mu_k)$. Then, for every
  $\epsilon>0$, there exists a multivariate PTF $g: \mathbb{R}^n \rightarrow
  [k]$ of degree $d = d(k, \epsilon)$ such that
  \begin{itemize}
    \item $\Vert \mathbf{E}[g] - \boldsymbol{\mu} \Vert_1 \le \epsilon$,
    \item $\langle g, P_t g \rangle \ge \langle f, P_t f \rangle - \epsilon$,
    \item $\Pr [x \in \mathsf{Collision} (g)] \le \epsilon$, and
  \item $g$ is $(d, \epsilon)$-balanced.
  \end{itemize}
\end{theorem}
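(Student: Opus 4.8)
\textbf{Proof plan for Theorem~\ref{thm:1}.}

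The plan is to follow the strategy outlined in the proof sketch: start from $f$, smooth it by applying $P_s$ for a small $s>0$, round the smoothed function coordinatewise to obtain a partition, and then argue that for a random choice of rounding offset the resulting partition has Hermite weight concentrated at low degree, so it can be approximated by a PTF. Concretely, fix a small $s = s(\epsilon, k) > 0$ to be chosen later, and set $h = P_s f : \R^n \to \Delta_k$. For $a \in \R^k$, define $g_a : \R^n \to [k]$ by $g_a(x) = \arg\max_i \big( (P_s f)_i(x) - a_i \big)$, breaking ties in favor of the smallest index. First I would show, by a continuity/intermediate-value argument over the compact family of offsets $a$, that there is a choice of $a$ (with $\|a\|$ bounded in terms of $k$) for which $\E[g_a] = \boldsymbol\mu$ exactly, or at least within $\epsilon/2$ in $\ell_1$; this handles the first bullet. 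Next I would verify the noise-stability improvement: since $P_s$ is a contraction with $P_s = P_{s/2} P_{s/2}$, one has $\langle g_a, P_t g_a\rangle \ge \langle h', P_t h'\rangle$ for the ``optimal rounding'' $h'$, and a convexity argument (rounding to the simplex vertex can only increase the correlation, in the spirit of Borell-type rearrangement / the fact that $\arg\max$ is the Bayes-optimal decoder) gives $\langle g_a, P_t g_a\rangle \ge \langle P_s f, P_t P_s f\rangle \ge \langle f, P_t f\rangle - \epsilon/2$, using that $\|P_s f - f\|_2 \to 0$ as $s \to 0$ together with the fact that $\mathsf{Stab}_t$ is Lipschitz in $L^2$. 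This gives the second bullet.

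The heart of the argument is the third and fourth bullets: controlling $\mathsf{W}^{>d}[g_a]$. Here I would use the smoothed-analysis / co-area argument referenced in the sketch. The key point is that $P_s f$ has a gradient bound: $\|\nabla P_s f\|$ is controlled pointwise (after Gaussian truncation) because the Ornstein-Uhlenbeck semigroup smooths, i.e. $\|\nabla P_s f(x)\| \lesssim \frac{e^{-s}}{\sqrt{1-e^{-2s}}}$ times a factor from the tails of $f$. The level set $\{x : g_a \text{ is discontinuous near } x\}$ is (a subset of) $\bigcup_{i \neq j}\{(P_s f)_i - a_i = (P_s f)_j - a_j\}$, and averaging over $a$ in a small box and applying the co-area formula bounds $\E_a[\gamma_n(\text{boundary region of width }\tau)]$ by $\tau$ times the relevant surface area, which in turn is bounded by the gradient bound. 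From this one extracts, for most $a$, a ``bounded total influence'' or ``small noise sensitivity'' statement for $g_a$, which by the standard relation between noise sensitivity and Hermite tails (e.g. $\mathsf{W}^{>d}[g_a] \le O(\mathsf{ns}_{1/d}(g_a))$) yields $\mathsf{W}^{>d}[g_a] \le \epsilon'$ for $d = d(k,\epsilon)$ large enough. Then I would set $g' = \mathsf{PTF}$ whose polynomials are the degree-$d$ Hermite truncations $(P_s f)_i^{\le d} - a_i$; the event $g' \neq g_a$ is contained in the region where some truncated polynomial disagrees in sign with the true difference, which has probability $O(\sqrt{\epsilon'})$ by Markov/Parseval applied to $\mathsf{W}^{>d}$. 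A second application of Theorem~\ref{thm:hyper} controls $\Pr[x \in \mathsf{Collision}(g')]$, since away from the $O(\sqrt{\epsilon'})$-measure ``near-tie'' region a unique coordinate of the polynomial vector is strictly positive. Finally, I would invoke Proposition~\ref{prop:balanced} to replace $g'$ by a $(d,\epsilon)$-balanced PTF $g$ at the cost of changing $g'$ on an $\epsilon$-fraction of inputs, which perturbs $\E[g]$ by $\le \epsilon$ in $\ell_1$, perturbs $\langle g, P_t g\rangle$ by $O(\epsilon)$ (since $g, g' \in \Delta_k$ and $P_t$ is a contraction), and increases $\Pr[\mathsf{Collision}]$ by at most $\epsilon$; then I would re-run the whole argument with $\epsilon$ replaced by a sufficiently small $c\epsilon$ so all the accumulated errors sum to at most $\epsilon$.

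The main obstacle I anticipate is making the smoothed-analysis step fully rigorous and quantitative: one must (i) handle the Gaussian tails, since the gradient bound on $P_s f$ degrades far from the origin and the co-area formula is cleanest on a bounded region, so a truncation to a ball of radius $R = R(\epsilon,k)$ is needed with the tail contributing $\le \epsilon$; (ii) relate the ``fraction of $a$ for which $g_a$ has small high-degree weight'' to an actual existential statement — a Fubini/averaging argument showing the good event over $a$ has positive measure, intersected with the good event from the expectation-matching step; and (iii) convert a bound on the Gaussian measure of the boundary region into a genuine Hermite-tail bound for the $\{0,1\}$-valued (vector) indicator $g_a$, which is where the ideas from \cite{Neeman14, KNOW14} on noise sensitivity of polynomial level sets enter. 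Everything else — the expectation matching, the stability monotonicity under rounding, and the final cleanup via Proposition~\ref{prop:balanced} — is comparatively routine.
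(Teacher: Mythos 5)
You correctly identified the key obstacle — item (ii) in your list, the Fubini/intersection step — but that obstacle is a genuine gap your plan does not close, and closing it requires an ingredient you do not have. You need a \emph{single} offset $a'$ that simultaneously gives $\|\E[g_{a'}] - \boldsymbol{\mu}\|_1 \le \epsilon/2$ and bounded $\E[|\nabla g_{a'}|]$. The co-area averaging shows the second holds for a positive-measure set of $a'$ in a small box, but the first can fail for every one of them: when $P_t f$ spends a large fraction of Gaussian mass within distance $\epsilon$ of a $k$-way tie, the map $a' \mapsto \E[g_{a'}]$ can be extremely sensitive to $a'$ (Lipschitz constant of order $1/\delta$ where $\delta$ is the fluctuation scale of $P_t f$), so the set of $a'$ with approximately correct marginals may be far too small to meet the co-area-good set, and the fixed-$\mathrm{argmax}$ construction can even fail to produce the target marginals at all (as $P_t f$ approaches a constant, $g_{a'}$ collapses to a single bucket for every $a'$). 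There is no averaging trick here; the tension between the two constraints is structural.

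The paper's fix is different in kind, not a refinement of yours. After choosing the margins $y_{ij}^\pm$ by co-area averaging, it defines $A_i$ (where coordinate $i$ clearly wins) and $B_I$ (where exactly the coordinates in $I$ are near-maximal), and then partitions each $B_I$ by \emph{halfspaces} of bounded surface area, with measures chosen via a weighted Hall's marriage theorem to restore $\E[h] = \E[f]$ exactly. Stability survives because on $B_I$ the vector $P_t f - z$ is within $\epsilon$ of a tie across $I$, so any assignment inside $I$ loses only $\epsilon$ in the pointwise correlation. Filling the ambiguous region with cheap halfspaces of exactly the right volume is precisely what makes the marginal constraint and the surface-area constraint compatible, and it is the piece missing from your proposal. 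Two smaller points: for $k \ge 3$ the expectation-matching is a multi-dimensional surjectivity statement proved in Lemma~\ref{lem:rounding-existence} via Sperner's lemma, for roundings with a flexible tie-breaking rule, not a one-dimensional intermediate-value argument for a fixed $\mathrm{argmax}$; and the paper thresholds $P_t f$ at the given noise rate $t$ (getting $\Stab_t(g) \ge \Stab_t(f)$ cleanly from $\langle g, P_t f\rangle = \langle P_{t/2}g, P_{t/2}f\rangle \le \sqrt{\Stab_t(g)\Stab_t(f)}$) rather than introducing a separate small $s$, and then removes the resulting $t$-dependence of $d$ by observing the claim is trivial for $t \ll (\epsilon/k)^2$. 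Also, Lemma~\ref{lem:2} truncates the Hermite expansion of the rounded indicator $h$ itself rather than of $P_s f$, which makes the collision bound immediate from the $\mathsf{W}^{>d}[h]$ bound.
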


We will prove Theorem~\ref{thm:1} in two parts. The first step is to show that
we can replace $f$ by a function with explicit Hermite decay:
\begin{lemma}\label{lem:1} 
  For every $f: \mathbb{R}^n \rightarrow [k]$ and every $\epsilon>0$, there exists a function
 $h: \mathbb{R}^n \rightarrow [k]$ satisfying the following: 
 \begin{itemize}
   \item $\E[h] = \E[f]$,
 \item $\langle h, P_t h \rangle \ge \langle f, P_t f \rangle - \epsilon$, 
 \item $\mathsf{W}^{> d} [h] \le \epsilon$ for some explicit $d = d(k, \epsilon)$. 
 \end{itemize}
\end{lemma}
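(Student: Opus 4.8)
The plan is to construct $h$ by ``rounding'' the smoothed function $P_s f$ for a suitably small noise parameter $s = s(k,\epsilon) > 0$, following the strategy outlined in the proof sketch. The key point is that $P_s f$, being in the image of $P_s$, automatically has exponentially decaying Hermite tails: $\mathsf{W}^{>d}[P_s f] \le e^{-2sd}\,\mathsf{W}^{>d}[f] \le k\,e^{-2sd}$, so by choosing $d = d(s,k,\epsilon) = O(s^{-1}\log(k/\epsilon))$ we can make $\mathsf{W}^{>d}[P_s f] \le \epsilon'$ for any desired $\epsilon'$. The issue is that $P_s f$ maps into the interior of $\Delta_k$ rather than into the vertices, so we must round it back to a $[k]$-valued function while (i) preserving the expectation exactly, (ii) not decreasing noise stability by more than $\epsilon$, and (iii) not destroying the Hermite decay. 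Point (iii) is the subtle one and is presumably handled by the ``smoothed analysis / co-area'' argument alluded to in the sketch, but for the purpose of this lemma I would proceed as follows.

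First I would handle the expectation and stability. Define, for a shift vector $a \in \R^k$, the rounding $g_a : \R^n \to [k]$ by $g_a(x) = \arg\max_i \big( (P_s f)_i(x) - a_i \big)$ (breaking ties by least index). A compactness/continuity argument (the map $a \mapsto \E[g_a]$ is continuous in an averaged sense and sweeps out all of the interior of $\Delta_k$ as $a$ ranges over $\R^k$, using that each coordinate probability is monotone in $a_i$) shows there is a choice $a^*$ with $\E[g_{a^*}] = \E[f] = \boldsymbol\mu$; this is the standard argument for matching measures. For the stability, the crucial observation is that $\langle P_s f, P_t P_s f\rangle \ge \langle f, P_t f\rangle$ is false in general, so instead one argues that $g_{a^*}$, being a ``hard'' assignment to the largest coordinate of $P_s f - a^*$, satisfies $\langle g_{a^*}, P_t g_{a^*}\rangle \ge \langle P_s f, P_t (P_s f)\rangle$-type inequalities; more robustly, since $P_s f \to f$ in $L^2(\gamma_n)$ as $s \to 0$ and noise stability $\langle \cdot, P_t \cdot\rangle$ is $L^2$-continuous (it is a bounded quadratic form with $\|P_t\|_{2\to 2}\le 1$), we get $|\langle P_s f, P_t P_s f\rangle - \langle f, P_t f\rangle| \le \epsilon/2$ for $s$ small; and then one shows the rounding step loses at most another $\epsilon/2$, either directly or by noting that rounding to the argmax of a function can only help noise stability in the same spirit as the majority/plurality rounding lemmas. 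So the first two bullets are obtained for any small $s$.

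The remaining and genuinely hard step is the third bullet: the naive bound $\mathsf{W}^{>d}[g_{a^*}]$ could be large even though $P_s f - a^*$ has decaying tails, because applying a discontinuous rounding map can create high-frequency Hermite mass. This is where I would invoke the smoothed-analysis argument from \cite{Neeman14, KNOW14}: instead of fixing $a^*$, one averages over $a$ in a small ball (or adds a small independent Gaussian perturbation to the threshold), and uses the co-area formula together with gradient bounds on $P_s f$ (which are controlled because $P_s$ is smoothing: $\|\nabla P_s f\|$ is bounded in terms of $s$) to show that for \emph{most} $a$ in this ball, the boundary $\mathsf{Collision}$-type region of $g_a$ has small Gaussian measure, which in turn forces $\mathsf{W}^{>d}[g_a] \le \epsilon$ for $d = d(s,k,\epsilon)$ via a standard relation between ``surface area''/level-set complexity and Hermite tail decay. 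One then picks $a$ in the intersection of the ``good Hermite decay'' set (large measure) with a refinement of the ``correct expectation'' argument (done over the perturbed family, so that some $a$ in the good set still yields $\E[g_a] = \boldsymbol\mu$, absorbing a tiny correction into the $\epsilon$ budget). Choosing $\epsilon$-parameters in the order: first $s$ small enough for stability, then $d$ large enough given $s$, then the perturbation radius small enough given both, closes the argument. The main obstacle, as flagged, is precisely quantifying the trade-off in the co-area step between the gradient bound on $P_s f$ (which blows up as $s \to 0$) and the decay rate of $\mathsf{W}^{>d}$ (which needs $s$ not too small relative to $d$), and verifying that the two constraints are simultaneously satisfiable with explicit, computable bounds.
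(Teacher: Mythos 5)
Your high-level plan is the paper's plan (threshold the smoothed function, use the co-area formula and the Bakry--Ledoux gradient bound to control surface area for most thresholds, and convert surface area into Hermite decay via Ledoux's inequality), but three of your steps have genuine gaps. First, the stability bullet: you round $P_s f$ for a separate small $s$ and then try to argue by $L^2$-continuity. The clean argument requires rounding $P_t f$ at the \emph{same} noise level $t$ appearing in the stability functional: if $g$ is an argmax-rounding of $P_t f$ with $\E[g]=\E[f]$, then $g$ maximizes $\E[\langle \cdot, P_t f\rangle]$ among functions with that expectation, so $\E[\langle g, P_t f\rangle]\ge \Stab_t(f)$, and Cauchy--Schwarz with the semigroup property ($\E[\langle g,P_tf\rangle]=\E[\langle P_{t/2}g,P_{t/2}f\rangle]\le\sqrt{\Stab_t(g)\Stab_t(f)}$) gives $\Stab_t(g)\ge\Stab_t(f)$ with no loss. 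Rounding $P_s f$ for $s\ne t$ only controls $\Stab_s(g)$, not $\Stab_t(g)$, and ``rounding can only help'' is not a substitute. Second, exact expectation matching: you propose to find a threshold $a$ lying simultaneously in the large-measure ``good Hermite decay'' set and in the set where $\E[g_a]=\boldsymbol\mu$ exactly; but the latter is generically a single point (or measure-zero set), so this intersection argument does not close. The paper's resolution is structural: threshold with a margin $y_{ij}^\pm\in[0,\epsilon]$ chosen via the co-area bound so that the ``definitely $i$'' sets $A_i$ have small surface area, leaving slack regions $B_I$ where several coordinates are near-maximal; then a weighted Hall's-marriage-theorem argument distributes the $B_I$ among the labels using parallel slabs $\{a\le x_1\le b\}$ of exactly the right Gaussian measure. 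The slabs have $O(1)$ surface area, so the decay survives, and the expectation is matched exactly rather than approximately. Third, the trade-off you flag between the gradient bound (which blows up as the smoothing parameter tends to $0$) and the required Hermite decay is real, and you leave it unresolved; the paper disposes of it by observing that when $t\ll(\epsilon/k)^2$ the lemma is trivial (parallel slabs already achieve $\Stab_t(h)\ge\E[|h|^2]-O(k\sqrt t)$), so one may assume $t$ is bounded below and the bound $\E[|\nabla h|]\le C(\epsilon,k)/\sqrt t$ yields an explicit $d=d(k,\epsilon)$.
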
 

The second step in the proof of Theorem~\ref{thm:1} goes from explicit
Hermite decay to an actual PTF:

\begin{lemma}\label{lem:2}
Let $h : \mathbb{R}^n \rightarrow [k]$ such that $\mathsf{W}^{>d}[h] \le \epsilon$. Then, there is a PTF $g: \mathbb{R}^n \rightarrow [k]$ of degree $d$ such that 
$ \mathbf{E}_{x } [g(x) \not = h(x) ] \le k^2 \cdot  \epsilon$. Further, $\Pr_{x } [x \in \mathsf{Collision} (h)]  \le k^2 \cdot \epsilon$. 
\end{lemma}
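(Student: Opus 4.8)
\textbf{Proof proposal for Lemma~\ref{lem:2}.}

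The plan is to take the polynomial $p = (p_1, \dots, p_k)$ whose coordinates are the degree-$d$ Hermite truncations of the indicator-coordinates of $h$, and show that the PTF $g = \mathsf{PTF}(p_1 - 1/2, \dots, p_k - 1/2)$ (suitably thresholded) agrees with $h$ except on a set of small measure. Concretely, write $h = (h_1, \dots, h_k): \mathbb{R}^n \to \{\mathbf{e}_1, \dots, \mathbf{e}_k\}$ where each $h_j: \mathbb{R}^n \to \{0,1\}$ is the indicator of the set $\{x : h(x) = j\}$, so that $\sum_j h_j \equiv 1$. Set $p_j = (h_j)_{\le d}$, the degree-$\le d$ Hermite part of $h_j$. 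The hypothesis $\mathsf{W}^{>d}[h] \le \epsilon$ says exactly $\sum_j \|h_j - p_j\|_2^2 = \sum_j \mathsf{W}^{>d}[h_j] \le \epsilon$ (using that the vector-valued tail weight is the sum of the coordinate tail weights).

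First I would argue pointwise: on the set where $g(x) \ne h(x)$, say $h(x) = j$ (so $h_j(x) = 1$ and $h_i(x) = 0$ for $i \ne j$), the PTF $g$ fails to output $j$. Examining the definition of $\mathsf{PTF}$, this forces one of the following: either $p_j(x) \le 1/2$ (so $|h_j(x) - p_j(x)| \ge 1/2$), or some $p_i(x) > 1/2$ for $i \ne j$ (so $|h_i(x) - p_i(x)| > 1/2$). In either case there is an index $\ell \in [k]$ with $|h_\ell(x) - p_\ell(x)| \ge 1/2$. Hence $\{x : g(x) \ne h(x)\} \subseteq \bigcup_{\ell=1}^k \{x : |h_\ell(x) - p_\ell(x)| \ge 1/2\}$, and by Markov/Chebyshev applied to each term, $\Pr[g \ne h] \le \sum_\ell 4\|h_\ell - p_\ell\|_2^2 \le 4\epsilon$. (The constant $k^2$ claimed in the statement is generous; I would not fight over constants, but a union bound over $k$ events each of probability $\le 4 \mathsf{W}^{>d}[h_\ell]$ comfortably gives the bound, and in fact one can bound $\sum_\ell \Pr[\cdot] \le 4\sum_\ell \mathsf{W}^{>d}[h_\ell] = 4\mathsf{W}^{>d}[h] \le 4\epsilon \le k^2\epsilon$.) The same pointwise containment handles $\mathsf{Collision}(h)$: wait — more precisely we want $\Pr[x \in \mathsf{Collision}(g)] \le k^2 \epsilon$; since $\mathsf{Collision}(g) \subseteq \{g \ne h\}$ would not quite be right, I should instead note that if $x \notin \{x : |h_\ell(x) - p_\ell(x)| \ge 1/2 \text{ for some } \ell\}$ then $p_{j}(x) > 1/2$ for the unique $j = h(x)$ and $p_i(x) < 1/2 < $ well below $0$... actually $p_i(x)$ close to $0$, so exactly one $p_j$ is positive, meaning $x \notin \mathsf{Collision}(g)$. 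So $\mathsf{Collision}(g)$ is contained in the same union of bad sets, giving the same bound. (Note the statement says $\mathsf{Collision}(h)$, but $h$ has range $[k]$ with no collision notion unless we view it through this PTF representation; I would read it as $\mathsf{Collision}(g)$.)

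The main obstacle, such as it is, is bookkeeping rather than depth: getting the thresholds right so that the pointwise containment is clean (using $1/2$ as the threshold for the $\{0,1\}$-valued $h_\ell$ is the natural choice since $p_\ell$ is a mean-square approximation to something valued in $\{0,1\}$), and making sure the degree-$d$ polynomials $q_j := p_j - 1/2$ used to define the PTF are genuinely degree $\le d$ (they are, since subtracting a constant does not raise degree). One small subtlety to check: the definition of $\mathsf{PTF}(q_1, \dots, q_k)$ outputs $1$ by default when the number of positive $q_j$ is not exactly one, so I must make sure that on the good set exactly one $q_j$ is positive — this holds because on the good set $p_{h(x)}(x) > 1/2$ while $p_i(x) < 1/2$ for all $i \ne h(x)$, and moreover $p_i(x) = h_i(x) - (h_i - p_i)(x) < 0 + 1/2$, but we actually need $q_i(x) = p_i(x) - 1/2 \le 0$, which is exactly $p_i(x) \le 1/2$: fine. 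So on the good set $g(x) = h(x)$ and $x \notin \mathsf{Collision}(g)$, completing both conclusions simultaneously.
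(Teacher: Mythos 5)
Your proposal is correct and follows essentially the same route as the paper: truncate the Hermite expansion coordinate-wise, threshold, observe that misclassification or collision forces a pointwise deviation of at least a fixed constant, and close with Parseval plus Markov. The only cosmetic difference is that you threshold at $1/2$ (giving a deviation bound of $\ge 1/2$ in some coordinate, hence the tighter constant $4\epsilon$), whereas the paper centers $h$ at $\frac{1}{k}\mathbf{1}$ and thresholds at $0$ (yielding the $k^2\epsilon$ bound); you also correctly read ``$\mathsf{Collision}(h)$'' in the statement as the typo it is, i.e.\ $\mathsf{Collision}(g)$.
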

It is easy to see that Theorem~\ref{thm:1} follows in a straightforward way by combining Lemma~\ref{lem:1} and Lemma~\ref{lem:2}. 
Note that the condition of $g$ being $(d,\epsilon)$-balanced is obtained by simply applying Proposition~\ref{prop:balanced}.

We begin with the proof of Lemma~\ref{lem:2}, which is easier.

\begin{proofof}{Lemma~\ref{lem:2}}
Let us view $h: \mathbb{R}^n \rightarrow \mathbb{R}^k$ and define $h^{(0)} : \mathbb{R}^n \rightarrow \mathbb{R}^k $ as $h^{(0)} (x) = h(x) -\frac{1}{k} \cdot \mathbf{1}$. Here the function $\mathbf{1}$ is defined as $\mathbf{1} : x \mapsto (1, 1 , \ldots, 1)$ for all $x \in \mathbb{R}^n$. Note that range of  the function $h^{(0)}$ at all points is a $k$-dimensional vector with the entry $(k-1)/k$ in one of the coordinates and $-1/k$ in every other coordinate. Call such a point \emph{a $k$-lattice point}.  

Let $h^{(0)}_{\le d} : \mathbb{R}^n \rightarrow \mathbb{R}^k$ (as defined earlier) correspond to the function obtained by truncating the Hermite expansion of $h^{(0)}$ at degree $d$. Consider the degree-$d$ polynomials $p_1, \ldots, p_k: \mathbb{R}^n \rightarrow \mathbb{R}$ where $p_i$ is the $i^{th}$ coordinate of $h^{(0)}_{\le d}$. We now define $g = \mathsf{PTF}(p_1, \ldots, p_k)$. Consider any point $x \in \mathbb{R}^n$. 
\begin{itemize}
\item If $x \in \mathsf{Collision}(g)$, then observe that $\Vert h^{(0)} (x) - h^{(0)}_{\le d}(x)  \Vert_2^2 \ge k^{-2}$. To see this, note that if $x \in \mathsf{Collision}(g)$, there are two possibilities. (a) $p_i(x) \leq 0$ for all $1 \le i \le k$: It then easily follows that  $\ell_2$ distance of $h^{(0)}_{\le d}(x)$ from any $k$-lattice point is at least $1/k$. (b) $p_i(x) , p_j(x)> 0$ for distinct $i,j$. Again the $\ell_2$ distance of $h^{(0)}_{\le d}(x)$ from any $k$-lattice point is at least $1/k$. Thus, in both these cases, $\Vert h^{(0)} (x) - h^{(0)}_{\le d}(x)\Vert_2^2 \ge \frac{1}{k^2}$. 
\item If $x \not \in \mathsf{Collision}(g)$ and $g(x) \not = h(x)$, then again $\Vert h^{(0)} (x) - h^{(0)}_{\le d}(x)  \Vert_2^2 \ge k^{-2}$. To see this, note that if $g(x) \not = h(x)$, then there exists $1 \le i, j \le k$ and $i \not = j$, such that $p_i(x) >0$, $p_j(x) \leq 0$. On the other hand, $h(x) = \mathbf{e}_j$ and thus the $j^{th}$ coordinate of $h^{(0)}(x) = (k-1)/k$. This also implies that 
$\Vert h^{(0)} (x) - h^{(0)}_{\le d}(x)\Vert_2^2 \ge \frac{(k-1)^2}{k^2}$. 
\end{itemize}
Combining these two, we get 
\[
\mathbf{E}_{x } \big[ \Vert h^{(0)} (x) - h^{(0)}_{\le d}(x)\Vert_2^2 \big] \geq \frac{1}{k^2} \cdot  \big( \Pr_{x } [x \in \mathsf{Collision}(g)] +  \Pr_{x } [x \not \in \mathsf{Collision}(g) \ \wedge \ g(x) \not = h(x) ]\big). 
\]
As $h^{(0)}$ and $h$ differ only in the Hermite coefficient for $S = 0$, hence the left hand side is the same as $ \mathsf{W}^{\ge d}[h]$. Combining this with the above inequality achieves all the stated guarantees. 
\end{proofof}

The proof of Lemma~\ref{lem:1} is longer, and so we begin with an outline.
The first observation is that
bounding $\E[|\nabla h|]$ implies a bound on $\mathsf{W}^{> d}[h]$.
This follows a standard spectral argument, and is stated as Corollary~\ref{corr:gradient}.
The second observation is that the function $h$ obtained by thresholding
$P_t f$ at a suitable value (chosen, for example, so that $\E[h] = \E[f]$)
satisfies $\langle h, P_t h\rangle \ge \langle f, P_t f\rangle$. This is stated as Lemma~\ref{lem:rounding-stability}.

Based on the previous paragraph, it seems like we would like to bound $\E[|\nabla h|]$ where $h$ is obtained
by thresholding $P_t f$; let $a \in \R^k$ be the desired threshold value, so that thresholding $P_t f$ at $a$
produces a partition with the right measures. It turns out, unfortunately, that for $h$ defined in this way,
$\E[|\nabla h|]$ could be
arbitrarily large. A key insight of~\cite{KNOW14} is that (using the co-area formula and gradient bounds on $P_t f$)
the partition produced by thresholding at a random value near $a$ has bounded expected surface area. In particular,
although thresholding exactly at $a$ might be a bad idea,
there exist many good nearby values at which to threshold.
Based on this observation, we construct $h$ in two steps. In the first step, we define $h$ by thresholding $P_t f$,
but only on the set of $x \in \R^n$
for which $P_t f(x)$ is not too close to $a$. By choosing ``not too close'' in a suitable random way, the observation
of~\cite{KNOW14} implies that this step only contributes a bounded amount to $\E[|\nabla h|]$. Since the first step
is almost the same as just thresholding $P_t f$ at $a$, it is consistent with our desire that
$\langle h, P_t h\rangle \ge \langle f, P_t f\rangle - \epsilon$.

In the second step, we partition the remaining part of $\R^n$ by chopping it with halfspaces of the correct size.
Since halfspaces have a bounded surface area, this also contributes a bounded amount to $\E[|\nabla h|]$.
Crucially, this step does not
destroy the value of $\langle h, P_t h\rangle$; fundamentally, this is because $P_t f$ is almost constant on the set
we are partitioning.

\subsection{Surface area and spectrum}

In our outline of Lemma~\ref{lem:2}'s proof, we claimed that a control of $\E[|\nabla h|]$ implies a control of
$\mathsf{W}^{> d}[h]$. Here we prove that claim, using 
a theorem of Ledoux~\cite{Ledoux:94} that gives a lower bound on the gradient in terms of the noise sensitivity. 

\begin{theorem}\label{thm:Ledoux}[Ledoux]
Let $f: \mathbb{R}^n \rightarrow [0,1]$ be of bounded variation. Then, for every $t>0$, 
\[
\mathop{\mathbf{E}}_{x \sim \gamma_n} [f \cdot (f - P_t f)]  \le \frac{\arccos (e^{-t})}{\sqrt{2\pi}} \cdot \mathop{\mathbf{E}}_{x \sim \gamma_n}[|\nabla f|]. 
\]
\end{theorem}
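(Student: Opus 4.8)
The plan is to reduce to smooth $f$ and then prove the inequality by interpolating between two correlated Gaussians along a ``great circle''. First I would dispose of the regularity issue: it suffices to prove the bound for smooth $f$ with bounded gradient, since a general bounded-variation $f$ can be approximated by mollifications $f_\varepsilon$ which still take values in $[0,1]$, converge to $f$ in $L^2(\gamma_n)$ (so the left-hand side converges), and satisfy $\mathbf{E}_{\gamma_n}[|\nabla f_\varepsilon|] \le (1+o(1))\,\mathbf{E}_{\gamma_n}[|\nabla f|]$ as $\varepsilon \to 0$, which is enough for an upper bound.

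Next I would symmetrize the left-hand side. Let $X \sim \gamma_n$ and $Y = e^{-t} X + \sqrt{1 - e^{-2t}}\, Z$ with $Z \sim \gamma_n$ independent of $X$; then $Y \sim \gamma_n$ and $\mathbf{E}[f(X) f(Y)] = \mathbf{E}[f \cdot P_t f]$, so expanding the square and using that $X, Y$ are each marginally standard Gaussian gives
$$
\mathbf{E}_{x \sim \gamma_n}[f \cdot (f - P_t f)] = \frac12\, \mathbf{E}\big[(f(X) - f(Y))^2\big] \le \frac12\, \mathbf{E}\big[|f(X) - f(Y)|\big],
$$
where the last step uses $f \in [0,1]$, so $(f(X)-f(Y))^2 \le |f(X)-f(Y)|$. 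It then remains to show $\mathbf{E}[|f(X) - f(Y)|] \le \sqrt{2/\pi}\cdot \arccos(e^{-t})\cdot \mathbf{E}_{\gamma_n}[|\nabla f|]$.

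For this, set $\theta^\ast = \arccos(e^{-t})$ and define $X_\theta = \cos\theta\cdot X + \sin\theta\cdot Z$ for $\theta \in [0,\theta^\ast]$, so that $X_0 = X$ and $X_{\theta^\ast} = Y$. The crucial observation is that for each fixed $\theta$, the pair $\big(X_\theta, \dot{X}_\theta\big)$ with $\dot{X}_\theta = -\sin\theta\cdot X + \cos\theta\cdot Z$ consists of two \emph{independent} standard Gaussians (jointly Gaussian, each of variance $1$, with covariance $-\cos\theta\sin\theta + \sin\theta\cos\theta = 0$). By the fundamental theorem of calculus and the triangle inequality, $|f(Y) - f(X)| \le \int_0^{\theta^\ast} |\langle \nabla f(X_\theta), \dot{X}_\theta\rangle|\, d\theta$. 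Taking expectations and conditioning on $X_\theta$: the inner product is a centered Gaussian of variance $|\nabla f(X_\theta)|^2$, hence has expected absolute value $\sqrt{2/\pi}\,|\nabla f(X_\theta)|$, and since $X_\theta \sim \gamma_n$ we get $\mathbf{E}[|\langle\nabla f(X_\theta),\dot X_\theta\rangle|] = \sqrt{2/\pi}\,\mathbf{E}_{\gamma_n}[|\nabla f|]$ for every $\theta$. Integrating over $[0,\theta^\ast]$ and combining with the previous display yields
$$
\mathbf{E}_{x \sim \gamma_n}[f \cdot (f - P_t f)] \le \frac12 \cdot \theta^\ast \cdot \sqrt{\frac{2}{\pi}}\cdot \mathbf{E}_{\gamma_n}[|\nabla f|] = \frac{\arccos(e^{-t})}{\sqrt{2\pi}}\cdot \mathbf{E}_{\gamma_n}[|\nabla f|].
$$

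I expect the main obstacle to be the approximation step for bounded-variation $f$: convolution interacts slightly awkwardly with the Gaussian weight (which is not translation invariant), so one must argue with a little care — e.g. restrict to a large ball, use that the Gaussian density is locally Lipschitz there, and absorb the boundary contribution in the limit; alternatively, run the whole ``great circle'' argument directly for Lipschitz $f$ and then invoke a density argument for bounded-variation functions. Everything else is routine once the interpolation is set up; the only genuine idea is that moving from $X$ to $Y$ along a path of standard Gaussians whose velocity is an independent standard Gaussian is exactly what converts the arc length $\arccos(e^{-t})$ together with the constant $\mathbf{E}|g| = \sqrt{2/\pi}$ into the claimed bound.
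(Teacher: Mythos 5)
Your proof is correct. The paper does not prove this statement at all --- it is quoted as a known result of Ledoux with a citation to \cite{Ledoux:94} --- so there is nothing to compare against except the literature, and your argument is essentially the standard one there: the symmetrization $\mathbf{E}[f(f-P_tf)]=\tfrac12\mathbf{E}[(f(X)-f(Y))^2]\le\tfrac12\mathbf{E}|f(X)-f(Y)|$, followed by the rotation $X_\theta=\cos\theta\,X+\sin\theta\,Z$ whose velocity $\dot X_\theta$ is an independent standard Gaussian, which is exactly what produces the constant $\tfrac12\cdot\arccos(e^{-t})\cdot\sqrt{2/\pi}=\arccos(e^{-t})/\sqrt{2\pi}$. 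All the individual computations (the covariance check, the half-normal mean $\sqrt{2/\pi}$, the endpoint matching $\cos\theta^\ast=e^{-t}$) are right, and your flag about the mollification step for bounded-variation $f$ against the non-translation-invariant Gaussian weight is the correct place to be careful; handling it for Lipschitz $f$ first and then passing to the limit, as you suggest, is the clean way to finish.
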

Using this theorem, it is possible to establish an upper bound on $\mathsf{W}^{\ge k}[f]$ in terms of $|\nabla f|$ as done below. 
\begin{corollary}\label{corr:gradient}
Let $f: \mathbb{R}^n \rightarrow [0,1]$ be of bounded variation. Then, 
\[
\mathsf{W}^{\ge d} [f]  \leq O\bigg( \frac{1}{\sqrt{d}}\bigg) \cdot \mathbf{E}[|\nabla f|].
\]
\end{corollary}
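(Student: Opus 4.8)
\textbf{Proof plan for Corollary~\ref{corr:gradient}.}
The plan is to feed Ledoux's inequality (Theorem~\ref{thm:Ledoux}) a well-chosen noise parameter $t$ and then read off the weight at levels $\ge d$ from the Hermite side. First I would expand both sides spectrally: since $P_t H_S = e^{-t|S|} H_S$, Parseval gives
\[
\mathop{\mathbf E}_{x\sim\gamma_n}[f\cdot(f-P_tf)] = \sum_{S\in\mathbb{Z}^{\ast n}} \bigl(1-e^{-t|S|}\bigr)\,\widehat f(S)^2 .
\]
Dropping all terms with $|S| < d$ (they are nonnegative) and using that $1-e^{-t|S|} \ge 1-e^{-td}$ whenever $|S|\ge d$, I get the one-sided bound
\[
\mathop{\mathbf E}_{x\sim\gamma_n}[f\cdot(f-P_tf)] \;\ge\; \bigl(1-e^{-td}\bigr)\sum_{|S|\ge d}\widehat f(S)^2 \;=\;\bigl(1-e^{-td}\bigr)\,\mathsf{W}^{\ge d}[f].
\]
Combining this with Theorem~\ref{thm:Ledoux} yields, for every $t>0$,
\[
\mathsf{W}^{\ge d}[f] \;\le\; \frac{1}{1-e^{-td}}\cdot\frac{\arccos(e^{-t})}{\sqrt{2\pi}}\cdot \mathop{\mathbf E}_{x\sim\gamma_n}[|\nabla f|].
\]

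The remaining step is to optimize over $t$. The natural choice is $t = 1/d$, which makes the prefactor $1/(1-e^{-td}) = 1/(1-e^{-1})$ an absolute constant. Then I need the elementary estimate $\arccos(e^{-1/d}) = O(1/\sqrt d)$: writing $e^{-1/d} = 1 - u$ with $u = 1/d + O(1/d^2)$, one has $\arccos(1-u) = \sqrt{2u}\,(1+O(u)) = O(1/\sqrt d)$ as $d\to\infty$ (and the bound is trivially true for the finitely many small $d$ by adjusting the constant). Substituting gives
\[
\mathsf{W}^{\ge d}[f] \;\le\; \frac{1}{1-e^{-1}}\cdot\frac{O(1/\sqrt d)}{\sqrt{2\pi}}\cdot \mathop{\mathbf E}_{x\sim\gamma_n}[|\nabla f|] \;=\; O\!\left(\frac{1}{\sqrt d}\right)\cdot \mathbf E[|\nabla f|],
\]
which is exactly the claim.

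There is essentially no serious obstacle here: the only thing to be careful about is the asymptotics of $\arccos$ near $1$ (a one-line Taylor expansion) and the bookkeeping that $f$ being of bounded variation with values in $[0,1]$ is precisely the hypothesis needed to apply Theorem~\ref{thm:Ledoux} and to have $f\in L^2(\gamma_n)$ so that the Hermite expansion and Parseval identity are valid. If one wants a fully explicit constant rather than the $O(\cdot)$, it suffices to note $\arccos(e^{-1/d}) \le \sqrt{2/d}$ for all $d\ge 1$ and $1-e^{-1} \ge 1/2$, giving $\mathsf{W}^{\ge d}[f] \le \frac{2}{\sqrt{\pi d}}\,\mathbf E[|\nabla f|]$.
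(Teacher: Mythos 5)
Your proposal is correct and follows essentially the same route as the paper: expand $\mathbf{E}[f(f-P_tf)]$ spectrally, lower-bound it by $(1-e^{-td})\,\mathsf{W}^{\ge d}[f]$, apply Theorem~\ref{thm:Ledoux}, and set $t=1/d$ with $\arccos(e^{-1/d})=O(1/\sqrt d)$. The only difference is that you spell out the Taylor expansion of $\arccos$ near $1$ and an explicit constant, which the paper leaves implicit.
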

\begin{proof}
\begin{eqnarray*}
\mathbf{E}[ f (f- P_t f)] = \sum_{j=0}^{\infty} (1- e^{-t \cdot j}) \mathsf{W}^{j}[f] \ge (1-e^{-d \cdot t}) \cdot  \mathsf{W}^{ \ge d}[f].
\end{eqnarray*}
Choose $d=1/t$ and apply Theorem~\ref{thm:Ledoux}, we get 
\[
\mathsf{W}^{ \ge d}[f] \leq O(1) \cdot \mathbf{E}[ f (f- P_{\frac{1}{d}} f)] \le  \frac{\arccos (e^{-\frac1d})}{\sqrt{2\pi}} \cdot \mathop{\mathbf{E}}_{x \sim \gamma_n}[|\nabla f|] \le O\bigg( \frac{1}{\sqrt{d}}\bigg) \cdot \mathbf{E}[|\nabla f|].
\]
\end{proof}

\subsection{Various Lemmas for thresholding}

Recall that our proof of Lemma~\ref{lem:1} is based on thresholding $P_t f$. Before proving it, we
introduce various properties of the thresholding procedure.
The first lemma states that the ``best'' way to round a $\Delta_k$-valued function to
a $\{e_1, \dots, e_k\}$-valued function while preserving its expectation is simply to threshold it.
Here, ``best'' means that we are trying to maximize the correlation between the original function
and the rounded one.

\begin{lemma}\label{lem:rounding-correlation}
  Take $f: \R^n \to \Delta_k$, $g: \R^n \to \{e_1, \dots, e_k\}$, and $z \in \R^k$.
  Suppose that whenever $g(x) = e_i$, we have
  \[
    i \in \mathrm{argmax}\{f_j(x) - z_j: j = 1, \dots, k\}.
  \]
  Then $g$ maximizes $\E[\langle f, h \rangle]$ among all
  $h: \R^n \to \{e_1, \dots, e_n\}$ satisfying $\E [h] = \E[g]$.
\end{lemma}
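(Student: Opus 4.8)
\textbf{Proof plan for Lemma~\ref{lem:rounding-correlation}.}
The plan is to reduce the claim to a pointwise rearrangement-type argument, combined with a global exchange (swapping) argument to handle the constraint $\E[h] = \E[g]$. First I would fix any competitor $h: \R^n \to \{e_1, \dots, e_k\}$ with $\E[h] = \E[g]$ and write
\[
  \E[\langle f, g\rangle] - \E[\langle f, h\rangle]
  = \E[\langle f - z, g - h\rangle],
\]
where the extra $z$-term vanishes because $\E[\langle z, g\rangle] = \langle z, \E[g]\rangle = \langle z, \E[h]\rangle = \E[\langle z, h\rangle]$. This is the key trick: subtracting $z$ costs nothing globally but is exactly what makes the integrand favorable pointwise. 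Indeed, at a point $x$ with $g(x) = e_i$ and $h(x) = e_{i'}$, the integrand equals $(f_i(x) - z_i) - (f_{i'}(x) - z_{i'})$, which is $\ge 0$ by the hypothesis that $i$ maximizes $f_j(x) - z_j$. Hence $\E[\langle f - z, g - h\rangle] \ge 0$, which gives $\E[\langle f, g\rangle] \ge \E[\langle f, h\rangle]$ as desired.

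So in fact the argument is entirely pointwise once the $z$-shift is in place, and the measure-preserving constraint $\E[h] = \E[g]$ is used only to kill the $z$-term. I would present it in exactly that order: (i) observe $\E[\langle z, g - h\rangle] = 0$ using the constraint; (ii) rewrite the correlation gap as $\E[\langle f - z, g - h\rangle]$; (iii) bound the integrand pointwise using the argmax hypothesis, noting that when $g(x) = h(x)$ the integrand is $0$ and otherwise it is a difference of the form (max value) minus (some value) $\ge 0$.

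The only genuinely delicate point — and the one I would be careful to state cleanly rather than grind through — is the handling of ties in the argmax: the hypothesis only says $i$ is \emph{a} maximizer, so when $g(x) = e_i$ and $h(x) = e_{i'}$ with $i \ne i'$, we still have $f_i(x) - z_i \ge f_{i'}(x) - z_{i'}$ (with equality allowed), so the integrand is $\ge 0$, which is all we need. There is no real obstacle here; the lemma is essentially a one-line observation dressed up, and the main ``work'' is simply recognizing that the additive shift by $z$ is free under the expectation constraint. I would also remark that no measurability subtlety arises beyond what is implicit in $f, g, h$ being honest functions, and that the statement does not require uniqueness of the maximizer or any continuity of $f$.
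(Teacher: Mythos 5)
Your proposal is correct and is essentially identical to the paper's proof: both subtract $z$ (which is free under the constraint $\E[h]=\E[g]$) and then use the pointwise inequality $\langle g, f-z\rangle \ge \langle h, f-z\rangle$ supplied by the argmax hypothesis. No further comment is needed.
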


\begin{proof}
  Let $h: \R^n \to \{e_1, \dots, e_n\}$ satisfy $\E[h] = \E[g]$. By the defining assumption of $g$,
  $\inr{g}{f-z} \ge \inr{h}{f-z}$ pointwise. Hence,
  \[
    \E[\inr{h}{f}] = \E[\inr{h}{f-z}] + \inr{\E[h]}{z} \ge \E[\inr{g}{f-z}] + \inr{\E[g]}{z} = \E[\inr{g}{f}].
  \]
\end{proof}

Let us assign a notation to the sort of rounding involved in Lemma~\ref{lem:rounding-correlation}.

\begin{definition}
  For $f: \R^n \to \Delta_k$ and $z \in \R^k$, let $T_z(f)$ denote the set of functions $g: \R^n \to \{e_1, \dots, e_k\}$
  such that
  \[
    \langle g(x), f(x) - z \rangle \ge f_i(x) - z_i
  \]
  for every $i = 1, \dots, k$ and every $x \in \R^n$.
\end{definition}

The next step is to show that a function obtained by thresholding $P_t f$ is always at least as noise-stable (with parameter $t$)
as the original function $f$. 

\begin{lemma}\label{lem:rounding-stability}
  Let $f: \R^n \to \{e_1, \dots, e_k\}$ and take $t > 0$. If $g \in T_z(f)$ for some $z \in \R^k$
  satisfies $\E[g] = \E[f]$
  then $\Stab_t(g) \ge \Stab_t(f)$.
\end{lemma}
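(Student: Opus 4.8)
The plan is to exploit the semigroup/self-adjointness structure of $P_t$ by writing $P_t = P_{t/2} P_{t/2}$ and $P_{t/2}$ self-adjoint, so that $\Stab_t(f) = \langle P_{t/2} f, P_{t/2} f\rangle = \|P_{t/2} f\|_2^2$, and likewise $\Stab_t(g) = \|P_{t/2} g\|_2^2$. Thus it suffices to show $\|P_{t/2} g\|_2 \ge \|P_{t/2} f\|_2$. To do this I would first apply Lemma~\ref{lem:rounding-correlation} with the roles chosen so that $g$ maximizes $\E[\langle P_t f, h\rangle]$ over all $h \colon \R^n \to \{e_1,\dots,e_k\}$ with $\E[h] = \E[g] = \E[f]$: indeed $g \in T_z(P_t f)$ by hypothesis means exactly that $g(x)$ picks out an argmax coordinate of $P_t f(x) - z$, which is the defining property in Lemma~\ref{lem:rounding-correlation} applied to the function $P_t f$ in place of $f$. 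Since $f$ itself is one competitor in this maximization (it has the right expectation and takes values in $\{e_1,\dots,e_k\}$), we get the key inequality
\[
  \E[\langle g, P_t f\rangle] \ge \E[\langle f, P_t f\rangle] = \Stab_t(f).
\]

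Now I would convert the left-hand side into something bounded by $\Stab_t(g)$ using Cauchy--Schwarz together with self-adjointness of $P_{t/2}$. Namely,
\[
  \E[\langle g, P_t f\rangle] = \E[\langle P_{t/2} g, P_{t/2} f\rangle] \le \|P_{t/2} g\|_2 \cdot \|P_{t/2} f\|_2 = \sqrt{\Stab_t(g)}\cdot\sqrt{\Stab_t(f)}.
\]
Combining the two displays gives $\Stab_t(f) \le \sqrt{\Stab_t(g)\,\Stab_t(f)}$, and since $\Stab_t(f) = \|P_{t/2}f\|_2^2 \ge 0$ — and we may assume it is strictly positive, the case $\Stab_t(f)=0$ being trivial since $\Stab_t(g)\ge 0$ — we can divide through by $\sqrt{\Stab_t(f)}$ to obtain $\sqrt{\Stab_t(f)} \le \sqrt{\Stab_t(g)}$, i.e. $\Stab_t(g) \ge \Stab_t(f)$, as desired.

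The steps are all routine: the substantive content is entirely encapsulated in Lemma~\ref{lem:rounding-correlation} (which is already proved), and the only mild point requiring care is the correct bookkeeping of which function plays the role of ``$f$'' in that lemma — here it is $P_t f$, not $f$ — and verifying that $f$ is an admissible competitor in the maximization (true because $f$ maps into $\{e_1,\dots,e_k\}$ and $\E[f] = \E[g]$). I would also note for completeness that $P_{t/2}$ is self-adjoint on $L^2(\gamma_n)$ with respect to the inner product $\langle u,v\rangle = \E[\langle u(x), v(x)\rangle]$, which is immediate from the Hermite diagonalization $P_{t/2} H_S = e^{-(t/2)|S|}H_S$ stated earlier. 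I do not anticipate any real obstacle; if anything, the subtlety is purely notational.
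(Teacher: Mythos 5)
Your proof is correct and follows essentially the same route as the paper's: apply Lemma~\ref{lem:rounding-correlation} with $P_t f$ in place of $f$ to get $\E[\langle g, P_t f\rangle] \ge \Stab_t(f)$, then bound that quantity by $\sqrt{\Stab_t(g)\Stab_t(f)}$ via the semigroup property of $P_t$ and Cauchy--Schwarz. You were also right to read the hypothesis as $g \in T_z(P_t f)$: that is what both the paper's proof and its later application in the proof of Lemma~\ref{lem:1} require, and the ``$g \in T_z(f)$'' in the printed statement is a typo.
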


\begin{proof}
  Since $\E[g] = \E[f]$, Lemma~\ref{lem:rounding-correlation} implies that
  $\E[\langle g, P_t f\rangle] \ge \E[\langle f, P_t f\rangle] = \Stab_t(f)$. On the other hand,
  the Cauchy-Schwarz inequality and the semi-group property of $P_t$ imply that
  \[
    \E[\langle g, P_t f \rangle]
    = \E[\langle P_{t/2} g, P_{t/2} f \rangle]
    \le \sqrt{\Stab_t(g) \Stab_t(f)}.
  \]
  The claim follows.
\end{proof}

In order to make Lemma~\ref{lem:rounding-stability} useful, we need to show that we can always find a rounding
with the same expectation as the original function.

\begin{lemma}\label{lem:rounding-existence}
  For any $f: \R^n \to \Delta_k$, there exists some $z\in \R^k$ and $g \in T_z(f)$ such that $\E[g] = \E[f]$.
\end{lemma}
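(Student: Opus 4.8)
The plan is to prove Lemma~\ref{lem:rounding-existence} by a topological fixed-point / degree argument, exhibiting $z$ via an appropriate continuous selection. First I would set up the map: for $z \in \R^k$, define $g_z \in T_z(f)$ by the deterministic tie-breaking rule $g_z(x) = e_{i(x)}$ where $i(x) = \min\,\mathrm{argmax}_j\{f_j(x) - z_j\}$, and let $\Phi(z) = \E[g_z] \in \Delta_k$. Note $\Phi$ maps $\R^k$ into the simplex $\Delta_k$, and shifting $z$ by a constant multiple of $\mathbf 1$ does not change $g_z$, so $\Phi$ is really a function of $z$ modulo $\mathrm{span}(\mathbf 1)$; equivalently we may restrict to the hyperplane $\{z : \sum_j z_j = 0\} \cong \R^{k-1}$. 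The goal is to show $\mu := \E[f]$ lies in the image of $\Phi$.

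The key steps, in order: (1) show that $\Phi$ is continuous except possibly at points $z$ where the argmax is non-unique on a set of positive measure, and argue that such "bad" $z$ form a measure-zero/nowhere-dense set — or, more robustly, replace $g_z$ by a randomized rounding (threshold at a uniformly random point of a small ball, in the spirit of the co-area argument used later for Lemma~\ref{lem:1}) so that $\Phi$ becomes genuinely continuous; (2) show that for $z_j \to -\infty$ relative to the others, $\Phi(z)$ is forced toward $e_j$ — more precisely, as we push the "price" $z_j$ of coordinate $j$ down, the $j$-th coordinate $\Phi_j(z)$ tends to $1$, giving the right boundary behavior on a large simplex; (3) invoke a topological lemma — the standard fact that a continuous map $\R^{k-1} \to \Delta_k$ (viewed via the boundary behavior from step (2)) is surjective onto the interior of $\Delta_k$, e.g.\ via Brouwer's fixed-point theorem applied to $z \mapsto z - (\Phi(z) - \mu)$ on a large simplex, or via a degree argument — to conclude that every $\mu$ in the (relative) interior of $\Delta_k$ is attained; (4) handle the case where $\mu$ lies on the boundary of $\Delta_k$, i.e.\ some $\mu_i = 0$, by restricting to the face of $\Delta_k$ supported on $\{i : \mu_i > 0\}$ and applying the interior case there (sending the prices of the absent coordinates to $+\infty$), then noting that $f$ itself having $\E[f]_i = \mu_i$ is consistent so the argument goes through on the sub-simplex.

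The main obstacle I expect is step (1): the naive deterministic rounding $g_z$ is genuinely discontinuous in $z$ (as $z$ crosses a value where ties occur on a positive-measure set, $\Phi$ jumps), so Brouwer does not apply directly. There are two standard fixes and I would pursue the cleaner one. Option A: observe that ties occur on a positive-measure set only for $z$ in a "bad" set $B$, and since $f$ is a fixed measurable function one can show $B$ has Lebesgue measure zero in the hyperplane $\{\sum z_j = 0\}$ (for each pair $i \neq j$, $\{z : \gamma_n(\{x: f_i(x) - z_i = f_j(x) - z_j \geq \max_\ell (f_\ell(x) - z_\ell)\}) > 0\}$ is contained in the set of atoms of the distribution of $f_i - f_j$, which is countable); then $\Phi$ is continuous on the complement of $B$, and one checks $\Phi$ extends continuously / the fixed-point equation still has a solution by a limiting argument using that $\Phi$ is monotone in a suitable coordinatewise sense. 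Option B (which I'd actually recommend writing): allow $g$ to be a measurable selection that is randomized on the tie set — formally, enlarge the target to $\Delta_k$-valued on the measure-zero-per-$x$ tie locus — so that $\Phi$ becomes continuous everywhere, apply Brouwer to get $z^\star$ with $\Phi(z^\star) = \mu$, and then observe that the resulting $g$ still lies in $T_{z^\star}(f)$ pointwise (the defining inequality $\langle g(x), f(x) - z\rangle \geq f_i(x) - z_i$ is preserved by taking convex combinations of maximizers), while $\E[g] = \mu = \E[f]$ as required. The monotonicity observation — that $\Phi_j(z)$ is non-increasing in $z_j$ and the coercivity at the boundary — makes the degree/Brouwer input routine; the real care is just in making the rounding map continuous, which the randomized-on-ties device handles.
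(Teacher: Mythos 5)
Your topological core is the same as the paper's: the paper also defines $\psi_i(z)$ as the measure of the region where coordinate $i$ attains the shifted maximum, observes that $\psi$ maps each face $F_I$ of $\Delta_k$ into itself, and invokes Sperner's lemma to conclude surjectivity (your Brouwer/degree argument on the hyperplane $\{\sum_j z_j = 0\}$ with the coercivity observation is an equivalent route). The gap is in your step (1). Your Option B claims that randomizing $g$ on the tie set makes $\Phi$ ``continuous everywhere,'' but no pointwise tie-breaking rule (deterministic or uniformly randomized) achieves this when the law of $f$ has atoms. Take $k=2$ and $f \equiv (0.4, 0.6)$: as $z_1 - z_2$ crosses $-0.2$, $\Phi$ jumps from $(1,0)$ to $(0,1)$, and uniform randomization on the tie inserts only the single value $(1/2,1/2) \ne \E[f]$. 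The point is that the correct weights for splitting the tie mass depend on the target $\mu$ and must themselves be \emph{found}; the map is genuinely set-valued at the critical $z$. To make your argument rigorous you would need either a Kakutani-type fixed-point theorem for the upper-hemicontinuous correspondence $z \mapsto \{\E[g] : g(x) \in \mathrm{conv}(\mathrm{argmax})\}$, or the paper's route. Your Option A has the same problem: $\Phi$ does not extend continuously across the bad set, and one-sided limits miss the target.

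The paper resolves exactly this issue in two moves that your proposal lacks. First, it proves only a super-additive \emph{inequality}: there is a $z$ with $\mu(\bigcup_{i \in I} A_i(z)) \ge \sum_{i \in I} q_i$ for all $I \subseteq [k]$ (obtained by running the Sperner argument for absolutely continuous approximations $\mu_n$, where ties have measure zero and $\psi$ is honestly continuous and single-valued, then passing to the limit — which degrades equalities to inequalities). Second, it converts these inequalities into an exact assignment by applying a weighted Hall's marriage theorem (Theorem~\ref{thm:hall}) to distribute the mass of each tie region $B_I$ among the coordinates $i \in I$, and only then defines $g$ by arbitrarily partitioning each $B_I$ into pieces of the prescribed measures. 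For $k = 2$ this splitting is a one-dimensional triviality, but for general $k$ the tie regions are indexed by all subsets $I$ with $|I| \ge 2$ and the feasibility of the splitting is precisely Hall's condition; this combinatorial step is the missing ingredient in your write-up.
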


Since the proof of Lemma~\ref{lem:rounding-existence} is mainly technical, we postpone it to the appendix.

\subsection{Proof of Lemma~\ref{lem:1}}

We introduce three final ingredients before beginning the proof of Lemma~\ref{lem:1}: the first is a gradient bound on $P_t f$
that is due (in a much more precise form) to Bakry and Ledoux~\cite{BakryLedoux:96}.
\begin{theorem}\label{thm:bakry-ledoux}
  If $f: \R^n \to [0, 1]$ then for any $t > 0$, $\|\nabla P_t f\|_\infty \le C/\sqrt{t}$.
\end{theorem}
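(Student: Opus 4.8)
The plan is to reduce the general case $f\colon\R^n\to[0,1]$ to a one-dimensional statement and then prove that by direct computation. First I would write $g = P_t f$ and observe, using the Mehler kernel representation, that for any unit vector $v$ the directional derivative $\partial_v g(x)$ can be written as an integral against $y\sim\gamma_n$ of $f(e^{-t}x+\sqrt{1-e^{-2t}}\,y)$ times a linear functional of $y$; concretely, differentiating under the integral sign,
\[
  \partial_v (P_t f)(x) = \frac{e^{-t}}{\sqrt{1-e^{-2t}}}\,\E_{y\sim\gamma_n}\big[f(e^{-t}x+\sqrt{1-e^{-2t}}\,y)\,\langle y,v\rangle\big].
\]
Because $f$ takes values in $[0,1]$ and $\langle y,v\rangle$ is a standard one-dimensional Gaussian under $y\sim\gamma_n$, one is tempted to bound this crudely by $\frac{e^{-t}}{\sqrt{1-e^{-2t}}}\,\E|\langle y,v\rangle| = \frac{e^{-t}}{\sqrt{1-e^{-2t}}}\sqrt{2/\pi}$, which already gives $\|\nabla P_tf\|_\infty \le C/\sqrt{t}$ for $t$ bounded away from $0$ but blows up like $1/\sqrt{t}$ as $t\to 0$ — which is exactly the claimed rate, so in fact this crude bound almost suffices. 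Since $\frac{e^{-t}}{\sqrt{1-e^{-2t}}} \le \frac{1}{\sqrt{2t}}\cdot(1+o(1))$ for small $t$ and is bounded for large $t$, we get $\|\nabla P_t f\|_\infty \le C/\sqrt t$ with an absolute constant $C$, uniformly in $n$ and in $f$.

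More carefully, to get the clean statement I would argue as follows. Fix $x$ and a unit vector $v$; by rotational invariance of $\gamma_n$ and by replacing $f$ with its conditional average over the hyperplanes orthogonal to $v$ (which still takes values in $[0,1]$ and does not decrease $|\partial_v P_t f(x)|$ at the relevant point), it suffices to treat $n=1$, i.e. $f\colon\R\to[0,1]$ and estimate $(P_t f)'(x)$. There,
\[
  (P_t f)'(x) = \frac{e^{-t}}{\sqrt{1-e^{-2t}}}\int_\R f\big(e^{-t}x+\sqrt{1-e^{-2t}}\,y\big)\,y\,\frac{e^{-y^2/2}}{\sqrt{2\pi}}\,dy,
\]
and bounding $|f|\le 1$ and integrating $|y|e^{-y^2/2}$ gives $|(P_t f)'(x)| \le \frac{e^{-t}}{\sqrt{1-e^{-2t}}}\cdot\sqrt{\tfrac{2}{\pi}}$. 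Finally I would record the elementary inequality $\frac{e^{-t}}{\sqrt{1-e^{-2t}}} \le \frac{1}{\sqrt{t}}$ for all $t>0$ (equivalently $1-e^{-2t}\ge t\,e^{-2t}$, which holds since $e^{2t}-1\ge 2t\ge t$), so that $|(P_t f)'(x)|\le \sqrt{2/\pi}\cdot t^{-1/2}$, and then pass back to $\R^n$ to conclude $\|\nabla P_t f\|_\infty \le C/\sqrt t$ with $C=\sqrt{2/\pi}$ (any absolute constant is fine for our purposes).

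The only mildly delicate point — and the one I would be most careful about — is the reduction from $\R^n$ to $\R^1$: one must justify differentiating under the integral (routine, since $f$ is bounded and the Gaussian density is smooth with rapidly decaying derivatives) and check that averaging $f$ over the $(n-1)$-dimensional affine subspaces perpendicular to $v$ commutes appropriately with $P_t$ and the directional derivative. In fact one can sidestep the averaging entirely: the displayed $n$-dimensional formula for $\partial_v(P_tf)(x)$ together with $|f|\le 1$ and $\E_{y\sim\gamma_n}|\langle y,v\rangle| = \sqrt{2/\pi}$ gives the bound directly, with no reduction needed. Everything else is the elementary scalar estimate above. Note we do not need — and do not prove — the sharp Bakry–Ledoux constant; the stated $O(1/\sqrt t)$ bound is all that is used in the sequel.
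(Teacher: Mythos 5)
Your proof is correct, but note the paper does not actually prove Theorem~\ref{thm:bakry-ledoux}: it is stated with a citation to Bakry and Ledoux~\cite{BakryLedoux:96} (where a much sharper version is proved) and used as a black box. Your self-contained derivation---write the Mehler-kernel representation, differentiate under the integral to get
\[
\partial_v (P_t f)(x) = \frac{e^{-t}}{\sqrt{1-e^{-2t}}}\,\E_{y\sim\gamma_n}\big[f(e^{-t}x+\sqrt{1-e^{-2t}}\,y)\,\langle y,v\rangle\big],
\]
bound $|f|\le 1$ and $\E|\langle y,v\rangle|=\sqrt{2/\pi}$, and finish with $e^{-t}/\sqrt{1-e^{-2t}}\le 1/\sqrt{t}$---is the standard elementary argument and is perfectly adequate for the weak form needed here. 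One caveat: the middle paragraph proposing a reduction to $n=1$ by replacing $f$ with its conditional average over hyperplanes orthogonal to $v$ is not justified as stated. The claim that this ``does not decrease $|\partial_v P_t f(x)|$ at the relevant point'' is at best unproven (averaging commutes with $\partial_v P_t$, which bounds the averaged quantity \emph{by} the supremum of the original one, i.e.\ the inequality goes the wrong way). But you correctly flag that this detour can be sidestepped entirely, and the direct $n$-dimensional bound in the displayed formula is the argument that actually closes the proof. It would be cleaner to delete the dimension-reduction paragraph and present only the direct computation.
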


The second ingredient is the co-area formula in Gaussian space. To state this, let $\gamma_n^{+}$ denote the Gaussian surface area in $n$-dimensions. The co-area formula (see~\cite{federer1969} for a reference) is stated next. For any $f: \mathbb{R}^n \rightarrow \mathbb{R}$ and for any continuous, compactly supported $\mu : \mathbb{R} \rightarrow \mathbb{R}$ 
\[
\int_{\mathbb{R}} \mu(t) \cdot \gamma_n^{+} (\{x: f(x) \ge t\}) dt = \int_{\mathbb{R}^n} \mu(f(x)) |\nabla(f(x))| d \gamma(x).
\]
The co-area formula relates the Gaussian surface of a Boolean function (specified by $\{x: f(x) \ge t\}$) to the gradient of $f$.

The final ingredient is a weighted version of Hall's marriage theorem.
It will be used to show that we can divide a certain amount of ``missing'' volume into pieces of the right size,
while preserving certain constraints.
\begin{theorem}\label{thm:hall}
  Let $G = (U, V, w, E)$ be a finite, vertex-weighted, bipartite graph. That is, $U$ and $V$ are finite sets, $w$ is a weight function $w: U \sqcup V \to (0, \infty)$, and $E \subset U \times V$ is the
  set of edges. Suppose that for every $U' \subset U$, $\sum_{u \in U} w(u) \le \sum_{v \sim U'} w(v)$, where $v \sim U'$ if there exists $u \in U'$ such that $(u, v) \in E$. Then
  there exists $p: V \to \Delta_{|U|}$ such that $p_u(v) > 0$ implies $u \sim v$, and such that
  \[
    \sum_{v \in V} p_u(v) w(v) = w(u)
  \]
  for every $u \in U$.
\end{theorem}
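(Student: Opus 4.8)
The plan is to reduce Theorem~\ref{thm:hall} to the max-flow/min-cut theorem on an auxiliary network and then read the desired fractional assignment $p$ off a maximum flow. Concretely, I would build a network $N$ on the vertex set $\{s,t\}\cup U\cup V$ with an edge $(s,u)$ of capacity $w(u)$ for each $u\in U$, an edge $(u,v)$ of capacity $+\infty$ (equivalently any number $\ge \sum_{v'\in V}w(v')$, since that is all that can ever traverse it) for each $(u,v)\in E$, and an edge $(v,t)$ of capacity $w(v)$ for each $v\in V$. The trivial cut $(\{s\},\mathrm{rest})$ has capacity $\sum_{u\in U}w(u)$, so it suffices to show every $s$–$t$ cut has capacity at least $\sum_{u\in U}w(u)$; then the maximum flow equals $\sum_{u\in U}w(u)$, i.e.\ it saturates every edge leaving $s$.

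The conceptual core is exactly this min-cut lower bound, and it is where Hall's condition is used. Let $(A,B)$ be a cut with $s\in A$, $t\in B$. If some $(u,v)\in E$ has $u\in A$ and $v\in B$ the cut is infinite and we are done, so assume that $v\sim U_A$ implies $v\in A$, where $U_A:=U\cap A$. The cut then contains every edge $(s,u)$ with $u\in U\setminus U_A$ and every edge $(v,t)$ with $v\sim U_A$, so its capacity is at least $\sum_{u\notin U_A}w(u)+\sum_{v\sim U_A}w(v)\ \ge\ \sum_{u\notin U_A}w(u)+\sum_{u\in U_A}w(u)\ =\ \sum_{u\in U}w(u)$, the middle inequality being precisely the hypothesis applied to $U'=U_A$. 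Given a maximum flow $\phi$, flow conservation at $u$ (which is saturated from $s$) gives $\sum_{v:(u,v)\in E}\phi(u,v)=w(u)$; I would then set $p_u(v)=\phi(u,v)/w(v)$ for $(u,v)\in E$ and $p_u(v)=0$ otherwise. Then $p_u(v)>0$ forces $(u,v)\in E$, i.e.\ $u\sim v$; $\sum_{v}p_u(v)w(v)=\sum_v\phi(u,v)=w(u)$ as required; and $\sum_u p_u(v)=\phi(v,t)/w(v)\le 1$, so $p(v)\in\Delta_{|U|}$ exactly when $\phi$ also saturates $(v,t)$, which holds iff $\sum_u w(u)=\sum_v w(v)$ — the case relevant to the application, where $V$ indexes ``missing'' pieces whose total weight matches the total demand $\sum_u w(u)$. (In general the construction lands $p(v)$ in the solid simplex $\{x\ge 0:\sum_i x_i\le 1\}$, which is all the application needs.)

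The only real subtleties are routine: handling the $+\infty$ capacities (resolved as above by replacing them with a large finite number), and the simplex-versus-solid-simplex bookkeeping just mentioned. The min-cut estimate, which carries all the content, is a couple of lines. If one prefers to avoid quoting max-flow/min-cut, an alternative is available for rational weights: blow up each $u$ into $w(u)$ copies and each $v$ into $w(v)$ copies, placing a complete bipartite graph between the clusters of adjacent original vertices; ordinary Hall's marriage theorem then yields an integral matching saturating the $U$-side (the blown-up Hall condition follows from the weighted one exactly as in the min-cut computation), which one aggregates and normalizes, passing to real weights by a compactness limit. A third route is to invoke LP duality/Farkas on the transportation polytope, where the stated condition is the dual feasibility certificate. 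I would present the max-flow argument, since it is the shortest and cleanest, and I expect no genuine obstacle — the proof is essentially a packaging of the classical equivalence between Hall-type conditions and feasibility of bipartite flows.
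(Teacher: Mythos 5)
Your proof is correct and, in its primary form, takes a genuinely different route from the paper's. The paper proves Theorem~\ref{thm:hall} by reduction to the classical Hall marriage theorem: for integer (hence rational) weights one replicates each $u \in U$ into $w(u)$ copies to obtain an ordinary unweighted bipartite graph, applies Hall to extract an integral matching saturating the $U$-side, and aggregates; the general real-weight case is then obtained by a limiting/approximation argument. Your main argument instead invokes max-flow/min-cut on the natural source--$U$--$V$--sink network, where the min-cut lower bound is exactly the weighted Hall hypothesis applied to $U_A = U \cap A$; this treats real weights directly and dispenses with the approximation step, so it is arguably the cleaner presentation, though the combinatorial content (the cut estimate) is the same as the blown-up Hall condition. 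Your second suggested alternative is precisely the paper's route.

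One point you flag is worth recording, and it in fact applies to the theorem as the paper states it: with $\Delta_{|U|}$ the probability simplex as defined in the paper, the conclusion $p(v) \in \Delta_{|U|}$ is too strong without an extra hypothesis. Your flow construction gives only $\sum_u p_u(v) = \phi(v,t)/w(v) \le 1$, with equality for every $v$ exactly when $\sum_{u \in U} w(u) = \sum_{v \in V} w(v)$. That balance identity holds at both places the theorem is invoked (in the proofs of Lemma~\ref{lem:1} and Lemma~\ref{lem:rounding-existence}, the $V$-weights are Gaussian masses of a partition whose total mass equals $\sum_u w(u)$), so nothing downstream is affected; but strictly speaking the statement of Theorem~\ref{thm:hall} should either add the hypothesis $\sum_u w(u) = \sum_v w(v)$ or replace $\Delta_{|U|}$ by the solid simplex $\{x \ge 0 : \sum_i x_i \le 1\}$. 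You handled this correctly.
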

In the case that $w \equiv 1$, Theorem~\ref{thm:hall} follows from the usual formulation of Hall's marriage theorem
(which guarantees in addition that $p: V \to \{e_1, \dots, e_{|U|}\}$). When
$w$ is integer-valued (or, by scaling, rational-valued), it follows by applying Hall's marriage
theorem to the graph in which $u$ is replicated $w(u)$ times. The general case follows by
a simple approximation argument.

\begin{proofof}{Lemma~\ref{lem:1}}
We begin by applying Lemma~\ref{lem:rounding-existence} to $P_t f$: let $z$ and $g \in T_z(P_t f)$
be such that $\E[g] = \E[P_t f] = \E[f]$. According to Lemma~\ref{lem:rounding-stability}, $\Stab_t(g) \ge \Stab_t(f)$.
However, $\E[|\nabla g|]$ cannot be controlled in general; therefore, we will move to an approximation of $g$.

Let $\mu$ be a probability measure on $[0, \epsilon]$ with continuous density bounded by $2/\epsilon$.
By the co-area formula,
\begin{align*}
  &\int_\R \mu(t) \gamma^+(\{x: P_t f_i(x) - z_i > P_t f_j(x) - z_j + t\})\, dt \\
  &= \int_{\R^n} \mu(P_t f_i(x) - z_i - P_t f_j(x) + z_j) |\nabla P_t f_i(x) - \nabla P_t f_j(x)|\, d\gamma(x) \\
  &\le \frac{2}{\epsilon} \int_{\R^n} |\nabla P_t f_i(x) - \nabla P_t f_j(x)|\, d\gamma(x) \\
  &\le \frac{C}{\epsilon \sqrt t},
\end{align*}
where the last inequality follows from Theorem~\ref{thm:bakry-ledoux}.
In particular, there exist some $y_{ij}^+$ and $y_{ij}^-$ in $[0, \epsilon]$ such that if
$A_{ij}^{\pm} = \{x \in \R^n: P_t f_i(x) - z_i > P_t f_j(x) - z_j \pm y_{ij^{\pm}}\}$ then
\begin{equation*}\label{eq:A_ij-surface}
\gamma^+(A_{ij}^\pm) \le C(\epsilon)/\sqrt t.
\end{equation*}
We repeat this construction of $y_{ij}$ and $A_{ij}$ for every ordered pair $(i, j)$.
Define $A_i = \bigcap_{j \ne i} A_{ij}^+$, and note that $A_i \subset \{x: g(x) = e_i\}$.
On the other hand, $A_{ij}^- \supset \{x: g(x) = e_i\}$ for every $i, j$.
Next, define
\begin{align*}
  C_i &= \bigcap_{j \ne i} A_{ij}^- \\
  C_I &= \bigcup_{i \in I} C_i \\
  B_I &= C_I \setminus \bigcup_{J \supsetneq I} C_J \setminus \bigcup_{i \in I} A_i.
\end{align*}
The meaning of these sets is the following: $A_i$ is the set where $f_i - z_i$ significantly larger than
any other
$f_j - z_j$. On $C_i$, $f_i - z_i$ is almost $\max_j f_j - z_j$;
on $C_I$, $f_i - z_i$ is almost maximal for \emph{every} $i \in I$; and on $B_I$, the set
of $i$ for which $f_i - z_i$ is almost maximal is exactly $I$. Importantly, the collection
of all $A_i$ and $B_I$ form a partition of $\R^n$. Our basic strategy will be to set $h$ to be $e_i$
on $A_i$, and then to define $h$ on the remaining part of the space in order to satisfy two properties:
$\E[h] = \E[f]$ and $h(x) = e_i$ only if $x \in B_I$ for some $I \ni i$.

Since the Gaussian surface area obeys the inequalities $\gamma_n^+(A \cap B) \le \gamma_n^+(A) + \gamma_n^+(B)$
and $\gamma_n^+(A \cup B) \le \gamma_n^+(A) + \gamma_n^+(B)$, and since $B_I$ and $A_i$ are defined
using a finite (depending on $k$) number of intersections and unions, it follows that
\begin{equation}\label{eq:A-B-surface}
  \begin{aligned}
    \gamma^+(A_i) &\le C(\epsilon, k)/\sqrt t \\
    \gamma^+(B_I) &\le C(\epsilon, k)/\sqrt t.
  \end{aligned}
\end{equation}

Now, for any $I \subset [k]$,
\[
  \bigcup_{J: J \cap I \ne \emptyset} B_J \cup \bigcup_{i \in I} A_i
\]
contains the set of $x$ for which $g(x) \in \{e_i: i \in I\}$. It follows that
\[
  \sum_{J: J \cap I \ne \emptyset} \gamma_n(B_J)
  \ge \sum_{i \in I} \E [g_i] - \gamma_n(A_i).
\]
Consider the bipartite graph where $U = [k]$ and $V = \{I: I \subset [k]\}$, and $(i, I) \in E$ if $i \in I$.
We assign the weights $w(i) = \E[g_i] - \gamma_n(A_i)$ to $i \in U$ and $w(I) = \gamma_n(B_I)$ for $I \in V$
(note that $w(i) \ge 0$ because the construction of $A_i$ ensures that $A_i \subset \{x: g(x) = e_i)\}$).
The displayed equation above ensures that this weighted graph satisfies the hypothesis of Theorem~\ref{thm:hall},
and so there exists $p: V \to \Delta_k$ with $p_i(I) > 0$ only if $i \in I$, and with
\[
  \sum_{I \ni i} p_i(I) \gamma_n(B_I) = \E[g_i] - \gamma_n(A_i).
\]

Finally, we will use $p$ to define $h$. First, for every $I$ and $i \in I$, let $B_{I,i}$ be a set of the form
$\{x \in \R^n: a \le x_1 \le b\}$ such that $\gamma_n(B_{I,i} \cap B_I) = p_i(I) \gamma_n(B_I)$; moreover, we choose $B_{I,i}$
such that $\gamma_n(B_{I,i} \cap B_{I,j}) = 0$ when $i \ne j$. Then we
set $h(x)$ to equal $e_i$ on the set $A_i \cup \bigcup_{I \ni i} (B_{I,i} \cap B_I)$. By the defining property of $p$,
$h$ satisfies $\E[h_i] = \E[g_i] = \E[f_i]$. Moreover, $h(x) = e_i$ only ona subset of $A_i \cup \bigcup_{I \ni i} B_I$,
and on this set $P_t f_i(x) - z_i \ge \max_j P_t f_j(x) - z_j - \epsilon = \langle g(x), P_t f(x) - z\rangle - \epsilon$.
It follows that
\[
  \E[\langle h, P_t f\rangle] \ge \E[\langle g, P_t f\rangle] - \epsilon \ge \Stab_t(f) - \epsilon.
\]
By the Cauchy-Schwarz inequality,
\[
  \sqrt{\Stab_t(h)} \ge \sqrt{\Stab_t(f)} - \frac{\epsilon}{\sqrt{\Stab_t(f)}},
\]
which implies that $\Stab_t(h) \ge \Stab_t(f) - 2\epsilon$.

Finally, we address the surface aread of $h$.  Recall that
\[
  \{x: h(x) = e_i\} = A_i \cup \bigcup_{I,i} B_{I,i} \cap B_I.
\]
The number of terms on the right is some constant depending on $k$. By~\eqref{eq:A-B-surface},
$\gamma_n^+(A_i)$ and $\gamma_n^+(B_I)$ are bounded by $C(\epsilon, t, k)$. Since $B_{I, i}$
is an intersection of two halfspaces, its Gaussian surface
area is at most a constant. It follows that $\gamma_n^+(\{x: h(x) = e_i\}) \le C(\epsilon, k)/\sqrt t$ for every $i$.
Hence, $\E[|\nabla h|] \le C(\epsilon, k)/\sqrt t$.

After applying Corollary~\ref{corr:gradient}, this completes the proof except that $d = d(k, \epsilon, t)$
instead of the claimed $d(k, \epsilon)$, where $d(k, \epsilon, t)$ blows up as $t \to 0$.
To eliminate this dependence on $t$, it suffices to note that the claim is trivial if $t \ll (\epsilon/k)^2$.
Indeed, one can easily construct $h$ with $\E[h] = \E[f]$, $\Stab_t(h) \ge \E[|h|^2] - O(k \sqrt t)$
and $\E[|\nabla h|] \le O(k)$. For example, we could take the pieces $\{x: h(x) = e_i\}$ to be parallel
slabs of the form $\{x: a \le x_1 \le b\}$, where $a$ and $b$ are chosen so that the slabs have the
correct volumes. If $t \ll (\epsilon/k)^2$ then such an example will satisfy the claim of the lemma.
\end{proofof}

\section{Reduction from PTFs to PTFs on a constant number of variables}\label{sec:junta}

In this section, we are going to prove the following theorem. 
\begin{theorem}\label{thm:reduction}
  Let $f: \mathbb{R}^n \rightarrow [k]$ be a degree-$d$, $(d,\epsilon)$-balanced  PTF with $\E_{x}[f(x)] = \boldsymbol{\mu}$ where $\boldsymbol{\mu} = (\mu_1, \ldots, \mu_k)$.  Further, let us assume that 
$\Pr[x \in \mathsf{Collision} (f)] \le \epsilon/(40 k^2)$.  
  Then, for every $\epsilon>0$, there exists a degree-$d$ 
PTF $\fh: \mathbb{R}^{n_0} \rightarrow [k]$ such that, \begin{itemize} \item  $\Vert \mathbf{E}_{x} [\fh(x)] - \boldsymbol{\mu} \Vert_1 \le \epsilon$, \item $ \langle \fh, P_t \fh \rangle \ge \langle f, P_t f \rangle - \epsilon$. 
\end{itemize} Further, $n_0 = n_0 (d,k,\epsilon)$ is an explicitly defined function.  

\end{theorem}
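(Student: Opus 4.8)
The plan is to carry out the two-step reduction sketched in Section~\ref{sec:theorem}: decompose $f$ via the regularity lemma of~\cite{DS14}, and then transplant the eigenregular ``inner'' polynomials into a bounded number of variables while preserving the data that controls $\E[f]$ and $\mathsf{Stab}_t(f)$. Concretely, I would first apply the regularity lemma of~\cite{DS14} to the defining polynomials, writing $f = \mathsf{PTF}(p_1,\dots,p_k)$ with $p_s = \Outer(p_s)\big(\{\Inner(p_{s,q,\ell})\}_{q\in[d],\,\ell\in[\num(s,q)]}\big)$, where each $\Inner(p_{s,q,\ell})$ is a homogeneous degree-$q$, $\delta$-eigenregular polynomial, the total number $N=\sum_{s,q}\num(s,q)$ of inner polynomials is bounded by an explicit function of $d,k,\epsilon,\delta$, and the outer functions act on variance-normalized inner polynomials and have bounded degree. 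The eigenregularity parameter $\delta=\delta(d,k,t,\epsilon)$ is chosen small only at the end, once $N$ is known.

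\emph{What must be preserved.} The key observation is that $\E[f]$ and $\mathsf{Stab}_t(f)=\sum_i \Pr_{x,y}[f(x)=i\wedge f(y)=i]$ (for $e^{-t}$-correlated $x,y$) are both functions \emph{only} of the joint law of the $2N$-dimensional vector $\big(\{\Inner(p_{s,q,\ell})(x)\},\{\Inner(p_{s,q,\ell})(y)\}\big)$ --- obtained by composing with the fixed outer functions and the $\arg\max$/PTF rule. By the multidimensional central limit theorem for eigenregular polynomials of~\cite{DS14}, for $\delta$ small this joint law is close, in a weak metric, to that of a $2N$-dimensional Gaussian whose covariance is determined by the within-degree Gram matrices $\mathbf{M}_q=\big(\langle \Inner(p_{s,q,\ell}),\Inner(p_{s',q,\ell'})\rangle\big)$, the degrees, and the correlation $e^{-t}$ (cross-degree blocks vanish by Hermite orthogonality, and the $x$--$y$ cross-covariances reduce to the same Gram data weighted by powers of $e^{-t}$ via the Hermite expansion of each homogeneous inner polynomial). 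To turn weak CLT closeness into an actual bound on $\E[f]$ and $\mathsf{Stab}_t(f)$ one must control the discontinuity set of the PTF rule: the $(d,\epsilon)$-balanced hypothesis together with Theorem~\ref{thm:hyper} (or a Carbery--Wright-type inequality) gives anticoncentration of each $p_s(x)$ near $0$, and the assumed bound on $\Pr[x\in\mathsf{Collision}(f)]$ bounds the ambiguous region, so that the relevant indicators can be smoothed over thin shells at a cost of $O(\epsilon)$.

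\emph{Low-dimensional realization and re-regularization.} Each $\mathbf{M}_q$ is PSD of size $\le N$, hence $\mathbf{M}_q=\mathbf{U}_q^{\top}\mathbf{U}_q$ with $\mathbf{U}_q\in\R^{N\times N}$; choosing in a fixed bounded dimension $m$ a family of homogeneous degree-$q$ polynomials with a known near-orthonormal Gram matrix and taking the linear combinations prescribed by $\mathbf{U}_q$ yields homogeneous degree-$q$ polynomials $r_{s,q,\ell}$ on $\R^m$ with the same pairwise inner products as the $\Inner(p_{s,q,\ell})$ --- the tensor analogue of realizing $N$ vectors with a prescribed Gram matrix inside $\R^N$. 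These need not be eigenregular, but eigenregularity is restored by averaging over independent copies: on $\R^{n_0}=(\R^m)^{r}$ put $\Inner(r_{s,q,\ell})(x^{(1)},\dots,x^{(r)})=\tfrac{1}{\sqrt r}\sum_{j=1}^{r} r_{s,q,\ell}(x^{(j)})$. Since inner polynomials have degree $\ge 1$ and hence mean $0$, this preserves every pairwise correlation and every degree exactly, while the canonical tensor becomes a direct sum of $r$ rescaled copies, so its spectral-to-Frobenius ratio drops by a factor $1/\sqrt r$; taking $r=\poly(1/\delta)$ makes all $\Inner(r_{s,q,\ell})$ $\delta$-eigenregular, with $n_0=mr$ an explicit function of $d,k,\epsilon$. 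Finally set $\fh=\mathsf{PTF}(\widetilde p_1,\dots,\widetilde p_k)$ with $\widetilde p_s=\Outer(p_s)\big(\{\Inner(r_{s,q,\ell})\}\big)$, keeping the outer functions verbatim; applying the previous paragraph's analysis to $\fh$ and using that $f$ and $\fh$ have identical outer functions, degrees and Gram matrices $\mathbf{M}_q$, the two Gaussian-surrogate values coincide, which gives $\Vert\E[\fh]-\boldsymbol\mu\Vert_1\le\epsilon$ and $\langle \fh,P_t\fh\rangle\ge\langle f,P_tf\rangle-\epsilon$ after rescaling $\epsilon$ at the outset.

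\emph{Main obstacle.} The crux is the second step: the PTF/$\arg\max$ rule is discontinuous, so the CLT of~\cite{DS14}, which only controls weak distances, does not by itself bound $\mathsf{Stab}_t$, and one must quantitatively interpose anticoncentration of the $p_s$ near $0$ (this is exactly why the $(d,\epsilon)$-balanced and small-collision hypotheses are imposed) and then chase the chain $\delta\ll\cdots\ll\epsilon$ with $\delta$ a function of the bounded quantity $N$. A secondary subtlety, flagged in the proof sketch, is that unlike in~\cite{DS14} we cannot replace the inner polynomials by Gaussians: $P_t$ acts on a degree-$q$ Hermite component by $e^{-tq}$ and the noise stability genuinely sees this, so the CLT is used only as a comparison tool between $f$ and $\fh$, both of which keep the homogeneous-polynomial structure intact.
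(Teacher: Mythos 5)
Your proposal matches the paper's actual proof essentially step for step: regularity decomposition into outer polynomials composed with eigenregular inner polynomials (Theorem~\ref{thm:decomp}), preservation of the within-degree Gram matrices and their $e^{-tq}$-weighted noise correlations (Claim~\ref{clm:const-corr}), realization of those Gram matrices by homogeneous polynomials on $\poly(N)$ variables followed by averaging over independent blocks to restore $\delta$-eigenregularity (Lemma~\ref{lem:junta-construct} and Fact~\ref{fact:eigen-2}), and comparison of the resulting PTFs via the CLT of~\cite{DS14} together with mollification and Carbery--Wright anticoncentration enabled by the balancedness and small-collision hypotheses (Lemma~\ref{lem:sign-match}, Corollary~\ref{corr:bounds}). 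The only material items you gloss over are (i) the preliminary reduction to multilinear PTFs (Lemma~\ref{lem:multilinear}), and (ii) the need for a moment-comparison tool stronger than the $C^2$-test-function CLT, namely Lemma~\ref{lem:monomial}, which the paper uses to transfer variance lower bounds on the composed polynomials (including products $\tilde p_s\tilde p_{s'}$) from the original inner family to the replacement family before anticoncentration can be invoked on both sides.
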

The above theorem states that given a degree-$d$ multivariate PTF over $n$ variables, there is another multivariate PTF which induces approximately the same partition sizes and has approximately the same noise stability (at any fixed noise rate $t$) but the new PTF is only over some (explicitly defined) $O_{d,t}(1)$ variables.
Our main workhorse for this section is going to be two structural theorems for low-degree polynomials proven in \cite{DS14}. In order to state these theorems, we will need a few definitions from that paper. In particular, we will need to define the relation between polynomials and tensors and then define the notion of an $\epsilon$-\emph{eigenregular} polynomial. {Before we do that, let us observe that Theorem~\ref{thm:main-stab} follows very easily by combining 
Theorem~\ref{thm:reduction} and Theorem~\ref{thm:1}. 

\subsubsection*{Proof of Theorem~\ref{thm:main-stab}} 
Let us us assume that given measure $\boldsymbol{\mu} = (\mu_1, \ldots, \mu_k)$ and noise rate $t>0$, the most noise stable partition is $f: \mathbb{R}^n \rightarrow [k]$. Then, applying Theorem~\ref{thm:1} (with error parameter $\epsilon/(40k^2)$), we obtain a PTF $g_1$ of degree $d = O_{k,\epsilon}(1)$ such that it is $(d,\epsilon/2)$ balanced and $\Pr[x \in \mathsf{Collision}(g_1)]\le \epsilon/(40k^2)$. Further, note that $\Vert \mathbf{E}[g_1] - \mathbf{E}[f] \Vert_1 \le \epsilon/2$ and $\langle g_1, P_t g_1 \rangle \ge \langle f, P_t f \rangle - \epsilon/2$. 

We next apply Theorem~\ref{thm:reduction} on this function $g_1$ to obtain $\fh$ which is a degree-$d$ PTF on $n_0 = O_{d,\epsilon,k}(1)$ variables such that $\Vert \mathbf{E}[g_1] - \mathbf{E}[\fh] \Vert_1 \le \epsilon/2$ and 
$\langle \fh, P_t \fh \rangle \ge \langle g_1, P_t g_1 \rangle - \epsilon/2$. 

Combining these two facts, we obtain $\Vert \mathbf{E}[g_1] - \mathbf{E}[\fh] \Vert_1 \le \epsilon$ and 
$\langle \fh, P_t \fh \rangle \ge \langle f, P_t f \rangle - \epsilon$. Setting $g = \fh$ concludes the proof.

} 
\subsection*{Connection between polynomials and tensors}
We give a brief description of the connection between symmetric tensors and polynomials under the Gaussian distribution. The interested reader may consult the book~\cite{Janson:97} for a detailed background. 
Let $\mathcal{H}$ denote the Hilbert space $\mathbb{R}^n$ and let $\mathcal{H}^{\otimes q}$ be used to denote the $q$-ary tensor product of $\mathcal{H}$. An element $f  \in \mathcal{H}^{\otimes q}$ is said to be symmetric if its invariant under any permutation $\sigma: [q] \rightarrow [q]$. Let $\mathcal{H}^{\odot q}$ denote the symmetric subspace of $\mathcal{H}^{\otimes q}$. 
A tensor $f \in \mathcal{H}^{\otimes q}$ is multilinear if $f(i_1, \ldots, i_q)=0$ for all diagonal elements $(i_1, \ldots, i_q)$ i.e. whenever there exists $1 \le j < \ell \le q$ such that $i_j = i_\ell$.  We now describe a map between the space $\mathcal{H}^{\odot q}$ and polynomials. Recall that $H_q$ denotes
a Hermite polynomial.
\begin{definition}
The iterated Ito integral $I_q$ maps $\mathcal{H}^{\odot q}$ as follows: Let $h \in \mathcal{H}$ be a unit vector and note that  $h^{\otimes q} \in \mathcal{H}^{\odot q}$. Then, 
$I_q(h^{\otimes q}) = H_q( \langle h , x \rangle)$ where $x \in \mathbb{R}^n$. The map $I_q$ is extended linearly to $\mathcal{H}^{\odot q}$. 
\end{definition}
For the convenience of the reader, here we describe a basis of the space $\mathcal{H}^{\odot q}$. Consider an unordered multiset $S = \{s_1, \ldots, s_q\} \subseteq [n]$ of size $q$. Define $\Phi_S \in \mathcal{H}^{\odot q}$ is defined as
$$
\Phi_S = \sum_{\sigma \in \mathsf{Sym}_q} e_{s_{\sigma(1)}} \otimes \ldots \otimes e_{s_{\sigma(q)}}.
$$
The following construction of basis for $\mathcal{H}^{\odot q}$ is obvious and is stated without proof.
\begin{proposition}\label{prop:basis}
Let $\mathcal{S}$ be the set of unordered multisets of $[n]$ of size $q$. Then, $\{\Phi_S \}_{S \in \mathcal{S}}$ forms an orthogonal basis of $\mathcal{H}^{\odot q}$. An important feature of this basis is that any element in the basis is dependent on at most $q$ of the standard basis elements of $\mathbb{R}^n$. 
\end{proposition}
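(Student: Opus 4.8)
The plan is to derive all three assertions directly from the definition of $\Phi_S$, using the standard fact that the product tensors $\{e_{i_1}\otimes\cdots\otimes e_{i_q} : (i_1,\ldots,i_q)\in[n]^q\}$ form an orthonormal basis of $\mathcal{H}^{\otimes q}$ for the inner product induced by the one on $\mathcal{H}=\mathbb{R}^n$. First I would check that $\Phi_S\in\mathcal{H}^{\odot q}$: for any $\tau\in\mathsf{Sym}_q$, permuting the $q$ tensor factors of $\Phi_S$ by $\tau$ merely re-indexes the defining sum via $\sigma\mapsto\tau\circ\sigma$, so $\Phi_S$ is fixed by $\mathsf{Sym}_q$ and is therefore symmetric.

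Next I would record the ``orbit form'' of $\Phi_S$. Write the distinct values occurring in the multiset $S$ with multiplicities $m_1,\dots,m_r$, so $m_1+\cdots+m_r=q$, and let $O_S\subseteq[n]^q$ be the set of tuples whose underlying multiset is $S$. Collecting equal terms in the definition gives
\[
  \Phi_S = (m_1!\cdots m_r!)\sum_{(i_1,\dots,i_q)\in O_S} e_{i_1}\otimes\cdots\otimes e_{i_q}.
\]
Since distinct multisets $S\ne S'$ yield disjoint orbits $O_S\cap O_{S'}=\emptyset$, the tensors $\Phi_S$ and $\Phi_{S'}$ are supported on disjoint subsets of an orthonormal basis of $\mathcal{H}^{\otimes q}$; hence they are orthogonal, each $\Phi_S\neq 0$, and the family $\{\Phi_S\}_{S\in\mathcal{S}}$ is in particular linearly independent.

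To see that the $\Phi_S$ span $\mathcal{H}^{\odot q}$, I would take an arbitrary $T\in\mathcal{H}^{\odot q}$ and expand $T=\sum_{(i_1,\dots,i_q)\in[n]^q} c_{i_1,\dots,i_q}\, e_{i_1}\otimes\cdots\otimes e_{i_q}$. Symmetry of $T$ forces $c_{i_1,\dots,i_q}=c_{i_{\sigma(1)},\dots,i_{\sigma(q)}}$ for every $\sigma\in\mathsf{Sym}_q$, so the coefficient depends only on the underlying multiset $S$; grouping the expansion by $S$ and comparing with the orbit form above writes $T$ as an explicit linear combination of the $\Phi_S$. This establishes that $\{\Phi_S\}_{S\in\mathcal{S}}$ is an orthogonal basis of $\mathcal{H}^{\odot q}$. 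Finally, the remaining claim is immediate: only the vectors $e_s$ with $s\in S$ appear in $\Phi_S$, and the number of distinct such $s$ is at most $|S|=q$, so $\Phi_S$ depends on at most $q$ of the standard basis vectors of $\mathbb{R}^n$.

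There is no substantive obstacle here — the only point needing a little care is the multiplicity factor $m_1!\cdots m_r!$ in the orbit form of $\Phi_S$, which is a routine piece of bookkeeping — and this is exactly why the proposition can fairly be stated without proof.
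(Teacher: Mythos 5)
Your proof is correct. The paper explicitly declares this proposition ``obvious'' and states it without proof, so there is nothing to compare against; the argument you supply — expressing $\Phi_S$ in its orbit form with multiplicity factor $m_1!\cdots m_r!$, using disjointness of orbits for orthogonality, and averaging coefficients over $\mathsf{Sym}_q$ for spanning — is exactly the standard derivation the authors were alluding to, and every step checks out.
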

A fundamental property of the map $I_q$ is that it is an isometry between the space of symmetric tensors and polynomials endowed with the standard normal measure (see~\cite{Janson:97} for a proof). 
\begin{proposition}\label{prop:4}
$\mathop{\mathbf{E}}_{x \sim \gamma_n} [I_q(f) \cdot I_p(g)]  = \delta_{p=q} \cdot \langle f,  g \rangle.$
\end{proposition}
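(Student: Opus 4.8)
The plan is to use bilinearity to reduce to rank-one symmetric tensors, on which $I_q$ is given in closed form by a single Hermite polynomial, and then to reduce to the classical one-dimensional Hermite orthogonality relation for correlated Gaussians. Both sides of the asserted identity are bilinear in $(f,g)\in\mathcal{H}^{\odot q}\times\mathcal{H}^{\odot p}$ — the left side because $I_q$ is linear and expectation is linear, the right side because $\langle\cdot,\cdot\rangle$ is a bilinear form — so it suffices to verify it when $f$ and $g$ range over spanning sets of the two symmetric subspaces. I would take $f = h^{\otimes q}$ and $g = (h')^{\otimes p}$ for unit vectors $h,h'\in\mathcal{H}$; the polarization identity guarantees that such powers span $\mathcal{H}^{\odot q}$ (and that the linear extension of $I_q$ in the definition is consistent — see the final remark). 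For these $f,g$ the definition of $I_q$ gives $I_q(f)=H_q(\langle h,x\rangle)$ and $I_p(g)=H_p(\langle h',x\rangle)$, while $\langle f,g\rangle=\langle h,h'\rangle^q$ when $p=q$, so the proposition reduces to showing
\[
\mathop{\mathbf{E}}_{x\sim\gamma_n}\big[H_q(\langle h,x\rangle)\,H_p(\langle h',x\rangle)\big]=\delta_{p=q}\,\langle h,h'\rangle^q .
\]

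Now set $\rho=\langle h,h'\rangle\in[-1,1]$ and observe that, under $x\sim\gamma_n$, the pair $(\langle h,x\rangle,\langle h',x\rangle)$ is a centered jointly Gaussian vector with unit variances and covariance $\rho$; thus the displayed identity is exactly the one-dimensional statement $\mathbf{E}[H_p(G)H_q(G')]=\delta_{p=q}\rho^q$ for $\rho$-correlated standard Gaussians $(G,G')$. I would derive this from the generating-function identity $\sum_{q\ge 0}\frac{s^q}{\sqrt{q!}}H_q(y)=e^{sy-s^2/2}$ (immediate from the Rodrigues-type formula defining $H_q$): since $sG+tG'$ is Gaussian with variance $s^2+2\rho st+t^2$,
\[
\mathbf{E}\big[e^{sG-s^2/2}e^{tG'-t^2/2}\big]=e^{-s^2/2-t^2/2}\,e^{(s^2+2\rho st+t^2)/2}=e^{\rho st},
\]
and matching the coefficient of $s^p t^q$ in $\sum_{p,q}\frac{s^p t^q}{\sqrt{p!q!}}\mathbf{E}[H_p(G)H_q(G')]$ with that in $\sum_q\frac{(\rho st)^q}{q!}$ gives $\mathbf{E}[H_p(G)H_q(G')]=\delta_{p=q}\rho^q$. (Alternatively, this follows from the earlier proposition $P_tH_S=e^{-t|S|}H_S$ with $e^{-t}=\rho$, since the left side equals $\mathbf{E}[H_p\cdot P_tH_q]$.) Combining this with the previous paragraph and then extending bilinearly yields the proposition for all $f,g$.

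I do not expect a genuine obstacle: the argument is a short computation. The one point needing a line of care is the well-definedness of the linear extension of $I_q$ off the rank-one tensors — that a symmetric tensor with two polarization representations is sent to the same polynomial. The cleanest fix is to note that $I_q$ is already determined on the explicit orthogonal basis $\{\Phi_S\}$ of Proposition~\ref{prop:basis}, where it sends $\Phi_S$ to a scalar multiple of a product of Hermite polynomials in the coordinates occurring in $S$; one checks this agrees with $I_q(h^{\otimes q})=H_q(\langle h,x\rangle)$, and working in this basis also makes the final bilinear extension and the diagonal case $\langle\Phi_S,\Phi_S\rangle$ entirely transparent.
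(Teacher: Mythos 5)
Your proposal is correct. The paper itself offers no proof of this proposition --- it simply cites Janson's book --- so any complete argument is ``different from the paper's''; yours is the standard one and it goes through. The three ingredients all check out: (i) bilinearity reduces to rank-one symmetric tensors $h^{\otimes q}$, on which $I_q(h^{\otimes q})=H_q(\langle h,x\rangle)$ is the definition and $\langle h^{\otimes q},(h')^{\otimes q}\rangle=\langle h,h'\rangle^q$ under the Frobenius inner product used throughout the paper; (ii) $(\langle h,x\rangle,\langle h',x\rangle)$ is a $\rho$-correlated standard Gaussian pair with $\rho=\langle h,h'\rangle$; and (iii) with the paper's normalization $H_q=He_q/\sqrt{q!}$ the generating function is indeed $\sum_q \frac{s^q}{\sqrt{q!}}H_q(y)=e^{sy-s^2/2}$, and the coefficient comparison gives $\mathbf{E}[H_p(G)H_q(G')]=\delta_{p=q}\,\rho^q$ (note that this normalization is exactly what absorbs the $q!$ that appears in the unnormalized Wiener--It\^o isometry, so the constants match). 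The two points you flag as needing care are the right ones: well-definedness of the linear extension of $I_q$ (handled either by polarization or, as you suggest, by working on the explicit basis $\{\Phi_S\}$ of Proposition~\ref{prop:basis}) and the interchange of expectation with the power series (routine, by Gaussian integrability of $e^{sG+tG'}$). Your parenthetical alternative --- deriving the one-dimensional identity from $P_tH_q=e^{-tq}H_q$ with $e^{-t}=\rho$ together with one-dimensional orthonormality --- is also valid and is arguably closer in spirit to the tools the paper has already set up.
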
 
We will  refer to the range of $I_q$ as the Wiener chaos $\mathcal{W}^{q}$. Based on Proposition~\ref{prop:4}, $I_q$ is a bijective map from $H^{\odot q}$ to $\mathcal{W}^q$. 
On the other hand, it is easy to show that any polynomial $p: \mathbb{R}^n \rightarrow \mathbb{R}$ of degree at most $d$ can be expressed as 
\begin{equation}\label{eq:tensor-decomp}
p = \sum_{q=0}^d I_q(f_q) \ \ \textrm{where}  \ \ f_q \in \mathcal{H}^{\odot q}. 
\end{equation}
{While the above theory  does not really require us to focus on the class of multilinear polynomials i.e. every monomial has degree at most $1$ in any variable, 
many of the results we will use from \cite{DS14} are stated in that paper just for multilinear polynomials. So, at some places, we restrict our attention to just multilinear polynomials. multilinear polynomials i.e. every monomial has degree at most $1$ in any variable.
 It is easy to show that if $f \in \mathcal{H}^{\odot q}$ is a multilinear tensor, then $I_q(f)$  is a multilinear polynomial. Conversely, if $p: \mathbb{R}^n \rightarrow \mathbb{R}$ is a multilinear polynomial, 
 then the tensors $\{f_q\}_{0 \le q\le d}$ appearing in the decomposition of $p$ in (\ref{eq:tensor-decomp}) are all multilinear. }
~\\
\textbf{Ito multiplication formula: } We now state the formula for product of two polynomials $I_p(f)$ and $I_q(g)$ in terms of $f$ and $g$. For a reference, see Nourdin's  survey~\cite{Nourdin2013}. To state the formula, for $r \le p \wedge q$,  let us define the contraction product of two tensors $f$ and $g$ (denoted by $f \ {\otimes}_r \ g \in \mathcal{H}^{p+q-2r}$),
\[
f \ {\otimes}_r \ g (t_1, \ldots, t_{p+q-2r})= \sum_{1 \le z_1, \ldots, z_r \le n} f(t_1, \ldots, t_{p-r}, z_{1}, \ldots, z_r) \cdot g(t_{p-r+1}, \ldots, t_{p+q-r}, z_{1}, \ldots, z_r).
\]
The symmetrized contraction product, denoted by $f \ \widetilde{{\otimes}_r} \ g \in \mathcal{H}^{p+q-2r}$ is defined as the symmetrization of the tensor $f \ {\otimes}_r \ g$.  \begin{proposition}[Ito's multiplication formula]\label{prop:Ito}
\[
I_p(f) \cdot I_q(g) = \sum_{r=0}^{p \wedge q} r! \cdot \binom{p}{r} \cdot \binom{q}{r}  \cdot \frac{\sqrt{(p+q-2r)!}}{\sqrt{p! \cdot q!}}\cdot I_{p+q-2r} (f \ \widetilde{{\otimes}_r} \ g). 
\]
\end{proposition}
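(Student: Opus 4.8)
The plan is to reduce to rank-one symmetric tensors and then use the generating-function identity for Hermite polynomials. First I would observe that both sides of the claimed formula are bilinear in $(f,g)\in\mathcal{H}^{\odot p}\times\mathcal{H}^{\odot q}$: the left side because $I_p,I_q$ are linear and polynomial multiplication is bilinear, the right side because the contraction $\otimes_r$ is bilinear in the tensor entries, symmetrization is linear, and $I_{p+q-2r}$ is linear. Since the rank-one tensors $\{h^{\otimes q}:h\in\mathcal{H}\}$ span $\mathcal{H}^{\odot q}$ (polarization), it suffices to prove the identity for $f=h^{\otimes p}$ and $g=e^{\otimes q}$.

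Next I would record the one-variable generating function. From $H_q(y)=\tfrac{(-1)^q}{\sqrt{q!}}e^{y^2/2}\tfrac{d^q}{dy^q}e^{-y^2/2}$ and the Taylor expansion of $e^{-(y-s)^2/2}=e^{-y^2/2}e^{sy-s^2/2}$ one gets $\sum_{q\ge0}\tfrac{s^q}{\sqrt{q!}}H_q(y)=e^{sy-s^2/2}$. Combined with $I_q(h^{\otimes q})=\|h\|^q H_q(\langle h/\|h\|,x\rangle)$ this gives, for every $h\in\mathcal{H}$ and scalar $s$,
\[
\Phi_h(s):=\sum_{q\ge0}\frac{s^q}{\sqrt{q!}}\,I_q(h^{\otimes q})=e^{s\langle h,x\rangle-s^2\|h\|^2/2},
\]
an honest function of $x$ which is entire in $s$ (with values in $L^2(\gamma_n)$), so that extracting the coefficient of $s^q$ is legitimate and recovers $\tfrac{1}{\sqrt{q!}}I_q(h^{\otimes q})$.

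Then I would compute $\Phi_h(s)\Phi_e(t)$ in two ways. On one hand it is $\sum_{p,q}\tfrac{s^pt^q}{\sqrt{p!\,q!}}I_p(h^{\otimes p})I_q(e^{\otimes q})$. On the other hand, multiplying the exponentials and using $\|sh+te\|^2=s^2\|h\|^2+t^2\|e\|^2+2st\langle h,e\rangle$ gives $\Phi_h(s)\Phi_e(t)=e^{st\langle h,e\rangle}\,\Phi_{sh+te}(1)$; now expand $e^{st\langle h,e\rangle}=\sum_r\tfrac{(st)^r\langle h,e\rangle^r}{r!}$ and $\Phi_{sh+te}(1)=\sum_m\tfrac{1}{\sqrt{m!}}I_m\big((sh+te)^{\otimes m}\big)$, and use the multinomial expansion, which (against the averaged-symmetrization convention defining $\widetilde\otimes$) reads $(sh+te)^{\otimes m}=\sum_{a+b=m}\binom{m}{a}s^at^b\,(h^{\otimes a}\,\widetilde\otimes\,e^{\otimes b})$. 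Matching the coefficient of $s^pt^q$ (so $a=p-r$, $b=q-r$, $m=p+q-2r$) and using the rank-one contraction identity $\langle h,e\rangle^r(h^{\otimes(p-r)}\widetilde\otimes e^{\otimes(q-r)})=h^{\otimes p}\,\widetilde{{\otimes}_r}\,e^{\otimes q}$ leaves exactly the asserted formula, once one checks the elementary identity $\tfrac{\sqrt{p!\,q!}}{r!\sqrt{(p+q-2r)!}}\binom{p+q-2r}{p-r}=r!\binom{p}{r}\binom{q}{r}\tfrac{\sqrt{(p+q-2r)!}}{\sqrt{p!\,q!}}$, both sides equalling $\tfrac{\sqrt{p!\,q!\,(p+q-2r)!}}{r!\,(p-r)!\,(q-r)!}$.

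The main obstacle here is not conceptual but bookkeeping: one must carry the $\sqrt{q!}$ normalization of this paper's Hermite polynomials consistently, count the $\binom{m}{a}$ rearrangements in $(sh+te)^{\otimes m}$ correctly relative to the averaged symmetrization used to define $\widetilde\otimes$ and $\widetilde{{\otimes}_r}$, and check the closing factorial identity. A minor auxiliary point is justifying the coefficient comparison in $(s,t)$, which is immediate once $\Phi_h(s)$ is recognized as an entire $L^2(\gamma_n)$-valued function whose Taylor coefficients are the $\tfrac{1}{\sqrt{q!}}I_q(h^{\otimes q})$.
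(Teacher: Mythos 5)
The paper does not actually prove Proposition~\ref{prop:Ito}: it is stated as a known result with a pointer to Nourdin's survey, so there is no internal proof to compare against. Your argument is a correct, self-contained proof, and it is one of the two standard routes (the other being induction on $q$ via the one-step product formula $I_p(f)\cdot I_1(h)$). The reduction to rank-one tensors by bilinearity and polarization is valid, the generating function $\Phi_h(s)=e^{s\langle h,x\rangle-s^2\|h\|^2/2}$ is the correct one for this paper's normalization $H_q = He_q/\sqrt{q!}$ (equivalently, $\E[I_q(f)I_q(g)]=\langle f,g\rangle$ as in Proposition~\ref{prop:4}, so $I_q=\tilde I_q/\sqrt{q!}$ relative to the unnormalized Wiener integrals, which is exactly what produces the $\sqrt{(p+q-2r)!}/\sqrt{p!\,q!}$ factor), and the closing factorial identity checks out: both sides equal $\sqrt{p!\,q!\,(p+q-2r)!}\,/\,(r!\,(p-r)!\,(q-r)!)$. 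Two points you flag deserve the care you give them: the multinomial expansion $(sh+te)^{\otimes m}=\sum_{a+b=m}\binom{m}{a}s^at^b\,(h^{\otimes a}\widetilde\otimes e^{\otimes b})$ is only correct for the \emph{averaged} symmetrization, which is indeed the convention forced by Fact~\ref{fact:contraction} ($\|f\widetilde\otimes_r g\|_F\le\|f\|_F\|g\|_F$ would fail for the summed version); and the coefficient comparison is legitimate because $\Phi_h(s)\Phi_e(t)$ is the exponential of a Gaussian, hence lies in every $L^p(\gamma_n)$ and is jointly entire in $(s,t)$, so the double series converges absolutely and term extraction is justified. What your approach buys over the paper's bare citation is a verification that the constants in the stated formula are consistent with this paper's specific normalizations of $H_q$, $I_q$, and $\widetilde\otimes_r$ — a nontrivial point, since the formula is often quoted for the unnormalized integrals.
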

We now list a basic fact about contraction products of tensors which shall be helpful later. To state these facts, observe that for $f \in \mathcal{H}^{\otimes i}$ and $g \in \mathcal{H}^{\otimes j}$,  the contraction product 
$f \otimes_r g$ can be viewed as a matrix multiplication between matrices $\mathcal{M}_f$ and $\mathcal{M}_g$ where $\mathcal{M}_f$ (resp. $\mathcal{M}_g$) is the matrix obtained by representing $f \in \mathbb{R}^{[n]^{i-r} \times [n]^{r}}$ (resp.  representing $g \in \mathbb{R}^{[n]^{r} \times [n]^{j-r}}$). 
The following trivial fact follows immediately. 
\begin{fact}~\label{fact:contraction}
For $f \in \mathcal{H}^{\otimes n_1}$ and $g \in \mathcal{H}^{\otimes n_2}$ and $0 \le r \le n_1 \wedge n_2$, $\Vert f \otimes_r g \Vert_F \le \Vert f \Vert_F \cdot \Vert g \Vert_F$. Consequently, $\Vert f \widetilde{\otimes}_r g \Vert_F \le \Vert f \Vert_F \cdot \Vert g \Vert_F$. 
\end{fact}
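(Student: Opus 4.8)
The plan is to reduce the first inequality to the submultiplicativity of the Frobenius norm under ordinary matrix multiplication, and then to deduce the second from the fact that symmetrization is a norm-non-increasing projection.

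First I would make the matrix reshaping indicated just before the statement precise. Write $m = n_1 + n_2 - 2r$. Reshape $f \in \mathcal{H}^{\otimes n_1}$ into a matrix $\mathcal{M}_f$ whose rows are indexed by the first $n_1 - r$ tensor coordinates of $f$ and whose columns are indexed by the last $r$; reshape $g \in \mathcal{H}^{\otimes n_2}$ into a matrix $\mathcal{M}_g$ whose rows are indexed by the last $r$ tensor coordinates of $g$ and whose columns are indexed by the first $n_2 - r$. Comparing with the definition of $f \otimes_r g$, one sees that the tensor $f \otimes_r g$, reshaped so that its first $n_1 - r$ coordinates index rows and its last $n_2 - r$ coordinates index columns, is exactly the matrix product $\mathcal{M}_f \mathcal{M}_g$. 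Since the Frobenius norm of a tensor is the square root of the sum of squares of its entries and is therefore unchanged by any such reshaping, we have $\Vert f \Vert_F = \Vert \mathcal{M}_f \Vert_F$, $\Vert g \Vert_F = \Vert \mathcal{M}_g \Vert_F$, and $\Vert f \otimes_r g \Vert_F = \Vert \mathcal{M}_f \mathcal{M}_g \Vert_F$.

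Next I would invoke the elementary bound $\Vert AB \Vert_F \le \Vert A \Vert_F \, \Vert B \Vert_F$, which for $A = (a_{ik})$ and $B = (b_{kj})$ is just Cauchy--Schwarz applied entrywise:
\[
  \Vert AB \Vert_F^2 = \sum_{i,j}\Big(\sum_k a_{ik} b_{kj}\Big)^2 \le \sum_{i,j}\Big(\sum_k a_{ik}^2\Big)\Big(\sum_k b_{kj}^2\Big) = \Big(\sum_{i,k} a_{ik}^2\Big)\Big(\sum_{k,j} b_{kj}^2\Big) = \Vert A \Vert_F^2 \, \Vert B \Vert_F^2.
\]
Combining this with the previous paragraph yields $\Vert f \otimes_r g \Vert_F \le \Vert f \Vert_F \, \Vert g \Vert_F$, which is the first assertion. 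For the second, recall that $f \, \widetilde{\otimes}_r \, g$ is the symmetrization of $f \otimes_r g \in \mathcal{H}^{\otimes m}$, i.e.\ the average of $f \otimes_r g$ over the $m!$ coordinate permutations. This averaging operator is self-adjoint (the permutation operators are orthogonal and the set of them is closed under inverses) and idempotent (it fixes symmetric tensors and maps everything into the symmetric subspace), hence it is the orthogonal projection of $\mathcal{H}^{\otimes m}$ onto $\mathcal{H}^{\odot m}$, and an orthogonal projection does not increase the Hilbert--Schmidt (Frobenius) norm. Therefore $\Vert f \, \widetilde{\otimes}_r \, g \Vert_F \le \Vert f \otimes_r g \Vert_F \le \Vert f \Vert_F \, \Vert g \Vert_F$, as claimed.

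There is no substantial obstacle here — the statement is, as the paper notes, immediate. The only points that call for any care are the index bookkeeping in the reshaping (tracking which tensor coordinates become rows and which become columns, so that the contraction really does align with matrix multiplication) and the normalization convention in the symmetrization: we use the averaged convention, under which symmetrization is a contraction and the stated inequality holds as written; with an unnormalized sum-over-permutations convention one would instead pick up a factor of $m!$, so the inequality as stated is consistent only with the averaged reading.
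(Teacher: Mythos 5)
Your proof is correct and follows exactly the route the paper intends: the paper states this fact without proof, remarking only that $f \otimes_r g$ is the matrix product $\mathcal{M}_f \mathcal{M}_g$ under the reshaping you describe, from which submultiplicativity of the Frobenius norm gives the first inequality and norm-non-increase of symmetrization (under the averaged convention, as you correctly flag) gives the second. Your index bookkeeping also matches the paper's definition of $\otimes_r$, which contracts the last $r$ indices of each tensor.
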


%
%

Before we proceed further, we will make a minor simplifying assumption, namely that the polynomials involved in defining $f$ in Theorem~\ref{thm:reduction} can be assumed to be multilinear. As we have said earlier, this is because the results in \cite{DS14} are stated for multilinear polynomials (though it should be easily possible to carry it over to non-multilinear polynomials). In order to show this, we will use the following simple lemma. 
\begin{lemma}\label{lem:multilinear} 
Let $f: \mathbb{R}^n \rightarrow [k]$ be a degree-$d$, $(d,\epsilon)$-balanced PTF. Then, for any $\epsilon>0$, there exists a degree-$d$, $(d,2\epsilon)$-balanced multilinear PTF $f_{\mathsf{multi}}: \mathbb{R}^\ell \rightarrow \mathbb{R}$ such that 
\begin{itemize}
\item $\Vert \mathbf{E}_{x } [f(x) ] - \mathbf{E}_{x } [f_{\mathsf{multi}}(x)  ] \Vert_1\le \epsilon$, 
\item $| \mathbf{E}_{x } [ \langle f, P_t  f \rangle ] - \mathbf{E}_{x } [\langle f_{\mathsf{multi}}, P_t f_{\mathsf{multi}}\rangle ] | \le \epsilon$.
\item $\Pr_{x } [x \in \mathsf{Collision}(f_{\mathsf{multi}})] \le \Pr_{x } [x \in \mathsf{Collision}(f)]  + \epsilon$.
\end{itemize}
Here $\ell = n \cdot (k^2/d \epsilon)^{3d} \cdot d^2$. 
\end{lemma}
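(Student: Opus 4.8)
The plan is to convert each polynomial $p^{(i)}$ defining the PTF $f$ into a multilinear polynomial by the standard ``blow-up'' substitution: replace each variable $x_j$ by $\frac{1}{\sqrt{M}}(x_{j,1} + \cdots + x_{j,M})$ for a large integer $M = M(d,k,\epsilon)$, working now over $\R^\ell$ with $\ell = nM$. This substitution preserves the distribution of each individual polynomial exactly (since $\frac1{\sqrt M}\sum_{m} x_{j,m}$ is again a standard Gaussian), and more importantly preserves the joint distribution of $(p^{(1)},\dots,p^{(k)})$ under $\gamma_\ell$, hence preserves $\E[f]$, $\mathsf{Stab}_t$, and $\Pr[x \in \mathsf{Collision}(f)]$ \emph{exactly} — but the resulting polynomials are not yet multilinear, because a monomial $x_j^2$ becomes $\frac1M(\sum_m x_{j,m})^2$, which still has ``diagonal'' terms $x_{j,m}^2$. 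The second step is therefore to delete, from the Hermite (Wiener chaos) expansion of each blown-up polynomial, all the non-multilinear basis tensors $\Phi_S$ with $S$ having a repeated index; call the result $q^{(i)}$ and set $f_{\mathsf{multi}} = \mathsf{PTF}(q^{(1)},\dots,q^{(k)})$. The key point is that after the blow-up, each non-multilinear monomial has its Hermite weight spread across $M$ copies, so the total weight on non-multilinear tensors is $O(1/M)$ times the original; choosing $M$ a large enough polynomial in $k,d,1/\epsilon$ makes $\|p^{(i)}_{\mathsf{blown}} - q^{(i)}\|_2^2 \le (\epsilon/(k^2 d))^{O(d)}$.

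With that $L^2$ (equivalently, Frobenius-norm-on-tensors) bound in hand, the three conclusions follow from the anticoncentration of low-degree polynomials. First I would invoke Theorem~\ref{thm:hyper} together with the $(d,\epsilon)$-balancedness (so each $p^{(i)}$ has variance $1$ and bounded mean, hence is ``spread out'' at scale $\mathrm{poly}(\epsilon/k)$ on a $1-\epsilon/k$ fraction of space) to argue that changing $p^{(i)}$ by a polynomial of $L^2$-norm $\kappa$ changes the sign of $p^{(i)}(x)$ on only an $O(\kappa^{1/d}\cdot\mathrm{polylog})$-fraction of points — this is the standard combination of the Carbery–Wright-type anticoncentration bound for degree-$d$ Gaussian polynomials with an $L^2$ tail bound (degree-$d$ polynomials of small $L^2$ norm are small except on a tiny set, by Theorem~\ref{thm:hyper}). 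Summing over $i \in [k]$, the set of $x$ where any $\mathrm{sign}(q^{(i)}(x)) \ne \mathrm{sign}(p^{(i)}_{\mathsf{blown}}(x))$ has measure $\le \epsilon$. Off this bad set, $f_{\mathsf{multi}}(x) = f(x)$ (in the blown-up coordinates) and $x \in \mathsf{Collision}(f_{\mathsf{multi}})$ iff $x \in \mathsf{Collision}(f)$; so $\|\E[f_{\mathsf{multi}}] - \E[f]\|_1 \le 2\epsilon$-ish, $|\mathsf{Stab}_t(f) - \mathsf{Stab}_t(f_{\mathsf{multi}})| \le O(\epsilon)$ (since $f, f_{\mathsf{multi}}$ are $\Delta_k$-valued and $\|P_t\| \le 1$), and the collision bound holds; one then reparametrizes $\epsilon$ to absorb the constants and checks $(d,2\epsilon)$-balancedness, which is immediate since the blow-up preserves means and variances and deleting the small-norm non-multilinear part perturbs $\mathsf{Var}(q^{(i)})$ by $O(1/M)$ (so rescale to variance $1$, changing the mean bound negligibly). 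Finally, $\ell = nM$ with $M$ the chosen polynomial in $k,d,1/\epsilon$ matches the claimed $\ell = n\cdot(k^2/(d\epsilon))^{3d}\cdot d^2$.

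The main obstacle I anticipate is making the anticoncentration step fully quantitative and clean: I need a bound of the form ``$\Pr_x[|p(x)| \le \tau] \le C_d\,\tau^{1/d}$'' for a variance-$1$ degree-$d$ Gaussian polynomial, which is exactly Carbery–Wright, combined with Theorem~\ref{thm:hyper} to control $\Pr_x[|p^{(i)}_{\mathsf{blown}}(x) - q^{(i)}(x)| > \tau]$ — the interplay between these two (choosing $\tau$ to balance them, and propagating through all $k$ polynomials and through the definition of $\mathsf{Collision}$) is where all the exponent bookkeeping lives, and it is what forces $M$ (hence $\ell$) to be as large as $(k^2/(d\epsilon))^{\Theta(d)}$. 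Everything else — exactness of the blow-up, the $O(1/M)$ bound on non-multilinear weight, and the fact that multilinearity of the tensor implies multilinearity of $I_q$ of it — is routine given the tensor/Hermite dictionary already set up in the preliminaries.
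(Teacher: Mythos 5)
Your proposal is correct and follows essentially the same route as the paper: the blow-up substitution $x_j \mapsto \frac{1}{\sqrt{T}}\sum_m x_{j,m}$ (which preserves the joint law exactly), deletion of the diagonal terms in the Wiener-chaos expansion with an $O(d^2/T)$ variance bound on the discarded part, and then the Carbery--Wright-plus-hypercontractivity argument (packaged in the paper as Theorem~\ref{thm:combine-hyper}) to convert the $L^2$ bound into a sign-agreement bound, followed by a union bound over the $k$ polynomials and a rescaling to restore $(d,2\epsilon)$-balancedness. The exponent bookkeeping you flag as the main obstacle is exactly what the black-boxed Theorem~\ref{thm:combine-hyper} handles, yielding the stated $\ell = n\cdot(k^2/(d\epsilon))^{3d}\cdot d^2$.
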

~\\ We defer the proof of this lemma to Appendix~\ref{app:multilinear}. However, note that by applying Lemma~\ref{lem:multilinear}, we can pretend that the PTF $f$ in Theorem~\ref{thm:reduction} is multilinear. 
Next, we introduce the notion of eigenregularity of tensors and the associated polynomials. 
\subsubsection*{Eigenvalues of tensors and polynomials}

We will now define the notion of eigenvalues of tensors and the corresponding polynomials. 
\begin{definition}
Let $f \in \mathcal{H}^{\odot d}$ and $d \ge 2$. Consider a partition of $[d]$ into $S, \overline{S}$. Then, 
$$
\lambda_{S, \overline{S}}(f) = \max_{g \in \mathcal{H}^{S}, h \in \mathcal{H}^{\overline{S}}} \frac{\langle f, g \otimes h \rangle}{\Vert g \Vert_F \cdot \Vert h \Vert_F},
$$
where $\Vert g \Vert_F$ denotes the Frobenius norm of $g$. We define $\lambda_{\max}(f)$ as
$$
\lambda_{\max}(f) = \max_{S: 0< |S|< d} \lambda_{S, \overline{S}} (f). 
$$
\end{definition}

\begin{definition}\label{def:eigen-tensor}
Let $p: \mathbb{R}^n \rightarrow \mathbb{R}$ be a multilinear polynomial of degree $d>0$  and let $(A_0, \ldots, A_d) \in \mathcal{H}^{\odot 0} \times  \ldots \times \mathcal{H}^{\odot d}$ be the tensor associated with $p$. Then, 
$$
\lambda_{\max}(p)  = \max_{1 < i \le d} \lambda_{\max}(A_i). 
$$
Further, we say that $p$ is $\delta$-eigenregular if $\frac{\lambda_{\max}(p)}{\sqrt{\Var(p)}} \le \delta$. 
\end{definition}
Note that we are not considering $\lambda_{\max}(A_1)$ in the definition of eigenregularity. 
From the fact that definition of eigenregularity is invariant under unitary transformation of variables, we have the following important fact.  
\begin{fact}
If $p(x)$ is $\delta$-eigenregular, then for any $\rho>0$,  $p(\rho \cdot x + \sqrt{1-\rho^2} y)$ is also $\delta$-eigenregular. 
\end{fact}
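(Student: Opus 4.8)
The plan is to realize the substitution $(x,y)\mapsto \rho x+\sqrt{1-\rho^2}\,y$ as a linear map $L\colon \R^{2n}\to\R^n$ and to track how the Wiener chaos (tensor) decomposition of $p$ transforms under precomposition with $L$. Write $L=[\,\rho I_n \mid \sqrt{1-\rho^2}\,I_n\,]$, so that the crucial identity $LL^{\top}=I_n$ holds (equivalently $\rho^2+(1-\rho^2)=1$); thus $L^{\top}$ is a linear \emph{isometry} of $\R^n$ into $\R^{2n}$ and $L$ has operator norm $1$. Note also that $L$ pushes $\gamma_{2n}$ forward to $\gamma_n$, and that $p\circ L$ is again multilinear (the $i$-th coordinate of $L(x,y)$ involves only $x_i$ and $y_i$, so a multilinear monomial expands into a sum of multilinear monomials in the $2n$ variables), so Definition~\ref{def:eigen-tensor} applies to it.

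First I would record the action of $L$ on iterated Ito integrals. For a unit vector $h\in\R^n$ we have $\langle L^{\top}h,L^{\top}h\rangle = h^{\top}LL^{\top}h=1$, so $L^{\top}h$ is again a unit vector, and hence
\[
  I_q(h^{\otimes q})\circ L \;=\; H_q(\langle h, L(\cdot)\rangle)\;=\;H_q(\langle L^{\top}h,\cdot\rangle)\;=\;I_q\big((L^{\top}h)^{\otimes q}\big)\;=\;I_q\big((L^{\top})^{\otimes q}h^{\otimes q}\big).
\]
Extending linearly, if $p=\sum_{q=0}^d I_q(A_q)$ is the decomposition~(\ref{eq:tensor-decomp}) of $p$, then $p\circ L=\sum_{q=0}^d I_q(B_q)$ with $B_q=(L^{\top})^{\otimes q}A_q\in(\R^{2n})^{\odot q}$; since the right-hand side already lies in the correct Wiener chaoses, this \emph{is} the tensor decomposition of $p\circ L$, and in particular no chaoses are mixed (so excluding $A_1$ in the definition of $\lambda_{\max}$ causes no trouble).

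Next I would bound the two quantities in the definition of eigenregularity. As $L^{\top}$ is an isometry, $(L^{\top})^{\otimes q}$ is an isometry, so $\Vert B_q\Vert_F=\Vert A_q\Vert_F$; summing over $q\ge 1$ and using Proposition~\ref{prop:4} gives $\Var(p\circ L)=\Var(p)$. For the eigenvalues, fix $2\le q\le d$ and a partition $[q]=S\sqcup\overline S$ with $0<|S|<q$; for tensors $g,h$ over $\R^{2n}$ on the index sets $S,\overline S$,
\[
  \langle B_q,\,g\otimes h\rangle \;=\;\langle (L^{\top})^{\otimes q}A_q,\,g\otimes h\rangle\;=\;\big\langle A_q,\,(L^{\otimes|S|}g)\otimes(L^{\otimes|\overline S|}h)\big\rangle\;\le\;\lambda_{S,\overline S}(A_q)\,\Vert L^{\otimes|S|}g\Vert_F\,\Vert L^{\otimes|\overline S|}h\Vert_F,
\]
and since $\Vert L\Vert_{\mathrm{op}}=1$ the last product is at most $\Vert g\Vert_F\,\Vert h\Vert_F$. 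Hence $\lambda_{S,\overline S}(B_q)\le\lambda_{S,\overline S}(A_q)$ for every such $S$, so $\lambda_{\max}(B_q)\le\lambda_{\max}(A_q)$ and therefore $\lambda_{\max}(p\circ L)\le\lambda_{\max}(p)$. Combining the two bounds, $\lambda_{\max}(p\circ L)/\sqrt{\Var(p\circ L)}\le\lambda_{\max}(p)/\sqrt{\Var(p)}\le\delta$, which is the claim.

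I do not expect a serious obstacle: the one point needing care is the identity $I_q(A)\circ L=I_q((L^{\top})^{\otimes q}A)$, i.e. that precomposition with a co-isometry acts diagonally on the chaos decomposition, and this is exactly where $LL^{\top}=I_n$ is used. An essentially equivalent alternative is to first view $p$ as a polynomial on $\R^{2n}$ ignoring the $y$-block (which alters neither $\Var$ nor $\lambda_{\max}$, the tensors merely being embedded isometrically), then observe that $(x,y)\mapsto(\rho x+\sqrt{1-\rho^2}y,\,-\sqrt{1-\rho^2}\,x+\rho y)$ is an orthogonal transformation of $\R^{2n}$ whose first block is $L$, and finally invoke the already-stated invariance of eigenregularity under unitary changes of variable.
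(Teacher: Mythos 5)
Your proof is correct. The paper does not actually give a proof of this fact: it merely appends the one-line remark that eigenregularity is invariant under unitary changes of variable, leaving it to the reader to notice that the non-square map $L = [\,\rho I_n \mid \sqrt{1-\rho^2}\,I_n\,]$ is not itself unitary, so one must first embed $p$ into $\R^{2n}$ and then complete $L$ to the orthogonal rotation $(x,y)\mapsto(\rho x+\sqrt{1-\rho^2}\,y,\ -\sqrt{1-\rho^2}\,x+\rho y)$ — which is exactly the ``essentially equivalent alternative'' you sketch at the end. Your primary argument is a genuinely different, self-contained route: rather than quoting unitary invariance, you push the Wiener chaos decomposition directly through the co-isometry $L$ via $I_q(A_q)\circ L=I_q((L^{\top})^{\otimes q}A_q)$, and then observe that $(L^{\top})^{\otimes q}$ preserves Frobenius norms (hence $\Var$) while contraction against $L^{\otimes q}$ is norm-nonincreasing (hence $\lambda_{\max}$ does not grow). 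This buys you a transparent, quantitative proof without needing the auxiliary padding-and-rotation construction, and it makes explicit that the conclusion $\lambda_{\max}(p\circ L)\le\lambda_{\max}(p)$ actually holds for any co-isometry $L$, not just the specific Ornstein-Uhlenbeck coupling. You also correctly note the two side issues the paper elides — that $p\circ L$ remains multilinear, and that no chaos levels mix — both of which are needed for Definition~\ref{def:eigen-tensor} to apply. The only (harmless) caveat is that the statement ``for any $\rho>0$'' should really read $\rho\in(0,1]$ so that $\sqrt{1-\rho^2}$ is real; your $LL^{\top}=I_n$ identity implicitly assumes this.
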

The next fact states that contraction product with eigenregular tensor is significantly contractive. 
\begin{fact}~\label{fact:contraction1}
Let $f \in \mathcal{H}^{\odot d}$  such that  $\lambda_{\max} (f) \le \kappa$. Let $g \in \mathcal{H}^{\otimes d'}$ and $0< r \le \min \{ d-1,d' \}$.  Then, 
$\Vert f \otimes_r g \Vert_F \le \kappa$. 
\end{fact}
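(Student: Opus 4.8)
The plan is to reuse the matrix-multiplication picture of contraction products that already underlies Fact~\ref{fact:contraction}, but to bound one of the two matrix factors in operator norm rather than in Frobenius norm. First I would flatten $f\in\mathcal H^{\odot d}$ into a matrix $M_f$ by taking the last $r$ of its $d$ indices as a single ``column'' index and the remaining $d-r$ indices as a ``row'' index, so that $M_f\in\mathbb{R}^{[n]^{d-r}\times[n]^{r}}$. Writing $S$ for the set of those last $r$ positions and $\overline S$ for its complement, one checks directly from the definitions that $\langle f,\, g\otimes h\rangle = h^{\top} M_f\, g$ whenever $g\in\mathcal H^{S}$ and $h\in\mathcal H^{\overline S}$, so the largest singular value of $M_f$ is exactly $\lambda_{S,\overline S}(f)$. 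Since $1\le r\le d-1$ we have $0<|S|<d$ (in particular $d\ge 2$, so that $\lambda_{\max}(f)$ is defined), and hence the operator norm of $M_f$ is at most $\lambda_{S,\overline S}(f)\le\lambda_{\max}(f)\le\kappa$.

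Next I would flatten $g\in\mathcal H^{\otimes d'}$ into a matrix $M_g$ by taking its last $r$ indices (the ones being contracted against $f$) as a ``column'' index and its first $d'-r$ indices as a ``row'' index, so $M_g\in\mathbb{R}^{[n]^{d'-r}\times[n]^{r}}$; reshaping leaves the Frobenius norm unchanged, so $\Vert M_g\Vert_F=\Vert g\Vert_F$. Unwinding the definition of $\otimes_r$ then shows that $f\otimes_r g$, flattened with the free indices of $f$ as rows and the free indices of $g$ as columns, is precisely the matrix product $M_f M_g^{\top}$. Therefore, using the submultiplicativity bound $\Vert AB\Vert_F\le\Vert A\Vert_{\mathrm{op}}\Vert B\Vert_F$,
\[
\Vert f\otimes_r g\Vert_F=\Vert M_f M_g^{\top}\Vert_F\le\Vert M_f\Vert_{\mathrm{op}}\,\Vert M_g^{\top}\Vert_F=\Vert M_f\Vert_{\mathrm{op}}\,\Vert g\Vert_F\le\kappa\,\Vert g\Vert_F,
\]
which is the desired inequality (the form displayed in the Fact is the case $\Vert g\Vert_F\le 1$).

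I do not expect a genuine obstacle here: the content is just the observation that $\delta$-eigenregularity is exactly a bound on the operator norm of every nontrivial flattening of the defining tensor, together with the elementary fact that left-multiplication by an operator-norm-$\le\kappa$ matrix contracts Frobenius norm by a factor $\kappa$. The only point needing care is the index bookkeeping: one must match the $r$ contracted slots of $f$ to a flattening with block sizes $(d-r,r)$ that is covered by the range $0<|S|<d$ appearing in the definition of $\lambda_{\max}$, and place the corresponding $r$ slots of $g$ in the column block so that $\otimes_r$ genuinely becomes $M_f M_g^{\top}$. The symmetry of $f$ is convenient for identifying these flattenings but is not strictly needed, since $\lambda_{\max}(f)$ already maximizes over all admissible partitions of $[d]$.
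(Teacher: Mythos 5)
Your proposal is correct and is essentially the paper's own argument: the paper also flattens $f$ and $g$ into matrices with the $r$ contracted slots as one block, identifies $\lambda_{\max}(f)\le\kappa$ with an operator-norm bound on $\mathcal{M}_f$, and derives $\Vert \mathcal{M}_f\mathcal{M}_g\Vert_F\le\kappa\Vert\mathcal{M}_g\Vert_F$ column by column, which is exactly the submultiplicativity bound you invoke. Your remark that the stated form of the Fact implicitly assumes $\Vert g\Vert_F\le 1$ applies equally to the paper's proof, which likewise ends with the bound $\kappa\Vert\mathcal{M}_g\Vert_F$.
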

\begin{proof}
View $f$ (resp. $g$) as the matrix $\mathcal{M}_f$ (resp. $\mathcal{M}_g$) such that $\mathcal{M}_f \in \mathbb{R}^{[n]^{d-r} \times [n]^{r}}$ (resp. $\mathcal{M}_g \in \mathbb{R}^{[n]^{r} \times [n]^{d'-r}}$). Then, $f \otimes_r g = \mathcal{M}_f \cdot \mathcal{M}_g$. Note that $\lambda_{\max}(f) \le \kappa$ directly translates to $\Vert \mathcal{M}_f \Vert_{op} \le \kappa$. For $\mathcal{S} \in [n]^{d'-r}$, let $\mathcal{M}_{g, \mathcal{S}}$ denote the corresponding column of $\mathcal{M}_{g}$. Then the corresponding column of $\mathcal{M}_f \cdot \mathcal{M}_g$ is given by $\mathcal{M}_f \cdot \mathcal{M}_{g, \mathcal{S}}$. Thus, 
\[
\Vert \mathcal{M}_f \cdot \mathcal{M}_g \Vert_F^2  = \sum_{\mathcal{S} \in [n]^{d'-r}} \Vert \mathcal{M}_f \cdot  \mathcal{M}_{g, \mathcal{S}} \Vert_F^2 \le \sum_{\mathcal{S} \in [n]^{d'-r}} \kappa^2 \cdot \Vert \mathcal{M}_{g, \mathcal{S}} \Vert_F^2 = \kappa^2 \cdot \Vert \mathcal{M}_g \Vert_F^2.
\]
This finishes the proof. 
\end{proof}
We next have the following easy proposition which says the sum of copies of the same polynomial over disjoint variables is eigenregular. 
\begin{fact}\label{fact:eigen-2}
Let $p: \mathbb{R}^{n} \rightarrow \mathbb{R}$ and let $q : \mathbb{R}^{n \cdot k} \rightarrow \mathbb{R}$ defined as 
$$
q(X_1, \ldots, X_k) = \frac{1}{\sqrt{k}} \cdot \big( p(X_1) + \ldots + p(X_k)), 
$$
where each $X_i$ represents a disjoint block of $n$ variables. Then $q$ is $\frac{1}{\sqrt{k}}$-eigenregular. 
\end{fact}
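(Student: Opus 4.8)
The plan is to compute the tensor decomposition of $q$ explicitly in terms of that of $p$, and then read off the operator norm of each contraction. Write $p = \sum_{i=0}^d I_i(f_i)$ with $f_i \in \mathcal{H}^{\odot i}$, where $\mathcal{H} = \mathbb{R}^n$. Let $\iota_j: \mathbb{R}^n \hookrightarrow \mathbb{R}^{nk}$ be the embedding onto the $j$-th block of coordinates, and for a tensor $g \in \mathcal{H}^{\odot i}$ write $g^{(j)} \in (\mathbb{R}^{nk})^{\odot i}$ for its image under the $i$-fold application of $\iota_j$. Since the OU/Ito framework behaves functorially under disjoint unions of variables, $p(X_j) = \sum_i I_i(f_i^{(j)})$, and therefore
\[
q(X_1,\dots,X_k) = \frac{1}{\sqrt{k}} \sum_{j=1}^k \sum_{i=0}^d I_i\big(f_i^{(j)}\big) = \sum_{i=0}^d I_i\Big(\frac{1}{\sqrt k}\sum_{j=1}^k f_i^{(j)}\Big).
\]
So the degree-$i$ tensor of $q$ is $A_i := \frac{1}{\sqrt k}\sum_{j=1}^k f_i^{(j)}$. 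The key structural observation is that the $f_i^{(j)}$ for distinct $j$ are supported on disjoint sets of coordinates, hence orthogonal, and all have the same Frobenius norm $\|f_i\|_F$; thus $\|A_i\|_F^2 = \frac1k \sum_j \|f_i^{(j)}\|_F^2 = \|f_i\|_F^2$, which in particular gives $\Var(q) = \sum_{i\ge 1}\|A_i\|_F^2 = \sum_{i\ge1}\|f_i\|_F^2 = \Var(p)$.

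Next I would bound $\lambda_{\max}(A_i)$ for each $i \ge 2$. Fix a partition $[i] = S \sqcup \overline S$ with $0 < |S| < i$, and view each tensor as a matrix: $A_i$ corresponds to the matrix $\mathcal{M}_{A_i}$ with rows indexed by $[nk]^S$ and columns by $[nk]^{\overline S}$. Because the blocks are disjoint, a row index supported on block $j$ can only pair nontrivially with a column index supported on the same block $j$; hence, after reordering rows and columns by which block they live in, $\mathcal{M}_{A_i}$ is (up to the $\frac{1}{\sqrt k}$ scaling) block-diagonal with $k$ blocks, the $j$-th block being exactly $\mathcal{M}_{f_i}$ (the matricization of $f_i$ along $S,\overline S$), and all remaining rows/columns zero. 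The operator norm of a block-diagonal matrix is the max of the operator norms of the blocks, so $\|\mathcal{M}_{A_i}\|_{op} = \frac{1}{\sqrt k}\|\mathcal{M}_{f_i}\|_{op} \le \frac{1}{\sqrt k}\|\mathcal{M}_{f_i}\|_F \le \frac{1}{\sqrt k}\|f_i\|_F \le \frac{1}{\sqrt k}\sqrt{\Var(p)}$. Taking the max over partitions $S,\overline S$ and over $i \in \{2,\dots,d\}$ gives $\lambda_{\max}(q) \le \frac{1}{\sqrt k}\sqrt{\Var(p)} = \frac{1}{\sqrt k}\sqrt{\Var(q)}$, i.e.\ $q$ is $\frac{1}{\sqrt k}$-eigenregular.

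The only mildly delicate point is the block-diagonal claim: one must check that the symmetrization built into $f_i^{(j)} \in (\mathbb{R}^{nk})^{\odot i}$ does not create cross-block entries. But $f_i^{(j)}$ is supported entirely on coordinates in block $j$ by construction (it is a symmetric tensor all of whose indices come from $\iota_j([n])$), so every nonzero entry of $f_i^{(j)}$ has all $i$ of its indices in block $j$; consequently in the matricization along $(S,\overline S)$ both the row-multi-index and the column-multi-index lie in block $j$. This is exactly the disjointness used, in matrix language, in the proof of Fact~\ref{fact:contraction1}, and it is the one step worth spelling out; everything else is bookkeeping. One should also note the edge case $k=1$ (trivial) and that the $i=1$ term is correctly excluded from the definition of $\lambda_{\max}$, so no condition on $A_1$ is needed.
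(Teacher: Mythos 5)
Your proof is correct and follows essentially the same route as the paper's: both rest on the observation that each degree-$i$ tensor of $q$ is a $\frac{1}{\sqrt k}$-scaled direct sum of $k$ disjointly supported copies of the corresponding tensor of $p$, so every matricization along $(S,\overline S)$ is block-diagonal and its operator norm drops by a factor $\frac{1}{\sqrt{k}}$ relative to its Frobenius norm. The paper packages this as a Cauchy--Schwarz estimate over block-decomposed test tensors $g,h$ and implicitly normalizes $\Var(q)=1$, whereas you read the bound off directly from the block-diagonal matricization and track $\Var(q)=\Var(p)$ explicitly; the content is the same.
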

\begin{proof}
Let $d$ be the degree of $q$ and let  $q(X_1, \ldots, X_k) =  \sum_{j=0}^{d}I_j(f_j)$. Let $\mathcal{H} = \mathbb{R}^{n \cdot k}$ and let $\mathcal{H} = \mathcal{H}_1 \otimes \ldots \otimes \mathcal{H}_k$ where $\mathcal{H}_i = \mathbb{R}^n$ corresponds to the coordinates in $X_i$. Note that $f_j$ has a block diagonal structure on $\mathcal{H}_1 \otimes \ldots \otimes \mathcal{H}_k$ where in fact, the same block is repeated. For $1 \le i \leq k$, let $f_j^{(i)}$ denote the $i^{th}$ block of $f_j$ and likewise for any other tensor.  For any non-trivial partition $S, \overline{S}$ of $[j]$, we have
\begin{eqnarray*}
\lambda_{S, \overline{S}}(f_j) &=&  \max_{g \in \mathcal{H}^{\odot S}, h \in \mathcal{H}^{\odot \overline{S}}} \frac{\langle f_j , g \otimes h \rangle}{\Vert g \Vert_F \cdot \Vert h \Vert_F} \\
&=& \max_{g \in \mathcal{H}^{\odot S}, h \in \mathcal{H}^{\odot \overline{S}}} \frac{\sum_{i=1}^k \langle f_j^{(i)}, g^{(i)} \otimes h^{(i)} \rangle }{\Vert g \Vert_F \cdot \Vert h \Vert_F} \\
&\ge& \max_{g \in \mathcal{H}^{\odot S}, h \in \mathcal{H}^{\odot \overline{S}}} \frac{\sum_{i=1}^k \langle f_j^{(i)}, g^{(i)} \otimes h^{(i)} \rangle }{ \sqrt{\sum_{i=1}^k \Vert g^{(i)} \Vert_F^2} \cdot  \sqrt{\sum_{i=1}^k \Vert h^{(i)} \Vert_F^2} }
\end{eqnarray*}
Let us denote $f_j^{(0)}$ as the common value of $f_j^{(i)}$ for $1 \le i \le k$ and note that $\Vert f_j^{(0)} \Vert_F = 1/\sqrt{k}$. The right hand side is then the same as  
\begin{eqnarray*}
 &&\max_{g \in \mathcal{H}^{\odot S}, h \in \mathcal{H}^{\odot \overline{S}}} \frac{\sum_{i=1}^k \langle f_j^{(0)}, g^{(i)} \otimes h^{(i)} \rangle }{ \sqrt{\sum_{i=1}^k \Vert g^{(i)} \Vert_F^2} \cdot  \sqrt{\sum_{i=1}^k \Vert h^{(i)} \Vert_F^2} } \\
 &\le& \max_{g \in \mathcal{H}^{\odot S}, h \in \mathcal{H}^{\odot \overline{S}}} \frac{\sum_{i=1}^k \frac{1}{\sqrt{k}} \Vert g^{(i)} \otimes h^{(i)} \Vert_F }{ \sqrt{\sum_{i=1}^k \Vert g^{(i)} \Vert_F^2} \cdot  \sqrt{\sum_{i=1}^k \Vert h^{(i)} \Vert_F^2} }\\
 &=& \max_{g \in \mathcal{H}^{\odot S}, h \in \mathcal{H}^{\odot \overline{S}}} \frac{\sum_{i=1}^k \frac{1}{\sqrt{k}} \Vert g^{(i)} \Vert_F \cdot \Vert  h^{(i)} \Vert_F }{ \sqrt{\sum_{i=1}^k \Vert g^{(i)} \Vert_F^2} \cdot  \sqrt{\sum_{i=1}^k \Vert h^{(i)} \Vert_F^2} } \le \frac{1}{\sqrt{k}}. 
\end{eqnarray*}
Both the first and last inequalities are simple applications of the Cauchy-Schwartz inequality and the equality uses the fact that $\Vert A \otimes B \Vert_F = \Vert A \Vert_F \cdot \Vert B \Vert_F$. This implies that for all $j>1$, $\lambda_{\max}(f_j) \le \frac{1}{\sqrt{k}}$. As $\mathsf{Var}(q)=1$, we get that $\lambda_{\max}(q) \le 1/\sqrt{k}$. 
\end{proof}

~\\Having defined the notion of eigenregularity, we now recall the main results from \cite{DS14}.~\\
\textbf{Central limit theorem for Gaussian polynomials:} 
\begin{theorem}\label{thm:CLT}
Let $d >1$ and let $p_1, \ldots, p_t: \mathbb{R}^n \rightarrow \mathbb{R}$ be $t$ degree-$d$ polynomials such that for $1\le i \le t$, $\mathbf{E}[p_i]=0$ and $\mathsf{Var}(p_i) \le 1$ and each $p_i$ is $\epsilon$-eigenregular. Let $F = (p_1(x), \ldots, p_t(x))$ where $x \sim \gamma_n$ and let $C$ denote the covariance matrix of $F$ i.e. $C[i,j] = \mathbf{E}_{x \sim \gamma_n} [p_i(x) \cdot p_j(x)]$.  Let $Z= (Z_1, \ldots, Z_t)$ be a $t$-dimensional Gaussian with mean zero and covariance $C$. 
Then, for any $\alpha:\mathbb{R}^t \rightarrow \mathbb{R}$ such that $\alpha \in \mathcal{C}^2$, 
$$
\big| \mathbf{E}_{x \sim \gamma_n} \big[ \alpha(p_1(x), \ldots, p_t(x)) \big] - \mathbf{E}_{Z} \big[ \alpha(Z_1, \ldots, Z_t) \big] \big|  \le 2^{O( d \log d)} \cdot t^2 \cdot \sqrt{\epsilon} \cdot \Vert \alpha '' \Vert_{\infty}. 
$$

\end{theorem}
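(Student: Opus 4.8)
The natural route is the Malliavin--Stein method for multivariate normal approximation, i.e. the Nourdin--Peccati framework (see~\cite{Nourdin2013}), which is precisely designed to convert control of contraction norms of Wiener-chaos kernels into a central limit theorem measured against smooth test functions. Write each $p_i=\sum_{q=1}^d I_q(f_{i,q})$ with $f_{i,q}\in\mathcal H^{\odot q}$ as in~\eqref{eq:tensor-decomp}; the degree-$0$ term is absent since $\E[p_i]=0$. Because $\Var(p_i)\le 1$ and $p_i$ is $\epsilon$-eigenregular, Definition~\ref{def:eigen-tensor} gives $\lambda_{\max}(f_{i,q})\le\epsilon$ for every $2\le q\le d$, and in particular $\lambda_{\{1\},\overline{\{1\}}}(f_{i,q})\le\epsilon$ for such $q$; also $\|f_{i,q}\|_F\le\sqrt{\Var(p_i)}\le 1$. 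Let $D$ denote the Malliavin derivative and $L$ the Ornstein--Uhlenbeck generator (so $P_t=e^{tL}$ and $L$ acts as $-q$ on the $q$-th chaos $\mathcal W^q$); then $-DL^{-1}$ acts on $I_q(f_{i,q})$ as the $\mathcal H$-valued random variable obtained by contracting one slot of $I_{q-1}(f_{i,q})$, and the quantity $\langle DF_i,-DL^{-1}F_j\rangle_{\mathcal H}$ has mean $\E[F_iF_j]=C_{ij}$.

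The first step is the smart-path interpolation. Let $N=(N_1,\dots,N_t)\sim N(0,C)$ be independent of the Gaussian field generating $F=(p_1,\dots,p_t)$, and for $\alpha\in\mathcal C^2$ set $\Psi(u)=\E[\alpha(\sqrt{1-u}\,F+\sqrt u\,N)]$, so that $\E[\alpha(F)]-\E[\alpha(N)]=-\int_0^1\Psi'(u)\,du$. Differentiating and applying Gaussian integration by parts in $N$ together with the Malliavin integration-by-parts formula $\E[G\,F_i]=\E[\langle DG,-DL^{-1}F_i\rangle_{\mathcal H}]$ in $F$, the $\sqrt u$ and $\sqrt{1-u}$ factors cancel and one obtains the standard identity
\[
  \Psi'(u)=\tfrac12\sum_{i,j=1}^t\E\!\left[\partial_{ij}\alpha\big(\sqrt{1-u}\,F+\sqrt u\,N\big)\cdot\big(C_{ij}-\langle DF_i,-DL^{-1}F_j\rangle_{\mathcal H}\big)\right].
\]
Bounding $|\partial_{ij}\alpha|\le\|\alpha''\|_\infty$, then using the triangle inequality, $\E[\langle DF_i,-DL^{-1}F_j\rangle_{\mathcal H}]=C_{ij}$, and Cauchy--Schwarz, yields
\[
  \big|\E[\alpha(F)]-\E[\alpha(N)]\big|\;\le\;\tfrac12\|\alpha''\|_\infty\sum_{i,j=1}^t\sqrt{\Var\big(\langle DF_i,-DL^{-1}F_j\rangle_{\mathcal H}\big)}.
\]

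It remains to bound $\Var(\langle DF_i,-DL^{-1}F_j\rangle_{\mathcal H})$ by $2^{O(d\log d)}\epsilon^2$ for each pair $i,j$; summing the square roots over the $t^2$ pairs then gives the theorem. Expanding the two chaos decompositions, $\langle DF_i,-DL^{-1}F_j\rangle_{\mathcal H}$ is a linear combination, over $1\le q,q'\le d$, of the terms $\langle I_{q-1}(f_{i,q}),I_{q'-1}(f_{j,q'})\rangle_{\mathcal H}$. Applying Ito's multiplication formula (Proposition~\ref{prop:Ito}) to each such product rewrites the whole expression as a finite sum of iterated integrals $I_m(\,\cdot\,)$ whose kernels are symmetrized contractions $f_{i,q}\ \widetilde{\otimes}_r\ f_{j,q'}$. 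The constant ($m=0$) part of this sum is exactly $\sum_q\langle f_{i,q},f_{j,q}\rangle=C_{ij}$ (these come from complete contractions $r=q=q'$), so after subtracting $C_{ij}$ every remaining term carries a contraction $f_{i,q}\ \widetilde{\otimes}_r\ f_{j,q'}$ with $1\le r\le\min\{q,q'\}$ and $r<\max\{q,q'\}$ (or, when a linear part $I_1(f_{i,1})$ meets a higher chaos, a one-slot contraction against $f_{j,q'}$ with $q'\ge 2$). For each such term, placing in the ``eigenregular slot'' the tensor whose order exceeds $r$ and invoking Fact~\ref{fact:contraction1} — together with the one-element-split bound $\lambda_{\{1\},\overline{\{1\}}}(\cdot)\le\epsilon$, Fact~\ref{fact:contraction} to absorb the symmetrization, and $\|f_{i,q}\|_F\le 1$ — gives $\|f_{i,q}\ \widetilde{\otimes}_r\ f_{j,q'}\|_F\le\epsilon$. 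There are $\poly(d)$ such terms, each with an Ito coefficient of size $2^{O(d\log d)}$; squaring and using Proposition~\ref{prop:4} to pass back to a variance gives $\Var(\langle DF_i,-DL^{-1}F_j\rangle_{\mathcal H})\le 2^{O(d\log d)}\epsilon^2$. Plugging this into the displayed bound and noting $\epsilon\le\sqrt\epsilon$ yields $|\E[\alpha(F)]-\E[\alpha(N)]|\le 2^{O(d\log d)}\,t^2\,\sqrt\epsilon\,\|\alpha''\|_\infty$ (in fact with $\epsilon$ in place of $\sqrt\epsilon$ if one keeps the squares throughout).

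I expect the main obstacle to be the combinatorial bookkeeping in the last step: carrying out the product-formula expansion of $\langle DF_i,-DL^{-1}F_j\rangle_{\mathcal H}$ when the $p_i$ are genuine degree-$d$ polynomials (mixtures of all chaoses $q=1,\dots,d$, not single homogeneous integrals), and verifying that \emph{every} term surviving the cancellation of $C_{ij}$ carries a contraction to which Fact~\ref{fact:contraction1} applies — the delicate cases being the complete contractions $r=q=q'$ (which must reproduce the covariance exactly, hence cancel) and the contractions that touch the degree-one parts $I_1(f_{i,1})$. The interpolation identity for $\Psi'$ and the passage from a transport-type estimate to a $\mathcal C^2$-test-function estimate are by now routine within the Nourdin--Peccati framework~\cite{Nourdin2013}. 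An alternative, Malliavin-free route would be a Lindeberg/moment-matching argument based on the diagram formula for products of Gaussian chaoses, but the smooth-metric conclusion is obtained most cleanly via Malliavin--Stein.
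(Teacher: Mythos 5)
The paper does not actually prove Theorem~\ref{thm:CLT}; it is quoted verbatim from \cite{DS14}, so there is no in-house proof to compare against. That said, your route — smart-path interpolation plus the Malliavin integration-by-parts identity $\E[GF_i]=\E[\langle DG,-DL^{-1}F_i\rangle_{\mathcal H}]$, reducing the $\mathcal C^2$-distance to $\sum_{i,j}\sqrt{\Var(\langle DF_i,-DL^{-1}F_j\rangle_{\mathcal H})}$ and then controlling each variance by a product-formula expansion — is precisely the Nourdin--Peccati--R\'eveillac mechanism that \cite{DS14} invokes, so you are on the intended track, and the sketch is essentially sound.

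A few points of the ``bookkeeping'' step deserve more care than the sketch gives them, though none is a genuine gap. First, when you expand $\langle DF_i,-DL^{-1}F_j\rangle_{\mathcal H}=\sum_{q,q'}q\langle I_{q-1}(f_{i,q}),I_{q'-1}(f_{j,q'})\rangle_{\mathcal H}$ by the product formula, the contraction order that appears is $\rho=r+1\ge 1$ (the $\mathcal H$-slot is always contracted), and you must verify that for every non-constant term there is a tensor of order $>\rho$ in the pair $\{f_{i,q},f_{j,q'}\}$ which is genuinely eigenregular, i.e.\ of degree $\ge 2$. The only terms where that could fail are those involving the degree-one kernels $f_{\cdot,1}$, whose $\lambda_{\max}$ is \emph{not} controlled by Definition~\ref{def:eigen-tensor}; as you note, $(q,q')=(1,1)$ contributes only the constant $\langle f_{i,1},f_{j,1}\rangle$, and when exactly one of $q,q'$ equals $1$ the forced contraction is $\rho=1$, which can be applied through Fact~\ref{fact:contraction1} using the \emph{other}, degree-$\ge 2$, eigenregular kernel. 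So the case split closes, but it is worth stating explicitly rather than folding it into ``place the higher-order tensor in the eigenregular slot.'' Second, what you want from symmetrization is that $\|f\,\widetilde\otimes_r\,g\|_F\le\|f\otimes_r g\|_F$ (symmetrization is an orthogonal projection, hence norm-nonincreasing); citing Fact~\ref{fact:contraction} for this is not quite right, since that fact gives the weaker product bound $\|f\,\widetilde\otimes_r\,g\|_F\le\|f\|_F\|g\|_F$, which would lose the factor of $\epsilon$. Similarly, the relevant split for Fact~\ref{fact:contraction1} at contraction order $\rho$ is the $(d-\rho,\rho)$ one, not the $\{1\}$-vs-rest split; $\lambda_{\max}$ dominates all of these, so nothing breaks, but the parenthetical mention of $\lambda_{\{1\},\overline{\{1\}}}$ is a red herring. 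Finally, your accounting does indeed yield $\Var(\langle DF_i,-DL^{-1}F_j\rangle)\le 2^{O(d\log d)}\epsilon^2$ and hence a final bound of order $\epsilon$ rather than $\sqrt\epsilon$; since $\epsilon\le\sqrt\epsilon$ this is compatible with (and strictly stronger than) the statement as quoted from \cite{DS14}.
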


The next result we will need from \cite{DS14} is the regularity lemma for low-degree polynomials. However, as the statement of the regularity lemma is quite cumbersome, we will first state the following definition. Again, the definition is quite cumbersome but the authors hope that stating this definition first will make the subsequent proof more modular. 
\begin{definition}
Let $\beta:  [1,\infty) \to (0,1)$ that satisfies $\beta(x) \leq 1/x$. Let us also be given $k$ lists of $d+1$ multlinear polynomials;  the
$s$-th list is $\tilde{p}_{s,0},\dots,\tilde{p}_{s,d}$ where $\tilde{p}_{s,q} \in {\cal W}^q$ and
\grade{
$\Var[\tilde{p}_{s,q}]=1$ for $1 \leq q \leq d.$} Then a $(\beta, M,N)$ decomposition of this list is given by a collection of polynomials 
$\Outer(p_{s,q})$
and a collection of polynomials that we denote
$\{\Inner(p_{s,q})_\ell\}_{\ell = 1,\dots, \num({s,q})}$; here $\num({s,q})$ is the number of arguments of the
polynomial $\Outer(p_{s,q}).$  (``$\Outer$'' stands for ``outer''
and ``$\Inner$'' stands for ``inner''.)

For $s=1,\dots,k$, $\blue{0} \leq q \leq d$ and $x \in \R^n$,
$\tilde{p}_{s,q}$ admits the following decomposition: 
\begin{equation} \label{eq:tildep}
\tilde{p}_{s,q}(x)=
\Outer(p_{s,q})\left(
\Inner(p_{s,q})_1(x),\dots, \Inner(p_{s,q})_{\num({s,q})}(x)\right)
\end{equation}
(Intuitively, each $\tilde{p}_{s,q}$ is a polynomial that has been
decomposed into constituent sub-polynomials:\\ 
$\Inner(p_{s,q})_1,\dots, \Inner(p_{s,q})_{\num(s,q)}$;
The following conditions make this precise.)
The following conditions hold:

\begin{enumerate}

\item For each $s \in [k], \blue{0} \leq q \leq d$ the polynomial
$\tilde{p}_{s,q}(x)$ belongs to the $q$-th Wiener chaos ${\cal W}^q$. Additionally, each polynomial $\Inner(p_{s,q})_\ell$ \blue{with $q \geq 1$}
lies in ${\cal W}^j$ for some $1 \leq j \leq q$ and
has $\Var[\Inner(p_{s,q})_\ell] = 1.$

\item
Each polynomial $\Outer(p_{s,q})$ is a multilinear polynomial in its
$\num({s,q})$ arguments.  Moreover, if $\mathrm{Coeff}(p_{s,q})$ denotes the sum of the absolute values of
the coefficients of $\Outer(p_{s,q})$, then $\sum_{s,q} \mathrm{Coeff}(p_{s,q}) \le M$ and
$\sum_{s,q} \num({s,q}) \le N$. The degree of  the polynomial $\Outer(p_{s,q})$ is at most $d$. 

\item Further, let $\mathrm{Num} = \sum_{s=1}^k \sum_{q=0}^{d}
\num({s,q})$  and $\mathrm{Coeff} = \sum_{s=1}^k \sum_{q=0}^{d}
\mathrm{Coeff}(p_{s,q})$.
\ignore{be the total number of
polynomials $\Inner(p_{s,q})_\ell$ which are arguments to all the
$\Outer(p_{s,q})$
polynomials.  }Then,
each polynomial $\Inner(p_{s,q})_\ell$ is $\beta(\mathrm{Num}+\mathrm{Coeff})$-eigenregular.

\end{enumerate}

\end{definition}
Intuitively, the above definition states the following: Given a decreasing function $\beta(\cdot)$ and a list of $k$ multilinear polynomials of degree $d$, a $(\beta, M,N)$ decomposition of expresses the polynomials as a ``outer polynomial" composed with ``inner polynomials". The quality of this decomposition is captured by the following parameters: 
\begin{itemize}
\item The sum of the arities (denoted by $\mathrm{Num}$) and the sum of absolute values of coefficients (denoted by $\mathrm{Coeff}$) of the outer polynomials. 
\item The eigenregularity of the inner polynomials captured by the function $\beta(\cdot)$. 
\end{itemize}
A $(\beta, M, N)$ decomposition ensures that  $\mathrm{Num} \le N$, $\mathrm{Coeff} \le M$ and that each of the inner polynomials is $\beta(\mathrm{Num} + \mathrm{Coeff})$-eigenregular. 
The main decomposition theorem of \cite{DS14} is that for any decreasing function $\beta(\cdot)$ and any given $k$ lists of $d+1$ multilinear polynomials $\{p_{s,q}\}_{1 \le s \le k , 0 \le q \le d}$, there is an approximate $(\beta, M, N)$ decomposition.
~\\
\textbf{Regularity lemma for collections of low-degree polynomials:}
\begin{theorem}~\label{thm:decomp} 
Fix $d \geq 2$ and fix any non-increasing computable
function $\beta:  [1,\infty) \to (0,1)$ that satisfies $\beta(x) \leq 1/x$.
There is a procedure {\bf \blue{MultiRegularize-Many-Wieners}}$_{d,\beta}$
with the following properties.
The procedure takes as input the following:

\begin{itemize}

\item It is given $k$ lists of $d+1$ multilinear Gaussian polynomials; the
$s$-th list is $p_{s,0},\dots,p_{s,d}$ where $p_{s,q} \in {\cal W}^q$ and
\grade{
$\Var[p_{s,q}]=1$ for $1 \leq q \leq d.$}

\item It also takes as input
a parameter $\tau>0$.

\end{itemize}
For explicitly defined $M_{\beta}(k,d,\tau)$ and $N_{\beta}(k,d,\tau)$, let us define $M = M_\beta(k,d,\tau)$ and $N = N_\beta(k,d, \tau)$. The output of the procedure are $k$ lists of $d+1$ multilinear polynomials; the $s$-th list is $\tilde{p}_{s,0},\dots,\tilde{p}_{s,d}$ where $\tilde{p}_{s,q} \in {\cal W}^q$ and $(\beta, M, N)$ decomposition for the $k$ lists, $\tilde{p}_{s,0},\dots,\tilde{p}_{s,d}$ (for $1 \le s \le k$). Here $p_{s,0} = \tilde{p}_{s,0}$ for $1 \le s \le k$. Further, for $1 \le s \le k$ and $1 \le q \le d$, we have 
\[
\Var[p_{s,q}-\tilde{p}_{s,q}] \leq \tau.
\]

\end{theorem}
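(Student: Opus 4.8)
This is the regularity lemma of \cite{DS14}, so in the paper itself one simply invokes it; here I sketch the idea behind its proof. The plan is an iterative ``extract a structured direction'' procedure driven by a potential function, with a well-founded recursion on Wiener-chaos degree. Identify each input polynomial $p_{s,q}$ with its symmetric tensor in $\mathcal{H}^{\odot q}$ via the Ito isometry (Proposition~\ref{prop:4}), and maintain three things: (i) a \emph{pending} multiset of polynomials, each tagged with its chaos degree; (ii) a \emph{finished} list of polynomials already certified $\beta(\cdot)$-eigenregular, which will become the $\Inner$ polynomials; and (iii) for each original $p_{s,q}$, an explicit multilinear expression for it in terms of the finished polynomials plus a single pending remainder.

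First I would run the main loop: pick a pending polynomial $r \in \mathcal{W}^i$; if $\lambda_{\max}(r)$ is already below the target, move $r$ to the finished list; otherwise there is a degree $2 \le i' \le i$ and a nontrivial bipartition $(S,\overline S)$ of $[i']$ for which the degree-$i'$ tensor of $r$, viewed as a matrix over $\mathcal{H}^{\odot S}\times\mathcal{H}^{\odot\overline S}$, has a large singular value. Take its SVD, keep the singular directions whose singular values exceed a threshold calibrated to $\tau$, and discard the rest (this truncation is the sole source of the $\Var[p_{s,q}-\tilde p_{s,q}]\le\tau$ error); applying $I_{|S|}$ and $I_{|\overline S|}$ to the kept singular vectors produces new polynomials of strictly smaller chaos degree, which I Gram--Schmidt against the current finished pool to restore unit variance and orthogonality and then append as new pending polynomials. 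Ito's multiplication formula (Proposition~\ref{prop:Ito}) lets me rewrite the large part of $r$ as a bounded-degree multilinear polynomial in these new polynomials; subtracting it leaves a remainder $r'$ whose variance at the relevant chaos levels has dropped by $\Omega(\lambda_{\max}(r)^2)$, and $r'$ goes back into the pending set. Since every newly created polynomial has strictly smaller chaos degree, the recursion is well-founded with depth at most $d$; within each fixed degree only $O(\delta^{-2})$ extractions can occur (where $\delta$ is the eigenregularity target) before the polynomial is eigenregular; composing the $d$ levels yields the explicitly defined bounds $M_\beta(k,d,\tau)$ and $N_\beta(k,d,\tau)$ on $\Coeff$ and $\Num$.

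The main obstacle is the bookkeeping that makes this $d$-fold recursion explicit while maintaining all the invariants at once: unit variance and mutual orthogonality of the growing pool of $\Inner$ polynomials (needed both for the normalization in the statement and so that Theorem~\ref{thm:CLT} later applies), multilinearity of the $\Outer$ polynomials, the division of the $\tau$ budget among the many SVD truncations, and---most annoyingly---the circular dependence whereby the target $\beta(\Num+\Coeff)$ depends on the size of the output. The last point is handled in the standard way: fix a conservative a priori bound $\Lambda$ on $\Num+\Coeff$, run the procedure with target $\beta(\Lambda)$, and verify afterwards that $\Num+\Coeff \le \Lambda$; since $\beta$ is non-increasing and $\beta(x)\le 1/x$, iterating ``coarser target $\Rightarrow$ more steps $\Rightarrow$ larger $\Lambda$'' converges to an explicit fixed point. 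No individual step is deep, but reconciling all of them simultaneously is precisely the content of \cite{DS14}, which is why we only need to quote the statement.
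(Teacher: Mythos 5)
The paper gives no proof of Theorem~\ref{thm:decomp}: it is quoted verbatim as the regularity lemma of \cite{DS14}, exactly as you say, so simply invoking that reference is the intended "proof" here. Your sketch of the underlying iterative SVD-extraction argument, the variance-decrement potential, the depth-$d$ recursion on chaos degree, and the fixed-point handling of the $\beta(\Num+\Coeff)$ circularity is a faithful summary of how \cite{DS14} establishes it, so nothing further is needed.
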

The above theorem essentially states that given any non-increasing function $\beta(\cdot)$ and error parameter $\tau$, there are (explicitly defined) quantities $M = M_\beta(k,d,\tau)$ and $N = N_\beta(k,d, \tau)$ such that the procedure outputs a $(\beta, M,N)$ decomposition of the polynomials $\{p_{s,q}\}$. 

\subsubsection{Proof of Theorem~\ref{thm:reduction}}
Let us assume that $f= \mathsf{PTF}(p_1, \ldots, p_k)$ 
where $f$ is the PTF appearing in the statement of Theorem~\ref{thm:reduction}
After applying Lemma~\ref{lem:multilinear}, we can pretend that the PTF $f =  \mathsf{PTF}(p_1, \ldots, p_k)$ in Theorem~\ref{thm:reduction} is multilinear. 
After scaling the polynomials $p_1, \ldots, p_k$,  we can assume that $\mathsf{Var}(p_s)=1$ for all $1 \le s \le k$. With this, we note that there are constants $c_{s,q}$ for $1 \le s \le k$ and $1 \le q \le d$ such that 
\[
p_{s} = p_{s,0} + \sum_{q=1}^d c_{s,q} p_{s,q}, 
\]
where $p_{s,q} \in \mathcal{W}^q$, $\mathsf{Var}(p_{s,q}) =1$ for all $s \in [k]$, $1 \le q \le d$ and $\sum_{q=1}^d c_{s,q}^2=1$. 

\subsubsection*{Anti-concentration for low-degree polynomials}
The following theorem is due to Carbery and Wright. 
\begin{theorem}\cite{CW:01}\label{thm:CW}
Let $p: \mathbb{R}^n \rightarrow \mathbb{R}$ be a degree-$d$ polynomial.  Then, for $\epsilon>0$, 
$$
\sup_{\theta \in \mathbb{R}} \Pr_{x} [|p(x) -\theta| \le \epsilon \cdot \sqrt{\mathsf{Var}[p]}] = O(d \cdot \epsilon^{1/d}). 
$$
\end{theorem}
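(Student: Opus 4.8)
Because the statement is a supremum over translates, the plan is first to reduce to a normalized form: it suffices to show that every degree-$d$ polynomial $r$ with $\mathbf{E}_{x\sim\gamma_n}[r(x)^2]=1$ satisfies $\Pr_x[\,|r(x)|\le\epsilon\,]=O(d\,\epsilon^{1/d})$. Indeed, for a fixed $\theta$ set $r=(p-\theta)/\Vert p-\theta\Vert_{L^2(\gamma_n)}$; since $\Vert p-\theta\Vert_{L^2(\gamma_n)}^2=\mathsf{Var}[p]+(\mathbf{E}[p]-\theta)^2\ge\mathsf{Var}[p]$, we have $\{\,|p-\theta|\le\epsilon\sqrt{\mathsf{Var}[p]}\,\}\subseteq\{\,|r|\le\epsilon\,\}$, and the resulting bound is uniform in $\theta$.

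The next idea is to trade the small-ball estimate for a bound on a negative fractional moment of $|r|$: if one shows $\mathbf{E}_x[\,|r(x)|^{-\eta}\,]\le M$ for a suitable $\eta=\Theta(1/d)$ with $M=O(d)$, then Markov's inequality gives $\Pr_x[\,|r(x)|\le\epsilon\,]=\Pr_x[\,|r(x)|^{-\eta}\ge\epsilon^{-\eta}\,]\le M\epsilon^{\eta}$, and adjusting the exponent recovers $O(d\,\epsilon^{1/d})$ (tracking constants here is fussy but routine). So the real content is the negative-moment bound, which I would prove by induction on the degree $d$. The base case $d=1$ is elementary: $r$ is an affine image of a standard Gaussian, so $r\sim N(b,1-b^2)$ for some $|b|\le 1$, and $\mathbf{E}[\,|r|^{-\eta}\,]$ can be estimated directly (the density of $r$ is at most $(2\pi(1-b^2))^{-1/2}$, and when $1-b^2$ is small $|r|$ is bounded away from $0$). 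For the inductive step, the guiding principle is that a degree-$d$ polynomial cannot be uniformly tiny on a large set unless its first-order directional derivatives --- which have degree $d-1$ --- are also small there. Concretely I would restrict $r$ to random lines (using the rotational invariance of $\gamma_n$ and Fubini), exploit that $\{t\in\mathbb{R}:|r(x+tv)|\le\epsilon\}$ is a union of at most $d$ intervals whose total length is governed by the leading behaviour of $r$ along the line, and feed in the inductive small-ball estimate for $\Vert\nabla r\Vert$ after renormalizing it to a unit-variance degree-$(d-1)$ object. Hypercontractivity --- Theorem~\ref{thm:hyper}, together with the fact that $\Vert\nabla r\Vert_{L^2}$ is comparable to $\Vert r\Vert_{L^2}$ up to a factor depending only on $d$ --- is what keeps each renormalization from costing more than a $d$-controlled factor.

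The hard part is exactly this inductive step: organizing the recursion so that the exponent degrades only mildly (from $1/d$ to $1/(d-1)$) and the prefactor remains linear in $d$ after $d$ iterations. This bookkeeping is the technical heart of \cite{CW:01}, and since Theorem~\ref{thm:CW} is used here only as a black box we do not reproduce it.
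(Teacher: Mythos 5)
The paper cites Theorem~\ref{thm:CW} from \cite{CW:01} with no proof supplied, so there is no paper-internal argument to compare against; what you have written is an independent sketch. Your outer frame is sound --- the normalization to unit $L^2$ norm is correct, and trading a small-ball probability for a bound on a negative fractional moment via Markov is a legitimate, well-used device --- but two things go wrong in the part you defer.

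First, the inductive step does not close as written. You propose to ``feed in the inductive small-ball estimate for $\Vert\nabla r\Vert$ after renormalizing it to a unit-variance degree-$(d-1)$ object,'' but $\Vert\nabla r\Vert=\bigl(\sum_i(\partial_i r)^2\bigr)^{1/2}$ is not a polynomial, so the induction hypothesis does not apply to it. Replacing it with $\Vert\nabla r\Vert^2$ gives a degree-$2(d-1)$ polynomial, and the induction no longer makes progress in $d$; replacing it with a single directional derivative $\langle v,\nabla r\rangle$ does give degree $d-1$, but a small-ball bound for that scalar does not, by itself, control the measure of $\{t:\,|r(x+tv)|\le\epsilon\}$ along the line (the restriction $t\mapsto r(x+tv)$ is still a degree-$d$ univariate polynomial, and one lower-order anticoncentration bound does not constrain its sublevel set the way the argument needs). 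The ``bookkeeping'' you wave at is precisely where the proof is, and it is not visibly there.

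Second, the deferral to \cite{CW:01} is a misattribution. Carbery and Wright do not argue by induction on degree with random-line restrictions and hypercontractive renormalization; their proof works with log-concavity of the underlying measure together with a Remez/Tur\'an-type inequality for polynomials --- roughly, that a degree-$d$ polynomial which is small relative to its $L^{1/d}(\mu)$ average can only be small on a set of small $\mu$-measure when $\mu$ is log-concave. Hypercontractivity in the form of Theorem~\ref{thm:hyper} plays no role in their argument. So while your Markov/negative-moment reduction is a reasonable plan in the abstract, you cannot point at \cite{CW:01} to discharge the inductive gap, because that is not the argument they give.
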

An immediate consequence is that $\Pr_{x} [|p(x)| > d^{-O(d)}] \ge \frac 12$ for a degree-$d$
polynomial $p$ with $\mathsf{Var}[p] = 1$. With this observation and Theorem~\ref{thm:hyper},
it is easy to deduce the following bound. See Lemma~5 in \cite{DS14}. 
\begin{theorem}\label{thm:combine-hyper}
Let $a, b: \mathbb{R}^n \rightarrow \mathbb{R}$ be degree $d$ polynomials. Further, $\mathbf{E}_x [a(x) - b(x)]=0$ and $\mathsf{Var}[a-b] \le (\tau/d)^{3d} \cdot \mathsf{Var}[a]$. Then, $\Pr_{x}[\mathsf{sign}(a(x)) \not = \mathsf{sign}(b(x))] = O(\tau)$. 
\end{theorem}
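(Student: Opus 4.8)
The plan is to exploit a simple dichotomy: a sign disagreement $\mathsf{sign}(a(x)) \neq \mathsf{sign}(b(x))$ can only occur when $a(x)$ is very small, and then to play the hypercontractive concentration bound (Theorem~\ref{thm:hyper}) applied to $a-b$ against the Carbery--Wright anti-concentration bound (Theorem~\ref{thm:CW}) applied to $a$. Since rescaling both polynomials by a positive constant changes neither their signs nor the ratio $\mathsf{Var}[a-b]/\mathsf{Var}[a]$, I would first assume without loss of generality that $\mathsf{Var}[a]=1$, so the hypothesis reads $\mathsf{Var}[a-b]\le (\tau/d)^{3d}$. Write $c=a-b$, which has mean $0$ and $\mathsf{Var}[c]\le(\tau/d)^{3d}$. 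The elementary observation is that if $\mathsf{sign}(a(x))\neq\mathsf{sign}(b(x))$ then $a(x)$ and $b(x)$ lie on opposite sides of $0$ (or one of them vanishes), so $|a(x)|\le|a(x)-b(x)|=|c(x)|$; hence it suffices to bound $\Pr_x[\,|a(x)|\le|c(x)|\,]$.

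Next I would introduce a threshold $\theta>0$ and split according to whether $|c(x)|\ge\theta$. Applying Theorem~\ref{thm:hyper} to $c$ with $t=(\ln(d/\tau))^{d/2}$ makes $d\cdot e^{-t^{2/d}}=\tau$, and since $\mathbf{E}[c]=0$ this gives $\Pr_x[\,|c(x)|\ge\theta\,]\le\tau$ for $\theta:=t\sqrt{\mathsf{Var}[c]}\le(\ln(d/\tau))^{d/2}(\tau/d)^{3d/2}$. On the complementary event $|c(x)|<\theta$, a sign disagreement forces $|a(x)|<\theta$, so Theorem~\ref{thm:CW} applied to $a$ (with $\mathsf{Var}[a]=1$) bounds $\Pr_x[\,|a(x)|\le\theta\,]$ by $O(d\,\theta^{1/d})$. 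A short computation gives $\theta^{1/d}\le(\ln(d/\tau))^{1/2}(\tau/d)^{3/2}$, hence
\[
d\,\theta^{1/d}\ \le\ d^{-1/2}\,\tau\,\bigl(\tau\ln(d/\tau)\bigr)^{1/2}\ \le\ d^{-1/2}\,\tau\,(d/e)^{1/2}\ =\ \tau/\sqrt{e},
\]
where I used that $\tau\ln(d/\tau)\le d/e$ for all $\tau>0$. A union bound over the two events then yields $\Pr_x[\mathsf{sign}(a(x))\neq\mathsf{sign}(b(x))]\le\tau+O(\tau)=O(\tau)$, as desired.

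There is no real obstacle here: the argument is just two invocations of black-box inequalities plus a union bound. The only point requiring care is the bookkeeping of exponents — the constant $3d$ in the hypothesis is exactly what is needed so that the threshold $\theta$ produced by the concentration bound is small enough for the anti-concentration bound to yield $O(\tau)$ (rather than $O(\tau^{c})$ for some $c<1$) and, in fact, with an absolute constant rather than one growing in $d$. For $\tau\ge 1$ the claim is vacuous, so one may freely assume $\tau\le 1\le d$, which is what makes $t=(\ln(d/\tau))^{d/2}$ well defined; and the inequality $|a(x)|\le|c(x)|$ on the disagreement event is valid regardless of the convention chosen for $\mathsf{sign}(0)$.
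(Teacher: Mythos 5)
Your proposal is correct, and it is exactly the argument the paper has in mind: the text around Theorem~\ref{thm:combine-hyper} does not write out a proof but explicitly points to combining the hypercontractive tail bound (Theorem~\ref{thm:hyper}, applied to $a-b$) with Carbery--Wright anti-concentration (Theorem~\ref{thm:CW}, applied to $a$), citing Lemma~5 of \cite{DS14} for the details. Your bookkeeping of the exponents (the role of the $(\tau/d)^{3d}$ hypothesis in making $d\,\theta^{1/d} = O(\tau)$) and your handling of the edge cases are all sound.
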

 
We will now prove Theorem~\ref{thm:reduction}. To do this, there are several steps involved. For the moment, let $\beta : \mathbb{N} \rightarrow [0,1)$ be a sufficiently fast decreasing function. We will choose the precise function $\beta(\cdot)$ later. As a first step, we run the procedure {\bf \blue{MultiRegularize-Many-Wieners}}$_{d,\beta}$ on the list $\{p_{s,q}\}_{1 \le s \le k , 0 \le q \le d}$ with $\tau = (\epsilon/ (kd))^{3d}$. Let us assume that the output of the procedure are polynomials $\tilde{p}_{s,q}$ (for $1 \le s \le k, 1 \le q \le d$) where 
\[
\tilde{p}_{s,q} = \Outer(p_{s,q})\left(
\Inner(p_{s,q})_1(x),\dots, \Inner(p_{s,q})_{\num({s,q})}(x)\right).
\]
Let us define the polynomials $\tilde{p}_s$ (for $1 \leq s \le k$)  as $\tilde{p}_s = \tilde{p}_{s,0} +  \sum_{q=1}^d c_{s,q} \tilde{p}_{s,q}$. Note that $\tilde{p}_{s,0} = p_{s,0}$. Further, 
\[
\mathsf{Var}(p_s - \tilde{p}_s) = \sum_{q=1}^d c_{s,q}^2 \mathsf{Var}(p_{s,q} - \tilde{p}_{s,q})  \le \tau. 
\]
Define the PTF $\tilde{f} = \mathsf{PTF}(\tilde{p}_1, \ldots, \tilde{p}_k)$. Since $\mathsf{Var}(\tilde{p}_s - p_s) \le \tau$ and $\mathbf{E}[\tilde{p}_s-  p_s] =0$, applying Theorem~\ref{thm:combine-hyper} and a union bound, 
\begin{eqnarray}
\Vert \mathbf{E}_{x \sim \gamma_n} [\tilde{f}(x) ] - \mathbf{E}_{x \sim \gamma_\ell} [f_{}(x)  ] \Vert&\le&\epsilon \nonumber,  \\
\big| \mathbf{E}_{x \sim \gamma_n} [ \langle \tilde{f}, P_t  \tilde{f} \rangle ] - \mathbf{E}_{x \sim \gamma_{\ell}} [\langle f_{}, P_t f_{}\rangle ] \big| &\le& \epsilon,  \nonumber \\ 
\Pr_{x \sim \gamma_{\ell}} [x \in \mathsf{Collision}(\tilde{f})] \le \Pr_{x \sim \gamma_{n}} [x \in \mathsf{Collision}(f_{})] &+& \epsilon.
\end{eqnarray}
Thus, from now on, we can work with the function $\tilde{f} = \mathsf{PTF}(\tilde{p}_1, \ldots, \tilde{p}_k)$. Recall that 
$$
\tilde{p}_{s,q} = \Outer(p_{s,q})\left(
\Inner(p_{s,q})_1(x),\dots, \Inner(p_{s,q})_{\num({s,q})}(x)\right).
$$
For the subsequent discussion, let us define $\mathcal{P} = \{\Inner(p_{s,q})_1(x),\dots, \Inner(p_{s,q})_{\num({s,q})}(x)\}_{1 \le s \le k, 1 \le q \le d}$. 
Next, we consider another family of polynomials $\mathcal{R} = \{\Inner(r_{s,q})_1(x),\dots, \Inner(r_{s,q})_{\num({s,q})}(x)\}_{1 \le s \le k, 1 \le q \le d}$. We will require two properties of the family $\mathcal{R}$. For $1 \le s \le k$, $1 \le q \le d$ and $1 \le \ell \le \mathrm{num}(s,q)$,  $\Inner(r_{s,q})_\ell : \mathbb{R}^{n_0} \rightarrow \mathbb{R}$ (where we will ensure that $n_0 = O_{d,k,\beta,\epsilon}(1)$.). 
\begin{cond}\label{cond:junta}
\begin{itemize}
\item For every $1 \le s \le k$, $1 \le q \le d$ and  $ 1 \le \ell \leq \num(s,q)$, $\Inner(r_{s,q})_\ell \in \mathcal{W}^j$ if and only if $\Inner(p_{s,q})_\ell \in \mathcal{W}^j$. In other words, 
$\Inner(r_{s,q})_\ell$ and $\Inner(p_{s,q})_\ell$ belong to the same level of Wiener chaos. 
\item The covariance of the family $\mathcal{R}$ is same as that of the family $ \{\Inner(p_{s,q})_1(x),\dots, \Inner(p_{s,q})_{\num({s,q})}(x)\}_{1 \le s \le k, 1 \le q \le d}$. In other words, 
for $1 \le s_1, s_2 \le k$, $1 \le q_1, q_2 \le d$ and $1 \le \ell_i \leq \num(s_i, q_i)$ $($for $i \in \{1,2\})$, 
\begin{equation}
\mathbf{E}[\Inner(p_{s_1,q_1})_{\ell_1}(x) \cdot \Inner(p_{s_2,q_2})_{\ell_2}(x) ] = \mathbf{E}[\Inner(r_{s_1,q_1})_{\ell_1}(x) \cdot \Inner(r_{s_2,q_2})_{\ell_2}(x) ] .  \nonumber
\end{equation}
\item Every polynomial in the family $\mathcal{R}$ is $\beta(\mathrm{Num} + \mathrm{Coeff})$-eigenregular. 
\end{itemize}
\end{cond}
An important lemma (stated next and  proven in Appendix~\ref{app:corrjunta}) states that it is possible to realize a family $\mathcal{R}$ over much fewer variables. 
\begin{lemma}~\label{lem:junta-construct}
It is possible to construct a family $\mathcal{R}$ meeting the requirements of Condition~\ref{cond:junta} with $n_0= \mathsf{poly}(d, \mathrm{Num}, \beta(\mathrm{Num} + \mathrm{Coeff}))$ variables. 
\end{lemma}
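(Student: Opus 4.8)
The plan is to exploit the fact that the family $\mathcal{P}$ enters the rest of the argument only through two pieces of finite linear-algebraic data: the Wiener-chaos level of each of its members and its full covariance matrix. By condition~1 in the definition of a $(\beta,M,N)$ decomposition, each $\Inner(p_{s,q})_\ell$ lies in a single Wiener chaos $\mathcal{W}^j$ with $1\le j\le d$, so $\Inner(p_{s,q})_\ell = I_j(f_{s,q,\ell})$ for a unique symmetric multilinear tensor $f_{s,q,\ell}\in\mathcal{H}^{\odot j}$ with $\|f_{s,q,\ell}\|_F^2=\Var[\Inner(p_{s,q})_\ell]=1$; moreover, by Proposition~\ref{prop:4} the covariance of two members equals $0$ when they lie in different chaoses and equals the tensor inner product $\langle f_{s_1,q_1,\ell_1},f_{s_2,q_2,\ell_2}\rangle$ when they lie in the same chaos. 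Hence, to meet the first two bullets of Condition~\ref{cond:junta} it suffices to produce, for each level $1\le j\le d$ separately, symmetric multilinear tensors over a bounded-dimensional space whose Gram matrix equals the Gram matrix $G^{(j)}$ of the level-$j$ tensors among the $f_{s,q,\ell}$.

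\emph{Step 1: realizing the covariance with few variables.} This is the promised tensor analogue of the statement that a Gram matrix of $m$ vectors of rank $r$ is realized by vectors in $\R^r$. Fix $j$, let $r_j=\mathrm{rank}(G^{(j)})\le\Num$, and write $G^{(j)}=BB^{\top}$ with $B$ having $r_j$ columns (via the spectral decomposition). Take $m=\poly(d,\Num)$, e.g.\ $m=d+\Num$, so that for every $1\le j\le d$ the normalized multilinear basis tensors $\{\Phi_S/\sqrt{j!}:S\subseteq[m],\,|S|=j\}$ of Proposition~\ref{prop:basis} form an orthonormal set in $(\R^m)^{\odot j}$ of size $\binom{m}{j}\ge m\ge\Num\ge r_j$. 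Choosing $r_j$ of them, say $u_1,\dots,u_{r_j}$, and setting the tensor of the $a$-th level-$j$ member to be $\hat g_a=\sum_{i=1}^{r_j}B_{ai}u_i$ gives $\langle\hat g_a,\hat g_b\rangle=G^{(j)}_{ab}$, in particular $\|\hat g_a\|_F^2=1$, and each $\hat g_a$ is multilinear (being a combination of multilinear $\Phi_S$'s). Letting $\hat r_{s,q,\ell}:=I_j(\hat g_a)$ for the corresponding index, the family $\{\hat r_{s,q,\ell}\}$ consists of variance-one multilinear polynomials over $\R^m$, each sitting in the correct Wiener chaos, and having exactly the same covariance matrix as $\mathcal{P}$.

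\emph{Step 2: averaging to recover eigenregularity.} The $\hat r_{s,q,\ell}$ need not be eigenregular; we repair this uniformly via Fact~\ref{fact:eigen-2}. Put $K=\lceil\beta(\Num+\Coeff)^{-2}\rceil$, let $X_1,\dots,X_K$ be disjoint blocks of $m$ Gaussian coordinates, and define
\[
  \Inner(r_{s,q})_\ell(X_1,\dots,X_K)=\frac{1}{\sqrt K}\sum_{i=1}^K\hat r_{s,q,\ell}(X_i),
\]
so that $n_0=mK=\poly\!\big(d,\Num,\,1/\beta(\Num+\Coeff)\big)$. Since each $\hat r_{s,q,\ell}$ has mean $0$ and lies in a single $\mathcal{W}^j$ with $j\ge 1$, each summand lies in $\mathcal{W}^j$ over $\R^{n_0}$ and hence so does $\Inner(r_{s,q})_\ell$, giving the first bullet; multilinearity is preserved because the blocks are disjoint. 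By independence of the blocks and mean-zero-ness the cross terms vanish, so
\[
  \E\big[\Inner(r_{s_1,q_1})_{\ell_1}\,\Inner(r_{s_2,q_2})_{\ell_2}\big]
  =\frac1K\sum_{i=1}^K\E\big[\hat r_{s_1,q_1,\ell_1}\hat r_{s_2,q_2,\ell_2}\big]
  =\E\big[\hat r_{s_1,q_1,\ell_1}\hat r_{s_2,q_2,\ell_2}\big],
\]
which by Step~1 equals $\E[\Inner(p_{s_1,q_1})_{\ell_1}\Inner(p_{s_2,q_2})_{\ell_2}]$ (and, taken diagonally, shows $\Var[\Inner(r_{s,q})_\ell]=1$); this is the second bullet. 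Finally, Fact~\ref{fact:eigen-2} makes each $\Inner(r_{s,q})_\ell$ $\,K^{-1/2}$-eigenregular, and $K^{-1/2}\le\beta(\Num+\Coeff)$ by the choice of $K$, which is the third bullet.

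The construction has no deep obstacle; the one subtlety is that the three requirements of Condition~\ref{cond:junta} must hold \emph{simultaneously}, and a priori the averaging in Step~2 might spoil the covariance arranged in Step~1. The reason it does not — and essentially the only place the structure of the inner polynomials is used — is that every inner polynomial has mean zero and lies in a single Wiener chaos, so cross-block correlations vanish and per-block variances are unchanged. Tracking the dependence of $m$ on $(d,\Num)$ and of $K$ on $\beta(\Num+\Coeff)$ to obtain the stated polynomial bound on $n_0$ is then routine.
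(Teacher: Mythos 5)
Your proposal is correct and follows essentially the same strategy as the paper: per Wiener-chaos level, realize the Gram matrix of the inner tensors inside a $\mathsf{poly}(d,\Num)$-dimensional space by picking enough of the $\Phi_S$ basis tensors (each supported on at most $d$ coordinates), then average over $\lceil\beta(\Num+\Coeff)^{-2}\rceil$ independent blocks, invoking Fact~\ref{fact:eigen-2} for eigenregularity and orthogonality of distinct chaoses plus mean-zero independence across blocks for covariance preservation. The only cosmetic difference is that you make the linear-algebra step explicit via a Cholesky-type factorization $G^{(j)}=BB^{\top}$ and use a uniform ambient dimension $m=d+\Num$, whereas the paper invokes a ``standard linear algebra argument'' and uses $T=i\cdot m_i$ coordinates at level $i$; both yield the same polynomial bound on $n_0$.
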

Having constructed the family $\mathcal{R}$, for $1 \le s \le k$ and $1 \le q \le d$, we define
\[
\tilde{r}_{s,q} =\Outer(r_{s,q}) \left(\Inner(r_{s,q})_1(x),\dots, \Inner(r_{s,q})_{\num({s,q})}(x)\right),
\]
where the polynomial $\Outer(r_{s,q}): \mathbb{R}^{\num(s,q)} \rightarrow \mathbb{R}$ is the same as the polynomial $\Outer(p_{s,q})$. For $1 \le s \le k$, we define
\[
\tilde{r}_s =  r_{s,0} + \sum_{q=1}^d c_{s,q} \tilde{r}_{s,q},
\]
where $r_{s,0}  = p_{s,0}$. Finally, we define $\fh = \mathsf{PTF}(\tilde{r}_1, \ldots, \tilde{r}_k)$. Note that $\fh: \mathbb{R}^{n_0} \rightarrow \{0,1\}$.
We will now show that this construction has all the properties stated in Theorem~\ref{thm:reduction}. 
First of all, note that as long as the function $\beta(\cdot)$ is explicitly defined, $n_0$ is an explicit function of $d$ and $\epsilon$. The proof of the remaining properties will be done in several steps. Towards this, we will define a new family of polynomials which will essentially be a ``noise-attenuated" versions of the polynomials. These polynomials will be over $2n_0$ variables. 
For $1 \le s \le k$, $1 \le q \le d$ and $1 \le \ell \le \num(s,q)$, we define 
\[
\Inner(u_{s,q})_\ell(x,y) = \Inner(p_{s,q})_\ell(e^{-t}(x) + \sqrt{1 - e^{-2t}}(y)), \ \Inner(v_{s,q})_\ell(x,y) = \Inner(r_{s,q})_\ell(e^{-t}(x) + \sqrt{1 - e^{-2t}}(y)). 
\]
\[
\tilde{u}_{s,q} = \Outer(u_{s,q}) \left(\Inner(u_{s,q})_1(x,y),\dots, \Inner(u_{s,q})_{\num({s,q})}(x,y)\right), \
\tilde{v}_{s,q} =\Outer(v_{s,q}) \left(\Inner(v_{s,q})_1(x,y),\dots, \Inner(v_{s,q})_{\num({s,q})}(x,y)\right) 
\]
Here $ \Outer(u_{s,q}) = \Outer(p_{s,q})$ and $\Outer(v_{s,q}) = \Outer(r_{s,q})$. Finally, for $1 \le s \le k$, we define 
\[
\tilde{u}_s = u_{s,0} + \sum_{q=1}^d c_{s,q} \tilde{u}_{s,k} , \ \ \tilde{v}_s = v_{s,0} + \sum_{q=1}^d c_{s,q} \tilde{v}_{s,k},
\]
where $u_{s,0} = v_{s,0} = p_{s,0}= r_{s,0}$. For the convenience of the reader, note that the  domain of the families of polynomials defined using the letter `u' is $\mathbb{R}^{2n}$ and that of polynomials defined using the letter `v' is $\mathbb{R}^{2n_0}$. Let us define $\tilde{f}_u : \mathbb{R}^{2n} \rightarrow [k]$ and $\fh_{v} : \mathbb{R}^{2n_0} \rightarrow [k]$ as $$\tilde{f}_u = \mathsf{PTF}(\tilde{u}_1, \ldots, \tilde{u}_k), \ \  \fhv= \mathsf{PTF}(\tilde{v}_1, \ldots, \tilde{v}_k).$$
Defining $\tilde{f}_u$ and $\fhv$ as above, we have the following useful relations: 
$$
\mathop{\mathbf{E}}_{x \sim \gamma_n} [\langle \tilde{f}(x) , P_{t} \tilde{f}(x) \rangle] = \mathop{\mathbf{E}}_{x \sim \gamma_n, y \sim \gamma_n}[\langle \tilde{f}(x), \tilde{f}_u(x,y) \rangle], \ 
$$ 
$$
\mathop{\mathbf{E}}_{x \sim \gamma_n} [\langle \fh(x) , P_{t} \fh(x)\rangle] = \mathop{\mathbf{E}}_{x \sim \gamma_n, y \sim \gamma_n}[\langle \fh(x), \fhv(x,y) \rangle].
$$
The next claim establishes relations between the pairwise correlations of polynomials in the family
$\{\Inner(u_{s,q})_\ell\}$ and $\{\Inner(v_{s,q})_\ell\}$. 
\begin{claim}\label{clm:const-corr}
\begin{enumerate}
\item For all $1 \le s_1, s_2 \le k$, $1 \le q_1, q_2 \le d$ and $1 \le \ell_i \le \mathsf{num}(s_i, q_i)$ (for $i \in \{1,2\}$), we have \[
\mathbf{E}[\Inner(u_{s_1,q_1})_{\ell_1}(x,y) \cdot \Inner(u_{s_2,q_2})_{\ell_2}(x,y) ] = \mathbf{E}[\Inner(v_{s_1,q_1})_{\ell_1}(x,y) \cdot \Inner(v_{s_2,q_2})_{\ell_2}(x,y) ] .\]
\item For all $1 \le s_1, s_2 \le k$, $1 \le q_1, q_2 \le d$ and $1 \le \ell_i \le \mathsf{num}(s_i, q_i)$ (for $i \in \{1,2\}$), we have \[
\mathbf{E}[\Inner(p_{s_1,q_1})_{\ell_1}(x) \cdot \Inner(u_{s_2,q_2})_{\ell_2}(x,y) ] = \mathbf{E}[\Inner(r_{s_1,q_1})_{\ell_1}(x) \cdot \Inner(v_{s_2,q_2})_{\ell_2}(x,y) ] .\]
\end{enumerate}
\end{claim}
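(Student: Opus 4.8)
The plan is a direct computation resting on two elementary facts: (i) if $x, y \sim \gamma_n$ are independent, then $z := e^{-t}x + \sqrt{1-e^{-2t}}\,y$ is again distributed as $\gamma_n$; and (ii) the Ornstein--Uhlenbeck operator $P_t$ acts on the $j$-th Wiener chaos $\mathcal{W}^j$ as the scalar $e^{-tj}$, which is immediate from $P_t H_S = e^{-t|S|} H_S$ together with the fact that $\mathcal{W}^j$ is spanned by the $H_S$ with $|S| = j$.

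For the first identity, I would apply the change of variables in (i). Writing $z = e^{-t}x + \sqrt{1-e^{-2t}}\,y$, the left-hand side becomes $\mathbf{E}_{z \sim \gamma_n}[\Inner(p_{s_1,q_1})_{\ell_1}(z)\,\Inner(p_{s_2,q_2})_{\ell_2}(z)]$ and, by the identical manipulation, the right-hand side becomes $\mathbf{E}_{z \sim \gamma_{n_0}}[\Inner(r_{s_1,q_1})_{\ell_1}(z)\,\Inner(r_{s_2,q_2})_{\ell_2}(z)]$. These two quantities agree by the covariance-matching clause of Condition~\ref{cond:junta}.

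For the second identity, I would integrate out $y$ first, keeping $x$ fixed. Since $\Inner(p_{s_1,q_1})_{\ell_1}$ depends only on $x$, and since $\mathbf{E}_y[\Inner(u_{s_2,q_2})_{\ell_2}(x,y)] = \big(P_t \Inner(p_{s_2,q_2})_{\ell_2}\big)(x) = e^{-tj}\,\Inner(p_{s_2,q_2})_{\ell_2}(x)$ where $j$ is the Wiener-chaos level of $\Inner(p_{s_2,q_2})_{\ell_2}$, the left-hand side equals $e^{-tj}\,\mathbf{E}_x[\Inner(p_{s_1,q_1})_{\ell_1}(x)\,\Inner(p_{s_2,q_2})_{\ell_2}(x)]$. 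The same manipulation turns the right-hand side into $e^{-tj'}\,\mathbf{E}_x[\Inner(r_{s_1,q_1})_{\ell_1}(x)\,\Inner(r_{s_2,q_2})_{\ell_2}(x)]$, where $j'$ is the level of $\Inner(r_{s_2,q_2})_{\ell_2}$. By the first clause of Condition~\ref{cond:junta} the levels coincide ($j = j'$), and by the second clause the two expectations coincide, so the identity follows.

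I do not expect a genuine obstacle here: the content is purely that OU-evolution respects the Wiener-chaos grading, and that Condition~\ref{cond:junta} was designed to preserve exactly the two invariants (chaos level and pairwise covariance) that this computation uses. The only place to be careful is matching the exponential prefactors $e^{-tj}$ across the $p$- and $r$-families in part (2) --- which is precisely why the ``same level of Wiener chaos'' requirement was built into Condition~\ref{cond:junta}.
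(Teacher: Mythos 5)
Your proposal is correct and follows essentially the same route as the paper: item (1) via the measure-preserving change of variables $z = e^{-t}x+\sqrt{1-e^{-2t}}\,y$ (what the paper calls a unitary transformation of variables) plus the covariance-matching clause of Condition~\ref{cond:junta}, and item (2) by integrating out $y$ to produce $P_t$, using that $P_t$ acts as $e^{-tj}$ on $\mathcal{W}^j$, and invoking the level-matching and covariance-matching clauses. The only cosmetic difference is that the paper additionally notes both sides vanish when the two chaos levels $j_1\ne j_2$ differ, a case your prefactor computation handles without needing to single it out.
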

\begin{proof}
\textbf{Proof of Item 1.} The proof of the first item is quite easy. Namely, note that $\Inner(u_{s_1,q_1})_{\ell_1}$ and 
$\Inner(u_{s_2,q_2})_{\ell_2}$ are obtained by applying a unitary transformation (on the space of variables) to the polynomials $\Inner(p_{s_1,q_1})_{\ell_1}$ and 
$\Inner(p_{s_2,q_2})_{\ell_2}$. Thus, we have
\[
\mathbf{E}[\Inner(u_{s_1,q_1})_{\ell_1}(x,y) \cdot \Inner(u_{s_2,q_2})_{\ell_2}(x,y) ] = \mathbf{E}[\Inner(p_{s_1,q_1})_{\ell_1}(x) \cdot \Inner(p_{s_2,q_2})_{\ell_2}(x) ] .
\]
Likewise, 
\[
\mathbf{E}[\Inner(r_{s_1,q_1})_{\ell_1}(x) \cdot \Inner(r_{s_2,q_2})_{\ell_2}(x) ] = \mathbf{E}[\Inner(v_{s_1,q_1})_{\ell_1}(x,y) \cdot \Inner(v_{s_2,q_2})_{\ell_2}(x,y) ] .
\]
However, by construction of the family $\{\Inner(r_{s,q})_{\ell}\}$, we have that 
\[
\mathbf{E}[\Inner(r_{s_1,q_1})_{\ell_1}(x) \cdot \Inner(r_{s_2,q_2})_{\ell_2}(x) ] = \mathbf{E}[\Inner(p_{s_1,q_1})_{\ell_1}(x) \cdot \Inner(p_{s_2,q_2})_{\ell_2}(x) ] .
\]
This proves the first item. ~\\
\textbf{Proof of Item 2:} The proof of this is somewhat more involved. First of all, note that that there exists $j_1, j_2 \in \mathbb{N}$, such that 
$$
\Inner(p_{s_1,q_1})_{\ell_1}(x), \ \Inner(r_{s_1,q_1})_{\ell_1}(x) \in \mathcal{W}^{j_1} \\ \ \ \ ; \ \  \Inner(u_{s_2,q_2})_{\ell_2}(x,y), \ \Inner(v_{s_2,q_2})_{\ell_2}(x,y) \in \mathcal{W}^{j_2}. 
$$
Note that if $j_1 \not = j_2$, then Item 2 trivially holds as polynomials in different levels of the Wiener chaos are orthogonal. Thus, from now onwards, assume that $j_1 = j_2  =j$. 
Next, we observe that 
\[
\mathbf{E}[\Inner(p_{s_1,q_1})_{\ell_1}(x) \cdot \Inner(u_{s_2,q_2})_{\ell_2}(x,y) ]  = \langle \Inner(p_{s_1,q_1})_{\ell_1}(x),\  P_t \  \Inner(p_{s_2,q_2})_{\ell_2}(x) \rangle
\]
However, $P_t \  \Inner(p_{s_2,q_2})_{\ell_2}(x) = e^{-jt} \cdot \Inner(p_{s_2,q_2})_{\ell_2}(x)$. Thus, 
\[
\mathbf{E}[\Inner(p_{s_1,q_1})_{\ell_1}(x) \cdot \Inner(u_{s_2,q_2})_{\ell_2}(x,y) ] = e^{-jt} \mathbf{E}[\Inner(p_{s_1,q_1})_{\ell_1}(x) \cdot \Inner(p_{s_2,q_2})_{\ell_2}(x) ].
\]
Likewise, 
\[
\mathbf{E}[\Inner(r_{s_1,q_1})_{\ell_1}(x) \cdot \Inner(v_{s_2,q_2})_{\ell_2}(x,y) ] 
 = e^{-jt} \cdot \mathbf{E}[\Inner(r_{s_1,q_1})_{\ell_1}(x) \cdot \Inner(r_{s_2,q_2})_{\ell_2}(x,y) ]
\]
However, 
\[
\mathbf{E}[\Inner(r_{s_1,q_1})_{\ell_1}(x) \cdot \Inner(r_{s_2,q_2})_{\ell_2}(x,y) ] = 
\mathbf{E}[\Inner(p_{s_1,q_1})_{\ell_1}(x) \cdot \Inner(p_{s_2,q_2})_{\ell_2}(x,y) ]. 
\]
This proves the second item as well. 
\end{proof}

We will now list all the conditions required on the function $\beta(\cdot)$ that we will be required by various parts of our proof. It is easy to see that by choosing the function $\beta(\cdot)$ to be sufficiently fast decreasing, one can satisfy all the conditions. As we have noticed earlier, once $\beta(\cdot)$ is chosen explicitly, the construction of the PTF $\fh$ is completely explicit. The conditions that we enforce on $\beta(\cdot)$ are given below. While the conditions enforced are somewhat tedious to state, they are stated in the form so that they are readily usable later in the paper. 
\begin{cond}\label{cond:enforce-beta}
The function $\beta: [1, \infty) \rightarrow [0,1]$ should satisfy the following conditions: 
\begin{enumerate}
\item For $\xi = \frac{\epsilon}{40k^2}$ and 
for $B^{(1)}, \delta^{(1)}, c^{(1)}$ defined as \[
B^{(1)} = \Omega\bigg( \ln  \frac{\mathsf{Num} \cdot d}{\xi} \bigg)^{d/2} ;  \ \delta^{(1)} = \bigg( \frac{\xi}{d \cdot B^{(1)} \cdot \sqrt{\mathsf{Num}} \cdot \mathsf{Coeff}^{1/d}} \bigg)^d; \   c_{(1)} = \frac{\mathsf{Num}}{\delta^{(1)} \cdot \sqrt{\xi}},
\]
we  require
\[
\beta(\mathsf{Coeff} + \mathsf{Num}) \cdot \mathsf{Coeff}^2 \cdot 2^{\mathsf{Num} \cdot (\mathsf{Num}\cdot 2d+1)} \cdot 2^{O(d^3)} \leq \frac{1}{4} , \ \ 
 2^{O(d \log d)} \cdot \mathsf{Num}^2 \cdot 4c_{(1)}^{2} \cdot \beta(\mathsf{Coeff} + \mathsf{Num}) \le \xi.
\]
\item Define  $\xi = \frac{\epsilon}{40k^2}$, $L = 4 \cdot 9^{d+1} \cdot (d+1)^2 \cdot \log^{d/2} (k \cdot d/\epsilon) $ and $\frac{L}{2} \cdot L^{-2^{2d}} \equiv \vartheta.$ Further,  define $B^{(2)}$, $\delta^{(2)}$ and $c_{(2)}$ as
\[
B^{(2)} = \Omega\bigg( \ln  \frac{\mathsf{2\cdot Num} \cdot 2d}{\xi} \bigg)^{d} ;  \ \delta^{(2)} = \bigg( \frac{\xi \cdot \vartheta^{1/2d}}{2d \cdot B^{(2)} \cdot \sqrt{2 \cdot \mathsf{Num}} \cdot \mathsf{Coeff}^{2/d}} \bigg)^{2d}; \   c_{(2)} = \frac{2 \cdot \mathsf{Num}}{\delta^{(2)} \cdot \sqrt{\xi}}.
\]
We require
\[
\beta(\mathsf{Coeff}^2 + 2 \cdot \mathsf{Num}) \cdot \mathsf{Coeff}^4 \cdot 2^{2\mathsf{Num} \cdot (\mathsf{Num}\cdot 4d+1)} \cdot 2^{O(d^3)} \leq \frac{\vartheta}{4} , \ \ 
 2^{O(d \log d)} \cdot \mathsf{Num}^2 \cdot 4c_{(2)}^{2} \cdot \beta(\mathsf{Coeff}^2 + 2 \cdot \mathsf{Num}) \le \xi.
\]
\end{enumerate}
\end{cond}

We now state a lemma whose consequences shall be used repeatedly in the rest of the proof. The lemma is stated below. 

\begin{lemma}\label{lem:monomial}
Let $p_1, \ldots, p_t \in \mathbb{R}^n \rightarrow \mathbb{R}$ such that for $1 \le i \le t$, $p_i = I_{q_i}(h_i)$ where $p_i \in \mathcal{W}^{q_i}$. Further, $\mathsf{Var}(p_i) =1$ and $\lambda_{\max}(h_i) \le \kappa$. Let $C \in \mathbb{R}^{t \times t}$ where $C(i,j) = \langle h_i, h_j \rangle$. 
There exists a function $F : \mathbb{R}^{t \times t} \times \mathbb{Z}^{t} \rightarrow \mathbb{R}$ such that 
\[
\bigg| \mathbf{E}\big[\prod_{i=1}^t p_i \big] - F(C, q_1, \ldots, q_t) \bigg| \le  2^{t (q_1 + \ldots + q_t+1)} \cdot \kappa. 
\]
In other words, up to the error term of $2^{t (q_1 + \ldots + q_t)} \cdot \kappa$, the expectation of the product $\prod_{i=1}^t p_i$ is just dependent on the degrees of the polynomials and the covariance matrix of the polynomials. 
\end{lemma}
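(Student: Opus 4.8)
\begin{proofsketch}
Note first that, since $\mathsf{Var}(p_i)=1$ and $p_i\in\mathcal{W}^{q_i}$, Proposition~\ref{prop:4} gives $\|h_i\|_F=1$ (and in particular $q_i\ge 1$). The plan is to expand the product $\prod_{i=1}^t p_i=\prod_{i=1}^t I_{q_i}(h_i)$ by repeatedly applying It\^o's multiplication formula (Proposition~\ref{prop:Ito}), take the expectation so that only the Wiener-chaos-$0$ terms remain, and then split those surviving terms into a ``main part'' depending only on $C$ and $q_1,\dots,q_t$ and an ``error part'' controlled by eigenregularity. Concretely, setting $P_1=p_1$ and $P_{j+1}=P_j\cdot p_{j+1}$ and applying Proposition~\ref{prop:Ito} at each stage, $P_t$ becomes a finite sum $\sum_\alpha c_\alpha I_{m_\alpha}(g_\alpha)$ in which each $g_\alpha$ is obtained from $h_1,\dots,h_t$ by a sequence of symmetrized contractions $\widetilde{\otimes}_r$ and each coefficient $c_\alpha$ is the corresponding product of It\^o constants. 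Since $\E[I_m(g)]=0$ for $m\ge1$ and $\E[I_0(g)]=g$, we have $\E[\prod_i p_i]=\sum_{\alpha:\,m_\alpha=0}c_\alpha\,g_\alpha$, a sum of scalars.

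Each fully contracted $g_\alpha$ is naturally encoded by a complete pairing (``diagram'') of the $Q:=q_1+\cdots+q_t$ legs of $h_1,\dots,h_t$ with no leg paired inside a single $h_i$, equivalently by a multigraph on $[t]$ in which vertex $i$ has degree $q_i$. I would sort these diagrams into \emph{product diagrams} --- those whose multigraph is a disjoint union of ``double edges'', i.e.\ there is a perfect matching $\pi$ of $[t]$ with $q_i=q_{\pi(i)}$ and all $q_i$ legs of $i$ paired with legs of $\pi(i)$ --- and all the rest. A product diagram contributes exactly $\prod_{\{i,j\}\in\pi}\langle h_i,h_j\rangle=\prod_{\{i,j\}\in\pi}C(i,j)$, so collecting all product diagrams with their coefficients defines the function $F(C,q_1,\dots,q_t)$ (taking $F\equiv 0$ if no such $\pi$ exists); by construction it depends only on $C$ and the degrees.

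It remains to bound $\big|\sum_{\text{non-product }\alpha}c_\alpha g_\alpha\big|$. For a fixed non-product diagram, its multigraph on $[t]$ has a connected component $K$ with $|K|\ge 2$ that is not a single double edge, and a short graph argument shows $K$ then has a vertex $v$ with edges to at least two distinct neighbours; hence for any one neighbour $u$ the number $r$ of $v$--$u$ edges satisfies $0<r<q_v$ and $r\le q_u$. Evaluating the full contraction by contracting tensors pairwise, start with $h_v\otimes_r h_u$: by Fact~\ref{fact:contraction1} (applicable since $\lambda_{\max}(h_v)\le\kappa$ and $\|h_u\|_F=1$) this has Frobenius norm at most $\kappa$; contracting in the remaining tensors of $K$ one at a time only decreases the Frobenius norm by Fact~\ref{fact:contraction} (as $\|h_j\|_F=1$), so the scalar value of $K$'s contraction is at most $\kappa$ in modulus, while the other components contribute factors $\langle h_i,h_j\rangle$ of modulus at most $1$; thus $|g_\alpha|\le\kappa$ for every non-product $\alpha$. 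Finally, bounding the It\^o constants produced at each of the $t-1$ multiplication steps together with the number of fully contracted diagrams yields a combinatorial prefactor of at most $2^{t(q_1+\cdots+q_t+1)}$, giving the claimed inequality. I expect the main work to be in this last part: setting up the diagram bookkeeping carefully, proving the combinatorial fact that a non-product diagram always admits a genuinely partial contraction against some $h_v$ (so that eigenregularity is invoked exactly once, to gain the factor $\kappa$), and carrying out the coefficient count tightly enough to reach the stated exponent; the expansion and the identification of $F$ are routine.
\end{proofsketch}
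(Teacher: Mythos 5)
Your proposal is correct and follows essentially the same route as the paper: iterated application of It\^o's multiplication formula, identification of the ``product/aligned'' fully-contracted terms as the ones depending only on $C$ and the degrees, a single invocation of eigenregularity (Fact~\ref{fact:contraction1}) on a genuinely partial contraction to bound every other term by $\kappa$, and the coefficient count $2^{t(q_1+\cdots+q_t+1)}$. The only difference is organizational: you classify terms directly as Wick-type pairing diagrams, whereas the paper first partitions by the contraction sequence $\mathbf{r}$ (Proposition~\ref{prop:approx-error2}) and then, within $\mathcal{S}_z$, by the permutations arising from the symmetrization (aligned vs.\ unaligned pairs) --- your single product/non-product dichotomy subsumes both cases.
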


The proof of Lemma~\ref{lem:monomial} is quite long and involved and is deferred to the appendix. However, this lemma can be used to derive the following important consequences stated below.

\begin{claim}~\label{lem:variance-lowerbound}
Let $\Psi:\mathbb{R}^{\mathsf{Num}_\Psi} \rightarrow \mathbb{R}$ be a degree-$d$ polynomial such that sum of the absolute values of its coefficients is given by $\mathsf{Coeff}_\Psi$. Let $\{A_i\}_{1 \le i \le \mathsf{Num}_\Psi}$ and $\{B_i \}_{1 \le i \le \mathsf{Num}_\Psi}$  be centered families polynomials of variance $1$ such that for all $1 \le i \le m$, $\exists 0< j \le d$ such that $A_i, B_i \in \mathcal{W}^j$. Further, for all $1 \le i \le j \le \mathsf{Num}_\Psi$, $\mathbf{E}[A_i A_j] = \mathbf{E}[B_i B_j]$. If each $A_i, B_i$ is $\zeta$-eigenregular where 
$
\zeta \cdot \mathsf{Coeff}_\Psi \cdot 2^{\mathsf{Num}_\Psi \cdot (\mathsf{Num}_\Psi \cdot d+1)} \leq \kappa, 
$
then $|\mathbf{E}[\Psi(A_1, \ldots, A_{\mathsf{Num}})] - \mathbf{E}[\Psi(B_1, \ldots, B_{\mathsf{Num}})]| \le \kappa$. 
\end{claim}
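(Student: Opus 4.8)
The plan is to reduce the statement to Lemma~\ref{lem:monomial} by expanding $\Psi$ into monomials and comparing the two families term by term. First I would write $\Psi(x)=\sum_\alpha c_\alpha x^\alpha$, where $\alpha$ ranges over the monomials of $\Psi$; since $\Psi$ has degree $d$ each has $|\alpha|\le d$, and (as holds in every application of the claim, where $\Psi$ is an ``outer'' polynomial produced by the regularity lemma) $\Psi$ is multilinear, so each monomial is $x^\alpha=\prod_{i\in S_\alpha}x_i$ for a set $S_\alpha\subseteq[\mathsf{Num}_\Psi]$ with $|S_\alpha|\le\min(d,\mathsf{Num}_\Psi)$. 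By linearity of expectation,
\[
\mathbf{E}[\Psi(A_1,\dots,A_{\mathsf{Num}_\Psi})]-\mathbf{E}[\Psi(B_1,\dots,B_{\mathsf{Num}_\Psi})]=\sum_\alpha c_\alpha\Big(\mathbf{E}\big[{\textstyle\prod_{i\in S_\alpha}}A_i\big]-\mathbf{E}\big[{\textstyle\prod_{i\in S_\alpha}}B_i\big]\Big),
\]
so since $\sum_\alpha|c_\alpha|=\mathsf{Coeff}_\Psi$, it suffices to bound each bracketed monomial difference by (a constant times) $2^{\mathsf{Num}_\Psi(\mathsf{Num}_\Psi d+1)}\zeta$ and sum, absorbing absolute constants per the paper's convention on implicit constants.

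\textbf{One monomial via Lemma~\ref{lem:monomial}.} Next I would fix $\alpha$ and set $t:=|S_\alpha|\le\min(d,\mathsf{Num}_\Psi)$. The factors $\{A_i\}_{i\in S_\alpha}$ meet the hypotheses of Lemma~\ref{lem:monomial}: each $A_i$ lies in a single Wiener chaos $\mathcal{W}^{j_i}$ with $1\le j_i\le d$, has variance $1$, and is $\zeta$-eigenregular, so writing $A_i=I_{j_i}(h_i^A)$ one has $\lambda_{\max}(h_i^A)\le\zeta$ (and level-$1$ factors, being determined by their covariances, contribute no error term). Lemma~\ref{lem:monomial} then produces a function $F$ with
\[
\Big|\mathbf{E}\big[{\textstyle\prod_{i\in S_\alpha}}A_i\big]-F\big(C^A_\alpha,(j_i)_{i\in S_\alpha}\big)\Big|\le 2^{t(\sum_{i\in S_\alpha}j_i+1)}\,\zeta,
\]
where $C^A_\alpha$ is the matrix of pairwise inner products of the tensors $h_i^A$, $i\in S_\alpha$, and the same estimate holds with $B$ in place of $A$.

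\textbf{Matching the matrices and summing.} The crucial point is that the two invocations of $F$ receive identical data. By hypothesis $A_i$ and $B_i$ lie in the \emph{same} level $j_i$, so the level-lists agree; and by Proposition~\ref{prop:4}, for $j_a=j_b$ we have $\langle h_a^A,h_b^A\rangle=\mathbf{E}[A_aA_b]=\mathbf{E}[B_aB_b]=\langle h_a^B,h_b^B\rangle$ from the covariance hypothesis, while for $j_a\ne j_b$ the entry is a fixed value ($0$, under the natural convention for inner products of tensors of differing orders) on both sides. Hence $C^A_\alpha=C^B_\alpha$, the $F$-terms cancel, and the triangle inequality gives
\[
\Big|\mathbf{E}\big[{\textstyle\prod_{i\in S_\alpha}}A_i\big]-\mathbf{E}\big[{\textstyle\prod_{i\in S_\alpha}}B_i\big]\Big|\le 2^{t(\sum_{i\in S_\alpha}j_i+1)+1}\,\zeta.
\]
Using $t\le\mathsf{Num}_\Psi$ and $\sum_{i\in S_\alpha}j_i\le t\,d\le\mathsf{Num}_\Psi\,d$ bounds the exponent by $\mathsf{Num}_\Psi(\mathsf{Num}_\Psi d+1)$ (up to the harmless additive $+1$), and summing over $\alpha$ weighted by $|c_\alpha|$ yields $|\mathbf{E}[\Psi(A_\bullet)]-\mathbf{E}[\Psi(B_\bullet)]|\le\mathsf{Coeff}_\Psi\cdot 2^{\mathsf{Num}_\Psi(\mathsf{Num}_\Psi d+1)}\cdot\zeta\le\kappa$.

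\textbf{Main obstacle.} All the genuine work sits inside Lemma~\ref{lem:monomial}; once that is available this argument is essentially bookkeeping. The two places that need care are (i) checking that the inner-product matrices handed to Lemma~\ref{lem:monomial} really are identical for the two families — this is precisely the step that consumes both the equal-covariance and equal-chaos-level hypotheses, and requires keeping track of which factors sit in which Wiener chaos; and (ii) the elementary but slightly fiddly exponent accounting, where one must use that each monomial of the (multilinear) $\Psi$ involves at most $\min(d,\mathsf{Num}_\Psi)$ distinct factors so that the exponent never exceeds $\mathsf{Num}_\Psi(\mathsf{Num}_\Psi d+1)$.
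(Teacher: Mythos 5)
Your proposal is correct and is essentially the paper's own proof: the paper's argument is exactly "apply Lemma~\ref{lem:monomial} to each monomial and use the triangle inequality," and your write-up simply spells out the bookkeeping (the monomial expansion, the matching of the covariance matrices via Proposition~\ref{prop:4} and the equal-chaos-level hypothesis, and the exponent accounting). The extra factor of $2$ from the two-sided comparison to $F$ is absorbed by the paper's loose constants, so nothing is lost.
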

\begin{proof}
Applying Lemma~\ref{lem:monomial} and triangle inequality, we have
$$
|\mathbf{E}[\Psi(A_1, \ldots, A_{\mathsf{Num}})] - \mathbf{E}[\Psi(B_1, \ldots, B_{\mathsf{Num}})]|  \le \zeta \cdot\mathsf{Coeff}_\Psi \cdot 2^{\mathsf{Num}_\Psi \cdot (\mathsf{Num}_\Psi \cdot d+1)}. 
$$
Plugging the condition on $\zeta$, we finish our proof. 
\end{proof}
Thus, the above lemma states that if the polynomial families $\{A_i\}$ and $\{B_i\}$ are \emph{sufficiently eigenregular} with matching means and covariance, the expectation of $\Psi(A_1, \ldots, A_{\mathsf{Num}})$ 
is close to $\Psi(B_1, \ldots, B_{\mathsf{Num}})$. The next lemma puts a bound on the absolute value of the expectation of $\Psi(A_1, \ldots, A_{\mathsf{Num}})$. The proof is quite elementary except it relies on Proposition~\ref{prop:upper-bound} which is essentially a combination of hypercontractivity. 
\begin{claim}~\label{clm:variance-lowerbound2}
Let $\Psi:\mathbb{R}^{\mathsf{Num}_\Psi} \rightarrow \mathbb{R}$ be a degree-$d$ polynomial such that sum of the absolute values of its coefficients is given by $\mathsf{Coeff}_\Psi$. Let $\{A_i\}_{1 \le i \le \mathsf{Num}_\Psi}$ be a collection of centered  polynomials of variance $1$ such that for all $1 \le i \le m$, $\exists 0< j \le d$ such that $A_i \in \mathcal{W}^j$. 
Then, $\mathbf{E}[|\Psi(A_1, \ldots, A_m)|] \le \mathsf{Coeff}_\Psi \cdot 2^{O(d^3)}$. 
\end{claim}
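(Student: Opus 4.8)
The plan is to expand $\Psi$ into monomials, bound the contribution of each monomial using Gaussian hypercontractivity, and sum. First I would write $\Psi(z)=\sum_{\alpha}c_\alpha z^\alpha$, where $\alpha$ ranges over multi-indices in $\mathbb{Z}_{\ge 0}^{\mathsf{Num}_\Psi}$ with $|\alpha|:=\sum_i\alpha_i\le d$ and $\sum_\alpha|c_\alpha|=\mathsf{Coeff}_\Psi$. By the triangle inequality it suffices to prove that $\mathbf{E}\big[\prod_{i=1}^m|A_i|^{\alpha_i}\big]\le 2^{O(d^3)}$ for every such $\alpha$, since then
\[
\mathbf{E}\big[|\Psi(A_1,\dots,A_m)|\big]\le\sum_\alpha|c_\alpha|\cdot\mathbf{E}\Big[\prod_{i=1}^m|A_i|^{\alpha_i}\Big]\le\mathsf{Coeff}_\Psi\cdot 2^{O(d^3)}.
\]

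Next I would fix $\alpha$, set $r=|\alpha|$ (assuming $r\ge 1$; the constant term contributes at most $\mathsf{Coeff}_\Psi$), and apply the generalized H\"older inequality with exponents $r/\alpha_i$ to get $\mathbf{E}\big[\prod_i|A_i|^{\alpha_i}\big]\le\prod_i\|A_i\|_r^{\alpha_i}$. This reduces the estimate to a single-variable moment bound that is uniform in $m$. Each $A_i$ is a Gaussian polynomial of degree at most $d$ with $\|A_i\|_2=1$ (indeed $A_i\in\mathcal{W}^{j}$ for some $j\le d$), so Gaussian hypercontractivity --- concretely, $\|A_i\|_r\le(r-1)^{j/2}$ for $r\ge 2$ and $\|A_i\|_r\le\|A_i\|_2=1$ for $r\le 2$ --- yields $\|A_i\|_r\le\max\{1,(r-1)^{d/2}\}\le d^{d/2}$. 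Hence $\mathbf{E}\big[\prod_i|A_i|^{\alpha_i}\big]\le(d^{d/2})^r\le d^{d^2/2}=2^{O(d^2\log d)}\le 2^{O(d^3)}$, which proves the claim. Alternatively one can invoke Proposition~\ref{prop:upper-bound} directly, which packages exactly this hypercontractive $L^r$-norm estimate.

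I do not anticipate a genuine obstacle here: the only subtlety is the case of monomials of degree $0$ or $1$, where the comparison $\|A_i\|_r\le\|A_i\|_2$ goes in the easy direction and hypercontractivity is unnecessary; and one should note that, because H\"older collapses the product into a product of one-dimensional $L^r$ norms, the bound is automatically independent of $\mathsf{Num}_\Psi$, which is precisely what makes it usable in the applications of Claim~\ref{clm:variance-lowerbound2} later on.
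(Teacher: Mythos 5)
Your proof is correct. The outer skeleton coincides with the paper's: expand $\Psi$ into monomials, use the triangle inequality so that only $\mathbf{E}\big[\prod_i |A_i|^{\alpha_i}\big]$ needs to be bounded, and observe that the bound per monomial is independent of $\mathsf{Num}_\Psi$. Where you differ is in how that per-monomial expectation is controlled. The paper sets $Z=\prod_{j} A_{i_j}$, bounds $\mathbf{E}[Z^2]$ by \emph{iterating} the two-factor estimate of Proposition~\ref{prop:upper-bound} (which is Cauchy--Schwarz plus the $L^4$-versus-$L^2$ hypercontractive inequality), and then applies Jensen to pass from $\mathbf{E}[Z^2]$ to $\mathbf{E}[|Z|]$; this yields $\sqrt{(d^2(d^2+1))^d\cdot 9^{d^3}}=2^{O(d^3)}$. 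You instead collapse the product in a single step via the generalized H\"older inequality with exponents $r/\alpha_i$ and then invoke the full $L^r$ hypercontractive moment bound $\|A_i\|_r\le (r-1)^{j/2}$ for chaos elements. Your route is cleaner and quantitatively a bit sharper ($2^{O(d^2\log d)}$ rather than $2^{O(d^3)}$, which of course still implies the stated bound), and it makes the uniformity in $m$ completely transparent; the paper's route has the minor advantage of using only the $L^4$ form of hypercontractivity that it has already packaged as Proposition~\ref{prop:upper-bound}, so no new analytic input is needed. Either argument is acceptable as a proof of Claim~\ref{clm:variance-lowerbound2}.
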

\begin{proof}
Let $1 \le i_1,\ldots, i_d \leq m$. Let $Z_{i_1,\ldots, i_d} = \prod_{j=1}^d A_{i_j}$. Then, applying Jensen's inequality and Proposition~\ref{prop:upper-bound} iteratively (and exploiting the fact that the degree of the product $A_{i,j}$ is at most $d^2$), we have 
\[
\mathbf{E}[|Z_{i_1,\ldots, i_d}|] \le \sqrt{\mathbf{E}[Z^2_{i_1,\ldots, i_d}]} \le \sqrt{(d^2 (d^2 +1))^d \cdot 9^{d^3}}. 
\]
Let $\Psi(z_1, \ldots, z_m) = \sum_{\mathcal{S}} c_{\mathcal{S}} \prod_{j \in \mathcal{S}} A_{j}$ (where $\mathcal{S}$ ranges over all multistep of size at most $d$ on the set $[m]$).  Here $\mathsf{Coeff}_\Psi = \sum_{\mathcal{S}} |c_{\mathcal{S}}|$. Then, 
\[
\mathbf{E}[|\Psi(A_1, \ldots, A_m)| ] \le \sum_{\mathcal{S}} |c_{\mathcal{S}}| \cdot \sqrt{(d^2 (d^2 +1))^d \cdot 9^{d^3}} \le \mathsf{Coeff}_\Psi \cdot 2^{O(d^3)}.
\]
\end{proof}
The next lemma is an extension of Claim~\ref{lem:variance-lowerbound}. In particular, let us assume that $\{A_i\}$ and $\{B_i\}$ are \emph{sufficiently eigenregular} polynomial families then for any degree-$d$ polynomial $\Psi$, $\Psi(A_1, \ldots, A_{\mathsf{Num}})$ and $\Psi(B_1, \ldots, B_{\mathsf{Num}})$ are non-negative with approximately the same probability.

\begin{lemma}~\label{lem:sign-match}
Let $\Psi:\mathbb{R}^{\mathsf{Num}_\Psi} \rightarrow \mathbb{R}$ be a degree-$d$ polynomial such that sum of the absolute values of its coefficients is given by $\mathsf{Coeff}_\Psi$. Let $\{A_i\}_{1 \le i \le \mathsf{Num}_\Psi}$ and $\{B_i \}_{1 \le i \le \mathsf{Num}_\Psi}$  be centered families polynomials of variance $1$ such that for all $1 \le i \le m$, $\exists 0< j \le d$ such that $A_i, B_i \in \mathcal{W}^j$. Further, for all $1 \le i \le j \le \mathsf{Num}_\Psi$, $\mathbf{E}[A_i A_j] = \mathbf{E}[B_i B_j]$, $\mathsf{Var}(\Psi(A_1, \ldots, A_{\mathsf{Num}_\Psi})) \ge \eta$ and $|\mathbf{E}[\Psi(A_1, \ldots, A_{\mathsf{Num}_\Psi})] \le \eta$. 
Let us define $B, \delta, c$ as follows \[
B = \Omega\bigg( \ln  \frac{\mathsf{Num}_\Psi \cdot d}{\epsilon} \bigg)^{d/2} ;  \ \delta = \bigg( \frac{\epsilon \cdot \eta^{1/2d}}{d \cdot B \cdot \sqrt{\mathsf{Num}_\Psi} \cdot \mathsf{Coeff}_\Psi^{1/d}} \bigg)^d; \   c = \frac{\mathsf{Num_\Psi}}{\delta \cdot \sqrt{\epsilon}}.
\]
If each $A_i, B_i$ is $\zeta$-eigenregular  satisfies 
\begin{eqnarray*}
\zeta \cdot \mathsf{Coeff}^2_\Psi \cdot 2^{\mathsf{Num}_\Psi \cdot (\mathsf{Num}_\Psi \cdot 2d+1)} \cdot 2^{O(d^3)} \leq \frac{\eta}{4} , \ \ 
 2^{O(d \log d)} \cdot \mathsf{Num}_{\Psi}^2 \cdot (4c^2) \cdot \zeta \le \epsilon.
\end{eqnarray*}
then $|\mathbf{E}[\mathsf{sign}(\Psi(A_1, \ldots, A_{\mathsf{Num}}))] - \mathbf{E}[\mathsf{sign}(\Psi(B_1, \ldots, B_{\mathsf{Num}}))]| \le O(\epsilon)$. 
\end{lemma}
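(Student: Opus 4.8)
The plan is to combine the closeness-in-moments given by Claim~\ref{lem:variance-lowerbound} with a mollification argument that turns the discontinuous $\mathrm{sign}$ function into a $\mathcal{C}^2$ test function, so that Theorem~\ref{thm:CLT} (the multidimensional CLT for eigenregular polynomials) can be invoked on both families $\{A_i\}$ and $\{B_i\}$. Write $P_A = \Psi(A_1,\dots,A_{\mathsf{Num}})$ and $P_B = \Psi(B_1,\dots,B_{\mathsf{Num}})$; both are degree-$d^2$ polynomials with mean of absolute value at most $\eta$ and (for $P_A$) variance at least $\eta$. The first step is to show the analogous variance lower bound $\mathsf{Var}(P_B) \ge \eta/2$: since $P_A$ and $P_B$ have matching pairwise moments of their building blocks, Claim~\ref{lem:variance-lowerbound} (applied to $\Psi^2$, whose coefficient sum is at most $\mathsf{Coeff}_\Psi^2$ and whose degree is $2d$, which is exactly why the hypothesis carries $\mathsf{Coeff}_\Psi^2$ and $2d$ in the exponent) gives $|\mathbf{E}[P_A^2] - \mathbf{E}[P_B^2]| \le \eta/4$ and similarly $|\mathbf{E}[P_A] - \mathbf{E}[P_B]| \le \eta/4$, hence $\mathsf{Var}(P_B) \ge \eta/2$.

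The second step is the mollification. Fix a parameter $\delta$ as in the statement and let $\alpha_\delta:\mathbb{R}\to[-1,1]$ be a $\mathcal{C}^2$ approximation to $\mathrm{sign}$ that agrees with $\mathrm{sign}$ outside $[-\delta,\delta]$ and satisfies $\|\alpha_\delta''\|_\infty \le O(1/\delta^2)$. Then $|\mathbf{E}[\mathrm{sign}(P_A)] - \mathbf{E}[\alpha_\delta(P_A)]| \le 2\Pr[|P_A| \le \delta]$, and by the Carbery--Wright anti-concentration inequality (Theorem~\ref{thm:CW}) applied to $P_A$ — using $\mathsf{Var}(P_A)\ge \eta$ and degree $d^2$ — this is at most $O\!\big(d^2 (\delta/\sqrt\eta)^{1/d^2}\big)$; the choice of $\delta$ in the statement is calibrated precisely so that this is $O(\epsilon)$. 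The same bound holds for $P_B$ using $\mathsf{Var}(P_B)\ge\eta/2$. It then remains to bound $|\mathbf{E}[\alpha_\delta(P_A)] - \mathbf{E}[\alpha_\delta(P_B)]|$. Here we apply Theorem~\ref{thm:CLT} to the eigenregular family $\{A_i\}$ (with the test function $\alpha_\delta\circ\Psi$, which is $\mathcal{C}^2$ with second derivative controlled by $\|\alpha_\delta''\|_\infty$, $\|\alpha_\delta'\|_\infty$ and the bounded coefficients of $\Psi$, yielding a bound of the form $2^{O(d\log d)}\cdot \mathsf{Num}^2 \cdot \sqrt{\zeta}\cdot \mathrm{poly}(\mathsf{Coeff}_\Psi)/\delta^2$), and likewise to $\{B_i\}$; the resulting Gaussian surrogates have the \emph{same} covariance matrix because $\{A_i\}$ and $\{B_i\}$ have matching pairwise correlations, so the two Gaussian expectations coincide exactly. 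The quantity $c = \mathsf{Num}_\Psi/(\delta\sqrt\epsilon)$ in the statement packages the $1/\delta^2$ and the other polynomial factors, and the hypothesis $2^{O(d\log d)}\cdot\mathsf{Num}_\Psi^2\cdot(4c^2)\cdot\zeta \le \epsilon$ is exactly what makes this CLT error $O(\epsilon)$. Adding the three contributions (two anti-concentration terms and one CLT term) gives the claimed $O(\epsilon)$ bound.

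The main obstacle — and the step requiring the most care — is managing the interaction between the mollification scale $\delta$ and the eigenregularity parameter $\zeta$: shrinking $\delta$ to make the anti-concentration error small blows up $\|\alpha_\delta''\|_\infty$ and hence the CLT error, so $\zeta$ (i.e., the rate of decay of $\beta$) must be taken small enough to absorb a factor like $1/\delta^2$ where $\delta$ itself depends on $\eta, \mathsf{Num}_\Psi, \mathsf{Coeff}_\Psi$ and $\epsilon$. This is precisely why the statement front-loads the explicit definitions of $B$, $\delta$ and $c$ and why Condition~\ref{cond:enforce-beta} later demands a sufficiently fast-decreasing $\beta$. A secondary technical point is verifying that $\alpha_\delta\circ\Psi$ genuinely lies in $\mathcal{C}^2$ with the claimed bound on its Hessian; this is routine (chain rule, plus the fact that $\Psi$ and its first two derivatives are polynomials whose sup-norms on the relevant region are controlled via hypercontractivity, as in Claim~\ref{clm:variance-lowerbound2}), but it must be done to legitimately apply Theorem~\ref{thm:CLT}.
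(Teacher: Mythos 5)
Your high-level plan matches the paper's proof in outline: match moments via Claim~\ref{lem:variance-lowerbound} (applied to $\Psi^2$ exactly as you say) to transfer the variance lower bound from the $A$-family to the $B$-family, then use anti-concentration to replace $\mathsf{sign}$ by a smooth surrogate, and finally invoke Theorem~\ref{thm:CLT} on both families and note that the two Gaussian expectations coincide because the covariances of the inner polynomials match. The paper's Steps 1--4 are precisely your first step.

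However, there is a genuine gap in how you instantiate the smooth surrogate. You propose to take a one-dimensional $\mathcal{C}^2$ mollifier $\alpha_\delta$ of $\mathrm{sign}$ and then apply Theorem~\ref{thm:CLT} with the test function $\alpha_\delta\circ\Psi$. But Theorem~\ref{thm:CLT} requires a \emph{global} bound on $\Vert\alpha''\Vert_\infty$, and the composition $\alpha_\delta\circ\Psi$ does not have one: by the chain rule its Hessian involves $\alpha_\delta''(\Psi)\,\nabla\Psi\,(\nabla\Psi)^T + \alpha_\delta'(\Psi)\,\nabla^2\Psi$, and the set $\{|\Psi|\le\delta\}$ where $\alpha_\delta',\alpha_\delta''$ are nonzero is unbounded in $\mathbb{R}^{\mathsf{Num}_\Psi}$, so $\nabla\Psi$ and $\nabla^2\Psi$ blow up there. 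Thus $\Vert(\alpha_\delta\circ\Psi)''\Vert_\infty=\infty$ and the theorem, as stated, gives nothing. You flag this as a "routine" verification (hypercontractivity controls the derivatives "on the relevant region"), but the theorem you are applying does not accept a local bound, and making a cut-off version rigorous is precisely the nontrivial content of the DS14 mollification. The paper avoids this by invoking Theorem~\ref{thm:mollification}, which mollifies the \emph{indicator of the region} $R=\{\Psi>0\}$ on the domain side: since $\mathsf{sign}\circ\Psi$ is a $\{0,1\}$-valued function, convolving it with a scale-$1/c$ bump directly produces a $\tilde g_c:\mathbb{R}^{\mathsf{Num}_\Psi}\to[0,1]$ with genuinely finite $\Vert\tilde g_c^{(1)}\Vert_\infty\le 2c$ and $\Vert\tilde g_c^{(2)}\Vert_\infty\le 4c^2$; the anti-concentration enters only via the distance-to-boundary estimate $|g - \tilde g_c|\le m^2/(c^2\,\mathsf{dist}^2(x,\partial R))$. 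So the anti-concentration and the Hessian bound are handled jointly inside Theorem~\ref{thm:mollification}, and the CLT is applied to $\tilde g_c$ itself, not to a composition. If you want to keep your range-side mollification, you must additionally truncate to a box $\{\Vert x\Vert_\infty\le B\}$, smoothly cut off, and separately bound the tail contribution via Theorem~\ref{thm:hyper} — which reconstructs the proof of Theorem~\ref{thm:mollification} rather than applying it. The parameters $B,\delta,c$ in the statement are calibrated for exactly that argument, not for a direct Carbery--Wright bound on the degree-$d^2$ polynomial $\Psi(A_1,\dots,A_m)$.
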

\begin{proof}
Let us define the function $\Upsilon(z_1, \ldots, z_{\mathsf{Num}_\Psi}) = \Psi^2(z_1, \ldots, z_{\mathsf{Num}_\Psi})$. Let us define $\mathsf{Num}_\Upsilon$ to be the number of arguments of $\Upsilon$ and $\mathsf{Coeff}_\Upsilon$ to be the sum of the absolute value of its coefficients. Observe that $\Upsilon$ is a degree-$2d$ polynomial such that $\mathsf{Coeff}_{\Upsilon} \le \mathsf{Coeff}^2_{\Psi}$ and $\mathsf{Num}_\Upsilon  = \mathsf{Num}_\Psi$. Now, applying Claim~\ref{lem:variance-lowerbound} to the function $\Upsilon$, 
\begin{equation}\label{eq:variance-diff1}
\big|\mathbf{E} [ \Psi^2(A_1, \ldots, A_{\mathsf{Num}})] -  \mathbf{E} [ \Psi^2(B_1, \ldots, B_{\mathsf{Num}})]  \big| \le \eta/4. 
\end{equation}
Similarly, we also get that
\[
\big|\mathbf{E} [ \Psi(A_1, \ldots, A_{\mathsf{Num}})] -  \mathbf{E} [ \Psi(B_1, \ldots, B_{\mathsf{Num}})]  \big| \le \zeta \cdot \mathsf{Coeff}_\Psi \cdot 2^{\mathsf{Num}_\Psi \cdot (\mathsf{Num}_\Psi \cdot d+1)} \le \frac{\eta}{4 \cdot \mathsf{Coeff}_\Psi \cdot 2^{\mathsf{Num}_\Psi^2 \cdot d} \cdot 2^{O(d^3)}}.
\]
Applying Claim~\ref{clm:variance-lowerbound2}, we get
\begin{equation}\label{eq:variance-diff2}
\big|\big(\mathbf{E} [ \Psi(A_1, \ldots, A_{\mathsf{Num}})]\big)^2- \big(\mathbf{E} [ \Psi(B_1, \ldots, B_{\mathsf{Num}})]\big)^2 \big| \le \eta/4. 
\end{equation}
Combining (\ref{eq:variance-diff1}) and (\ref{eq:variance-diff2}), get that 
$
|\mathsf{Var}(\Psi(A_1, \ldots, A_{\mathsf{Num}})) - \mathsf{Var}(\Psi(A_1, \ldots, A_{\mathsf{Num}}))| \le \eta/2
$ and thus, $$\mathsf{Var}(\Psi(B_1, \ldots, B_{\mathsf{Num}})) \ge \eta/2.$$  Next, observe that for $\Psi$, the sum of squares of its coefficients (denoted by $S$) is at most $\mathsf{Coeff}_\Psi^2$. Recall that we define $B, \delta, c$ as follows \[
B = \Omega\bigg( \ln  \frac{\mathsf{Num}_\Psi \cdot d}{\epsilon} \bigg)^{d/2} ;  \ \delta = \bigg( \frac{\epsilon \cdot \eta^{1/2d}}{d \cdot B \cdot \sqrt{\mathsf{Num}_\Psi} \cdot \mathsf{Coeff}_\Psi^{1/d}} \bigg)^d; \   c = \frac{\mathsf{Num_\Psi}}{\delta \cdot \sqrt{\epsilon}}.
\]
Then, applying Theorem~\ref{thm:mollification}, we get that there is a function $\tilde{g}_c : \mathbb{R}^{\mathsf{Num}} \rightarrow [0,1]$ such that $\Vert \tilde{g}^{(1)}_c \Vert_{\infty} \le 2c$, $\Vert \tilde{g}^{(1)}_c \Vert_{\infty} \le 4c^2$, 
\[
\big|\mathbf{E} [ \mathsf{sign}(\Psi(A_1, \ldots, A_{\mathsf{Num}}))]  - \mathbf{E} [ \tilde{g}_c(\Psi(A_1, \ldots, A_{\mathsf{Num}}))] \big| \le O(\epsilon).
\]
\[
\big|\mathbf{E} [ \mathsf{sign}(\Psi(B_1, \ldots, B_{\mathsf{Num}}))]  - \mathbf{E} [ \tilde{g}_c(\Psi(B_1, \ldots, B_{\mathsf{Num}}))] \big| \le O(\epsilon).
\]
Finally, applying Theorem~\ref{thm:CLT}, we have 
\[
\big| \mathbf{E} [ \tilde{g}_c(\Psi(B_1, \ldots, B_{\mathsf{Num}}))]- \mathbf{E} [ \tilde{g}_c(\Psi(A_1, \ldots, A_{\mathsf{Num}}))] \big| \le 2^{O(d \log d)} \cdot \mathsf{Num}_\Psi^2 \cdot (4c^2) \cdot \zeta \le \epsilon.
\]
The last inequality uses the second condition on $\zeta$.  Combining these three inequalities, we obtain the proof. 
\end{proof}
We will now use the above lemma to prove the following corollary which essentially states that the cdf of the polynomials $\tilde{r}_s$ (resp. $\tilde{v}_s$) is close to $\tilde{p}_s$ (resp. $\tilde{u}_s$). In fact, the same holds for their joint cdf as well. All the consequences in the above corollary are proven subject to the conditions listed in Condition~\ref{cond:enforce-beta}. For the rest of the proof, set 
$\xi = \frac{\epsilon}{40k^2}$. 
\begin{corollary}~\label{corr:bounds}
For all $1 \le s \le k$, $1 \le s' \le k$, 
\begin{itemize}
\item[(a)] $\big| \Pr_{x \sim \gamma_{n}}[ \tilde{p}_s \ge 0 ] - \Pr_{x \sim \gamma_{n_0}}[ \tilde{r}_s \ge 0]\big| \le \xi. $
\item[(b)] $\big| \Pr_{x,y \sim \gamma_{n}}[ \tilde{u}_s \ge 0 ] -\Pr_{x,y \sim \gamma_{n_0}}[ \tilde{v}_s \ge 0]\big| \le \xi. $
\item[(c)] $\big| \Pr_{x \sim \gamma_{n}}[ \tilde{p}_s \cdot \tilde{p}_{s'} \ge 0 ] - \Pr_{x \sim \gamma_{n_0}}[ \tilde{r}_s \cdot \tilde{r}_{s'} \ge 0]\big| \le \xi. $
\item[(d)] $\big| \Pr_{x,y \sim \gamma_{n}}[ \tilde{u}_s \cdot \tilde{u}_{s'} \ge 0 ] - \Pr_{x,y \sim \gamma_{n_0}}[ \tilde{v}_s \cdot \tilde{v}_{s'} \ge 0]\big| \le \xi. $
\item[(e)] $\big| \Pr_{x,y \sim \gamma_{n}}[ \tilde{p}_s \cdot \tilde{u}_{s'} \ge 0 ] - \Pr_{x,y \sim \gamma_{n_0}}[ \tilde{r}_s \cdot \tilde{v}_{s'} \ge 0]\big| \le \xi. $
\end{itemize}
\end{corollary}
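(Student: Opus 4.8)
The plan is to express every quantity appearing in Corollary~\ref{corr:bounds} as $\Pr[\Psi(\cdot)\ge 0]$ for a fixed, bounded‑degree, bounded‑coefficient polynomial $\Psi$ evaluated on one of the inner‑polynomial families, and then to feed this into Lemma~\ref{lem:sign-match}. Set $\Psi_s(z) = p_{s,0} + \sum_{q=1}^{d} c_{s,q}\,\Outer(p_{s,q})(z|_{s,q})$; this is a polynomial in the $\mathsf{Num}$ arguments indexed by the inner polynomials, it has degree at most $d$ (each $\Outer(p_{s,q})$ does), and, using $\sum_q c_{s,q}^2 = 1$ together with the $(d,\epsilon)$‑balanced bound $|p_{s,0}| = |\E[p_s]| \le \log^{d/2}(kd/\epsilon)$, its sum of absolute coefficients is at most $\mathsf{Coeff}$ up to a lower‑order additive term. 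By construction $\tilde p_s = \Psi_s(\mathcal P)$ and $\tilde r_s = \Psi_s(\mathcal R)$ (recall $\Outer(r_{s,q})=\Outer(p_{s,q})$ and $r_{s,0}=p_{s,0}$); and since $\Inner(u_{s,q})_\ell(x,y) = \Inner(p_{s,q})_\ell(e^{-t}x+\sqrt{1-e^{-2t}}\,y)$ one gets $\tilde u_s(x,y) = \tilde p_s(e^{-t}x+\sqrt{1-e^{-2t}}\,y)$ and likewise $\tilde v_s(x,y) = \tilde r_s(e^{-t}x+\sqrt{1-e^{-2t}}\,y)$. Because $e^{-t}x+\sqrt{1-e^{-2t}}\,y \sim \gamma_n$ when $x,y\sim\gamma_n$, parts (b) and (d) are \emph{literally} equal to parts (a) and (c). Finally, all the polynomials in question ($\tilde p_s$, $\tilde p_s\tilde p_{s'}$, $\tilde p_s(x)\tilde u_{s'}(x,y)$, etc.) are non‑constant — their variances, bounded below below, are positive — so $\Pr[\cdot = 0]=0$ and $\Pr[\cdot \ge 0] = \tfrac12(1+\E[\mathsf{sign}(\cdot)])$; hence each difference in the corollary is $\tfrac12$ times a difference of expected signs, which is exactly what Lemma~\ref{lem:sign-match} controls.

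For (a) (hence (b)) I apply Lemma~\ref{lem:sign-match} with $\Psi=\Psi_s$, with $A_i$ ranging over $\mathcal P$ and $B_i$ over $\mathcal R$. The hypotheses are verified as follows: the pairwise covariances of $\mathcal P$ and $\mathcal R$ agree by Condition~\ref{cond:junta}(2), and corresponding members lie in the same Wiener chaos by Condition~\ref{cond:junta}(1); every member of $\mathcal P$ is $\beta(\mathrm{Num}+\mathrm{Coeff})$‑eigenregular by the output guarantee of the regularity lemma (Theorem~\ref{thm:decomp}), and every member of $\mathcal R$ by Condition~\ref{cond:junta}(3); the variance lower bound holds since the $\tilde p_{s,q}$ live in distinct Wiener chaoses (so are orthogonal) and $\Var(p_{s,q}-\tilde p_{s,q})\le\tau$, giving $\Var(\tilde p_s)=\sum_q c_{s,q}^2\Var(\tilde p_{s,q})\ge(1-\sqrt\tau)^2=\Omega(1)$; and the mean bound $|\E[\tilde p_s]|=|p_{s,0}|\le B^{(1)}$ is the $(d,\epsilon)$‑balanced bound. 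With $\epsilon$ replaced by $\xi=\epsilon/(40k^2)$, the smallness required of $\zeta=\beta(\mathrm{Num}+\mathrm{Coeff})$ in Lemma~\ref{lem:sign-match} is precisely Condition~\ref{cond:enforce-beta}(1) (note $\delta^{(1)}$ omits the $\eta^{1/2d}$ factor because $\eta=\Omega(1)$ here). The lemma yields the bound $O(\xi)$, which we push below $\xi$ by taking $\beta$ to decay slightly faster (still explicitly computable).

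For (c) (hence (d)) I take $\Psi=\Psi_s\cdot\Psi_{s'}$: degree $\le 2d$, coefficient sum $\le\mathsf{Coeff}^2$, still on $\mathsf{Num}$ arguments. The covariance, Wiener‑level and eigenregularity hypotheses are inherited from (a); the mean is controlled by $|\E[\tilde p_s\tilde p_{s'}]|\le\|\tilde p_s\|_2\|\tilde p_{s'}\|_2 \le B^{(2)}$ (Cauchy--Schwarz, the balanced bound on $|p_{s,0}|$, and a hypercontractive moment bound as in Claim~\ref{clm:variance-lowerbound2}); the variance is bounded below by $\vartheta$ (see below); and the smallness condition becomes Condition~\ref{cond:enforce-beta}(2). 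For (e) the point is that $\tilde p_s(x)\,\tilde u_{s'}(x,y)$ is a degree‑$\le 2d$ polynomial in the \emph{combined} family $\{\Inner(p_{s,q})_\ell(x)\}\cup\{\Inner(u_{s',q})_\ell(x,y)\}$ of $\le 2\mathsf{Num}$ polynomials, while $\tilde r_s(x)\,\tilde v_{s'}(x,y)$ is the same polynomial applied to $\{\Inner(r_{s,q})_\ell(x)\}\cup\{\Inner(v_{s',q})_\ell(x,y)\}$. Adjoining dummy variables does not change Wiener‑chaos level, and the change of variables $x\mapsto\rho x+\sqrt{1-\rho^2}y$ preserves both the level and eigenregularity (the Fact quoted above), so all members of either combined family are $\beta(\mathrm{Num}+\mathrm{Coeff})$‑eigenregular with corresponding members at equal levels; the covariances of the two combined families agree by Claim~\ref{clm:const-corr} (item~1 on the $u$–$u$/$v$–$v$ block, item~2 on the $p$–$u$/$r$–$v$ block, Condition~\ref{cond:junta}(2) on the $p$–$p$/$r$–$r$ block). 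The mean $\E[\tilde p_s(x)\tilde u_{s'}(x,y)] = \E[\tilde p_s\,P_t\tilde p_{s'}]$ is again $\le B^{(2)}$, the variance is $\ge\vartheta$, and Lemma~\ref{lem:sign-match} with $\mathsf{Num}_\Psi\le 2\mathsf{Num}$ and degree $2d$ — exactly the regime of Condition~\ref{cond:enforce-beta}(2) — gives (e). Summing the five estimates proves the corollary.

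The main obstacle is the variance lower bound $\Var(\tilde p_s\tilde p_{s'})\ge\vartheta$ and $\Var(\tilde p_s(x)\tilde u_{s'}(x,y))\ge\vartheta$ for a product of two low‑degree polynomials that each have variance $\Omega(1)$ but mean as large as $\log^{d/2}(kd/\epsilon)$; this is precisely why the quantity $\vartheta=\tfrac{L}{2}L^{-2^{2d}}$ (with $L$ the hypercontractive bound on $\|\tilde p_s\|_2$) appears. The intended route is Carbery--Wright anti‑concentration (Theorem~\ref{thm:CW}) applied to each factor, which makes each factor exceed a fixed negative power of $d$ with probability $\ge 1/2$, hence makes the product bounded away from $0$ with probability $\ge 1/2$; converting this into a genuine variance lower bound requires a hypercontractive moment bound to handle the mean contribution, and probably a short case split (when a factor is far from one‑signed the bound follows; when both factors are nearly one‑signed, both sides of the estimate are within $O(\xi)$ of $1$ by Theorem~\ref{thm:hyper} and the CLT of Theorem~\ref{thm:CLT}, so the estimate is trivial). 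A secondary, purely bookkeeping obstacle is to check that the internal parameters $B,\delta,c,\eta$ of Lemma~\ref{lem:sign-match} instantiate to the exact quantities in Condition~\ref{cond:enforce-beta}, and that one non‑increasing computable $\beta$ can be chosen small enough at the finitely many relevant arguments to satisfy all of these simultaneously — which is possible because $\mathrm{Num}$ and $\mathrm{Coeff}$ are fixed functions of $k,d,\tau$ produced by the regularity lemma and do not depend on $\beta$.
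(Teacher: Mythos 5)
Your proposal matches the paper's proof in overall architecture: express each probability in the corollary as $\Pr[\Psi(\cdot)\ge 0]$ for a fixed degree-$\le 2d$, coefficient-bounded outer polynomial evaluated on the two inner families; observe that items (b) and (d) reduce to (a) and (c) because the $u,v$ polynomials differ from $p,r$ by a measure-preserving linear change of variables; and feed each instance into Lemma~\ref{lem:sign-match} using the covariance matching from Condition~\ref{cond:junta} and Claim~\ref{clm:const-corr}, with $\beta$ chosen small enough to satisfy Condition~\ref{cond:enforce-beta}. Your treatment of item (e) via the combined family of $\le 2\mathsf{Num}$ polynomials is actually spelled out more explicitly than in the paper, and the rest of the parameter bookkeeping ($\mathsf{Num}_\Psi$, $\mathsf{Coeff}_\Psi$, $\eta$, $B^{(i)}$, $\delta^{(i)}$, $c_{(i)}$) is handled as in the paper.

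The genuine gap is the variance lower bound $\Var(\tilde p_s\tilde p_{s'})\ge\vartheta$ (and likewise for $\tilde p_s \tilde u_{s'}$) needed to apply Lemma~\ref{lem:sign-match} to items (c), (d), (e). You correctly identify this as the main obstacle, but the route you sketch through Carbery--Wright does not close it. Applying Theorem~\ref{thm:CW} to each factor individually and taking a union bound gives $\Pr[|\tilde p_s\tilde p_{s'}| \ge d^{-O(d)}] \ge 1/2$, which is a lower bound on the \emph{second moment}, not the variance: if $\E[\tilde p_s\tilde p_{s'}]$ is large (and it can be on the order of $\log^{d}(kd/\epsilon)$ under the balancedness assumption), the product can be highly concentrated around that mean while still being bounded away from zero, so anti-concentration away from $0$ says nothing about $\Var(\tilde p_s\tilde p_{s'})$. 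Applying Carbery--Wright directly to $\tilde p_s\tilde p_{s'}$ is circular, since the quantity $\Var(\tilde p_s\tilde p_{s'})$ appears in the hypothesis. The case split you gesture at (``both factors nearly one-signed'') would also need a separate argument that near-one-signedness transfers from the $\tilde p$'s to the $\tilde r$'s, which is not obviously any easier than the variance bound itself. The paper resolves this by a different and more structural argument: Lemma~\ref{lemma:variance-product-lower} (proved in the appendix from Ito's multiplication formula, Neuberger's lower bound $\Vert A\widetilde{\otimes}_0 B\Vert_F^2 \ge \binom{d_1+d_2}{d_1}^{-1}\Vert A\Vert_F^2\Vert B\Vert_F^2$, and an iterative degree-truncation process) yields the explicit bound $\Var(pq)\ge \tfrac{L}{2}L^{-2^{2d}}$ for any two degree-$d$ polynomials of variance one, which is exactly the $\vartheta$ appearing in Condition~\ref{cond:enforce-beta}(2). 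This algebraic lemma — not anti-concentration — is the missing ingredient in your proposal.
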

\begin{proof}
We first note that it suffices to prove (a), (c) and (e). This is because the polynomials $\{\tilde{u}_s\}$ (resp. $\{\tilde{v}_s\}$) are obtained by an isometric transformation of variables 
on the polynomials $\{\tilde{p}_s\}$ (resp. $\{ \tilde{r}_s\}$). For $1 \le s \le k$, note that there is a $\Psi_s: \mathbb{R}^{\mathsf{Num}} \rightarrow \mathbb{R}$ 
\[
\tilde{p}_s = \Psi_s( \{\Inner(p_{s,q})_1(x),\dots, \Inner(p_{s,q})_{\num({s,q})}(x)\}_{ 1 \le q \le d})
\]
\[
\tilde{r}_s = \Psi_s( \{\Inner(r_{s,q})_1(x),\dots, \Inner(r_{s,q})_{\num({s,q})}(x)\}_{ 1 \le q \le d})
\]
~\\
\textbf{Proof of Item (a):} Note that  by definition $\mathsf{Var}(\tilde{p}_s)=1$. 
 Let us define $B^{(1)}, \delta^{(1)}, c^{(1)}$  \[
B^{(1)} = \Omega\bigg( \ln  \frac{\mathsf{Num} \cdot d}{\xi} \bigg)^{d/2} ;  \ \delta^{(1)} = \bigg( \frac{\xi}{d \cdot B^{(1)} \cdot \sqrt{\mathsf{Num}} \cdot \mathsf{Coeff}^{1/d}} \bigg)^d; \   c_{(1)} = \frac{\mathsf{Num}}{\delta^{(1)} \cdot \sqrt{\xi}},
\]
and thus the first item of Condition~\ref{cond:enforce-beta} implies
\[
\beta(\mathsf{Coeff} + \mathsf{Num}) \cdot \mathsf{Coeff}^2 \cdot 2^{\mathsf{Num} \cdot (\mathsf{Num}\cdot 2d+1)} \cdot 2^{O(d^3)} \leq \frac{1}{4} , \ \ 
 2^{O(d \log d)} \cdot \mathsf{Num}^2 \cdot 4c_{(1)}^{2} \cdot \beta(\mathsf{Coeff} + \mathsf{Num}) \le \xi.
\]
We now apply Lemma~\ref{lem:sign-match} to the function $\Psi_s$ with $\eta=1$.  Note that by construction,
\[
\mathbf{E}[\Inner(p_{s,q_1})_{\ell_1} \cdot \Inner(p_{s,q_2})_{\ell_2}] = \mathbf{E}[\Inner(r_{s,q_1})_{\ell_1} \cdot \Inner(r_{s,q_2})_{\ell_2}]. 
\]
Thus, we obtain $\big| \Pr_{x \sim \gamma_{n}}[ \tilde{p}_s \ge 0 ] - \Pr_{x \sim \gamma_{n_0}}[ \tilde{r}_s \ge 0]\big| \le \xi, $ proving (a). 
~\\
\textbf{Proof of Item (c):} For $1 \le s, s' \le k$, note that there is a function $\Psi_{s,s'}: \mathbb{R}^{2 \cdot \mathsf{Num}} \rightarrow \mathbb{R}$ such that 
\[
\tilde{p}_s \cdot \tilde{p}_{s'} = \Psi_{s,s'}( \{\Inner(p_{s,q})_1(x),\dots, \Inner(p_{s,q})_{\num({s,q})}(x)\}_{ 1 \le q \le d}, \{\Inner(p_{s',q})_1(x),\dots, \Inner(p_{s',q})_{\num({s',q})}(x)\}_{ 1 \le q \le d})
\]
\[
\tilde{r}_s \cdot \tilde{r}_{s'} = \Psi_{s,s'}( \{\Inner(r_{s,q})_1(x),\dots, \Inner(r_{s,q})_{\num({s,q})}(x)\}_{ 1 \le q \le d}, \{\Inner(r_{s',q})_1(x),\dots, \Inner(r_{s',q})_{\num({s',q})}(x)\}_{ 1 \le q \le d})
\]
Note that $\Psi_{s,s'}$ is a degree-$2d$ polynomial and $\mathsf{Coeff}_{\Psi_{s,s'}} \le \mathsf{Coeff}^2$ and $\mathsf{Num}_{\Psi_{s,s'}} \le 2 \mathsf{Num}$.  Let us define $L = 4 \cdot 9^{d+1} \cdot (d+1)^2 \cdot \log^{d/2} (k \cdot d/\epsilon) $. By using the $(d,2\epsilon)$-balancedness of $\tilde{p}_s$ and $\tilde{p}_{s'}$ and applying Lemma~\ref{lemma:variance-product-lower}, 
\[
\mathsf{Var}(\tilde{p}_s \cdot \tilde{p}_{s'}) \ge \frac{L}{2} \cdot L^{-2^{2d}} \equiv \vartheta. 
\]
 Let us define
\[
B^{(2)} = \Omega\bigg( \ln  \frac{\mathsf{2 \cdot Num} \cdot 2d}{\xi} \bigg)^{d} ;  \ \delta^{(2)} = \bigg( \frac{\xi \cdot \vartheta^{1/2d}}{2d \cdot B^{(2)} \cdot \sqrt{2\cdot \mathsf{Num}} \cdot \mathsf{Coeff}^{2/d}} \bigg)^{2d}; \   c_{(2)} = \frac{\mathsf{Num}}{\delta^{(2)} \cdot \sqrt{\xi}}.
\]
Applying Item 2 of Condition~\ref{cond:enforce-beta}, we see that  
\[
\beta(\mathsf{Coeff}^2 + 2 \cdot \mathsf{Num}) \cdot \mathsf{Coeff}^4 \cdot 2^{2\mathsf{Num} \cdot (\mathsf{Num}\cdot 4d+1)} \cdot 2^{O(d^3)} \leq \frac{\vartheta}{4} , \ \ 
 2^{O(d \log d)} \cdot \mathsf{Num}^2 \cdot 4c_{(2)}^{2} \cdot \beta(\mathsf{Coeff}^2 + 2 \cdot \mathsf{Num}) \le \xi.
\]
Note that applying Claim~\ref{clm:const-corr}, we have that
for any $t,t' \in \{s,s'\}$ and any $1 \le q, q' \le d$ and $\ell \le \mathsf{num}(t,q)$, $\ell' \le \mathsf{num}(t',q')$, we have
\[
\mathbf{E}[\Inner(p_{t,q})_{\ell} \cdot \Inner(p_{t',q'})_{\ell'} ] = \mathbf{E}[\Inner(r_{t,q})_{\ell} \cdot \Inner(r_{t',q'})_{\ell'} ]. 
\]
Thus, we can 
now apply Lemma~\ref{lem:sign-match} to the function $\Psi_{s,s'}$ with $\eta=\vartheta$ and set
This ensures that $\big| \Pr_{x \sim \gamma_{n}}[ \tilde{p}_s \cdot \tilde{p}_{s'}\ge 0 ] - \Pr_{x \sim \gamma_{n_0}}[ \tilde{r}_s \cdot \tilde{r}_{s'} \ge 0]\big| \le \xi, $ proving (c).  
~\\
\textbf{Proof of Item (e):} The proof of item (e) is the same as the proof of item (c) with $\tilde{u}_{s'}$ taking over the role of $\tilde{p}_{s'}$. We leave the proof to the reader. 
\end{proof}
Before we move on, we summarize the following consequence of the above corollary which will be useful for us in a subsequent work (and appears to be an interesting technical result in its own right). 
\begin{theorem}
Let $p_1, \ldots, p_k: \mathbb{R}^n \rightarrow \mathbb{R}$ be degree-$d$ polynomials and for $\delta>0$, the following two conditions: (i) For all $1 \le s \le \ell$, $\mathsf{Var}(p_s)=1$ and (ii) For all $1 \le s \le \ell$, $|\mathbf{E}[p_s]| \le \log^{d/2} (k \cdot d /\delta)$. For $1 \le s \le k$ and $t>0$, define  $u_s: \mathbb{R}^{2n} \rightarrow \mathbb{R}$
as follows: $u_s(x,y) = p_s(e^{-t} x + \sqrt{1-e^{-2t}} y)$. Then, there is an explicitly computable 
$n_0 = n_0(k, d, \epsilon)$ and polynomials $\tilde{r}_1, \ldots, \tilde{r}_{k}: \mathbb{R}^{n_0} \rightarrow \mathbb{R}$   with the following properties: For $1 \le s \leq \ell$, define $\tilde{v}_s: \mathbb{R}^{2n_0} \rightarrow \mathbb{R}$ as $\tilde{v}_s(x,y) = \tilde{r}_s(e^{-t} x + \sqrt{1-e^{-2t}} y)$. Then, for $ 1 \le s, s' \le k$, 
\begin{enumerate}
\item[(a)] $\big|\Pr_{x \sim \gamma_n} [{p}_s \ge 0] - \Pr_{x \sim \gamma_n} [\tilde{r}_s \ge 0]\big| \le \epsilon$. 
\item[(b)] $\big|\Pr_{x,y \sim \gamma_{n}} [{u}_s \ge 0] - \Pr_{x,y \sim \gamma_{n_0}} [\tilde{v}_s \ge 0]\big| \le \epsilon$. 
\item[(c)] $\big|\Pr_{x \sim \gamma_n} [{p}_s \cdot {p}_{s'} \ge 0] - 
\Pr_{x \sim \gamma_{n_0}} [\tilde{r}_s \cdot \tilde{r}_{s'} \ge 0]\big| \le \epsilon$. 
\item[(d)] $\big|\Pr_{x,y \sim \gamma_n} [{u}_s\cdot {u}_{s'} \ge 0] - 
\Pr_{x,y \sim \gamma_{n_0}} [\tilde{v}_s \cdot \tilde{v}_{s'} \ge 0]\big| \le \epsilon$. 
\item[(e)] $\big|\Pr_{x,y \sim \gamma_n} [{p}_s\cdot {u}_{s'} \ge 0] - 
\Pr_{x,y \sim \gamma_{n_0}} [\tilde{r}_s \cdot \tilde{v}_{s'} \ge 0]\big| \le \epsilon$. 
\end{enumerate}
\end{theorem}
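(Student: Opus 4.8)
The plan is to read the statement off directly from the construction already carried out in the proof of Theorem~\ref{thm:reduction}, combined with Corollary~\ref{corr:bounds}. Recall that in that proof one first multilinearizes (Lemma~\ref{lem:multilinear}), scales each $p_s$ to unit variance, writes $p_s = p_{s,0} + \sum_{q=1}^d c_{s,q}p_{s,q}$ with $p_{s,q}\in\mathcal{W}^q$ and $\sum_q c_{s,q}^2 = 1$, runs {\bf MultiRegularize-Many-Wieners}$_{d,\beta}$ with $\tau = (\epsilon/(Ckd))^{3d}$ for a large absolute constant $C$ and an explicit, sufficiently fast-decreasing $\beta(\cdot)$ satisfying Condition~\ref{cond:enforce-beta}, obtaining $\tilde p_{s,q}\in\mathcal{W}^q$ of variance $1$ with $\mathsf{Var}(p_{s,q}-\tilde p_{s,q})\le\tau$, builds the low-dimensional inner family $\mathcal{R}$ via Lemma~\ref{lem:junta-construct}, and sets $\tilde r_s = r_{s,0} + \sum_{q=1}^d c_{s,q}\tilde r_{s,q}$ with $r_{s,0}=p_{s,0}$. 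These $\tilde r_s:\mathbb{R}^{n_0}\to\mathbb{R}$ are exactly the polynomials claimed by the theorem, and $n_0 = \mathrm{poly}(d,\mathsf{Num},\beta(\mathsf{Num}+\mathsf{Coeff}))$ is explicit once $\beta$ is fixed explicitly, using $\mathsf{Num}\le N_\beta(k,d,\tau)$ and $\mathsf{Coeff}\le M_\beta(k,d,\tau)$ from Theorem~\ref{thm:decomp}. (We take $\delta=\epsilon$ in hypothesis (ii); this is what makes $(d,O(\epsilon))$-balancedness available below.)

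Next I would record the identity $\tilde u_s(x,y) = \tilde p_s(e^{-t}x+\sqrt{1-e^{-2t}}y)$: the polynomial $\tilde u_s$ of the proof is obtained by substituting $e^{-t}x+\sqrt{1-e^{-2t}}y$ into each inner polynomial and leaving the outer polynomials and the constant $p_{s,0}$ alone, and this substitution commutes with composition; likewise $\tilde v_s(x,y) = \tilde r_s(e^{-t}x+\sqrt{1-e^{-2t}}y)$, so the $\tilde v_s$ of the theorem coincide with those of the proof. Corollary~\ref{corr:bounds} then supplies exactly the five inequalities (a)--(e), but with $\tilde p_s,\tilde u_s$ in place of $p_s,u_s$ and with error $\xi=\epsilon/(40k^2)$ in each. (One checks that Corollary~\ref{corr:bounds} applies under the present hypotheses: its proof never invokes the collision bound of Theorem~\ref{thm:reduction}; it needs only that each $\tilde p_s$ is $(d,O(\epsilon))$-balanced --- which holds because $\mathbf{E}[\tilde p_s]=p_{s,0}=\mathbf{E}[p_s]$ is bounded by hypothesis (ii) and $\mathsf{Var}(\tilde p_s)=\sum_q c_{s,q}^2=1$, the $\tilde p_{s,q}$ lying in distinct Wiener chaoses --- together with the product-variance lower bound $\mathsf{Var}(\tilde p_s\tilde p_{s'})\ge\vartheta$ of Lemma~\ref{lemma:variance-product-lower}.)

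The remaining step is to swap $\tilde p_s$ back for $p_s$, and $\tilde u_s$ for $u_s$, in these inequalities. Since $\mathbf{E}[p_s-\tilde p_s]=\mathbf{E}[p_s]-p_{s,0}=0$ and $\mathsf{Var}(p_s-\tilde p_s)\le\tau=(\epsilon/(Ckd))^{3d}$, Theorem~\ref{thm:combine-hyper} gives $\Pr_{x\sim\gamma_n}[\mathsf{sign}(p_s(x))\ne\mathsf{sign}(\tilde p_s(x))]\le\epsilon/(10k^2)$ once $C$ is large. Because $(x,y)\mapsto e^{-t}x+\sqrt{1-e^{-2t}}y$ pushes $\gamma_{2n}$ forward to $\gamma_n$, the same bound holds for $u_s$ versus $\tilde u_s$ on $\gamma_{2n}$. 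Since $\mathsf{sign}(ab)=\mathsf{sign}(a)\mathsf{sign}(b)$ and every polynomial in sight is nonzero almost surely (Carbery--Wright, Theorem~\ref{thm:CW}), a union bound over the at most two polynomials appearing in any one of (a)--(e) shows that each event-probability for $p_s,u_s$ differs from the corresponding one for $\tilde p_s,\tilde u_s$ by at most $2\epsilon/(10k^2)$; combining with the $\xi$-bounds of Corollary~\ref{corr:bounds} via the triangle inequality yields each of (a)--(e) with total error at most $2\epsilon/(10k^2)+\epsilon/(40k^2)\le\epsilon$.

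The only place that calls for real care --- and essentially the sole content beyond bookkeeping --- is choosing $\beta(\cdot)$ and $\tau$ so that every constraint is met simultaneously: $\beta$ must decrease fast enough for each clause of Condition~\ref{cond:enforce-beta} so that Corollary~\ref{corr:bounds} goes through, $\tau$ must be small enough both there and for the hypercontractivity/anti-concentration estimate above, and $\tau$ (hence $M_\beta,N_\beta$, hence $n_0$) must remain an explicit function of $k,d,\epsilon$. All the constraints are monotone --- ``$\beta$ small enough'', ``$\tau$ small enough'' --- so they can be satisfied at once; checking that the error budget composes to at most $\epsilon$ is the one mildly delicate point.
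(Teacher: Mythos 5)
Your proposal is correct and takes essentially the same route as the paper's own (terse) proof of this theorem: read the $\tilde r_s$ off the construction in the proof of Theorem~\ref{thm:reduction}, observe that $\E[p_s-\tilde p_s]=0$ and $\Var(p_s-\tilde p_s)\le\tau$ so that Theorem~\ref{thm:combine-hyper} gives $\Pr[\sign(p_s)\ne\sign(\tilde p_s)]=O(\epsilon)$ (and likewise for $u_s$ vs.\ $\tilde u_s$ by pushforward), and then splice these sign-agreement bounds into the five items of Corollary~\ref{corr:bounds} by a union bound and the triangle inequality. You fill in two useful details that the paper leaves implicit — that Corollary~\ref{corr:bounds} never uses the collision hypothesis of Theorem~\ref{thm:reduction} and so applies verbatim here, and that $\tilde u_s(x,y)=\tilde p_s(e^{-t}x+\sqrt{1-e^{-2t}}y)$ so the $\tilde v_s$ of the theorem really are those of the construction — but the argument is the same.
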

\begin{proof}
Note that the bound on the arity of $\tilde{r}_1, \ldots, \tilde{r}_k$ directly follows from our  construction. Further, note that  $\mathsf{Var}(p_s-\tilde{p}_s) \le \tau$, hence, we obtain that \begin{equation}~\label{eq:gamma-2} \Pr_{x \sim \gamma_n} [\mathsf{sign}(p_s(x)) \not = \mathsf{sign}(\tilde{p}_s(x))] =O(\epsilon).\end{equation} Proofs of item (a), (b), (c), (d) and (e) follow by combining (\ref{eq:gamma-2}) (respectively) with item (a), (b), (c), (d) and (e) of Corollary~\ref{corr:bounds}.
\end{proof}

We will now use the above corollary  to claim that $\Pr_{x \sim \gamma_{n_0}} [x \in \mathsf{Collision}(\fh)]$ is small. Before proceeding, let us define one additional piece of notation: Namely, for a multivariate PTF $f = \mathsf{PTF}(p_1, \ldots, p_k)$, let us define $\mathsf{Unique}(f,i) = \overline{\mathsf{Collision}(f)} \cap \{x : p_i(x)  \ge 0\}$. In other words, it is the set of all points such that $i$ is the unique index such that $p_i(x) \ge 0$.
\begin{claim}~\label{clm:collision-upper}
$\Pr_{x \sim \gamma_{n_0}} [x \in \mathsf{Collision}(\fh)]\le  (k^2+1) \cdot \Pr[x \in \mathsf{Collision}(f)] + k \cdot \xi + \frac{3k^2 \xi}{2}. $ \end{claim}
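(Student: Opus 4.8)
The plan is to bound the probability that $x \in \mathsf{Collision}(\fh)$ by relating collisions of $\fh = \mathsf{PTF}(\tilde r_1, \ldots, \tilde r_k)$ back to collisions of the original PTF $f = \mathsf{PTF}(p_1,\ldots,p_k)$, going through the chain of approximations $f \rightsquigarrow \tilde f \rightsquigarrow \fh$ already established. Recall that $x \in \mathsf{Collision}(\fh)$ means $|\{s : \tilde r_s(x) \ge 0\}| \ne 1$, i.e.\ either \emph{no} index $s$ has $\tilde r_s(x) \ge 0$, or \emph{two or more} do.

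First I would handle the ``two or more'' case. If $\tilde r_s(x) \ge 0$ and $\tilde r_{s'}(x) \ge 0$ for some $s \ne s'$, then in particular $\tilde r_s(x) \cdot \tilde r_{s'}(x) \ge 0$. Using item (c) of Corollary~\ref{corr:bounds}, for each fixed pair $(s,s')$ the probability (under $\gamma_{n_0}$) that $\tilde r_s \cdot \tilde r_{s'} \ge 0$ differs from the corresponding probability (under $\gamma_n$) that $\tilde p_s \cdot \tilde p_{s'} \ge 0$ by at most $\xi$. Summing over the at most $\binom{k}{2} \le k^2/2$ unordered pairs and then relating $\tilde p_s \cdot \tilde p_{s'} \ge 0$ back to $\{x : |\{s : \tilde p_s(x) \ge 0\}| \ge 2\} \subseteq \mathsf{Collision}(\tilde f)$, I would get that the probability of a ``double-positive'' collision for $\fh$ is at most $\Pr[x \in \mathsf{Collision}(\tilde f)] + (k^2/2)\xi$ or so. The ``no index positive'' case is complementary: if every $\tilde r_s(x) < 0$ then we are in $\mathsf{Collision}(\fh)$ as well, and the event $\{\forall s: \tilde r_s(x) < 0\}$ can be controlled by $\{\tilde r_s(x) \ge 0\}$-type events via item (a) of Corollary~\ref{corr:bounds} summed over $s \in [k]$ (costing $k\xi$), pushing us back to $\Pr[\forall s: \tilde p_s(x) < 0] \le \Pr[x \in \mathsf{Collision}(\tilde f)]$. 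Combining, $\Pr_{x\sim\gamma_{n_0}}[x \in \mathsf{Collision}(\fh)] \le \Pr_{x \sim \gamma_n}[x \in \mathsf{Collision}(\tilde f)] + k\xi + (3k^2/2)\xi$ (the exact split of the $\xi$ terms is bookkeeping).

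Then I would convert $\Pr[x \in \mathsf{Collision}(\tilde f)]$ back to $\Pr[x \in \mathsf{Collision}(f)]$. This is exactly the third line of the displayed inequality block following Theorem~\ref{thm:combine-hyper} (obtained by the union bound over $s$ on the sign-change events $\mathsf{sign}(\tilde p_s) \ne \mathsf{sign}(p_s)$, each of probability $O(\tau)$): $\Pr_{x\sim\gamma_n}[x \in \mathsf{Collision}(\tilde f)] \le \Pr_{x\sim\gamma_n}[x \in \mathsf{Collision}(f)] + \epsilon$, which for the present purposes I would restate with the $k^2$-dependent slack that appears in the final bound — more precisely I would track that each of the two ``$\mathsf{Collision}$'' reductions above (from $\fh$ through the pairwise/unary events) effectively costs a factor $k^2$, accounting for the coefficient $(k^2+1)$ in front of $\Pr[x \in \mathsf{Collision}(f)]$ rather than $1$. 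The cleanest way to produce that exact constant is to expand $\{|\{s:\tilde p_s \ge 0\}| \ne 1\}$ as a union of $\binom{k}{2}$ pairwise-positive events and $k$ all-negative events, bound $\gamma_n$-probabilities of each against $\gamma_{n_0}$-probabilities of the $\tilde r$/$\tilde v$ analogues losing $\xi$ each, and separately absorb the sign-change slack into the $(k^2+1)\Pr[x\in\mathsf{Collision}(f)]$ term.

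The main obstacle is getting the constants to match the claimed $(k^2+1)\Pr[x \in \mathsf{Collision}(f)] + k\xi + \tfrac{3k^2\xi}{2}$ exactly rather than up to a constant factor, since this requires carefully choosing how to decompose the collision event into pairwise and all-negative pieces and being careful about double-counting when both the ``$\ge 2$ positive'' and the other pieces are estimated. But the argument is essentially a union bound combined with the five items of Corollary~\ref{corr:bounds} and the already-established $\mathsf{Var}(p_s - \tilde p_s) \le \tau$ sign-stability estimate, so no new ideas are needed.
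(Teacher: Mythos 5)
There are two genuine gaps in the proposal, both of which the paper's proof avoids by taking a different decomposition.

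The first gap is in the treatment of the ``two or more positive'' case. Item (c) of Corollary~\ref{corr:bounds} speaks about the event $\{\tilde r_s \cdot \tilde r_{s'} \ge 0\}$, and you attempt to relate this directly to $\{\tilde p_s \cdot \tilde p_{s'} \ge 0\} \subseteq \{|\{s: \tilde p_s \ge 0\}| \ge 2\} \subseteq \mathsf{Collision}(\tilde f)$. But the containment $\{\tilde p_s \tilde p_{s'} \ge 0\} \subseteq \mathsf{Collision}(\tilde f)$ is false for $k \ge 3$: the product is also nonnegative when \emph{both} polynomials are negative, and if some third polynomial is the unique positive one the point is not a collision. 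What you actually need is $\{\tilde p_s \ge 0 \wedge \tilde p_{s'} \ge 0\} \subseteq \mathsf{Collision}(\tilde f)$, and converting between the ``product $\ge 0$'' event that item (c) controls and the ``both $\ge 0$'' event requires the identity $\mathbf{I}[A\ge 0 \wedge B\ge 0] = \frac12(\mathbf{I}[AB\ge 0]+\mathbf{I}[A\ge 0]+\mathbf{I}[B\ge 0]-1)$, which is equation (\ref{eq:identity}) and the first step of the paper's proof. This identity is why the error per pair is $\tfrac{3\xi}{2}$ (one application of item (c) plus two of item (a)) rather than $\xi$; it is missing from your argument and cannot be bypassed.

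The second gap is the claim that the ``no index positive'' case ``can be controlled by $\{\tilde r_s(x) \ge 0\}$-type events via item (a) ... summed over $s$ (costing $k\xi$).'' The event $\{\forall s: \tilde r_s < 0\}$ is a $k$-fold intersection; item (a) compares only the marginal probabilities $\Pr[\tilde r_s \ge 0]$ and $\Pr[\tilde p_s \ge 0]$. Two random vectors can have matching marginals for every coordinate-event yet have wildly different intersection/union probabilities, so marginal comparisons alone cannot bound $|\Pr[\forall s: \tilde r_s < 0] - \Pr[\forall s: \tilde p_s < 0]|$. Making this rigorous requires a Bonferroni bound $\Pr[\exists s: \tilde r_s \ge 0] \ge \sum_s \Pr[\tilde r_s \ge 0] - \sum_{s<s'}\Pr[\text{both}]$, which re-introduces the pairwise terms and hence item (c) and the identity again; in particular the ``no positive'' case also contributes an order-$k^2 \cdot \Pr[\mathsf{Collision}]$ term, not just the $k\xi$ you allot to it.

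The paper sidesteps both issues by not decomposing $\mathsf{Collision}(\fh)$ into ``no positive'' vs.\ ``$\ge 2$ positive'' at all. Instead it writes $\Pr[\mathsf{Collision}(\cdot)] = 1 - \sum_s \Pr[\mathsf{Unique}(\cdot,s)]$ and sandwiches each $\Pr[\mathsf{Unique}(\cdot,s)]$ between $\Pr[p_s \ge 0] - \sum_{s'\ne s}\Pr[p_s\ge 0 \wedge p_{s'}\ge 0]$ and $\Pr[p_s \ge 0]$. This sandwich involves only marginal and pairwise ``both positive'' probabilities, which line up exactly with items (a), (c), and equation (\ref{eq:identity}), and also produces the stated constants: the $k(k-1) \le k^2$ pairwise terms $\Pr[p_s\ge 0 \wedge p_{s'}\ge 0]$, each bounded by $\Pr[\mathsf{Collision}(f)]$, give the coefficient $k^2+1$, and the $\xi$-terms decompose as $k$ applications of item (a) and $k(k-1)$ applications of the $\tfrac{3\xi}{2}$ bound. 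Your instinct that the argument is ``a union bound combined with Corollary~\ref{corr:bounds}'' is right in spirit, but the particular decomposition you chose cannot be carried out with items (a) and (c) alone; once you repair it via Bonferroni and the identity, you have essentially reconstructed the paper's $\mathsf{Unique}$-set argument.
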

\begin{proof}
First of all, note that any of $p_1(x), \ldots, p_k(x) =0$ only in  a measure zero set of the domain. So, from now on, let us assume that at all points $x$, $p_s(x) \not =0$ (for $1 \le s \le k$). Next, observe that for real-valued quantities $A, B \not =0$, we have 
\begin{equation}\label{eq:identity}
\mathbf{I} \ [A \ge 0 \wedge B \ge 0] = \frac12 \big( \mathbf{I} \ [ A \cdot B \ge 0] + \mathbf{I} \ [A \ge 0] + \mathbf{I} \ [B \ge 0] - 1\big). 
\end{equation}
Thus, 
\begin{eqnarray}
\big| \Pr [p_{s}(x) \ge 0 \wedge p_{s'}(x) \ge 0]  - \Pr [r_{s}(x) \ge 0 \wedge r_{s'}(x) \ge 0] \big| &\le& \frac{1}{2} \big( \big| \Pr[p_{s}(x) \ge 0 ]  - \Pr [r_{s}(x) \ge 0 ] \big|\big) \nonumber\\
&+& \frac{1}{2} \big( \big| \Pr_{x \sim \gamma_n} [p_{s'}(x) \ge 0 ]  - \Pr_{x \sim \gamma_n} [r_{s'}(x) \ge 0 ] \big|\big) \nonumber \\ &+& 
\frac{1}{2}  \big( \big| \Pr_{x \sim \gamma_n} [p_{s'}(x) \cdot p_s(x) \ge 0 ]  - \Pr_{x \sim \gamma_n} [r_{s'}(x)  \cdot r_s(x)\ge 0 ] \big|\big)~\label{eq:gamma1}. 
\end{eqnarray}
Applying the first and third item of  Corollary~\ref{corr:bounds}, we have that the right hand side is bounded by $3\xi/2$.  Now observe that 
\[
\Pr_{x \sim \gamma_n} [p_s(x) \ge 0] - \sum_{s' \not =s} \Pr_{x \sim \gamma_n} [p_s(x) \ge 0 \wedge p_{s'}(x) \ge 0]  \leq \Pr_{x \sim \gamma_n} [x \in \mathsf{Unique}(f,s)] \le \Pr_{x \sim \gamma_n} [p_s(x) \ge 0]
\]
\[
\Pr_{x \sim \gamma_n} [r_s(x) \ge 0] - \sum_{s' \not =s} \Pr_{x \sim \gamma_n} [r_s(x) \ge 0 \wedge r_{s'}(x) \ge 0]  \leq \Pr_{x \sim \gamma_n} [x \in \mathsf{Unique}(\fh,s)] \le \Pr_{x \sim \gamma_n} [r_s(x) \ge 0]
\]
Applying (\ref{eq:gamma1}), we obtain
\begin{eqnarray*}
\big|\Pr_{x \sim \gamma_n} [x \in \mathsf{Unique}(f,s)]- \Pr_{x \sim \gamma_n} [x \in \mathsf{Unique}(\fh,s)] \big| &\leq& \big| \Pr_{x \sim \gamma_n} [p_s(x) \ge 0] -\Pr_{x \sim \gamma_n} [r_s(x) \ge 0]\big| \\ &+& \big|\sum_{s' \not =s} \Pr_{x \sim \gamma_n} [p_s(x) \ge 0 \wedge p_{s'}(x) \ge 0]\big| + \frac{3k \xi}{2}.
\end{eqnarray*}
Adding over all $s$ and applying the first item of Corollary~\ref{corr:bounds},  we obtain 
\[
\sum_{s=1}^k \big|\Pr_{x \sim \gamma_n} [x \in \mathsf{Unique}(f,s)]- \Pr_{x \sim \gamma_n} [x \in \mathsf{Unique}(\fh,s)] \big| \le k\cdot \xi + \frac{3k^2 \xi}{2} +\sum_{s' \not =s} \Pr_{x \sim \gamma_n} [p_s(x) \ge 0 \wedge p_{s'}(x) \ge 0]
\]

Noting that for all $(s,s')$, $\Pr [p_s(x) \ge 0 \wedge p_{s'}(x) \ge 0] \le \Pr [x \in \mathsf{Collision}(f)]$, we obtain
\[
\sum_{s=1}^k \big|\Pr_{x \sim \gamma_n} [x \in \mathsf{Unique}(f,s)]- \Pr_{x \sim \gamma_n} [x \in \mathsf{Unique}(\fh,s)] \big| \le k^2 \cdot \Pr[x \in \mathsf{Collision}(f)] + k \cdot \xi + \frac{3k^2 \xi}{2}. 
\]
This however immediately implies that 
$$
\Pr_x [x \in \mathsf{Collision}(\fh)] \le  (k^2+1) \cdot \Pr[x \in \mathsf{Collision}(f)] + k \cdot \xi + \frac{3k^2 \xi}{2}. 
$$
\end{proof}

\begin{lemma}~\label{lem:noise-stab}
\[
\big| \mathbf{E}[\langle f, P_t f \rangle]  - \mathbf{E}[\langle \fh, P_t \fh \rangle]\big| \le 2\Pr[x \in \mathsf{Collision}(f)] +2 \Pr[x \in \mathsf{Collision}(\fh)] + \frac{3k\xi}{2}. 
\]
\end{lemma}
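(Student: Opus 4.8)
The plan is to reduce the noise stability, on both sides, to a sum over $s\in[k]$ of ``agreement'' probabilities for the individual threshold polynomials, and then to compare those termwise via Corollary~\ref{corr:bounds} and the elementary identity
\[
  \mathbf{1}[A\ge 0 \wedge B\ge 0] = \tfrac12\bigl(\mathbf{1}[AB\ge 0] + \mathbf{1}[A\ge 0] + \mathbf{1}[B\ge 0] - 1\bigr)
\]
that was already used in Claim~\ref{clm:collision-upper}. Throughout I write $f=\mathsf{PTF}(\tilde p_1,\dots,\tilde p_k)$, $f_u=\mathsf{PTF}(\tilde u_1,\dots,\tilde u_k)$, $\fh=\mathsf{PTF}(\tilde r_1,\dots,\tilde r_k)$ and $\fhv=\mathsf{PTF}(\tilde v_1,\dots,\tilde v_k)$.

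First I would pass to the $2n$-variable picture using the relations recorded just above this lemma, so that $\mathbf{E}[\langle f,P_t f\rangle]=\mathbf{E}_{x,y}[\langle f(x),f_u(x,y)\rangle]=\Pr_{x,y}[f(x)=f_u(x,y)]$ and likewise $\mathbf{E}[\langle \fh,P_t\fh\rangle]=\Pr_{x,y}[\fh(x)=\fhv(x,y)]$. Since a PTF is $\{e_1,\dots,e_k\}$-valued, with $f(x)=e_s$ holding exactly on $\mathsf{Unique}(f,s)$ for $s\ge 2$ and on $\mathsf{Unique}(f,1)\sqcup\mathsf{Collision}(f)$ for $s=1$, I would expand $\langle f(x),f_u(x,y)\rangle=\sum_s \mathbf{1}[f(x)=e_s]\mathbf{1}[f_u(x,y)=e_s]$; using the disjointness of $\mathsf{Unique}(f,1)$ and $\mathsf{Collision}(f)$ together with $\Pr[(x,y)\in\mathsf{Collision}(f_u)]=\Pr[x\in\mathsf{Collision}(f)]$ this yields
\[
  0 \le \mathbf{E}[\langle f,P_t f\rangle] - \sum_{s=1}^k \Pr\bigl[x\in\mathsf{Unique}(f,s)\wedge (x,y)\in\mathsf{Unique}(f_u,s)\bigr] \le 2\Pr[x\in\mathsf{Collision}(f)],
\]
and the analogous bound for $\fh$ with $2\Pr[x\in\mathsf{Collision}(\fh)]$ on the right.

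Next I would replace each joint ``unique winner'' event by the single sign event $\{\tilde p_s(x)>0\wedge \tilde u_s(x,y)>0\}$. Off $\mathsf{Collision}(f)$ (up to a null set) the set $\mathsf{Unique}(f,s)$ coincides with $\{\tilde p_s>0\}$, so the two events differ only inside $\mathsf{Collision}(f)\cup\mathsf{Collision}(f_u)$, and a short computation (handling separately the cases in which exactly one, or both, of $x$ and $(x,y)$ lie in a collision set) shows that $\bigl|\sum_s\Pr[x\in\mathsf{Unique}(f,s)\wedge (x,y)\in\mathsf{Unique}(f_u,s)]-\sum_s\Pr[\tilde p_s(x)>0\wedge \tilde u_s(x,y)>0]\bigr|$ is bounded by a multiple of $\Pr[x\in\mathsf{Collision}(f)]$, and similarly for $\fh$. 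Finally, for the main sums the displayed identity gives $\Pr[\tilde p_s(x)>0\wedge \tilde u_s(x,y)>0]=\tfrac12\bigl(\Pr[\tilde p_s(x)\tilde u_s(x,y)>0]+\Pr[\tilde p_s(x)>0]+\Pr[\tilde u_s(x,y)>0]-1\bigr)$, and the corresponding expression for $\tilde r_s,\tilde v_s$; items (a), (b) and (e) of Corollary~\ref{corr:bounds} then bound the difference of the two by $\tfrac32\xi$ for each $s$, hence by $\tfrac{3k\xi}{2}$ after summing. Combining the three parts by the triangle inequality gives the stated bound.

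The one delicate point is the collision bookkeeping in the middle step: one must check that moving from the true noise stability to $\sum_s\Pr[\tilde p_s>0\wedge \tilde u_s>0]$ costs only a controlled multiple of $\Pr[\mathsf{Collision}(f)]$ (and $\Pr[\mathsf{Collision}(\fh)]$), rather than something uncontrolled. Routing the argument through the sets $\mathsf{Unique}(f,s)$ is exactly what keeps this in check, because on the complement of the collision set ``$s$ is the unique winner'' is literally the single inequality $\tilde p_s>0$, so no higher-order sign events (which Corollary~\ref{corr:bounds} does not control) enter the comparison.
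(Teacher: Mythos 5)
Your proposal takes essentially the same route as the paper's proof: reduce both noise stabilities to $\sum_{s}\Pr[\tilde p_s\ge 0\wedge \tilde u_s\ge 0]$ (resp.\ $\tilde r_s,\tilde v_s$) at the cost of the collision probabilities, then compare termwise using the identity $\mathbf{1}[A\ge 0\wedge B\ge 0]=\tfrac12(\mathbf{1}[AB\ge 0]+\mathbf{1}[A\ge 0]+\mathbf{1}[B\ge 0]-1)$ together with items (a), (b) and (e) of Corollary~\ref{corr:bounds}, and your detour through the $\mathsf{Unique}$ sets merely fills in the paper's two asserted opening inequalities. The only caveat is that your step replacing the joint unique-winner events by the single sign events can cost up to $k$ times a collision probability once summed over $s$ (several $\tilde p_s$ may be positive on the collision set), so the constant in front of $\Pr[x\in\mathsf{Collision}(\cdot)]$ comes out somewhat larger than the stated $2$; the paper's own one-line assertion has the same looseness, and it is immaterial downstream because the collision probabilities are taken to be $O(\epsilon/k^2)$.
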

\begin{proof}
We begin by noting the following two inequalities 
\[
\big| \mathbf{E}[\langle f, P_t f \rangle]  -\sum_{s=1}^k \Pr[p_s(x) \ge 0 \wedge u_s(x,y) \ge 0] \big| \le 2 \Pr[x \in \mathsf{Collision}(f)].
\]
\[
\big| \mathbf{E}[\langle \fh, P_t \fh \rangle]  -\sum_{s=1}^k \Pr[r_s(x) \ge 0 \wedge v_s(x,y) \ge 0] \big| \le 2 \Pr[x \in \mathsf{Collision}(\fh)].
\]
Thus, 
\begin{eqnarray}
\big| \mathbf{E}[\langle f, P_t f \rangle] - \mathbf{E}[\langle \fh, P_t \fh \rangle] \big| &\le& 2\Pr[x \in \mathsf{Collision}(f)] +2 \Pr[x \in \mathsf{Collision}(\fh)] \nonumber \\ 
&+& \sum_{s=1}^k \big|\Pr[p_s(x) \ge 0 \wedge u_s(x,y) \ge 0]- \Pr[r_s(x) \ge 0 \wedge v_s(x,y) \ge 0] \big| \label{eq:noise-junta-1}
\end{eqnarray}
We now seek to bound $\Pr[p_s(x) \ge 0 \wedge u_s(x) \ge 0]- \Pr[r_s(x) \ge 0 \wedge v_s(x) \ge 0]$. As before, using that the polynomials $p_s, u_s, r_s$ and $v_s$ are zero only
on a measure zero set and using the identity from (\ref{eq:identity}), we obtain that 
\begin{eqnarray*}
&& \big|\Pr[p_s(x) \ge 0 \wedge u_s(x,y) \ge 0]- \Pr[r_s(x) \ge 0 \wedge v_s(x,y) \ge 0] \big| \\ &\le& \frac{1}{2} \cdot  \big|\Pr[p_s(x) \ge 0]- \Pr[r_s(x) \ge 0] \big| +  \frac{1}{2} \cdot  \big|\Pr[u_s(x,y) \ge 0]- \Pr[v_s(x,y) \ge 0] \big|  \\ &+&  \frac{1}{2} \cdot  \big|\Pr[p_s(x) \cdot u_s(x,y) \ge 0]- \Pr[r_s(x) \cdot v_s(x,y) \ge 0] \big|.
\end{eqnarray*} 
Applying Corollary~\ref{corr:bounds} to bound all the terms, we obtain that 
$$
 \big|\Pr[p_s(x) \ge 0 \wedge u_s(x,y) \ge 0]- \Pr[r_s(x) \ge 0 \wedge v_s(x,y) \ge 0] \big| \le \frac{3 \xi}{2}. 
$$
Plugging this bound back to (\ref{eq:noise-junta-1}), we obtain that 
$$
\big| \mathbf{E}[\langle f, P_t f \rangle] - \mathbf{E}[\langle \fh, P_t \fh \rangle] \big| \le   2\Pr[x \in \mathsf{Collision}(f)] +2 \Pr[x \in \mathsf{Collision}(\fh)] + \frac{3k\xi}{2}. 
$$

\end{proof}

We are now in a position to finish the proof of Theorem~\ref{thm:reduction} and show that the construction $\fh$ indeed satisfies the required properties. Note that by construction, $\fh$ is a degree-$d$ PTF over $n_0$ variables where as we have said before, once $\beta (\cdot)$ is explicitly selected, $n_0 = n_0(k, \epsilon, d)$ is an explicit function. Further, it is easy to see that there exists $\beta(\cdot)$ (we just need to be sufficiently fast decreasing) which satisfies the requirement of Condition~\ref{cond:enforce-beta}. 

We first bound $\Vert \mathbf{E}[\fh] - \boldsymbol{\mu}\Vert_1 = \Vert \mathbf{E}[\fh] - \mathbf{E}[f] \Vert_1$. To do this, note that 
\begin{eqnarray*}
\Vert \mathbf{E}[\fh] - \mathbf{E}[f] \Vert_1 &\le& \sum_{s=1}^k |\Pr[\tilde{p}_s(x) \ge 0] - \tilde{r}_s(x) \ge 0] | + \Pr[x \in \mathsf{Collision}(f)]  + \Pr[x \in \mathsf{Collision}(\fh)] \\ &\le& k \cdot \xi + \Pr[ x \in \mathsf{Collision}(f)] + (k^2 +1) \cdot \Pr[ x \in \mathsf{Collision}(f)] + k \cdot \xi + \frac{3 k^2 \cdot \xi}{2}. 
\end{eqnarray*}
Here the last inequality follows by applying the first item of Corollary~\ref{corr:bounds} and Claim~\ref{clm:collision-upper}. Plugging in the value of $\xi = \frac{\epsilon}{20k^2}$ and the upper bound on $\Pr[x \in \mathsf{Collision}(f)]$, we obtain that $\Vert \mathbf{E}[\fh] - \mathbf{E}[f] \Vert_1 \le \epsilon$. 

For the second part, note that applying Lemma~\ref{lem:noise-stab}, we obtain
\begin{eqnarray*}
\langle \fh, P_t \fh \rangle \ge \langle f, P_t f \rangle - 2\Pr[x \in \mathsf{Collision}(f)] -2 \Pr[x \in \mathsf{Collision}(\fh)] - \frac{3k\xi}{2}. 
\end{eqnarray*}
Again plugging in the value $\xi = \frac{\epsilon}{20k^2}$ and the upper bound on $\Pr[x \in \mathsf{Collision}(f)]$, we obtain that 
$$
\langle \fh, P_t \fh \rangle \ge \langle f, P_t f \rangle - \epsilon.
$$

\bibliographystyle{plain}
\bibliography{allrefs,all,mossel}

\appendix

\section{Existence of a rounding threshold}

\begin{proofof}{Lemma~\ref{lem:rounding-existence}}
  We will prove the following more general statement: given $z \in \R^k$ and $i \in [k]$, let $A_i(z) \subset \Delta_k$ be the set
  $\{x \in \Delta_k: x_i - z_i = \max_j x_j - z_j\}$. For every probability measure $\mu$ on $\Delta_k$ and every $q \in \Delta_k$,
  there exists $z \in \R^k$ such that $\mu(\bigcup_{i \in I} A_i(z)) \ge \sum_{i \in I} q_i$ for every $I \subset [k]$.
  To obtain the original statement from this one, let $\mu$ be the distribution of $f$ and let $q = \E[f]$.
  By the preceding claim about $\mu$, we may take $z \in \R^k$ such that
  $\mu(\bigcup_{i \in I} A_i(z)) \ge \sum_{i \in I} q_i$ for every $I \subset [k]$. For
  $I \subset [k]$, let $B_I$ be the set of $x \in \R^n$ such that $\{i: f(x_i) - z_i = \max_j f(x_j) - z_j\} = I$. Then $\{B_I: I \subset [k]\}$ is a
  partition of $\R^n$. Moreover, if we define $w(I) = \gamma_n(B_I)$ then
  \[
    \sum_{J: J \cap I \ne \emptyset} w(J) = \gamma_n\Big(\bigcup_{J: J \cap I \ne \emptyset} B_J\Big) = \mu\Big(\bigcup_{i \in I} A_i(z)\Big) \ge \sum_{i \in I} q_i.
  \]
  Now we construct a bipartite graph with $U = [k]$ and $V = \{I: I \subset [k]\}$. The pair $(i, I)$
  is an edge if $i \in I$. We define $w$ on $V$ as above, and we define $w$ on $U$ by $w(i) = p_i$.
  According to the displayed equation above, the condition of Theorem~\ref{thm:hall}
  holds; hence, there exists $p: V \to \Delta_k$ such that $p_i(I) > 0$ only when $i \in I$, and
  such that $\sum_{I \ni i} p_i(I) \gamma_n(B_I) = q_i$. Now we will define $g$: for each 
  $I \subset [k]$, partition $B_I$ arbitrarily into sets $\{B_{I,i}: i \in I\}$, where
  $\gamma_n(B_{I,i}) = p_i(I) \gamma_n(B_I)$. This may be done because $\gamma_n$ has no atoms.
  Finally, define $g$ to be $e_i$ on every $B_{I,i}$.
  Then the condition $\sum_{I \ni i} p_i(I) \gamma_n(B_I) = q_i$ ensures that $\gamma_n(\{g = e_i\}) = q_i$.
  Moreover, note that $g(x) = e_i$ implies that $x \in B_I$ for some $I$, which implies
  that $f_i(x) - z_i = \max_j (f_j(x) - z_j)$. This completes the construction of $g$.

  It remains to prove the claim about $\mu$.
  We will assume initially that $\mu$ is absolutely continuous with respect to the
  uniform measure on $\Delta_k$. As a consequence, the function $\psi: \Delta_k \to \R^k$ defined by
  $\psi_i(z) = \mu(A_i(-kz))$
  is continuous. Moreover, the image of $\psi$ is in $\Delta_k$, and we need prove that it is all of $\Delta_k$.
  
  For $I \subsetneq \{1, \dots, k\}$, let $F_I = \{x \in \Delta_k: x_i = 0 \text{ for all } i \in I\}$. Note that
  every face of $\Delta_k$ is of the form $F_I$ for some $I \subsetneq \{1, \dots, k\}$.
  Next, we claim that $\psi$ maps $F_I$ into $F_I$ for every $I$. Indeed, if $z \in F_I$ then there is at least one $j \not \in I$
  such that $z_j \ge 1/k$. For this $j$ and any $i \in I$, if $x$ is in the interior of $\Delta_k$ then
  \[
    x_i + k z_i = x_i < x_j + k z_j.
  \]
  It follows that $A_i(-kz)$ does not intersect the interior
  of $\Delta_k$; hence, $\psi_i(z) = 0$. Since this holds for all $i \in I$, $\psi(z) \in F_I$.
  By Sperner's lemma, any map from $\Delta_k$ into itself that leaves all faces invariant must be onto, and so
  $\psi$ is onto, as claimed.

  To complete the proof, we must eliminate the assumption that $\mu$ is absolutely continuous with
  respect to the uniform measure. For an arbitrary probability measure $\mu$, let $\mu_n$ be a sequence of
  absolutely continuous probability measures that converge to $\mu$ in distribution. Define $\psi_n: \Delta_k \to \Delta_k$
  by $\psi_n(z) = (\mu_n(A_1(z)), \dots, \mu_n(A_k(z)))$.
  By the previous argument, for every $n$ there is some $z_n \in \Delta_k$ such that $\psi(z_n) = q$. Since
  $\Delta_k$ is compact, we may pass to a subsequence and thereby assume that $z_n$ converges to some limit $z_\infty$.
  Note that
  \[
    \bigcup_{i \in I} A_i(z_\infty) = \bigcap_{n=1}^\infty \Big(\bigcup_{i \in I} \bigcup_{m \ge n} A_i(z_m) \cup A_i(z_\infty)\Big).
  \]
  Moreover, $\bigcup_{m \ge n} A_i(z_n) \cap A_i(z_\infty)$ is closed for every $n$, since $z_\infty$ is the only
  limit point of the sequence $z_n$. It follows that
  \[
    \mu\Big(\bigcup_{i \in I} A_i(z_\infty)\Big)
    = \lim_{n \to \infty} \mu\Big(\bigcup_{i \in I} \bigcup_{m \ge n} A_i(z_m) \cup A_i(z_\infty)\Big)
    \ge \lim_{n \to \infty} \lim_{\ell \to \infty} \mu_\ell\Big(\bigcup_{i \in I} \bigcup_{m \ge n} A_i(z_m) \cup A_i(z_\infty)\Big)
    \ge \sum_i q_i,
  \]
  since $\sum_{i \in I} \mu_\ell(A_i(z_\ell)) = \sum_{i \in I} q_i$ for every $I$ and $\ell$
  (using the fact that $\mu$ has a density, and so it assigns no mass to $A_i(z_\ell) \cap A_j(z_\ell)$).
\end{proofof}

\section{Bounds on variance of product of low-degree polynomials}

Let $p, q: \mathbb{R}^n \rightarrow \mathbb{R}$ be degree $d_1$ and $d_2$ polynomial respectively defined as
\[
p = \sum_{r=0}^{d_1} I_r(f_r) \ \textrm{ and } \ q = \sum_{r=0}^{d_2} I_r(g_r).
\]
We will use Ito's multiplication formula to establish upper and lower bounds on the variance of the product polynomial $p \cdot q$. 
\begin{proposition}\label{prop:upper-bound} 
Let $p, q$ be as defined above such that $\mathbf{E}[q]=0$. Then, 
\[
\mathbf{E}[(p \cdot q)^2]  \le 9^d \cdot \mathbf{E}[p^2] \cdot \mathbf{E}[q^2]. 
\]
\end{proposition}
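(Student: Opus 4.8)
The plan is to derive the bound directly from Gaussian hypercontractivity together with the Cauchy--Schwarz inequality, which avoids any bookkeeping with Ito's multiplication formula. The one external tool I would invoke is the Gaussian hypercontractive inequality in the following form (see~\cite{Janson:97}): if $r : \mathbb{R}^n \to \mathbb{R}$ is any polynomial of degree at most $D$, then $\|r\|_4 \le 3^{D/2}\,\|r\|_2$, where both norms are taken with respect to $\gamma_n$. Squaring, this reads $\mathbf{E}[r^4] \le 3^{D}\,(\mathbf{E}[r^2])^2$.

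Applying this to $p$ (of degree $d_1$) and to $q$ (of degree $d_2$), and then using Cauchy--Schwarz on the product $p^2 \cdot q^2$, gives
\[
  \mathbf{E}[(p\cdot q)^2] \;=\; \mathbf{E}[p^2 q^2] \;\le\; \sqrt{\mathbf{E}[p^4]}\,\sqrt{\mathbf{E}[q^4]} \;\le\; 3^{d_1/2}\,\mathbf{E}[p^2]\cdot 3^{d_2/2}\,\mathbf{E}[q^2] \;=\; 3^{(d_1+d_2)/2}\,\mathbf{E}[p^2]\,\mathbf{E}[q^2].
\]
Since $(d_1+d_2)/2 \le \max(d_1,d_2)$, the prefactor $3^{(d_1+d_2)/2}$ is at most $3^{\max(d_1,d_2)} \le 9^{\max(d_1,d_2)}$, so with $d = \max(d_1,d_2)$ (and in particular $d = d_1 = d_2$, which is the situation in all of our applications) we obtain $\mathbf{E}[(p\cdot q)^2] \le 9^{d}\,\mathbf{E}[p^2]\,\mathbf{E}[q^2]$, as claimed. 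I note in passing that the hypothesis $\mathbf{E}[q]=0$ is not used in this bound; it is stated only to parallel the companion lower bound on $\Var(p\cdot q)$, whose proof does rely on Ito's formula (Proposition~\ref{prop:Ito}).

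There is essentially no obstacle in this argument, so the only thing to take care of is matching the constant produced by hypercontractivity to the stated $9^d$, which is the trivial estimate above. If one insists on an Ito-based proof instead, the route would be to expand $p = \sum_{r=0}^{d_1} I_r(f_r)$ and $q = \sum_{s=0}^{d_2} I_s(g_s)$, apply Proposition~\ref{prop:Ito} to each product $I_r(f_r)\,I_s(g_s)$, group the resulting terms by Wiener-chaos level, use Proposition~\ref{prop:4} to replace each $\|I_m(\cdot)\|_2$ by a Frobenius norm, bound $\|f_r\,\widetilde{\otimes}_\ell\,g_s\|_F \le \|f_r\|_F\,\|g_s\|_F$ via Fact~\ref{fact:contraction}, and finally collect the $O(d_1 d_2)$ Ito coefficients together with the number of summands contributing to each level. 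This also yields a bound of the form $C(d_1,d_2)\,\mathbf{E}[p^2]\,\mathbf{E}[q^2]$, but the delicate part would be controlling the binomial and factorial factors in Proposition~\ref{prop:Ito} sharply enough to land at exactly $9^d$; since the hypercontractivity argument produces a constant this small with no effort, I would present that one.
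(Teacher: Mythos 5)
Your argument is the same as the paper's: Cauchy--Schwarz on $\mathbf{E}[p^2q^2]$ followed by Gaussian hypercontractivity to bound the fourth moments, and your remark that the hypothesis $\mathbf{E}[q]=0$ is not needed is also accurate. There is, however, a slip in your constants: from $\|r\|_4 \le 3^{D/2}\|r\|_2$ one gets, by raising to the fourth power, $\mathbf{E}[r^4] \le 9^{D}(\mathbf{E}[r^2])^2$, not $3^{D}(\mathbf{E}[r^2])^2$; consequently $\sqrt{\mathbf{E}[p^4]} \le 3^{d_1}\,\mathbf{E}[p^2]$ rather than $3^{d_1/2}\,\mathbf{E}[p^2]$, and the chain yields the prefactor $3^{d_1+d_2} \le 3^{2d} = 9^{d}$. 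The stated conclusion therefore still follows, but exactly at $9^d$ with no slack --- your implicit claim that the argument really proves the stronger bound $3^{(d_1+d_2)/2}$ is not justified by the inequality you quote.
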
 
\begin{proof}
By applying Cauchy-Schwarz inequality, we have 
$ \mathbf{E}[(p \cdot q)^2]  \le \sqrt{\mathbf{E}[p^4] \cdot \mathbf{E}[q^4]}$. It follows from Theorem~\ref{thm:hyper} that $\mathbf{E}[p^4] \le 9^d (\mathbf{E}[p^2])^2$ and $\mathbf{E}[q^4] \le 9^d (\mathbf{E}[q^2])^2$  (see \cite{ODonnell:book} for a short inductive proof of this statement). Combining these observations yields the proof.  

\end{proof}

The next proposition proves a lower bound on the variance of $p \cdot q$ in terms of the Frobenius norm of its highest degree component. 
\begin{proposition}\label{prop:variance-bound}
\[
\mathsf{Var}(p \cdot q) \ge {\Vert f_{d_1} \Vert_F^2 \cdot \Vert g_{d_2} \Vert_F^2}.
\] 
\end{proposition}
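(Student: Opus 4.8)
The plan is to isolate the top‑degree Wiener chaos component of the product $p\cdot q$ and lower bound $\mathsf{Var}(p\cdot q)$ by the squared $L^2$–norm of that component. Expand $p\cdot q=\sum_{r_1,r_2} I_{r_1}(f_{r_1})I_{r_2}(g_{r_2})$. By Ito's multiplication formula (Proposition~\ref{prop:Ito}), $I_{r_1}(f_{r_1})I_{r_2}(g_{r_2})$ lives in $\mathcal{W}^{r_1+r_2}\oplus\mathcal{W}^{r_1+r_2-2}\oplus\cdots$, so the only contribution to $\mathcal{W}^{d_1+d_2}$ comes from $(r_1,r_2)=(d_1,d_2)$, and within that product only from the $r=0$ term; that contribution is $\binom{d_1+d_2}{d_1}^{1/2}\, I_{d_1+d_2}\!\big(f_{d_1}\widetilde{\otimes}_0 g_{d_2}\big)$. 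We may assume $d_1,d_2\ge 1$ (if, say, $q$ is constant then $\mathsf{Var}(p\cdot q)=q^2\,\mathsf{Var}(p)\ge q^2\|f_{d_1}\|_F^2=\|f_{d_1}\|_F^2\|g_{d_2}\|_F^2$). Then $d_1+d_2\ge 1$, so $\mathbf{E}[p\cdot q]\in\mathcal{W}^0$ is orthogonal to this component, and by orthogonality of the Wiener chaoses together with the isometry $\mathbf{E}[I_m(h)^2]=\|h\|_F^2$ (Proposition~\ref{prop:4}, cf.\ \eqref{eq:parseval}),
\[
\mathsf{Var}(p\cdot q)\;\ge\;\binom{d_1+d_2}{d_1}\,\big\|f_{d_1}\widetilde{\otimes}_0 g_{d_2}\big\|_F^2 .
\]

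It therefore suffices to prove the tensor inequality
\[
\binom{a+b}{a}\,\big\|F\widetilde{\otimes}_0 G\big\|_F^2\;\ge\;\|F\|_F^2\,\|G\|_F^2
\qquad\text{for symmetric }F\in\mathcal{H}^{\odot a},\ G\in\mathcal{H}^{\odot b},
\]
applied with $F=f_{d_1}$, $G=g_{d_2}$. To see this, write $F\widetilde{\otimes}_0 G=\mathrm{Sym}(F\otimes G)=\binom{a+b}{a}^{-1}\sum_{S}\tau_S(F\otimes G)$, where $S$ ranges over the $a$‑element subsets of $[a+b]$ and $\tau_S(F\otimes G)$ places the indices of $F$ in the coordinates indexed by $S$ and those of $G$ in the complement (well defined since $F,G$ are already symmetric). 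As $\mathrm{Sym}$ is an orthogonal projection, $\|\mathrm{Sym}(F\otimes G)\|_F^2=\langle \mathrm{Sym}(F\otimes G),F\otimes G\rangle=\binom{a+b}{a}^{-1}\sum_S\langle\tau_S(F\otimes G),F\otimes G\rangle$. The term $S=\{1,\dots,a\}$ contributes $\|F\otimes G\|_F^2=\|F\|_F^2\|G\|_F^2$, and every other term is nonnegative: partitioning $[a+b]$ into the four blocks cut out by $S$ and $\{1,\dots,a\}$, the two "middle" blocks have the same size $a-|S\cap\{1,\dots,a\}|$ (because $|S|=a$), and indexing the four blocks by $u,v,w,z$ one finds
\[
\langle\tau_S(F\otimes G),F\otimes G\rangle=\sum_{u,v,w,z}F(u,w)G(v,z)\,F(u,v)G(w,z)=\sum_{u,z}\Big(\sum_{v}F(u,v)G(v,z)\Big)^2\ \ge\ 0 .
\]
Summing over $S$ gives $\|\mathrm{Sym}(F\otimes G)\|_F^2\ge\binom{a+b}{a}^{-1}\|F\|_F^2\|G\|_F^2$, i.e.\ the claimed inequality, and combining with the previous display yields $\mathsf{Var}(p\cdot q)\ge\|f_{d_1}\|_F^2\|g_{d_2}\|_F^2$.

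The main obstacle is the nonnegativity of the cross terms $\langle\tau_S(F\otimes G),F\otimes G\rangle$: everything else is routine bookkeeping with Ito's formula and chaos orthogonality, and the content is the combinatorial observation that superimposing the two factorization patterns of $F\otimes G$ forces the "middle" blocks to match in size and thereby turns the inner product into a sum of squares. The only other point requiring care is matching the normalization constant in Proposition~\ref{prop:Ito} with the averaging convention for the symmetrization $\widetilde{\otimes}_0$, so that the factor $\binom{d_1+d_2}{d_1}$ from Ito's formula cancels exactly against the $\binom{d_1+d_2}{d_1}^{-1}$ from the tensor inequality; this is a direct check.
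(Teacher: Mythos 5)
Your proof is correct and follows the same two-step strategy as the paper's: apply Ito's multiplication formula (Proposition~\ref{prop:Ito}), observe that the top Wiener chaos $\mathcal{W}^{d_1+d_2}$ receives a contribution only from the term $(r_1,r_2,r)=(d_1,d_2,0)$, and use chaos orthogonality to obtain
$\mathsf{Var}(p\cdot q)\ge\binom{d_1+d_2}{d_1}\,\|f_{d_1}\widetilde{\otimes_0} g_{d_2}\|_F^2$,
after which the claim reduces to the tensor inequality $\binom{a+b}{a}\,\|F\widetilde{\otimes_0} G\|_F^2\ge\|F\|_F^2\|G\|_F^2$ for symmetric $F,G$.

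The one place you genuinely diverge is that the paper invokes this tensor inequality as a cited black box (Neuberger's theorem, Fact~\ref{fac:Neu}), whereas you supply a self-contained combinatorial proof. Your argument is correct: since $F$ and $G$ are symmetric, $\mathrm{Sym}(F\otimes G)$ is the average of the $\binom{a+b}{a}$ placements $\tau_S(F\otimes G)$; since $\mathrm{Sym}$ is an orthogonal projection, $\|\mathrm{Sym}(F\otimes G)\|_F^2=\binom{a+b}{a}^{-1}\sum_S\langle\tau_S(F\otimes G),F\otimes G\rangle$; the term $S=\{1,\dots,a\}$ gives $\|F\|_F^2\|G\|_F^2$; and the key observation that each remaining cross term equals $\sum_{u,z}\bigl(\sum_v F(u,v)G(v,z)\bigr)^2\ge 0$ — using that $|S|=|S_0|=a$ forces the two ``middle'' blocks $S_0\setminus S$ and $S\setminus S_0$ to have equal size, so they can be summed against each other — is exactly right. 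This buys you a proof that is independent of the external reference, at the small cost of the block-bookkeeping; it is in effect a direct proof of Neuberger's inequality in the case $r=0$, which is the only case the Proposition needs.
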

\begin{proof} 
\begin{eqnarray*}
p \cdot q &=& \sum_{r_1=0}^{d_1} \sum_{r_2=0}^{d_2} I_{r_1}(f_{r_1}) \cdot I_{r_2}(g_{r_2})  \\
&=& \sum_{r_1=0}^{d_1} \sum_{r_2=0}^{d_2} \sum_{r=0}^{r_1 \wedge r_2} r! \cdot \binom{r_1}{r} \cdot \binom{r_2}{r} \cdot \frac{\sqrt{(r_1+r_2 - 2r)!}}{\sqrt{r_1!}\sqrt{r_2!}} \cdot I_{r_1+r_2 - 2r} (f_{r_1} \widetilde{\otimes_r} g_{r_2}). 
\end{eqnarray*}
The second equality uses Ito's multiplication formula (Proposition~\ref{prop:Ito}). Since the term with $r=0$, $r_1= d_1$ and $r_2=d_2$ does not get canceled with any other term, we have
\[
\mathsf{Var}(p \cdot q) \ge \mathsf{Var} \bigg( \frac{\sqrt{(d_1+d_2 )!}}{\sqrt{d_1!}\sqrt{d_2!}}  \cdot I_{d_1+d_2} (f_{d_1} \widetilde{\otimes_0} g_{d_2}) \bigg). 
\]
Next, we use the following fact proven by Neuberger~\cite{Neuberger:74}. 
\begin{fact}~\label{fac:Neu}[Neuberger]
Let $A \in \mathcal{H}^{\odot d_1}$ and $B \in \mathcal{H}^{\odot d_2}$. Then, 
$$
\Vert A \widetilde{\otimes_0} B \Vert_F^2 \ge \frac{1}{\binom{d_1+d_2}{d_1}} \cdot \Vert A \Vert_F^2 \cdot \Vert B \Vert_F^2.
$$
\end{fact} 
\[
\mathsf{Var} \bigg( \frac{\sqrt{(d_1+d_2 )!}}{\sqrt{d_1!}\sqrt{d_2!}}  \cdot I_{d_1+d_2} (f_{d_1} \widetilde{\otimes_0} g_{d_2}) \bigg) \geq \frac{{(d_1+d_2 )!}}{{d_1!} \cdot {d_2!}} \Vert f_{d_1} \widetilde{\otimes_0} g_{d_2} \Vert_F^2 
\]
Using Fact~\ref{fac:Neu}, we get 
\[
\mathsf{Var} \bigg( \frac{\sqrt{(d_1+d_2 )!}}{\sqrt{d_1!}\sqrt{d_2!}}  \cdot I_{d_1+d_2} (f_{d_1} \widetilde{\otimes_0} g_{d_2}) \bigg) \geq {\Vert f_{d_1} \Vert_F^2 \cdot \Vert g_{d_2} \Vert_F^2}.
\]

\end{proof}
\begin{lemma}~\label{lemma:variance-product-lower}
Let $p, q: \mathbb{R}^n \rightarrow \mathbb{R}$ be degree $d$ polynomials such that $\mathsf{Var}(p) = \mathsf{Var}(q)=1$. Let $T = \max \{\mathbf{E}[p^2], \mathbf{E}[q^2]\}$ and 
$L=4 \cdot T \cdot 9^{d+1} \cdot (d+1)^2$. 
Then
\[
\mathsf{Var}(p \cdot q) \ge \frac{L}{2} \cdot (L)^{2^{-2d}}. 
\] 
\end{lemma}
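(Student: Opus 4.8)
\medskip
\noindent\textbf{Proof plan.} The plan is to combine two ingredients that are already in hand: the top-component lower bound of Proposition~\ref{prop:variance-bound}, $\mathsf{Var}(pq)\ge \|f_{d_1}\|_F^2\,\|g_{d_2}\|_F^2$, where $f_{d_1},g_{d_2}$ are the top-degree Wiener chaos components of $p$ and $q$; and the hypercontractive fourth-moment bound $\mathbf{E}[a^4]\le 9^{\deg a}(\mathbf{E}[a^2])^2$ (a consequence of Theorem~\ref{thm:hyper}, also used inside Proposition~\ref{prop:upper-bound}) for controlling $L^2$-perturbations. I record at the outset that the displayed exponent $2^{-2d}$ should read $-2^{2d}$: the quantity actually used as $\vartheta$ in Condition~\ref{cond:enforce-beta} and in the proof of Corollary~\ref{corr:bounds} is $\frac{L}{2}L^{-2^{2d}}$, which is a genuinely tiny positive lower bound (not $\ge L/2$), and it is that inequality I aim to prove; I will not treat $L^{-2^{2d}}$ and $L^{2^{-2d}}$ as the same number. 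Throughout, write $p=\sum_{r\le d_1}I_r(f_r)$ and $q=\sum_{s\le d_2}I_s(g_s)$ with $d_1,d_2$ the true degrees, so that by Proposition~\ref{prop:4} one has $\sum_{r\ge 1}\|f_r\|_F^2=\mathsf{Var}(p)=1$, $\sum_{r\ge 0}\|f_r\|_F^2=\mathbf{E}[p^2]\le T$, and symmetrically for $q$.

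The main step is a dichotomy iterated along a decreasing sequence of thresholds $\theta_0>\theta_1>\cdots$. At a given stage, with current polynomials $p,q$ and current threshold $\theta$: if both top components are large, $\|f_{d_1}\|_F^2\ge\theta$ and $\|g_{d_2}\|_F^2\ge\theta$, then Proposition~\ref{prop:variance-bound} gives $\mathsf{Var}(pq)\ge\theta^2$ and we stop. Otherwise the smaller top component, say $\|f_{d_1}\|_F^2<\theta$, is stripped: set $p':=p-I_{d_1}(f_{d_1})$, which has degree $\le d_1-1$, variance at least the running floor minus $\theta$, $\mathbf{E}[(p')^2]\le\mathbf{E}[p^2]\le T$, and $\|p-p'\|_2=\|f_{d_1}\|_F<\sqrt\theta$. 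By the reverse triangle inequality for the centered $L^2$-norm, $\sqrt{\mathsf{Var}(pq)}\ge\sqrt{\mathsf{Var}(p'q)}-\|(p-p')q\|_2$, and Cauchy-Schwarz together with the fourth-moment bound give $\|(p-p')q\|_2\le 3^{d}\sqrt T\,\|f_{d_1}\|_F$. One then recurses on $\mathsf{Var}(p'q)$; each step lowers $\deg p+\deg q$ by one, so there are at most $2d$ steps, and the recursion must terminate in the ``both tops large'' branch, because once $\deg p=\deg q=1$ the top component of $p$ \emph{is} its entire non-constant part and its squared norm equals the variance floor. Since the terminal bound $\theta^2$ from Proposition~\ref{prop:variance-bound} is a \emph{product}, and a stripped component has squared norm comparable to the running threshold, the natural accounting makes the effective budget roughly square from one level to the next; after $\le 2d$ levels this produces the exponent $2^{2d}$, and the prefactor $L=4T\cdot 9^{d+1}(d+1)^2$ collects the $\sqrt T$ and the powers of $9$ from the perturbation estimate together with a $(d+1)^2$ coming from the chaos levels and the number of recursion steps.

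The hard part will be closing the quantitative accounting, since the naive execution fails: removing a component of squared norm $<\theta$ costs an $L^2$-error of order $\sqrt\theta$, which swamps the Proposition~\ref{prop:variance-bound} gain of order $\theta^2$ whenever $\theta<1$. The fix is to run the dichotomy with the thresholds chosen \emph{adaptively and geometrically}, $\theta_{j+1}\approx(\theta_j/L)^2$, starting from a modest $\theta_0\approx 1/(dL)$, and to prove by a downward induction over the $\le 2d$ levels that at each level the accumulated perturbation is at most half of the running variance lower bound; the doubly-exponential shrinkage of $\theta_j$ forced by this requirement is precisely what pins the exponent at $2^{2d}$ and fixes the shape of $L$. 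Two further bookkeeping items: one must use the true (not the formal) top degree whenever $p$ or $q$ degenerates to lower degree, and one must carry $T=\max\{\mathbf{E}[p^2],\mathbf{E}[q^2]\}$ through every invocation of the fourth-moment bound --- which is exactly why $T$, hence the mean size of $p$ and $q$, enters $L$. Finally I would verify the small cases ($d=1$, or $pq$ of small degree) directly from Proposition~\ref{prop:variance-bound}, where it already yields a clean constant lower bound, so as to anchor the induction.
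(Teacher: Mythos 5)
Your proposal follows essentially the same route as the paper's own proof: an iterative stripping of top-degree Wiener-chaos components whose norms fall below a doubly-exponentially decreasing threshold schedule (the paper takes $\Gamma(x)=L\cdot L^{-2^{x}}$, so $\Gamma(x+1)=\Gamma(x)^2/L$, matching your $\theta_{j+1}\approx(\theta_j/L)^2$), terminating in the branch where Proposition~\ref{prop:variance-bound} applies, with the $L^2$-perturbation from the stripped pieces controlled via hypercontractivity as in Proposition~\ref{prop:upper-bound}. Your reading of the exponent as $-2^{2d}$ rather than $2^{-2d}$ is also what the paper's proof actually establishes.
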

\begin{proof}
Let us express $p$ and $q$ in terms of iterated Ito integrals. 
\[
p = \sum_{r=0}^{d} I_r(f_r) \ \textrm{ and } \ q = \sum_{r=0}^{d} I_r(g_r).
\]
Let $\alpha = \mathbf{E}[p^2]$ and $\beta= \mathbf{E}[q^2]$. 
Let $\Gamma : \mathbb{N} \rightarrow [0,1]$ be a function (which we shall fix later). Consider the following iterative process:
\begin{itemize}
\item Start with $i=d$ and $j=d$. If $\Vert f_i \Vert_F \cdot \Vert g_j \Vert_F \ge \Gamma(i+j)$, then stop the process. 
\item If $\Vert f_i \Vert_F \le \sqrt{\Gamma(i+j)}$, then $i \leftarrow i-1$, else $j \leftarrow  j-1$. 
\item If any of $i$ or $j$ reaches zero, terminate the process. 
\end{itemize}
Recall that  $T = \max \{\alpha, \beta\} $ and $L = 4 \cdot T \cdot 9^{d+1} \cdot (d+1)^2$. We let $\Gamma (x) = L \cdot L^{-2^{x}}$. First, we claim that for $\Gamma(\cdot)$ as chosen here, the iterative process above terminates with $(i,j)$ such that neither of them is zero. Towards a contradiction, assume that the above process terminates with $i=0$ and $j>0$. Further, for $d \le \ell \le 1$, when the value of $i$ drops from $\ell$ to $\ell-1$, let the value of $i+j$ be $\kappa_\ell$. Note that $\kappa_1 < \ldots < \kappa_d$ and $\kappa_1 \ge 2$.  Thus, if the process terminates with $i=0$, 
then 
\[
\mathsf{Var}(p)= \sum_{\ell=1}^d \Vert f_\ell \Vert_F^2  \le \sum_{\ell=1}^d  \Gamma(\kappa_\ell) \le 2 \cdot \Gamma(\kappa_1)  <1
\]
This results in a contradiction which means that both $i$ and $j$ must be non-zero at the end of the process.  Here the penultimate inequality uses that for our choice of $\Gamma(\cdot)$, $\Gamma(x+1) \le \Gamma(x)/2$ and the last inequality uses that $\Gamma(\kappa_1) <1/2$. 

Let us assume that $(i_0, j_0)$ is the pair returned by the above iterative process and define $\tilde{p} = \sum_{r=0}^{i_0} I_r(f_r)$ and $\tilde{q} = \sum_{r=0}^{j_0} I_r(g_r)$. Applying Proposition~\ref{prop:variance-bound}, we have
$
\mathsf{Var}(\tilde{p} \cdot \tilde{q}) \ge {\Gamma^2(i_0+j_0)}. 
$
Observe that $p \cdot q = \tilde{p} \cdot \tilde{q} + (p - \tilde{p}) \cdot q + \tilde{p} \cdot (q-\tilde{q}).$ Thus, it follows that $\mathsf{Var}(p \cdot q - \tilde{p} \cdot \tilde{q}) = \mathsf{Var}((p - \tilde{p}) \cdot q + \tilde{p} \cdot (q-\tilde{q}))$. Applying Jensen's inequality, we get
\[
\sqrt{\mathsf{Var}(p \cdot q)} \ge \sqrt{\mathsf{Var}(\tilde{p} \cdot \tilde{q})} - \sqrt{\mathsf{Var}(\tilde{p} \cdot {(q-\tilde{q})})} - \sqrt{\mathsf{Var}(q \cdot {(p-\tilde{p})})}.\]
Hence, it suffices to bound $\mathsf{Var}(\tilde{p} \cdot {(q-\tilde{q})})$ and $\mathsf{Var}(q \cdot {(p-\tilde{p})})$. To do this, notice that by definition of the process, 
\[
\mathsf{Var}(q-\tilde{q}), \ \mathsf{Var}(p-\tilde{p}) < \sum_{\ell = i_0 +j_0+1}^{d} \Gamma(\ell) <  2 \cdot \Gamma(i_0+j_0+1). 
\]
Applying Proposition~\ref{prop:upper-bound}, 
\[
\mathsf{Var}(q \cdot {(p-\tilde{p})}), \  \mathsf{Var}(q \cdot {(p-\tilde{p})})\le 2 \cdot \Gamma(i_0+j_0+1) \cdot d \cdot (d+1) \cdot 8^d \cdot T. 
\]
Thus, 
\[
\sqrt{\mathsf{Var}(p \cdot q)} \ge {\Gamma(i_0+j_0)} - 2 \sqrt{2} \cdot (d+1) \cdot (2\sqrt{2})^d \cdot \sqrt{T} \cdot \sqrt{\Gamma(i_0+j_0+1)} .
\]
However, note that for any $x \ge 0$, $\sqrt{\Gamma(x+1) } = \Gamma(x) / \sqrt{L}$. Plugging the value of $L$, we obtain that 
\[
\sqrt{\mathsf{Var}(p \cdot q)}  \ge {\Gamma(i_0+j_0)}/2 \ge \frac{L}{2} \cdot L^{2^{-2d}}.
\]
\end{proof}

\section{Transformation of PTFs to multilinear PTFs}~\label{app:multilinear}
In this section, we prove Lemma~\ref{lem:multilinear}. To recall, this lemma says that given any multivariate PTF, one can get a multilinear multivariate PTF which has the same partition sizes and noise stability and satisfies the balanced condition. 
\begin{lemma} 
Let $f: \mathbb{R}^n \rightarrow [k]$ be a degree-$d$, $(d,\epsilon)$-balanced PTF. Then, for any $\epsilon>0$, there exists a degree-$d$, $(d,2\epsilon)$-balanced multilinear PTF $f_{\mathsf{multi}}: \mathbb{R}^\ell \rightarrow \mathbb{R}$ such that 
\begin{itemize}
\item $\Vert \mathbf{E}_{x \sim \gamma_n} [f(x) ] - \mathbf{E}_{x \sim \gamma_\ell} [f_{\mathsf{multi}}(x)  ] \Vert_1 \le \epsilon$, 
\item $| \mathbf{E}_{x \sim \gamma_n} [ \langle f, P_t  f \rangle ] - \mathbf{E}_{x \sim \gamma_{\ell}} [\langle f_{\mathsf{multi}}, P_t f_{\mathsf{multi}}\rangle ] | \le \epsilon$.
\item $\Pr_{x \sim \gamma_{\ell}} [x \in \mathsf{Collision}(f_{\mathsf{multi}})] \le \Pr_{x \sim \gamma_{n}} [x \in \mathsf{Collision}(f)]  + \epsilon$.
\end{itemize}
Here $\ell = n \cdot (k^2/d \epsilon)^{3d} \cdot d^2$. 
\end{lemma}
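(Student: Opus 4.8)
\textbf{Proof plan for Lemma~\ref{lem:multilinear}.}

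The plan is to replace each variable $x_i$ of $f$ by a normalized sum of fresh independent Gaussians, a standard ``multilinearization'' trick, and to control the error introduced thereby. Concretely, fix a parameter $m = m(d,k,\epsilon)$ to be chosen (this will be roughly $(k^2/d\epsilon)^{3d} d^2$, so that $\ell = nm$), and for each original variable index $i \in [n]$ introduce a block of $m$ fresh variables $y_{i,1},\dots,y_{i,m}$. Define the linear map $L: \R^{nm} \to \R^n$ by $(Lx)_i = \frac{1}{\sqrt m}\sum_{j=1}^m y_{i,j}$; note that if $y \sim \gamma_{nm}$ then $Lx \sim \gamma_n$, so the pushforward of the Gaussian measure is preserved. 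Given $f = \mathsf{PTF}(p^{(1)},\dots,p^{(k)})$ with each $p^{(s)}$ of degree $d$, variance $1$, and $|\E[p^{(s)}]|\le \log^{d/2}(kd/\epsilon)$, first form $q^{(s)} = p^{(s)}\circ L$, and then let $r^{(s)}$ be the \emph{multilinear truncation} of $q^{(s)}$: in the Hermite/monomial expansion of $q^{(s)}$ over the $y$ variables, delete every monomial that is not multilinear (i.e. has degree $\ge 2$ in some $y_{i,j}$). Set $f_{\mathsf{multi}} = \mathsf{PTF}(r^{(1)},\dots,r^{(k)})$ on $\R^\ell$ with $\ell = nm$.

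The key quantitative step is to bound $\Var(q^{(s)} - r^{(s)})$, i.e. the variance of the non-multilinear part of $q^{(s)}$. Each monomial of $p^{(s)}$ of degree $\le d$, say $\prod_{i} x_i^{a_i}$ with $\sum a_i \le d$, becomes $\prod_i \big(\frac{1}{\sqrt m}\sum_j y_{i,j}\big)^{a_i}$ after substitution; expanding this product, the monomials in the $y$'s that fail to be multilinear are exactly those that ``collide'' — two of the $a_i$ factors for the same original index $i$ pick the same $j$. A direct combinatorial/second-moment estimate shows this non-multilinear contribution has variance at most $O(d^2/m)$ times the variance contribution of the original monomial; summing over monomials (using the coefficient-to-variance relationship for degree-$d$ Gaussian polynomials and the fact that $\Var(p^{(s)})=1$) gives $\Var(q^{(s)} - r^{(s)}) \le C(d)/m$. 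Choosing $m = (k^2/d\epsilon)^{3d}d^2$ makes this at most $(\epsilon'/d)^{3d}$ for a suitable $\epsilon' = \Theta(\epsilon/k^2)$. I expect this second-moment bookkeeping — keeping track of exactly which terms become non-multilinear and bounding their aggregate variance by $C(d)/m$ uniformly over all degree-$d$ variance-$1$ polynomials — to be the main obstacle, though it is elementary.

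With $\Var(q^{(s)} - r^{(s)})$ controlled, the three conclusions follow by the now-familiar machinery. Since $\E[q^{(s)} - r^{(s)}] = 0$ (the truncation only removes nonconstant monomials, as multilinear includes degree-$0$) and $\Var(q^{(s)})=1$, Theorem~\ref{thm:combine-hyper} gives $\Pr_x[\sgn(r^{(s)}(x)) \ne \sgn(q^{(s)}(x))] = O(\epsilon')$; a union bound over $s\in[k]$ then yields $\Pr_x[f_{\mathsf{multi}}(x) \ne f(Lx)] = O(k\epsilon')\le \epsilon$, which immediately gives the first bullet ($\Vert\E[f_{\mathsf{multi}}] - \E[f]\Vert_1 \le 2\Pr[f_{\mathsf{multi}}\ne f\circ L] \le \epsilon$) and, together with the trivial bound relating symmetric difference to collision sets (as in the proof of Proposition~\ref{prop:balanced}), the third bullet. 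For the noise-stability bullet, write $\langle f_{\mathsf{multi}}, P_t f_{\mathsf{multi}}\rangle$ using the $(x,y)$-representation of $P_t$ as in Section~\ref{sec:junta}, and observe that changing $f\circ L$ to $f_{\mathsf{multi}}$ on an $O(k\epsilon')$-fraction of inputs changes $\langle\cdot,P_t\cdot\rangle$ by at most $O(k\epsilon')\le\epsilon$ (each indicator $\mathbf{1}[\text{correct on }x]\mathbf{1}[\text{correct on }y]$ changes on a set of measure $O(k\epsilon')$). Finally, the $(d,2\epsilon)$-balancedness is checked directly: $\Var(r^{(s)})$ is within $(\epsilon'/d)^{3d}$ of $1$ so rescaling to variance exactly $1$ perturbs things negligibly, and $|\E[r^{(s)}]| = |\E[q^{(s)}]| = |\E[p^{(s)}]| \le \log^{d/2}(kd/\epsilon)$ since $L$ preserves the Gaussian measure and truncation preserves the constant term; the factor $2$ slack in $(d,2\epsilon)$ absorbs the rescaling. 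Deferring the full calculations, this completes the proof.
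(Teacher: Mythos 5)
You use the same variable-duplication substitution $x_i \mapsto \frac{1}{\sqrt m}\sum_j y_{i,j}$ as the paper, but the ``multilinear truncation'' you then apply is the wrong operation. You delete the non-multilinear \emph{ordinary monomials} of $q^{(s)}$ in the $y$-variables, whereas the paper deletes the diagonal entries of the symmetric tensors $g^{(i)}_q$ in the Wiener-chaos decomposition of $q^{(s)}$ --- equivalently, it takes the $L^2(\gamma)$-orthogonal projection of $q^{(s)}$ onto the subspace of multilinear polynomials. These operations differ whenever $p^{(s)}$ has non-multilinear monomials, and both of your key claims about the error then fail.

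The assertion that $\mathbf{E}[q^{(s)} - r^{(s)}] = 0$ (``truncation only removes nonconstant monomials'') is false: $y_{i,j}^2$ is nonconstant, non-multilinear, and has mean $1$, so deleting it shifts the mean. Already for $p(x) = x^2$ one gets $\mathbf{E}[q-r]=1 \ne 0$, so the hypothesis of Theorem~\ref{thm:combine-hyper} is violated. More seriously, the variance bound $\mathsf{Var}(q^{(s)} - r^{(s)}) \le C(d)/m$ also fails. Take $p(x) = x^4$ in one variable (so $\mathsf{Var}(p)=96$). Then $q = \big(\tfrac{1}{\sqrt m}\sum_j y_j\big)^4 = \tfrac{1}{m^2}\sum_{t_1,\dots,t_4} y_{t_1}\cdots y_{t_4}$, and deleting every tuple with a repeated index leaves $r = \tfrac{1}{m^2}\sum_{\mathrm{distinct}} y_{t_1}\cdots y_{t_4}$, which lies entirely in the chaos $\mathcal{W}^4$ and has $\mathsf{Var}(r) = 24\big(1-O(1/m)\big)$. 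One checks that $r \perp (q-r)$ (every $\mathcal{W}^4$ Hermite component of $q-r$ is of the form $H_S$ with some entry of $S$ at least $2$, while $r$ is supported on multilinear $H_S$), so $\mathsf{Var}(q-r) = \mathsf{Var}(q) - \mathsf{Var}(r) \to 72$, not $O(1/m)$. The reason is that a non-multilinear monomial such as $y_{t_1}^2 y_{t_2} y_{t_3}$ carries, after Hermite re-expansion, a large \emph{multilinear} component ($y_{t_2} y_{t_3}$) in a lower chaos, and your truncation throws that mass away along with the genuinely non-multilinear content. The paper avoids both problems by working chaos-by-chaos with the tensor representation: the diagonal part of each $g^{(i)}_q$ is orthogonal to the nondiagonal part, so deleting it leaves the constant term $g^{(i)}_0$ untouched (mean preserved) and removes only Frobenius mass of order $d^2/T$ uniformly over $q$. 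Your plan would go through if you replaced ``delete non-multilinear monomials'' by ``$L^2$-project onto multilinear polynomials,'' but as written the variance estimate is wrong.
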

\begin{proof}
Let us assume that $f = \mathsf{PTF}(p^{(1)}, \ldots, p^{(k)})$. Assume that $p^{(i)} = \sum_{q=0}^d I_q (f_q^{(i)})$. Choose some $T \in \mathbb{N}$ which will be fixed later.
Let us consider a collection of $\ell = n \cdot T$ variables denoted by  $\{x_{i,j}\}_{1 \le i \le n, 1 \le j \le T}$. Let $\Phi$ be a mapping (defined below) which maps $\{x_i\}_{i \in \mathbb{N}}$ to linear forms over 
$$
\Phi (x_i) = \frac{x_{i,1} + \ldots + x_{i,T}}{\sqrt{T}}. 
$$
 Let the polynomial $r^{(i)} : \mathbb{R}^{\ell} \rightarrow \mathbb{R}$ be defined as  the image of polynomial $p^{(i)}$ under the mapping $\Phi$. 
In other words, if  $f_q^{(i)} = \sum_{1 \le j_1, \ldots, j_i \le n} \alpha^{(q)}_{j_1, \ldots, j_i} e_{j_1} \otimes \ldots \otimes e_{j_i} $, then we let
$r^{(i)} = \sum_{q=0}^d I_q(g^{(i)}_q)$ where $g^{(i)}_q$ is defined as
$$
g_{q}^{(i)}= \sum_{1 \le j_1, \ldots, j_q \le n} \alpha^{(i)}_{j_1, \ldots, j_q} \frac{e_{j_1,1} + \ldots + e_{j_1,T}}{\sqrt{T}} \otimes \ldots \otimes \frac{e_{j_q,1} + \ldots + e_{j_q,T}}{\sqrt{T}} 
$$
Let us define $f' : \mathbb{R}^{\ell} \rightarrow [k]$ as $f'= \mathsf{PTF}(r^{(1)}, \ldots, r^{(k)})$. As the distribution of $(\Phi(x_1), \ldots, \Phi(x_n))$ is the same as $(x_1, \ldots, x_n)$,  we have that for $1\le i \le k$,
\begin{eqnarray}
\Pr_{x \sim \gamma_n}[f(x)=i] &=&\Pr_{x \sim \gamma_\ell}[f'(x)=i], \nonumber \\ 
\mathbf{E}_{x \sim \gamma_n} [\langle f, T_{\rho} f \rangle]&=&\mathbf{E}_{x \sim \gamma_\ell} [\langle f', T_{\rho} f' \rangle], \nonumber \\
\textrm{and} \ \Pr_{x \sim \gamma_n} [x \in \mathsf{Collision}(f)] &=& \Pr_{x \sim \gamma_\ell} [x \in \mathsf{Collision}(f')]. \label{eq:equality} 
\end{eqnarray}
Let us now consider the multilinear version of $g_{q}^{(i)}$ which is obtained by deleting any diagonal term and denote it by $h_{q}^{(i)}$. Define the polynomials $w^{(1)}, \ldots, w^{(k)}$  as 
$
w^{(i)} = \sum_{q=0}^d I_q(h_{q}^{(i)}). 
$
Let us define $f_{\mathsf{multi}} : \mathbb{R}^{\ell} \rightarrow [k]$ as $f_{\mathsf{multi}} = \mathsf{PTF} (w^{(1)}, \ldots, w^{(k)})$. 
We will now prove that for a suitable choice of $T$, $f_{\mathsf{multi}}$ has the desired properties.  First, by definition, it follows that $f_{\mathsf{multi}}$ is a degree-$d$ multilinear PTF. 
Next,  for $1 \le j_1, \ldots, j_q \le n$, let us define $\mathcal{S}_{j_1, \ldots, j_q}$ as 
$$
\mathcal{S}_{j_1, \ldots, j_q} = \{(s_1, \ldots, s_q) \in [T]^q : | \cup_{b=1}^q (j_b, s_b) | <  q\}. 
$$
With this definition, observe that
\[
r^{(i)} - w^{(i)}  =  \sum_{q=0}^d \sum_{1 \le j_1, \ldots, j_q \le n} \sum_{(s_1, \ldots, s_q) \in \mathcal{S}_{j_1, \ldots, j_q} } \alpha^{(i)}_{j_1, \ldots, j_q}   \cdot \frac{1}{T^{s/2}} \cdot I_q \big(\otimes_{u=1}^q e_{j_u, s_u} \big)  .
\]
%
%
As the vectors $\{\otimes_{u=1}^T e_{j_u, s_u} \}_{j_u \in [n], s_u \in [T]}$ are orthonormal, using Proposition~\ref{prop:4}, we get that
$$
\mathsf{Var}[r^{(i)} -  w^{(i)} ] = \sum_{q=0}^d \sum_{1 \le j_1, \ldots, j_q \le n}  (\alpha^{(i)}_{j_1, \ldots, j_q})^2 \cdot \frac{|\mathcal{S}_{j_1, \ldots, j_q}|}{T^s} 
$$
On the other hand, 
$$
\mathsf{Var}[r^{(i)}] = \sum_{q=0}^d \sum_{1 \le j_1, \ldots, j_q \le n}  (\alpha^{(i)}_{j_1, \ldots, j_q})^2
$$
It is easy to see that we can uniformly bound $|\mathcal{S}_{j_1, \ldots, j_q}| $ by $\frac{T^s \cdot q^2 }{ T} \le \frac{T^s \cdot d^2}{T}$.  Thus, 
$$
\mathsf{Var}[r^{(i)} - w^{(i)} ] \leq \mathsf{Var}[r^{(i)} ] \cdot \frac{d^2}{T}. 
$$
By applying Theorem~\ref{thm:combine-hyper}, we get that if $T \ge (\tau/d)^{3d} \cdot d^2$, then 
$$
\Pr_{x \sim \gamma_\ell} [\mathsf{sign} (r^{(i)})  \not = \mathsf{sign} (w^{(i)})] \le \tau. 
$$
From this, using a union bound,  it follows that 
$$
\Pr_{x \sim \gamma_\ell} [f_{\mathsf{multi}}(x) \not = f'(x)]   \le k \cdot \tau. 
$$
Likewise, 
$$
\Pr_{x \sim \gamma_\ell} [x \in \mathsf{Collision} (f_{\mathsf{multi}})] \leq \Pr_{x \sim \gamma_\ell} [x \in \mathsf{Collision} (f')] + k \cdot \tau. 
$$
Combining the above equations with (\ref{eq:equality}) and setting $\tau = \epsilon/k$, gives us the claim. Also, by an application of Cauchy-Schwarz inequality, we have that 
\[
\sqrt{\mathsf{Var}(w^{(i)})} \ge \sqrt{\mathsf{Var}(r^{(i)})}  \cdot \bigg( 1- \frac{d}{\sqrt{T}}\bigg) \ge \sqrt{\mathsf{Var}(r^{(i)})}  \cdot \big( 1- \epsilon\big)
\]
Hence, by rescaling the polynomials $w^{(i)}$, it is easy to see that we can ensure that $f_{\mathsf{multi}}$ is a $(d,2\epsilon)$-balanced PTF. 

\end{proof}

\section{Mollification for the sign function}
In this section, we will prove Theorem~\ref{thm:mollification}. For the convenience of the reader, we recall the setting of this theorem. Namely, there are degree-$d$ polynomials $A_1, \ldots, A_m: \mathbb{R}^{n} \rightarrow \mathbb{R}$ of variance $1$ (and mean $0$). Given  parameters $S$,  $\epsilon$ and $\eta$ $\in \mathbb{R}^+$ (whose use will be made clear soon), we define additional dependent parameters $B$, $\delta$ and $c$ as follows: 
\begin{equation}\label{eq:param2} 
B = \Omega\bigg( \ln  \frac{md}{\epsilon} \bigg)^{d/2} ;  \ \delta = \bigg( \frac{\epsilon \cdot \eta^{1/2d}}{d \cdot B \cdot \sqrt{m} \cdot S^{1/2d}} \bigg)^d; \   c = \frac{m}{\delta \cdot \sqrt{\epsilon}}
\end{equation}
\begin{theorem}\label{thm:mollification}
Let $\phi: \mathbb{R}^{m} \rightarrow \mathbb{R}$ be a degree-$d$ polynomial such that the sum of squares of its (non-constant) coefficients is bounded by $S$. 
There is a function $\tilde{g}_c: \mathbb{R}^m \rightarrow [0,1]$ such that for $A_1, \ldots, A_m$ as described above, if $\mathsf{Var}(\phi(A_1, \ldots, A_m)) \ge \eta$, then 
\[
\mathop{\mathbf{E}}_{x \sim \gamma_n} [|\mathsf{sign}(\phi(A_1, \ldots, A_m)) - \tilde{g}_c(A_1, \ldots, A_m)|] \le O(\epsilon). 
\]
Further,  $\Vert \tilde{g}_c^{(1)} \Vert_{\infty} \le c$, $\Vert \tilde{g}_c^{(2)} \Vert_{\infty} \le 4c^2$.
\end{theorem}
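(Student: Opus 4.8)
The plan is to prove Theorem~\ref{thm:mollification} by constructing $\tilde{g}_c$ as a mollified (smoothed) version of the sign function, following the approach of~\cite{DS14}. First I would observe that it suffices to approximate $\mathsf{sign}$ by a function of the \emph{single} real variable $u = \phi(A_1,\dots,A_m)$; that is, I take $\tilde{g}_c(y_1,\dots,y_m) = G_c(\phi(y_1,\dots,y_m))$ where $G_c : \R \to [0,1]$ is a fixed one-dimensional smooth approximator of $\mathbf{1}_{\{u \ge 0\}}$ with $\|G_c'\|_\infty \le c$ and $\|G_c''\|_\infty \le 4c^2$. Such a $G_c$ is obtained by convolving the step function with a bump function supported on a window of width $\sim 1/c$. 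Then the chain rule gives $\|\tilde g_c^{(1)}\|_\infty \le \|G_c'\|_\infty \cdot \|\nabla\phi\|$-type bounds and $\|\tilde g_c^{(2)}\|_\infty \le 4c^2$ times the relevant derivative-of-$\phi$ factors; the stated bounds should be read with $\phi$'s contribution absorbed (or one rescales so $\phi$ has unit-scale coefficients). The point is that the derivative bounds on $G_c$ transfer to $\tilde g_c$ up to the explicit polynomial factors built into the definitions of $B,\delta,c$ in~\eqref{eq:param2}.

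Next I would bound $\E_{x\sim\gamma_n}[|\mathsf{sign}(u) - G_c(u)|]$. By construction $\mathsf{sign}(u)$ and $G_c(u)$ differ only when $|u|$ is at most the mollification window width, which is $O(1/c) = O(\delta\sqrt{\epsilon}/m)$. So the error is at most $\Pr_x[|u| \le O(1/c)]$, and I need an anti-concentration bound for $u = \phi(A_1,\dots,A_m)$. Here is where the hypotheses enter: $\phi$ is degree $d$ in its arguments, each $A_i$ is degree $d$, so $u$ is a polynomial of degree $d^2$ in the underlying Gaussians, and $\mathsf{Var}(u) \ge \eta$. By the Carbery--Wright anti-concentration inequality (Theorem~\ref{thm:CW}), $\Pr_x[|u - \theta| \le t\sqrt{\Var(u)}] = O(d^2 \cdot t^{1/d^2})$ for any $\theta$; taking $\theta = 0$ and $t = O(1/(c\sqrt{\eta}))$ gives $\Pr_x[|u| \le O(1/c)] = O(d^2 (c\sqrt\eta)^{-1/d^2})$. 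The parameters $B, \delta, c$ in~\eqref{eq:param2} are precisely tuned (via the bound on $S$ controlling $\|\phi\|$ and hence a crude upper bound relating the window width to $\sqrt\eta$) so that this probability is $O(\epsilon)$. I would carry out this calculation: plug the definition of $c = m/(\delta\sqrt\epsilon)$ and $\delta$ into the Carbery--Wright estimate and check the exponents cancel to leave $O(\epsilon)$.

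The main obstacle I expect is the bookkeeping of constants and exponents in reconciling the smoothing-window width with the variance lower bound $\eta$: one must be careful that the degree of $u$ is $d^2$ (not $d$) and that the coefficient bound $S$ on $\phi$ only enters through an $S^{1/2d}$-type factor in $\delta$, which in turn must interact correctly with the $\eta^{1/2d}$ factor. A secondary subtlety is making precise what ``$\phi$ of variance-$1$ polynomials'' contributes to the second-derivative bound $\|\tilde g_c^{(2)}\|_\infty \le 4c^2$ — strictly this requires either that $\phi$ have controlled coefficients (so the derivative factors are absorbed into the definition of $c$) or a reinterpretation where the $4c^2$ is the bound on $G_c''$ and the full bound on $\tilde g_c^{(2)}$ carries an extra $\poly$ factor that is harmless downstream (indeed Theorem~\ref{thm:CLT}'s error term $\mathsf{Num}^2 (4c^2)\zeta$ already has room for such factors, and Condition~\ref{cond:enforce-beta} chooses $\beta$ small enough). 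I would state the clean version matching how the theorem is invoked in Lemma~\ref{lem:sign-match}, and relegate the constant-chasing to a short computation at the end.
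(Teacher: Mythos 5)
Your construction $\tilde g_c = G_c \circ \phi$, with $G_c$ a one-dimensional mollifier, does \emph{not} yield the claimed derivative bounds, and this is a genuine gap rather than a bookkeeping issue. By the chain rule, $\nabla^2 \tilde g_c = G_c''(\phi)\, \nabla\phi\,\nabla\phi^\top + G_c'(\phi)\,\nabla^2\phi$; since $\phi$ is a polynomial of degree $d \ge 2$, $\nabla\phi$ and $\nabla^2\phi$ are unbounded on $\R^m$, so $\|\tilde g_c^{(2)}\|_\infty = \infty$. You cannot ``absorb'' or ``rescale away'' this factor, because it is not a finite constant. And the boundedness of $\|\tilde g_c^{(2)}\|_\infty$ is exactly what the theorem is invoked for: in Lemma~\ref{lem:sign-match} it is plugged into Theorem~\ref{thm:CLT} as $\|\alpha''\|_\infty$; if that quantity is infinite, the CLT error estimate is vacuous, so your ``reinterpretation'' in the last paragraph cannot be made rigorous.

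The paper sidesteps this entirely by mollifying \emph{in $\R^m$}: $\tilde g_c$ is obtained by convolving the indicator $g_c(y) = \mathsf{sign}(\phi(y))$ with a bump function supported in a ball of radius $\sim 1/c$ in $\R^m$. This gives genuinely uniform bounds $\|\tilde g_c^{(1)}\|_\infty \le c$, $\|\tilde g_c^{(2)}\|_\infty \le 4c^2$ independently of $\phi$, together with the pointwise error estimate $|g_c(y) - \tilde g_c(y)| \le \min\{1, m^2/(c^2\,\mathsf{dist}^2(y, \partial R))\}$ where $\partial R$ is the boundary of $\{\phi \ge 0\}$. The price is that the error is now controlled by the \emph{Euclidean distance in $\R^m$} to $\partial R$, not directly by $|\phi|$. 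This is why Claim~\ref{clm:Lipschitz} is needed: on the event $\|A\|_\infty \le B$, a lower bound on $|\phi(A)|$ is converted into a lower bound on $\mathsf{dist}(A, \partial R)$. Your outline entirely omits this geometric step, which is the crux of the argument. The final bound is then a sum of three terms: $\Pr[\sup_j |A_j| > B]$ via hypercontractivity, $\Pr[|\phi(A)|$ small$]$ via Carbery--Wright, and $m^2/(c^2\delta^2)$ from the mollification estimate; your proposal correctly anticipates the Carbery--Wright piece but misses the other structure. (Your observation about degree $d^2$ versus $d$ in the Carbery--Wright exponent is a legitimate bookkeeping concern, but it is not the main issue with your argument.)
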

~ As this theorem is essentially proven in \cite{DS14}, we will only give a sketch of the proof here. To do this, we recall the following lemma from \cite{DS14} (appears as Claim 49 in the paper). 
\begin{claim}\label{clm:Lipschitz}
Let $x, y \in \mathbb{R}^m$  such that $\Vert x \Vert_\infty \le B$. If $\Vert x- y \Vert_2 \le \delta \le B$, then $|\phi(x) - \phi(y)| \le d \cdot (2B)^{d} \cdot \delta \cdot \sqrt{S} \cdot m^{d/2}$. 
\end{claim}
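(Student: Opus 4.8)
The plan is to prove the Lipschitz bound by decomposing $\phi$ into its homogeneous pieces, identifying each piece with a symmetric tensor, and running a telescoping argument in tensor slots controlled by a Cauchy--Schwarz (Frobenius-norm) estimate. The point of the tensor viewpoint is precisely that it produces the factor $m^{d/2}$ rather than the much weaker $m^d$ one gets by naively summing over monomials and applying Cauchy--Schwarz to the coefficient vector.

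First I would record the easy reductions. Write $\phi = \sum_{k=0}^{d} P_k$, where $P_k$ is the degree-exactly-$k$ homogeneous part of $\phi$; since the degree-$k$ coefficients form a subset of all (non-constant) coefficients of $\phi$, the sum of squared coefficients of each $P_k$ with $k \ge 1$ is at most $S$. Identify $P_k$ with the symmetric tensor $T^{(k)} \in (\mathbb{R}^m)^{\otimes k}$ whose entry at $(i_1,\dots,i_k)$ is the coefficient of the corresponding monomial divided by the number of orderings of that multi-index, so that $P_k(z) = \langle T^{(k)}, z^{\otimes k}\rangle$. Dividing a coefficient $c_\alpha$ by the multinomial coefficient $\binom{k}{\alpha}$ and then summing the square over its $\binom{k}{\alpha}$ orderings gives $c_\alpha^2/\binom{k}{\alpha} \le c_\alpha^2$, hence $\Vert T^{(k)}\Vert_F^2 \le S$. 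Also note that every point $z$ on the segment $[x,y]$, and in particular $y$ itself, satisfies $\Vert z\Vert_2 \le \Vert x\Vert_2 + \Vert x-y\Vert_2 \le \sqrt{m}\,B + \delta \le 2\sqrt{m}\,B$, using $\Vert x\Vert_\infty \le B$ and $\delta \le B$.

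The core estimate is then a telescoping in tensor slots. For the multilinear form associated to $T^{(k)}$, Cauchy--Schwarz on $(\mathbb{R}^m)^{\otimes k}$ gives $|T^{(k)}(a_1,\dots,a_k)| = |\langle T^{(k)}, a_1\otimes\cdots\otimes a_k\rangle| \le \Vert T^{(k)}\Vert_F \prod_i\Vert a_i\Vert_2$. Writing $P_k(x) - P_k(y) = \sum_{l=1}^{k} T^{(k)}(\,\underbrace{y,\dots,y}_{l-1},\,x-y,\,\underbrace{x,\dots,x}_{k-l}\,)$ by multilinearity and bounding each of the $k$ terms by $\Vert T^{(k)}\Vert_F\,(2\sqrt m B)^{k-1}\,\Vert x-y\Vert_2 \le \sqrt S\,(2\sqrt m B)^{k-1}\,\delta$, we obtain $|P_k(x)-P_k(y)| \le k\sqrt S\,(2\sqrt m B)^{k-1}\delta$. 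Summing over $1\le k\le d$ (the $k=0$ term vanishes, $P_0$ being constant) and dominating the geometric sum using $2\sqrt m B \ge 2$ — which holds since $B \ge 1$ in the regime of \eqref{eq:param2} — yields $|\phi(x)-\phi(y)| \le \sqrt S\,\delta\sum_{k=1}^d k(2\sqrt m B)^{k-1} \le d\sqrt S\,\delta\,(2\sqrt m B)^d = d\,(2B)^d\,\delta\,\sqrt S\,m^{d/2}$, which is the claim. Equivalently one may phrase the same computation as a gradient bound $\Vert \nabla P_k(z)\Vert_2 \le k\Vert T^{(k)}\Vert_F\Vert z\Vert_2^{k-1}$, obtained by flattening $T^{(k)}$ to an $m\times m^{k-1}$ matrix and using $\Vert\cdot\Vert_{\mathrm{op}} \le \Vert\cdot\Vert_F$, followed by integrating $\nabla\phi$ along $[x,y]$.

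The step I expect to require the most care is the tensor bookkeeping in the second and third paragraphs: one must verify that the symmetrization constants relating monomial coefficients to tensor entries always go in the favourable direction, so that $\Vert T^{(k)}\Vert_F^2 \le S$ holds with no combinatorial blow-up, and that the slotwise Cauchy--Schwarz (equivalently the flattening/contraction bound) costs only a Frobenius-versus-operator factor rather than a power of $m$. The remaining ingredients — the segment norm bound, the geometric series, and the inequality $2\sqrt m B \ge 2$ — are entirely routine.
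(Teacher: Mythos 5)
Your proof is correct, but note that the paper does not actually prove Claim~\ref{clm:Lipschitz} at all --- it is quoted verbatim from [DS14] (Claim~49 there), so there is no in-paper argument to compare against. The proof in [DS14] proceeds monomial by monomial: each $|x^{\alpha}-y^{\alpha}|$ is bounded by a coordinate-wise telescoping using $\|x\|_\infty,\|y\|_\infty\le 2B$ and $\|x-y\|_\infty\le\delta$, and then Cauchy--Schwarz is applied to the coefficient vector, with the $m^{d/2}$ arising as the square root of the number of monomials. So your stated motivation --- that the naive coefficient-vector Cauchy--Schwarz only gives $m^{d}$ --- is not quite right ($\sqrt{\#\text{monomials}}\approx m^{d/2}$ already); what your tensor/Frobenius route genuinely buys is cleaner constants and no monomial counting. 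Your argument itself is sound: the identification $P_k(z)=\langle T^{(k)},z^{\otimes k}\rangle$ with $\|T^{(k)}\|_F^2\le S$, the slot-wise telescoping with $\|a_1\otimes\cdots\otimes a_k\|_F=\prod_i\|a_i\|_2$, and the geometric-sum bound all check out. One point worth making explicit: your final inequality $\sum_{k\le d}k\,r^{k-1}\le d\,r^{d}$ with $r=2\sqrt{m}B$ requires $r\ge 2$, i.e.\ $\sqrt{m}B\ge 1$; this is not a defect of your proof but of the claim as literally stated, which is in fact false for $B<1$ (e.g.\ $m=1$, $\phi(z)=z^2$, $B=\delta=0.1$), and is harmless here because the $B$ of \eqref{eq:param2} always satisfies $B\ge 1$. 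You correctly flag this, which is more care than the source exhibits.
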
 
The next claim we need is the following. Assume that $g: \mathbb{R}^m \rightarrow \{0,1\}$ defined as $g (x)  = \mathsf{sign} (\phi(x))$. Let $R$ denote the  set $g^{-1}(1)$ and $\partial R$ denote its boundary. Applying quite standard mollification based techniques, the authors in \cite{DS14} proved that that for every $c>0$, there is a function $\tilde{g}_c: \mathbb{R}^m \rightarrow [0,1]$ such that $\Vert \tilde{g}_c^{(1)} \Vert_{\infty} \le c$, $\Vert \tilde{g}_c^{(2)} \Vert_{\infty} \le 4c^2$ such that
\begin{equation}\label{eq:diffmoli} 
\big| g_c(x) - \tilde{g}_c(x) \big| \leq \min \bigg\{ 1, \frac{m^2}{c^2 \cdot \mathsf{dist}^2(x, \partial R)}\bigg\}. 
\end{equation}
In the above $\mathsf{dist} (x, \partial R)$  denotes the Euclidean distance of $x$ from the boundary of $R$.  We now get back to our proof of Theorem~\ref{thm:mollification}. To prove this, we first make the following observation (which is immediate from Claim~\ref{clm:Lipschitz}). 
\begin{obs}
Let $x \in \mathbb{R}^m $ and $\Vert x \Vert_{\infty} \le B$. If $\delta \leq B$ and $|\phi(x)| \ge d \cdot (2B)^{d} \cdot \delta \cdot \sqrt{S} \cdot m^{d/2}$, then $\mathsf{dist}(x, \partial R)\ge \delta$.
\end{obs}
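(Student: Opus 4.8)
The plan is to obtain the Observation as a one-line corollary of the quantitative Lipschitz estimate of Claim~\ref{clm:Lipschitz}, by contraposition. First I would record the elementary containment $\partial R\subseteq\phi^{-1}(0)$. Indeed, whatever convention is used for the sign of $0$, we have $\{x:\phi(x)>0\}\subseteq R\subseteq\{x:\phi(x)\ge 0\}$, so any $y\in\partial R$ is simultaneously a limit of points at which $\phi\ge 0$ and a limit of points at which $\phi\le 0$; by continuity of the polynomial $\phi$ this forces $\phi(y)=0$. (If $\partial R=\emptyset$ then $\mathsf{dist}(x,\partial R)=+\infty$ and there is nothing to prove, so assume $\partial R\neq\emptyset$; as a topological boundary it is closed.)

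Next, suppose for contradiction that $\mathsf{dist}(x,\partial R)<\delta$. Since $\partial R$ is closed and nonempty, the distance is attained at some $y\in\partial R$, and we set $\delta':=\Vert x-y\Vert_2<\delta\le B$. Applying Claim~\ref{clm:Lipschitz} to the pair $x,y$ with radius $\delta'$ (the hypotheses $\Vert x\Vert_\infty\le B$ and $\Vert x-y\Vert_2\le\delta'\le B$ are satisfied), and using $\phi(y)=0$, we get
\[
|\phi(x)|=|\phi(x)-\phi(y)|\le d\cdot(2B)^d\cdot\delta'\cdot\sqrt S\cdot m^{d/2}<d\cdot(2B)^d\cdot\delta\cdot\sqrt S\cdot m^{d/2},
\]
which contradicts the assumed lower bound $|\phi(x)|\ge d\cdot(2B)^d\cdot\delta\cdot\sqrt S\cdot m^{d/2}$. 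Hence $\mathsf{dist}(x,\partial R)\ge\delta$, as claimed.

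There is essentially no obstacle here. The only point that requires a little care is to invoke Claim~\ref{clm:Lipschitz} at the precise radius $\delta'=\Vert x-y\Vert_2$ rather than at $\delta$ itself, so that the resulting inequality is strict and genuinely contradicts the hypothesis; this is legitimate because $\delta'<\delta\le B$, so we stay in the regime where the claim applies. (Equivalently, one could state the Observation with a strict inequality $|\phi(x)|>d\cdot(2B)^d\cdot\delta\cdot\sqrt S\cdot m^{d/2}$ and apply the claim directly at radius $\delta$.)
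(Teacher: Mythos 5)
Your proof is correct and is exactly the argument the paper has in mind: the paper states the Observation as ``immediate from Claim~\ref{clm:Lipschitz}'' without writing out the contrapositive, and you have simply filled in the routine details (that $\partial R\subseteq \phi^{-1}(0)$ and that the Lipschitz bound applied at radius $\Vert x-y\Vert_2<\delta$ yields the contradiction). No gaps.
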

Applying (\ref{eq:diffmoli}), if $\mathsf{dist}(x, \partial R)\ge \delta$, then $| g_c(x) - \tilde{g}_c(x) | \le m^2 /(c^2 \cdot \delta^2)$. Thus, $\mathbf{E}_{x \sim \gamma_n} [|\mathsf{sign}(\phi(A_1, \ldots, A_m)) - \tilde{g}_c(A_1, \ldots, A_m)|]$ is bounded by 
\begin{eqnarray*}
\Pr_{x \sim \gamma_n} \big[\sup_{1 \le j \leq m} |A_j(x)| > B\big] + \Pr_{x \sim \gamma_n} \big[ |\phi(A_1(x), \ldots, A_m(x))| \le d \cdot (2B)^{d} \cdot \delta \cdot \sqrt{S} \cdot m^{d/2}\big] + \frac{m^2}{c^2 \delta^2}. 
\end{eqnarray*}
We now bound the above terms. To bound the first term, note that each of the $A_j(x)$ is a degree-$d$ polynomial with mean $0$ and variance $1$. Thus, applying Theorem~\ref{thm:hyper}, we have 
$$
\Pr_{x \sim \gamma_n} \big[\sup_{1 \le j \leq m} |A_j(x)| > B\big]  \le m \cdot d \cdot \exp \big( -B^{2/d}\big).
$$
To bound the second term, note that $\mathsf{Var}(\phi(A_1, \ldots, A_m)) \ge \eta$. Then, 
\[
\Pr_{x \sim \gamma_n} \big[ |\phi(A_1(x), \ldots, A_m(x))| \le d \cdot (2B)^{d/2} \cdot \delta \cdot \sqrt{S} \cdot m^{d/2}\big] \le \frac{d \cdot (2B) \cdot \delta^{1/d} \cdot S^{1/2d} \cdot \sqrt{m}}{\eta^{1/2d}}.
\]
Now, observe that trivially, the requirement of $\delta \leq B$ is trivially satisfied. With the setting of parameters in (\ref{eq:param2}), we see that all the terms are $O(\epsilon)$. This finishes the proof of Theorem~\ref{thm:mollification}.

\section{Construction of polynomial families with matching correlation and eigenregularity}~\label{app:corrjunta}
In this section, we will prove Lemma~\ref{lem:junta-construct}. We will restate it here for the convenience of the reader. 
\begin{lemma*}
It is possible to construct a family $\mathcal{R}$ meeting the requirements of Condition~\ref{cond:junta} with $n_0= \mathsf{poly}(d, \mathrm{Num}, \beta(\mathrm{Num} + \mathrm{Coeff}))$ variables. 
\end{lemma*}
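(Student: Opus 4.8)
### Proof plan for Lemma~\ref{lem:junta-construct}

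\textbf{Setup and strategy.} The plan is to construct the family $\mathcal{R} = \{\Inner(r_{s,q})_\ell\}$ in two stages: first find homogeneous polynomials realizing the prescribed covariance structure and Wiener-chaos levels on a bounded number of variables (but without any eigenregularity control), then ``amplify'' them to be $\beta(\mathrm{Num}+\mathrm{Coeff})$-eigenregular by taking a normalized sum of independent copies over disjoint variable blocks, in the spirit of Fact~\ref{fact:eigen-2}. The point is that both operations are controlled: realizing the covariance needs dimension at most the total number of polynomials (a tensor analogue of the fact that $\mathrm{Num}$ vectors have a $\mathrm{Num}$-dimensional Gram realization), and the amplification multiplies the dimension by a factor $1/\beta(\mathrm{Num}+\mathrm{Coeff})^{2}$ or so, which is still bounded once $\beta$ is fixed.

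\textbf{Step 1: realizing the covariance.} Enumerate the family $\mathcal{P} = \{\Inner(p_{s,q})_\ell\}$ as $P_1,\dots,P_m$ with $m = \mathrm{Num}$, where $P_a \in \mathcal{W}^{j_a}$, $\Var[P_a]=1$, and let $\Sigma \in \R^{m\times m}$ be the Gram matrix $\Sigma_{ab} = \E[P_a P_b]$. By Proposition~\ref{prop:4}, writing $P_a = I_{j_a}(g_a)$ with $g_a \in \mathcal{H}^{\odot j_a}$, we have $\Sigma_{ab} = \delta_{j_a = j_b}\langle g_a, g_b\rangle$. So it suffices, within each fixed chaos level $j$, to produce symmetric tensors $h_a \in (\R^{n_1})^{\odot j}$ (for a bounded $n_1$) with $\langle h_a, h_b\rangle = \langle g_a, g_b\rangle$ for all $a,b$ in that level. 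Using the basis $\Phi_S$ of Proposition~\ref{prop:basis} to view $\mathcal{H}^{\odot j}$ as a Euclidean space, this is exactly the classical statement that a positive-semidefinite Gram matrix of rank $\le r$ among vectors in any inner-product space can be realized by vectors in $\R^r$; since there are at most $m$ vectors, rank $\le m$, so $\binom{n_1 + j - 1}{j} \ge m$ suffices, e.g. $n_1 = \mathrm{poly}(d,\mathrm{Num})$. Setting $r_a := I_j(h_a)$ gives a family $\mathcal{R}_0$ over $n_1$ variables with the first two bullets of Condition~\ref{cond:junta} satisfied; only the eigenregularity bullet may fail.

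\textbf{Step 2: eigenregular amplification.} Fix $N := \lceil 1/\beta(\mathrm{Num}+\mathrm{Coeff})^2\rceil$, take $N$ disjoint copies $X^{(1)},\dots,X^{(N)}$ of the $n_1$ variables, and define $\Inner(r_{s,q})_\ell(x) := \frac{1}{\sqrt N}\sum_{i=1}^N (r_a)(X^{(i)})$ where $r_a$ is the polynomial from Step 1 corresponding to $(s,q,\ell)$. By the argument of Fact~\ref{fact:eigen-2} (which bounds $\lambda_{\max}$ of a normalized sum of disjoint copies by $1/\sqrt N$), each $\Inner(r_{s,q})_\ell$ is $1/\sqrt N \le \beta(\mathrm{Num}+\mathrm{Coeff})$-eigenregular; it stays in the same chaos level $\mathcal{W}^j$ and keeps variance $1$. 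Crucially, the inner products are preserved: for $a,b$ in chaos level $j$, $\E[(\frac1{\sqrt N}\sum_i r_a(X^{(i)}))(\frac1{\sqrt N}\sum_{i'} r_b(X^{(i')}))] = \frac1N\sum_i \E[r_a(X^{(i)}) r_b(X^{(i)})] = \langle h_a, h_b\rangle = \Sigma_{ab}$, using independence of distinct blocks and that each block is a copy of $\gamma_{n_1}$. Thus all three bullets of Condition~\ref{cond:junta} hold, with $n_0 = N\cdot n_1 = \mathrm{poly}(d,\mathrm{Num},\beta(\mathrm{Num}+\mathrm{Coeff}))$ as claimed (the dependence on $\beta(\mathrm{Num}+\mathrm{Coeff})$ entering through $N$).

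\textbf{Main obstacle.} The only genuinely delicate point is making the two steps compatible: the amplification in Step 2 must not disturb the covariance fixed in Step 1, and must not secretly change chaos levels or variances. Both are handled by the block-disjointness computation above, but one should check that polynomials sitting in different chaos levels automatically have zero cross-correlation (orthogonality of Wiener chaoses, Proposition~\ref{prop:4}), so that working level-by-level in Step 1 loses nothing. A secondary bookkeeping issue is confirming that $n_1$ genuinely depends only on $d$ and $\mathrm{Num}$ — this is where one uses that within chaos level $j \le d$ there are at most $\mathrm{Num}$ tensors to realize, so dimension $O(\mathrm{Num})$ per level suffices after choosing a basis, and there are at most $d$ levels.
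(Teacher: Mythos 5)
Your proposal is correct and follows essentially the same route as the paper's proof: work chaos level by chaos level (using orthogonality of Wiener chaoses to decouple), realize the Gram matrix of the inner polynomials by symmetric tensors living in a bounded-dimensional coordinate space, and then achieve $\beta(\mathrm{Num}+\mathrm{Coeff})$-eigenregularity by averaging $\Theta(1/\beta^2)$ independent copies over disjoint variable blocks, exactly as in Fact~\ref{fact:eigen-2}, verifying that the block-disjoint averaging preserves all pairwise covariances. The only cosmetic difference is in the dimension count for Step 1: you bound $n_1$ via $\binom{n_1+j-1}{j} \ge m$ (dimension of the symmetric tensor space), whereas the paper uses the locality of the $\Phi_S$ basis (each element depends on $\le j$ coordinates) to take $T = j \cdot m_j$ coordinates; both yield $\mathrm{poly}(d,\mathrm{Num})$ and the argument is otherwise identical.
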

\begin{proof}
To prove this lemma, for $1 \le i \le d$, let $\mathcal{P}_i = \{\Inner(p_{s,q})_\ell\}_{\ell = 1,\dots, \num({s,q})} \cap  \mathcal{W}^i$. Let $m_i$ denote the size of $\mathcal{P}_i$. We will construct $\mathcal{R}$ by constructing the corresponding set $\mathcal{R}_i$ for each $1 \le i \le d$. Note that  individually constructing $\mathcal{R}_i $ for each $1 \le i \le d$ suffices for our construction. This is because if $r_1 \in \mathcal{W}^i$, $r_2 \in \mathcal{W}^j$ and $i \not =j$, then $\mathbf{E}[r_1(x) \cdot r_2(x)]=0$. 

For convenience of notation, let us enumerate the elements of $\mathcal{P}_i$ as $\{p_1, \ldots, p_{m_i}\}$. We will now construct $\mathcal{R}_i = \{r_1, \ldots, r_{m_i}\}$. To do this, 
let $p_j = I_i(h_j)$ where $h_j \in \mathcal{H}^{\odot i}$ (here $\mathcal{H} = \mathbb{R}^n$). By standard linear algebra argument, we get that given orthonormal basis $\mathcal{B}$
of $\mathcal{H}^{\odot i}$ (with an ordering among its elements), there are vectors $v_1, \ldots, v_{m_i}  \in  \mathcal{H}^{\odot i}$ such that \begin{itemize}
\item[(a)] $\{v_1, \ldots, v_{m_i}\} $  lies in the linear span of  the first $m_i$ elements of $\mathcal{B}$, 
\item[(b)] $\langle v_j, v_{j'} \rangle = \langle h_j , h_{j'} \rangle$. 
\end{itemize}
We now instantiate the basis $\mathcal{B}$. Consider the standard basis $\{e_1, \ldots, e_n\}$ for $\mathcal{H}$ 
and using Proposition~\ref{prop:basis}, we obtain that there is a basis for $\mathcal{H}^{\odot i}$ where each basis element is just a function of at most $i$ of the elements of $\{e_1, \ldots, e_n\}$.  Let $T = i \cdot m_i$ and let $\mathcal{H}_1 = \mathbb{R}^T$. Thus, we get that there are elements $v_1, \ldots, v_{m_i} \in \mathcal{H}^{ \odot i}_1$ such that 
for $1 \le j_1, j_2 \le m_i$, $\langle v_{j_1}, v_{j_2} \rangle = \langle h_{j_1}, h_{j_2} \rangle$. For $1 \le j \le m_i$, we define $q_j = I_i(v_j)$. Note that for all $1 \le j \le m_i$, $q_j :\mathbb{R}^T \rightarrow \mathbb{R}$. 
Let $\delta= \beta(\mathrm{Num} + \mathrm{Coeff})$, $\kappa=\lceil 1/\delta^2 \rceil$ and $n_0 = T \cdot \kappa$. Finally, for $1 \le j \le m_i$, we define $r_j: \mathbb{R}^{n_0} \rightarrow \mathbb{R}$ as follows: Divide $x \in \mathbb{R}^{n_0}$ into $\kappa$ blocks of size $T$ each (call the blocks $X_1, \ldots, X_{\kappa}$) and define
\[
r_j = \frac{1}{\sqrt{\kappa}} \cdot \big( q_{j}(X_1) + \ldots  + q_{j}(X_\kappa)\big). 
\]
We now verify the properties of the polynomials $\{r_j \}_{1 \le j \le m_i}$. First, note that $n_0 = \mathsf{poly}(d, \mathrm{Num}, \beta(\mathrm{Num} + \mathrm{Coeff}))$. Secondly, 
\[
\mathbf{E}[r_{j_1} \cdot r_{j_2}] = \sum_{j=1}^\kappa \frac{1}{\kappa} \cdot  \mathbf{E}[q_{j_1}(X_j) \cdot q_{j_2}(X_j)] = \sum_{j=1}^\kappa \frac{1}{\kappa} \cdot \langle v_{j_1}, v_{j_2} \rangle=\sum_{j=1}^\kappa \frac{1}{\kappa} \cdot \langle h_{j_1}, h_{j_2} \rangle = \mathbf{E} [p_{j_1} \cdot p_{j_2}].
\]
The first equality relies on the observation that $q_{j_1}, q_{j_2} \in \mathcal{W}^{i}$ for $i>1$ and thus $\mathbf{E}[q_{j_1}(X_j) \cdot q_{j_2}(X_\ell)]=0$ if $j \not =\ell$. The second and fourth equality uses Proposition~\ref{prop:4}. Finally, using Fact~\ref{fact:eigen-2}, we get that the polynomials $\{r_{j}\}_{1 \le j \le m_i}$ are $\delta = \beta(\mathrm{Num} + \mathrm{Coeff})$-eigenregular.

\end{proof}
\section{Expectations of products of eigenregular polynomials}
In this section, we prove Lemma~\ref{lem:monomial}. It is restated below for the convenience of the reader.  
\begin{lemma*}\textbf{\ref{lem:monomial}}
Let $p_1, \ldots, p_t \in \mathbb{R}^n \rightarrow \mathbb{R}$ such that for $1 \le i \le t$, $p_i = I_{q_i}(h_i)$ where $p_i \in \mathcal{W}^{q_i}$. Further, $\mathsf{Var}(p_i) =1$ and $\lambda_{\max}(h_i) \le \kappa$. Let $C \in \mathbb{R}^{t \times t}$ where $C(i,j) = \langle h_i, h_j \rangle$. 
There exists a function $F : \mathbb{R}^{t \times t} \times \mathbb{Z}^{t} \rightarrow \mathbb{R}$ such that 
\[
\bigg| \mathbf{E}\big[\prod_{i=1}^t p_i \big] - F(C, q_1, \ldots, q_t) \bigg| \le  2^{t (q_1 + \ldots + q_t+1)} \cdot \kappa. 
\]
In other words, up to the error term of $2^{t (q_1 + \ldots + q_t)} \cdot \kappa$, the expectation of the product $\prod_{i=1}^t p_i$ is just dependent on the degrees of the polynomials and the covariance matrix of the polynomials. 
\end{lemma*}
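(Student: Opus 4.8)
The plan is to expand the product $\prod_{i=1}^t p_i$ repeatedly using Ito's multiplication formula (Proposition~\ref{prop:Ito}) and then to show that the only term that survives in expectation and is \emph{not} controlled by $\kappa$ is a term depending only on the contraction structure (encoded by the matrix $C$ of inner products $\langle h_i, h_j\rangle$) and the degrees $q_1,\dots,q_t$. Concretely, I would multiply the polynomials one at a time: having formed $I_{q_1}(h_1)\cdots I_{q_j}(h_j) = \sum_\pi c_\pi I_{m_\pi}(g_\pi)$ as a sum of (symmetrized) iterated contractions of $h_1,\dots,h_j$, multiplying by $I_{q_{j+1}}(h_{j+1})$ introduces, via Proposition~\ref{prop:Ito}, further contractions $g_\pi \,\widetilde{\otimes}_r\, h_{j+1}$. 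The crucial dichotomy at each contraction step is: either we contract along \emph{at least one} index that belongs to a single $h_i$ on one side — in which case Fact~\ref{fact:contraction1} (contraction against a tensor with $\lambda_{\max}\le\kappa$) forces the Frobenius norm of the result down to $\le\kappa$ — or we only ever perform ``full'' contractions that pair up entire blocks $h_i$ against $h_j$, in which case the resulting scalar is literally a polynomial in the entries $\langle h_i,h_j\rangle = C(i,j)$.

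The combinatorial bookkeeping is this: $\mathbf{E}[\prod_i p_i]$ picks out only the $I_0$ component of the expanded product (by Proposition~\ref{prop:4}, $\mathbf{E}[I_m(g)] = 0$ for $m\ge 1$ and $=\langle g,\cdot\rangle$-type scalar for $m=0$). Each way of reaching $I_0$ corresponds to a perfect matching / pairing scheme of the $q_1+\dots+q_t$ ``legs'' of the tensors $h_1,\dots,h_t$. Partition these pairing schemes into (a) those that are ``block-respecting'' — every contraction pairs a whole copy of some $h_i$ with a whole copy of $h_j$ along all of its remaining legs simultaneously — and (b) the rest. I would define $F(C,q_1,\dots,q_t)$ to be exactly the sum, over block-respecting schemes, of the corresponding Ito coefficients times the product of the relevant $C(i,j)$'s; this is manifestly a function of $C$ and the $q_i$'s only. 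Then every scheme in class (b) contains at least one ``partial'' contraction, to which Fact~\ref{fact:contraction1} applies, bounding that scheme's contribution by $\kappa$ times a product of Frobenius norms of the remaining tensors; since each $h_i$ has $\|h_i\|_F \le \sqrt{q_i!}\cdot\sqrt{\mathsf{Var}(p_i)}$-type bound (from $\mathsf{Var}(p_i)=1$ and Proposition~\ref{prop:4}, $\|h_i\|_F=1$ after normalization) and Fact~\ref{fact:contraction} keeps subsequent contractions non-expanding, each such contribution is $O(\kappa)$. Counting: the number of pairing schemes is at most $2^{q_1+\dots+q_t}$ per ``slot'' and there are $\le t$ multiplication steps, giving the stated bound $2^{t(q_1+\dots+q_t+1)}\cdot\kappa$ after absorbing the Ito binomial coefficients into the exponential (they are bounded by $2^{q_1+\dots+q_t}$ as well).

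The main obstacle I anticipate is making the induction on the number of factors genuinely clean: after one multiplication the intermediate tensors $g_\pi$ are no longer ``single-block'' objects with a single eigenregularity parameter, so I cannot naively reapply Fact~\ref{fact:contraction1} to them. The fix is to never track eigenregularity of the \emph{intermediate} tensors, but instead to expand the \emph{entire} product symbolically first — writing $\mathbf{E}[\prod_i p_i]$ as a single finite sum over global pairing schemes of the original legs of $h_1,\dots,h_t$ with explicit scalar coefficients — and only then classify each scheme and apply Fact~\ref{fact:contraction1} to the \emph{original} $h_i$'s exactly once per scheme (at the first partial contraction encountered, traversing the scheme in a fixed order). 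This requires a careful but routine argument that any iterated-contraction scheme producing a scalar can be evaluated as a sequence of matrix multiplications in which, whenever it is not block-respecting, some factor $\mathcal{M}_{h_i}$ appears with operator norm $\le\kappa$ at a point where the rest of the product has Frobenius norm $\le 1$; this is precisely the content of Fact~\ref{fact:contraction1} combined with the submultiplicativity in Fact~\ref{fact:contraction}. The degree and coefficient bounds then fall out of counting the schemes.
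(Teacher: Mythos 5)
Your proposal matches the paper's proof in all essentials: iterated application of Ito's multiplication formula, classification of the resulting contraction schemes into ``block-respecting/aligned'' ones (which define $F$ as a polynomial in the entries of $C$) versus schemes containing a partial contraction (each bounded by $\kappa$ via Fact~\ref{fact:contraction1} applied to an original $h_i$, with Fact~\ref{fact:contraction} keeping later contractions non-expanding), and a coefficient count giving $2^{t(q_1+\cdots+q_t+1)}$. Your fix for the intermediate-tensor issue --- expanding the whole product into global pairing schemes before classifying --- is exactly what the paper does with its two-level decomposition into contraction sequences $\mathbf{r}$ and symmetrization permutations $\boldsymbol{\sigma}$, together with the associated bipartite graph and the aligned/unaligned dichotomy.
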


To prove this lemma, we will express the product $p_1 \cdot \ldots \cdot p_t$ in terms of the tensors $\{h_1, \ldots, h_t\}$ using successive applications of Ito's multiplication formula.  Towards this, we first have the following definition. 
\begin{definition} 
Given $p_1, \ldots, p_t$ as above, a tuple $\mathbf{r} = (r_1, \ldots, r_{t-1})$ is said to be a contraction sequence if it satisfies the following definition: 
\begin{equation}\label{eq:sequence}
\textrm{For all } 1\le i \le t-1, \  0 \le r_i \le \min \bigg\{ q_{i+1}, q_i+ \sum_{j=1}^{i-1} (q_j - 2 r_j) \bigg\}.
\end{equation}
\end{definition}
~\\ Let $\mathcal{S}_{\mathsf{valid}}$ be the set of all contraction sequences. 
Also for a contraction sequence $\mathbf{r}=(r_1, \ldots, r_t)$,
define the quantity 
\[
\gamma_{\mathbf{r}} = \prod_{i=1}^{t-1} r_i! \cdot \binom{q_{i+1}}{r_i} \cdot \binom{q_i+ \sum_{j=1}^{i-1} (q_j - 2 r_j) }{r_i} \cdot \frac{\sqrt{(q_{i+1}+ \sum_{j=1}^{i} (q_j - 2 r_j) )!}}{\sqrt{q_{i+1}!} \cdot \sqrt{(q_i+ \sum_{j=1}^{i-1} (q_j - 2 r_j))!}}. 
\]
Also, let us define $g^{\mathbf{r}}_1 = h_1$ and for $1 < i  \le t$, define $g^{\mathbf{r}}_i $ iteratively as $g^{\mathbf{r}}_i  = g^{\mathbf{r}}_{i-1} \widetilde{\otimes}_{r_{i-1}} h_i$. Define  $m^{\mathbf{r}}_i = q_{i+1} + \sum_{j=1}^{i} (q_j - 2 r_j)$ and  observe that $g^{\mathbf{r}}_{i+1}  \in \mathcal{H}^{\odot m^{\mathbf{r}}_i}$. 
Applying Ito's multiplication formula (Proposition~\ref{prop:Ito}) iteratively, we obtain that
\begin{equation}\label{eq:Ito1}
\prod_{i=1}^t p_i = \sum_{\mathbf{r} =(r_1, \ldots, r_{t-1}) \in \mathcal{S}_{\mathsf{valid}}} \gamma_{\mathbf{r}} I_{m^{\mathbf{r}}_{t-1}}(g^{\mathbf{r}}_{t}).
\end{equation}
The next claim establishes a simple upper bound on $\gamma_{\mathbf{r}}$. 
\begin{claim}~\label{clm:bound-easy1}
$\sum_{\mathbf{r} =(r_1, \ldots, r_{t-1}) \in \mathcal{S}_{\mathsf{valid}}} |\gamma_{\mathbf{r}}| \le 2^{t (q_1 + \ldots + q_t +1 )} . $
\end{claim}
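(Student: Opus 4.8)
The plan is to exploit the fact that the coefficients $\gamma_{\mathbf r}$ in the iterated Ito expansion \eqref{eq:Ito1} are purely combinatorial: they depend only on $q_1,\dots,q_t$ and the contraction pattern $\mathbf r$, not on the tensors $h_1,\dots,h_t$. So it suffices to prove the bound for one convenient choice, and the natural choice is $h_i = e_1^{\otimes q_i}$, for which $p_i = I_{q_i}(e_1^{\otimes q_i}) = H_{q_i}(x_1)$ is a univariate Hermite polynomial in the first coordinate. Since $e_1^{\otimes a}\,\widetilde{\otimes}_r\, e_1^{\otimes b} = e_1^{\otimes(a+b-2r)}$, every tensor $g^{\mathbf r}_i$ obtained by the iterated symmetrized contraction is again of the form $e_1^{\otimes m}$ with unit Frobenius norm, so $I_{m^{\mathbf r}_{t-1}}(g^{\mathbf r}_t) = H_{m^{\mathbf r}_{t-1}}(x_1)$. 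Writing $Q := q_1+\dots+q_t$, equation \eqref{eq:Ito1} then reads
\[
\prod_{i=1}^t H_{q_i}(x_1) \;=\; \sum_{\mathbf r \in \mathcal S_{\mathsf{valid}}} \gamma_{\mathbf r}\, H_{m^{\mathbf r}_{t-1}}(x_1) \;=\; \sum_{m=0}^{Q} c_m\, H_m(x_1),
\]
where $c_m$ is the coefficient of $H_m$ in the Hermite expansion of the degree-$Q$ univariate polynomial $\prod_i H_{q_i}(x_1)$ (its Hermite spectrum is supported on levels $0,\dots,Q$).

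Next I would note that every $\gamma_{\mathbf r}$ is nonnegative: the defining inequality \eqref{eq:sequence} forces $0\le r_i\le \min\{q_{i+1},\, q_i+\sum_{j<i}(q_j-2r_j)\}$, and an easy induction shows all the intermediate degrees $q_{i+1}+\sum_{j\le i}(q_j-2r_j)$ are nonnegative, so all binomials and factorials occurring in $\gamma_{\mathbf r}$ are of nonnegative integers. Hence $\sum_{\mathbf r}|\gamma_{\mathbf r}| = \sum_{\mathbf r}\gamma_{\mathbf r} = \sum_{m=0}^{Q} c_m$, and in fact each $c_m = \sum_{\mathbf r:\,m^{\mathbf r}_{t-1}=m}\gamma_{\mathbf r}\ge 0$. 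The problem is thus reduced to bounding the $\ell^1$-norm of the Hermite coefficients of $\prod_i H_{q_i}(x_1)$; since at most $Q+1$ of them are nonzero, Cauchy--Schwarz gives
\[
\sum_{m=0}^Q c_m \;\le\; \sqrt{Q+1}\,\Big(\sum_{m=0}^Q c_m^2\Big)^{1/2} \;=\; \sqrt{Q+1}\,\Big\|\textstyle\prod_{i=1}^t H_{q_i}(x_1)\Big\|_{L^2(\gamma_1)}.
\]

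To control the $L^2$ norm I would use generalized H\"older together with the standard moment form of Gaussian hypercontractivity (a degree-$q$ polynomial satisfies $\|p\|_s\le (s-1)^{q/2}\|p\|_2$; see \cite{Janson:97, ODonnell:book}): applying H\"older with $t$ exponents each equal to $t$,
\[
\mathbf{E}_{X\sim N(0,1)}\Big[\textstyle\prod_{i=1}^t H_{q_i}(X)^2\Big] \;\le\; \prod_{i=1}^t \big(\mathbf{E}[H_{q_i}(X)^{2t}]\big)^{1/t} \;\le\; \prod_{i=1}^t (2t-1)^{q_i} \;=\; (2t-1)^{Q},
\]
using $\mathbf{E}[H_{q_i}(X)^2]=1$. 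Combining, $\sum_{\mathbf r}|\gamma_{\mathbf r}| \le \sqrt{Q+1}\,(2t-1)^{Q/2}$. The remaining step is elementary arithmetic: the case $t=1$ is trivial ($\mathcal S_{\mathsf{valid}}$ is the single empty sequence with $\gamma=1$), and for $t\ge 2$ one has $Q+1\le 2^Q$ and $(2t-1)^{Q/2}\le (2t)^{Q/2} = 2^{(Q/2)(1+\log_2 t)}\le 2^{Qt/2}$ (since $1+\log_2 t\le t$), whence $\sum_{\mathbf r}|\gamma_{\mathbf r}|\le 2^{Q/2}2^{Qt/2} = 2^{Q(t+1)/2} \le 2^{t(Q+1)}$, because $Q(t+1)\le 2t(Q+1)$ for every $t\ge 1$. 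I do not expect any real obstacle here: the single idea is the specialization $h_i=e_1^{\otimes q_i}$ that collapses the identity to the univariate case, and after that it is only a soft $\ell^1$--$\ell^2$ comparison plus hypercontractivity, with the final constant-chasing being the one mildly fiddly part.
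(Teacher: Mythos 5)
Your proof is correct, but it takes a genuinely different route from the paper's. The paper bounds each $\gamma_{\mathbf r}$ individually and purely combinatorially: using $\binom{n}{x}\le 2^n$ it shows $|\gamma_{\mathbf r}|\le 2^{t(q_1+\dots+q_t)-\sum_i r_i}$, and then sums the resulting geometric-type series over all contraction sequences to get $2^{t(q_1+\dots+q_t+1)}$. You instead exploit two structural observations the paper never uses here: that the $\gamma_{\mathbf r}$ are independent of the tensors $h_i$ (so one may specialize $h_i=e_1^{\otimes q_i}$, collapsing \eqref{eq:Ito1} to the univariate Hermite linearization formula $\prod_i H_{q_i}(x_1)=\sum_m c_m H_m(x_1)$), and that every $\gamma_{\mathbf r}$ is nonnegative (which you correctly justify from \eqref{eq:sequence} forcing all intermediate degrees to be nonnegative, so there is no cancellation and $\sum_{\mathbf r}|\gamma_{\mathbf r}|=\sum_m c_m$). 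After that, Cauchy--Schwarz on the at most $Q+1$ nonzero Hermite coefficients plus generalized H\"older and Gaussian hypercontractivity give $\sum_{\mathbf r}|\gamma_{\mathbf r}|\le\sqrt{Q+1}\,(2t-1)^{Q/2}$, which is in fact substantially sharper than the claimed $2^{t(Q+1)}$ for large $t$ (exponent $O(Q\log t)$ versus $O(Qt)$), and your closing arithmetic showing it is dominated by $2^{t(Q+1)}$ is fine. What each approach buys: yours yields a better constant and sidesteps the binomial bookkeeping, at the price of invoking hypercontractivity and the nonnegativity/specialization observations; the paper's is longer on arithmetic but entirely self-contained and elementary. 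There is no circularity in your use of \eqref{eq:Ito1}, since that identity is derived before the claim and holds for arbitrary symmetric unit-norm tensors.
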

\begin{proof} By applying the fact that $\binom{n}{x} \le 2^n$, it is easy to show that 
\[
r_i! \cdot \binom{q_{i+1}}{r_i} \cdot \binom{q_i+ \sum_{j=1}^{i-1} (q_j - 2 r_j) }{r_i} \cdot \frac{\sqrt{(q_{i+1}+ \sum_{j=1}^{i} (q_j - 2 r_j) )!}}{\sqrt{q_{i+1}!} \cdot \sqrt{(q_i+ \sum_{j=1}^{i-1} (q_j - 2 r_j))!}} \le 2^{\sum_{j=1}^i (q_j - 2 r_j) + q_{i+1} + r_i }. 
\] 
This implies that 
\[
 |\gamma_{\mathbf{r}}|  \le 2^{\sum_{i=1}^{t-1} (\sum_{j=1}^i (q_j - 2 r_j) + q_{i+1} + r_i ) } \le 2^{t(q_1 +\ldots + q_t) - \sum_{i=1}^{t-1} r_{i}}. 
\]
As a result, we obtain 
\[
\sum_{\mathbf{r} =(r_1, \ldots, r_{t-1}) \in \mathcal{S}_{\mathsf{valid}}} |\gamma_{\mathbf{r}}| \le \sum_{\mathbf{r} =(r_1, \ldots, r_{t-1}) \in \mathcal{S}_{\mathsf{valid}}}2^{t(q_1 +\ldots + q_t) - \sum_{i=1}^{t-1} r_{i}} \le 2^{t(q_1 + \ldots + q_t +1)}. 
\]
\end{proof}
Our strategy for proving Lemma~\ref{lem:monomial} is as follows. We will first partition the sum in (\ref{eq:Ito1}) into two sets and show that the terms belonging to the first set are all bounded by $\kappa$. We will then further partition the terms in the  second set into two sets: Again, the terms in the first set will be bounded by $\kappa$ whereas terms in the second set will just be a function of $(C, q_1, \ldots, q_t)$ which will conclude our proof. Towards this, let us partition $\mathcal{S}_{\mathsf{valid}}$ into two partitions, $\mathcal{S}_z$ and $\mathcal{S}_{\mathsf{valid}} \setminus \mathcal{S}_z$ where the first set is defined as 
 $\mathcal{S}_z = \{\mathbf{r} \in \mathcal{S}_{\mathsf{valid}}:$ For all $i <t$, $(r_i=0) \vee (r_i = q_{i+1}) \}$.
Next, prove the following proposition. 
\begin{proposition}~\label{prop:approx-error2}
Let $h_1, \ldots, h_t$ be as above such that for all $1 \le i \le t$,  $ \Vert h_1 \Vert_F = \ldots = \Vert h_t \Vert_F=1$. Further, each $h_i$ is $\kappa$-eigenregular. If $\mathbf{r} \not \in \mathcal{S}_z$, then $\Vert g^{\mathbf{r}}_{t} \Vert_F \le \kappa$. 
\end{proposition}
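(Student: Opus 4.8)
The plan is to isolate the single ``proper'' contraction step that the hypothesis $\mathbf{r}\notin\mathcal{S}_z$ forces, and to combine two contractivity facts already in hand: contracting against a tensor of unit Frobenius norm never increases the Frobenius norm (Fact~\ref{fact:contraction}), while contracting against a $\kappa$-eigenregular tensor along a proper (neither empty nor full) set of indices produces something of Frobenius norm at most $\kappa$ times the norm of the other factor (Fact~\ref{fact:contraction1}). Since $\mathbf{r}=(r_1,\dots,r_{t-1})\notin\mathcal{S}_z$, by definition of $\mathcal{S}_z$ there is an index $i^{*}\in\{1,\dots,t-1\}$ with $0<r_{i^{*}}<q_{i^{*}+1}$; in particular $q_{i^{*}+1}\ge 2$, so the eigenregularity hypothesis on $h_{i^{*}+1}$ is meaningful at this step.

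The argument then proceeds in three bookkeeping stages along the recursion $g^{\mathbf{r}}_1=h_1$, $g^{\mathbf{r}}_{i}=g^{\mathbf{r}}_{i-1}\,\widetilde{\otimes}_{r_{i-1}}\,h_i$. First, for $i\le i^{*}$, starting from $\|g^{\mathbf{r}}_1\|_F=\|h_1\|_F=1$ and applying Fact~\ref{fact:contraction} at each step (using $\|h_j\|_F=1$), one gets $\|g^{\mathbf{r}}_i\|_F\le\|g^{\mathbf{r}}_{i-1}\|_F$, hence $\|g^{\mathbf{r}}_{i^{*}}\|_F\le 1$. Second, at step $i^{*}$ the norm collapses to $\kappa$: since $\operatorname{Var}(I_{q}(h))=\|h\|_F^2$ by Proposition~\ref{prop:4} and $\|h_{i^{*}+1}\|_F=1$, the assumption that $h_{i^{*}+1}$ is $\kappa$-eigenregular means $\lambda_{\max}(h_{i^{*}+1})\le\kappa$; and because $0<r_{i^{*}}<q_{i^{*}+1}$ we have $0<r_{i^{*}}\le q_{i^{*}+1}-1$, while $r_{i^{*}}$ is also at most the tensor order of $g^{\mathbf{r}}_{i^{*}}$ by validity of the contraction sequence \eqref{eq:sequence}; thus Fact~\ref{fact:contraction1} applies with the eigenregular tensor $h_{i^{*}+1}$ in the role of $f$ and gives $\|h_{i^{*}+1}\otimes_{r_{i^{*}}}g^{\mathbf{r}}_{i^{*}}\|_F\le\kappa\,\|g^{\mathbf{r}}_{i^{*}}\|_F\le\kappa$. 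Since reordering free indices and symmetrizing do not increase the Frobenius norm (the same mechanism used in Fact~\ref{fact:contraction}), this yields $\|g^{\mathbf{r}}_{i^{*}+1}\|_F\le\kappa$. Third, for $i=i^{*}+1,\dots,t-1$ we again invoke Fact~\ref{fact:contraction} to get $\|g^{\mathbf{r}}_{i+1}\|_F\le\|g^{\mathbf{r}}_i\|_F\cdot\|h_{i+1}\|_F=\|g^{\mathbf{r}}_i\|_F$, so $\|g^{\mathbf{r}}_t\|_F\le\|g^{\mathbf{r}}_{i^{*}+1}\|_F\le\kappa$, which is the claim.

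The only delicate points are the ones sitting at step $i^{*}$: checking that the contraction rank $r_{i^{*}}$ satisfies the order constraints of Fact~\ref{fact:contraction1}, that $\|g^{\mathbf{r}}_{i^{*}}\|_F\le 1$ so the bound $\kappa\|g^{\mathbf{r}}_{i^{*}}\|_F$ can be replaced by $\kappa$, and that the identification of $g^{\mathbf{r}}_{i^{*}}\otimes_{r_{i^{*}}}h_{i^{*}+1}$ with a matrix product lets the eigenregular factor contribute precisely its operator-norm bound $\lambda_{\max}(h_{i^{*}+1})\le\kappa$. None of this is conceptually hard; it is purely a matter of matching the index conventions of the contraction products with those of Facts~\ref{fact:contraction} and~\ref{fact:contraction1}, and I expect the writeup to be short.
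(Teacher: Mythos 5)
Your proof is correct and follows the same route as the paper: identify the index $i^{*}$ where $0<r_{i^{*}}<q_{i^{*}+1}$, bound $\|g^{\mathbf{r}}_{i^{*}}\|_F\le 1$ by iterating Fact~\ref{fact:contraction}, apply Fact~\ref{fact:contraction1} together with the eigenregularity of $h_{i^{*}+1}$ (and the harmlessness of symmetrization) to collapse the norm to $\kappa$ at step $i^{*}+1$, then iterate Fact~\ref{fact:contraction} again. The only addition beyond the paper's argument is your explicit unpacking of why eigenregularity of $h_{i^{*}+1}$ is well-posed and applicable (namely $q_{i^{*}+1}\ge 2$ and $\|h_{i^{*}+1}\|_F=1$), which the paper leaves implicit.
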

\begin{proof}
If $\mathbf{r} \not \in \mathcal{S}_z$, then there is a coordinate $1 \le j \le t-1$ such that $r_j \not = 0$ and $r_j \not = q_{j+1}$. Appealing to Fact~\ref{fact:contraction}, it is easy to see that  $\Vert g^{\mathbf{r}}_{j} \Vert_F \le 1$. Next, we appeal to Fact~\ref{fact:contraction1}, we get $\Vert g^{\mathbf{r}}_{j} {\otimes}_{r_{j}} h_{j+1} \Vert_F \le \kappa$. As symmetrization decreases norm, we obtain that $\Vert g^{\mathbf{r}}_{j} \widetilde{\otimes}_{r_{j}} h_{j+1} \Vert_F \le \kappa$. Finally, again using that $\Vert h_{j+2} \Vert_F, \ldots, \Vert h_t \Vert_F \le 1$ and applying Fact~\ref{fact:contraction} iteratively, we obtain $\Vert g^{\mathbf{r}}_{t} \Vert_F \le \kappa$. This finishes the proof.
\end{proof}
To define further partitions of $\mathcal{S}_z$, we will need a somewhat more elaborate definition. To do this, consider any $\mathbf{r} \in \mathcal{S}_z$. Recall that $m^{\mathbf{r}}_i = q_{i+1} + \sum_{j=1}^{i} (q_j - 2 r_j)$ and $g_{i+1} \in \mathcal{H}^{\odot m^{\mathbf{r}}_i}$. For any $\ell \in \mathbb{N}$, let $\mathbb{S}_\ell$ denote the symmetric group on $\ell$ elements. Consider a tuple of permutations $\boldsymbol{\sigma}= (\sigma_1, \ldots, \sigma_{t-1})$ such that $\sigma_i \in \mathbb{S}_{m^{\mathbf{r}}_i}$. For $\boldsymbol{\sigma}$ and $\mathbf{r}$, we define $g^{\mathbf{r}, \boldsymbol{\sigma}}_i$ iteratively as 
$ 
g^{\mathbf{r}, \boldsymbol{\sigma}}_1 = h_1 \ \  \textrm{and} \ \ g^{\mathbf{r}, \boldsymbol{\sigma}}_i = \sigma_{i-1}(g^{\mathbf{r}, \boldsymbol{\sigma}}_{i-1} {\otimes}_{r_{i-1}} h_i).
$
Let $\Lambda_{\mathbf{r}} = \{\boldsymbol{\sigma} : \boldsymbol{\sigma} = ( \sigma_1, \ldots, \sigma_{t-1})\}$. With this notation, we have that 
$
g^{\mathbf{r}}_t = \frac{1}{|\Lambda_{\mathbf{r}}|} \cdot \sum_{\boldsymbol{\sigma} \in \Lambda_{\mathbf{r}}} g^{\mathbf{r}, \boldsymbol{\sigma}}_t. 
$
We will now partition pairs $(\mathbf{r}, \boldsymbol{\sigma})$ (where $\mathbf{r} \in \mathcal{S}_z$). 
For this subsequent partitioning, we will need to associate a graph with the pair $(\mathbf{r}, \boldsymbol{\sigma})$ where $\mathbf{r} \in \mathcal{S}_z$ and then partition based on this graph. To understand what this graph is meant to capture, note that to obtain $g^{\mathbf{r}, \boldsymbol{\sigma}}_{i+1}$ from $g^{\mathbf{r}, \boldsymbol{\sigma}}_{i}$, one of the following two events happen: (a) If $r_i=0$, then we first compute $g^{\mathbf{r}, \boldsymbol{\sigma}}_{i} \otimes h_{i+1}$ and then permute the indices of the resulting tensor by $\sigma_i$. Note that each time an index is added, we can associate a unique $1 \le j \le t$ with this index. (b) If $r_i =q_{i+1}$, we compute the contraction product of $g^{\mathbf{r}, \boldsymbol{\sigma}}_{i}$ with $h_{i+1}$ and subsequently permute the indices of the resulting tensor by $\sigma_i$. Note that crucially, the ``contraction" is along the last $r_i$ indices of $g^{\mathbf{r}, \boldsymbol{\sigma}}_{i}$. If we do a contraction at $i=i_0$, we can associate $i_0$ with all the indices that got collapsed at $i=i_0$. In other words, when an index gets created, we associate an element of $[t]$ and another one, when it gets annihilated. The graph structure is meant to capture this relation as formalized below. 

\begin{figure}[htb]
\hrule
\vline
\begin{minipage}[t]{0.98\linewidth}
\vspace{10 pt}
\begin{center}
\begin{minipage}[h]{0.95\linewidth}


\vspace{3 pt}
\underline{\textsf{Description of graph}}
\begin{enumerate}
\item Initialize bipartite graph with $L = R = \phi$. The vertex set $L$ will be ordered at all points in the algorithm. 

\item Add $q_1$ vertices (in order), each colored as `$1$' to $L$. 

\item For $i = 2$ to $i = t$, 
\begin{enumerate}
\item If $r_{i-1} =0$, then add $q_i$ vertices to $L$ with color `$i$' (in order). Permute the `unmatched' vertices in $L$ with permutation $\sigma_{i-1}$. 
\item If $r_{i-1} = q_i$, then add $q_i$ vertices to $R$ with color `$i$' (in order) and match them to the `last' $q_i$ vertices in $L$. Permute the unmatched vertices in $L$ with permutation $\sigma_{i-1}$. 
\end{enumerate}
\end{enumerate}

\vspace{5 pt}

\end{minipage}
\end{center}

\end{minipage}
\hfill \vline
\hrule
\label{fig:DS}
\end{figure}
~The correspondence between the graph constructed above and the contraction process used to generate $g^{\mathbf{r}, \boldsymbol{\sigma}}_{i+1}$ is obvious. For example, we have the following easy observation. 
\begin{obs}
The dimension of the tensor $g^{\mathbf{r}, \boldsymbol{\sigma}}_t$ is precisely $n^{|U|}$ where $U$ is the set of unmatched vertices in $L$. Consequently, 
$\mathbf{E}[g^{\mathbf{r}, \boldsymbol{\sigma}}_t] \not =0$ only if $U \not = \phi$. 
\end{obs}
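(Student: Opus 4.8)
The plan is to read off both sentences from a single invariant carried along the joint induction that simultaneously builds the tensors $g^{\mathbf{r},\boldsymbol{\sigma}}_i$ (by iterated contractions) and the bipartite graph constructed above, for a fixed pair $(\mathbf{r},\boldsymbol{\sigma})$ with $\mathbf{r}\in\mathcal{S}_z$. The invariant is: for each $1\le i\le t$, the \emph{order} of $g^{\mathbf{r},\boldsymbol{\sigma}}_i$ — the integer $k$ with $g^{\mathbf{r},\boldsymbol{\sigma}}_i\in\mathcal H^{\otimes k}$, equal to $m^{\mathbf{r}}_{i-1}$ in the notation preceding~(\ref{eq:Ito1}) — equals the number of currently unmatched vertices of $L$ after index $i$ has been processed. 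Granting the invariant, the first sentence is immediate: at $i=t$ the tensor $g^{\mathbf{r},\boldsymbol{\sigma}}_t$ has order $|U|$, hence lies in $(\mathbb R^n)^{\otimes |U|}$ and so has exactly $n^{|U|}$ entries. For the second sentence, recall that $I_j$ sends an order-$j$ tensor into the $j$-th Wiener chaos $\mathcal W^j$; applying Proposition~\ref{prop:4} with $p=0$, $g=1$ gives $\mathbf E_{x\sim\gamma_n}[I_j(f)]=\delta_{j=0}\langle f,1\rangle$, which vanishes for every $j\ge 1$ (and for $j=0$ the ``tensor'' is a scalar and $I_0$ just returns that constant). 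Hence $\mathbf E_{x\sim\gamma_n}[I_{|U|}(g^{\mathbf{r},\boldsymbol{\sigma}}_t)]\neq 0$ forces $|U|=0$, i.e.\ $U=\phi$. (The condition displayed as ``$U\neq\phi$'' should read ``$U=\phi$''; this is the direction actually used afterwards, where only summands with $U=\phi$ survive in $\mathbf E[\prod_i p_i]$. For a possibly non-symmetric $g^{\mathbf{r},\boldsymbol{\sigma}}_t$ we read $I_{|U|}$ via its symmetrization; since $I_j$ is unchanged under symmetrization and under index permutation, the mean-zero conclusion for $|U|\ge 1$ is unaffected, and the average $\tfrac1{|\Lambda_{\mathbf r}|}\sum_{\boldsymbol{\sigma}}g^{\mathbf{r},\boldsymbol{\sigma}}_t$ is itself symmetric.)

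\textbf{The induction.}
I would induct on $i$, using throughout that $\mathbf{r}\in\mathcal{S}_z$, so $r_{i-1}\in\{0,q_i\}$ — exactly the dichotomy governing Step~3 of the graph construction. Base case $i=1$: $g^{\mathbf{r},\boldsymbol{\sigma}}_1=h_1\in\mathcal H^{\otimes q_1}$ has order $q_1$, and Step~2 inserts $q_1$ unmatched vertices into $L$; the invariant holds. Inductive step, case $r_{i-1}=0$: here $g^{\mathbf{r},\boldsymbol{\sigma}}_i=\sigma_{i-1}\!\big(g^{\mathbf{r},\boldsymbol{\sigma}}_{i-1}\otimes h_i\big)$, so the order increases by $q_i$ (the outer product appends $q_i$ new indices and $\sigma_{i-1}$ merely relabels indices); on the graph side Step~3(a) appends $q_i$ fresh unmatched $L$-vertices and then permutes the unmatched ones, so the unmatched-$L$ count also rises by $q_i$. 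Case $r_{i-1}=q_i$: here $g^{\mathbf{r},\boldsymbol{\sigma}}_i=\sigma_{i-1}\!\big(g^{\mathbf{r},\boldsymbol{\sigma}}_{i-1}\otimes_{q_i} h_i\big)$, and since $f\otimes_r g\in\mathcal H^{\otimes(p+q-2r)}$ for $f\in\mathcal H^{\otimes p}$, $g\in\mathcal H^{\otimes q}$, contracting along all $q_i$ indices of $h_i$ (paired with the last $q_i$ indices of $g^{\mathbf{r},\boldsymbol{\sigma}}_{i-1}$) lowers the order by $q_i$; correspondingly Step~3(b) matches $q_i$ new $R$-vertices to the last $q_i$ unmatched $L$-vertices, so the unmatched-$L$ count drops by $q_i$. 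In both cases the invariant is preserved, which completes the induction.

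\textbf{Main obstacle.}
There is no genuine obstacle — the argument is bookkeeping. The one point that must be gotten exactly right is the alignment between tensor operations and graph operations in the contraction case: the contraction $\otimes_{q_i}$ acts on the \emph{last} $q_i$ indices of the running tensor, which is what Step~3(b)'s convention of matching the \emph{last} $q_i$ vertices of $L$ is designed to mirror, so that ``unmatched $L$-vertices'' tracks precisely the free indices of $g^{\mathbf{r},\boldsymbol{\sigma}}_i$. The only other thing worth flagging is the typo discussed above: what is meant — and what is invoked in~(\ref{eq:Ito1}) — is that a summand $\gamma_{\mathbf{r}}\,I_{m^{\mathbf r}_{t-1}}(g^{\mathbf{r},\boldsymbol{\sigma}}_t)$ can contribute to $\mathbf E\big[\prod_{i=1}^t p_i\big]$ only when $|U|=m^{\mathbf r}_{t-1}=0$, i.e.\ only when $U=\phi$.
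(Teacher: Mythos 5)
Your proof is correct and is exactly the bookkeeping argument the paper intends for this unproved "easy observation": the induction matching the order of $g^{\mathbf{r},\boldsymbol{\sigma}}_i$ to the unmatched-vertex count of $L$, followed by $\mathbf{E}[I_j(\cdot)]=0$ for $j\ge 1$ via Proposition~\ref{prop:4}. You are also right that the stated condition "$U\neq\phi$" is a typo for "$U=\phi$", as confirmed by the sentence immediately following the observation, which restricts attention to pairs with no unmatched vertices.
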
 
Since we are interested in the expectation of $\mathbf{E}[\prod_{i=1}^t p_i]$, thus from now onwards we only focus on $(\mathbf{r}, \boldsymbol{\sigma})$ such that there are no unmatched vertices in the graph generated above. Note that the property of having no unmatched vertices is just a property of $\mathbf{r}$ and not $\boldsymbol{\sigma}$. The next definition forms the crux  of our basis of partitioning the pairs $(\mathbf{r}, \boldsymbol{\sigma})$. 
\begin{definition}
For a pair $(\mathbf{r}, \boldsymbol{\sigma})$ such that the corresponding graph has no unmatched vertices, we say that $(\mathbf{r}, \boldsymbol{\sigma})$ is ``aligned" if and only each color `$i$', there is a unique color `$j$' such that vertices of color $i$ are only matched to vertices of color `$j$' and vice-versa. Else, it is said to be ``unaligned". 
\end{definition}
We will now prove the following claim. 
\begin{claim}\label{clm:approx-error2}
Let $(\mathbf{r}, \boldsymbol{\sigma})$ be an unaligned pair. Then,  $\Vert g^{\mathbf{r}, \boldsymbol{\sigma}}_t \Vert \le \kappa$.  \end{claim}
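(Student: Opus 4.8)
The plan is to show that if $(\mathbf{r}, \boldsymbol{\sigma})$ is unaligned then the tensor $g^{\mathbf{r}, \boldsymbol{\sigma}}_t$, which is obtained by a sequence of contraction products and permutations applied to unit-Frobenius-norm eigenregular tensors $h_1, \dots, h_t$, must contain at some stage a contraction that is ``genuinely mixing'' in the sense of Fact~\ref{fact:contraction1} — that is, a contraction along a set of indices that is neither empty nor all of the indices of one of the two tensors being combined. Once such a step is identified, the argument is identical in spirit to the proof of Proposition~\ref{prop:approx-error2}: the tensors before this step have Frobenius norm at most $1$ (by Fact~\ref{fact:contraction}, since all $h_i$ have norm $1$ and each symmetrization only decreases the norm), the genuinely mixing contraction with an eigenregular tensor brings the norm down to at most $\kappa$ (Fact~\ref{fact:contraction1}), and all subsequent contractions and symmetrizations can only decrease the Frobenius norm further (Fact~\ref{fact:contraction} again). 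Hence $\Vert g^{\mathbf{r}, \boldsymbol{\sigma}}_t \Vert_F \le \kappa$.

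First I would set up the correspondence with the bipartite graph constructed above: each vertex in $L$ carries a color in $[t]$ recording which $h_i$ created it, and a matched edge records that two indices were contracted together at some step $i$ with $r_{i-1} = q_i$. Since the pair has no unmatched vertices, the graph is a perfect matching on $L$ (every created index is eventually annihilated). Unalignedness means that the coloring of the matching is not a ``partial pairing of colors'': there exist colors $i, i', j$ with $i \ne i'$ such that some vertex of color $i$ is matched to a vertex of color $j$ and some vertex of color $i'$ is also matched to a vertex of color $j$ (or, symmetrically, a color $j$ vertex is matched to color $i$ while another color $j$ vertex is matched to color $i'$). I would then argue that this forces, at the step where the color-$j$ tensor $h_j$ is introduced (say at iteration $i_0 = j$, so $r_{j-1} = q_j$), a contraction that uses only \emph{some but not all} of the available indices coming from a single earlier $h$-block, or equivalently that at that contraction step $g^{\mathbf{r}, \boldsymbol{\sigma}}_{j-1} \otimes_{r_{j-1}} h_j$ cannot be rewritten as a contraction that respects block boundaries. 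Concretely, the ``last $q_j$ vertices of $L$'' that get matched at step $j$ span at least two different colors (because the color-$j$ vertices are matched to vertices of colors $i$ and $i'$), which means the contraction is not simply pairing off an entire previously-created block; one can then exhibit a nontrivial splitting of the index set of $g^{\mathbf{r},\boldsymbol{\sigma}}_{j-1}$ into $S, \bar S$ with $0 < |S| < $ (degree) realizing the contraction, and apply Fact~\ref{fact:contraction1} to the relevant $h$-tensor — noting that $\lambda_{\max}(h_i) \le \kappa$ by hypothesis.

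The main obstacle I expect is making the previous paragraph fully rigorous: the contraction at step $j$ is literally along the last $r_{j-1} = q_j$ indices of $g^{\mathbf{r},\boldsymbol{\sigma}}_{j-1}$, so a priori it is ``all of $h_j$'' being contracted, and the subtlety of unalignedness is about how those indices were themselves created by different earlier blocks. I would handle this by tracking, through the contraction, which $h$-tensor each surviving index "belongs to," and observing that because the pair is unaligned there must be a first step at which the contraction mixes two distinct source blocks — and at that step the tensor being contracted against an eigenregular $h_i$ has a decomposition of its index set that is genuinely bipartite (nonempty on both sides and not equal to the full index set of either block), so Fact~\ref{fact:contraction1} applies with the eigenregular tensor playing the role of $f$ and the rest playing the role of $g$. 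After that it is routine: Fact~\ref{fact:contraction} propagates the bound $\kappa$ through all remaining contractions and the symmetrizations only help. I would also remark that the ``aligned'' case is exactly the case where $\Vert g^{\mathbf{r},\boldsymbol{\sigma}}_t\Vert$ can be large and its value depends only on the pattern of which color is matched to which and on the inner products $\langle h_i, h_j\rangle = C(i,j)$ — this is what will feed into the definition of $F(C, q_1, \dots, q_t)$ in the next stage of the proof of Lemma~\ref{lem:monomial}.
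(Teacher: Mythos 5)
The high-level strategy here---locate a step where eigenregularity of some $h_i$ buys a factor of $\kappa$, then propagate the bound with Fact~\ref{fact:contraction}---is the same as the paper's, and the observation that all intermediate tensors $g^{\mathbf{r},\boldsymbol{\sigma}}_i$ have Frobenius norm at most $1$ is correct and used identically. However, the pivotal claim that ``because the pair is unaligned there must be a first step at which the contraction mixes two distinct source blocks'' is false, and this leaves a genuine gap.

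The paper splits unalignedness into two events, and only one of them matches your picture. In Event~A, at the first bad step $N$ the last $q_N$ vertices of $L$ (those about to be matched against $h_N$'s slots) span at least two colors; this is your ``mixing'' step, and there the eigenregularity of $h_N$ gives the factor $\kappa$ after splitting $[q_N]$ along the color boundary. In Event~B, at step $N$ the last $q_N$ vertices of $L$ are \emph{all} of a single color $j_0$, but there remain other unmatched color-$j_0$ vertices: block $h_{j_0}$ is being split across several later contraction steps, each of which pairs a single-color chunk of $j_0$ against one fresh $h$-tensor, and \emph{no} step mixes two source blocks. A concrete instance: $t=3$, $q_1=4$, $q_2=q_3=2$, $r_1=r_2=2$. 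Color $1$ is matched to both colors $2$ and $3$, so the pair is unaligned, yet step~$2$ pairs two color-$1$ vertices with $h_2$ and step~$3$ pairs the remaining two color-$1$ vertices with $h_3$; at no step do the ``last $q_N$ vertices'' span two colors. Your argument would search for a mixing step and not find one.

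In Event~B the tensor whose eigenregularity is used is $h_{j_0}$ (the split $L$-block), not the $h_N$ introduced at the current step, and Fact~\ref{fact:contraction1} is applied with $f=h_{j_0}$, $g=h_N$, $r=q_N<q_{j_0}$. Your second paragraph gestures at the right phenomenon when it mentions ``a contraction that uses only some but not all of the available indices coming from a single earlier $h$-block,'' but the sentence that follows (``Concretely, the last $q_j$ vertices of $L$ ... span at least two different colors'') reverts to the mixing picture and, moreover, tacitly assumes that the color $j$ which is matched to two other colors lies on the $R$ side (so that all its matches happen at the single step~$j$). When, as in the example, that color lies on the $L$ side (i.e., $r_{j-1}=0$), its matches are spread across several steps and the concrete description no longer applies. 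To make the proof go through you need to treat this case separately and invoke $\lambda_{\max}(h_{j_0})\le\kappa$ directly for the split block, exactly as the paper does.
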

\begin{proof}
Note that we are assuming that there are no unmatched vertices in the graph. If the pair is $(\mathbf{r}, \boldsymbol{\sigma})$ is unaligned, then there is a unique smallest integer $1 < N\le t$ such that at the $N^{th}$ step of the algorithm to form the graph, the pair becomes unaligned. Namely, $N$ is the smallest integer such that one of the following events happen: ~\\
\textbf{Event A:} At the beginning of Step (3), when $i=N$, we have $r_{i-1} = q_{i}$ and the last $q_i$ vertices in $L$ are not of the same color. 
~\\ \textbf{Event B:} At the beginning of Step (3), when $i=N$, we have $r_{i-1} = q_{i}$, the last $q_i$ vertices in $L$ are of the same color (say $j$) but there is also another unmatched vertex in $L$ colored $j$. 
~\\ We ask the reader to verify that  $(\mathbf{r}, \boldsymbol{\sigma})$ is unaligned if and only if one of Event A or B happen. We will now prove that in both cases $A$ and $B$, our conclusion holds. ~\\
\textbf{Proof for Event A: } Let $\mathcal{I}_1 = \{1\} \cup \{N>i > 1: r_{i-1} =0 \}$ and $\mathcal{I}_2 =  \{N>i > 1: r_{i-1}  \not = 0 \}$. By definition of $N$, it is easy to verify that there is a one-one mapping $\nu$ from $\mathcal{I}_2$ to $\mathcal{I}_1$ such that for any $a \in \mathcal{I}_1$, vertices of color $a$ are matched with vertices of color $\nu(a)$ and vice-versa. 
Let us define $\mathcal{I}_3 = \mathcal{I}_1 \setminus \nu(\mathcal{I}_2)$. 
As a consequence, we have $$g^{\mathbf{r}, \boldsymbol{\sigma}}_{N-1} =\prod_{a \in \mathcal{I}_2} \langle h_a , h_{\nu(a)} \rangle \cdot \mu (\otimes_{j \in \mathcal{I}_3} h_j),$$
where $\mu$ is a permutation of the indices of  $\otimes_{j \in \mathcal{I}_3} h_j$. Further, since at $i=N$, the last $q_N$ vertices of $L$ are of at least two different colors, we have that 
the last $q_N$ indices of $\mu (\otimes_{j \in \mathcal{I}_3} h_j)$ belong to more than one $h_j$ (for $j \in \mathcal{I}_3$). Let $A$ be the set of all indices of $\otimes_{j \in \mathcal{I}_3} h_j$ which do not map to the last $q_N$ positions under $\mu$. Consider some assignment $\boldsymbol{\rho}$ for these indices. Then, we have that 
\[
\Vert g^{\mathbf{r}, \boldsymbol{\sigma}}_{N} \Vert_F^2 = \sum_{\boldsymbol{\rho}} \Vert \prod_{a \in \mathcal{I}_2} \langle h_a , h_{\nu(a)} \rangle \cdot \mu (\otimes_{j \in \mathcal{I}_3} h_{j, \boldsymbol{\rho}}) \otimes_{q_N} h_N \Vert_F^2.\]
Here $h_{j, \boldsymbol{\rho}}$ is tensor obtained by restricting the indices in $A$ to $\rho$. Since $\lambda_{\max}(h_N) \le \kappa$, we get
\[
\Vert \mu (\otimes_{j \in \mathcal{I}_3} h_{j, \boldsymbol{\rho}}) \otimes_{q_N} h_N \Vert_F^2 \le \kappa^2 \cdot \Vert \otimes_{j \in \mathcal{I}_3} h_{j, \boldsymbol{\rho}}\Vert_F^2. 
\]
This implies that 
\[
\Vert g^{\mathbf{r}, \boldsymbol{\sigma}}_{N} \Vert_F^2 \le \kappa^2  \sum_{\boldsymbol{\rho}}  \prod_{a \in \mathcal{I}_2} \langle h_a , h_{\nu(a)} \rangle^2 \cdot  \Vert \otimes_{j \in \mathcal{I}_3} h_{j, \boldsymbol{\rho}}\Vert_F^2 \le  \kappa^2 \prod_{a \in \mathcal{I}_2} \langle h_a , h_{\nu(a)} \rangle^2  \le \kappa^2. 
\]
The penultimate inequality uses the fact that $\sum_{\boldsymbol{\rho}}  \Vert \otimes_{j \in \mathcal{I}_3} h_{j, \boldsymbol{\rho}}\Vert_F^2=1$. Using the fact that $\Vert g_j \Vert_F=1$ for all $j>N$ and applying Fact~\ref{fact:contraction}, we get that $|g^{\mathbf{r}, \boldsymbol{\sigma}}_{t}|\le \kappa$. This finishes the proof. 
~\\
\textbf{Proof for Event B: } Let us define $\mathcal{I}_1$, $\mathcal{I}_2$ and $\mathcal{I}_3$ as in Event A. As before, we have
\[
g^{\mathbf{r}, \boldsymbol{\sigma}}_{N-1} =\prod_{a \in \mathcal{I}_2} \langle h_a , h_{\nu(a)} \rangle \cdot \mu (\otimes_{j \in \mathcal{I}_3} h_j),
\]
where $\mu$ is a permutation of the indices of  $\otimes_{j \in \mathcal{I}_3} h_j$. The crucial difference is that there exists $j_0 \in \mathcal{I}_3$ such that if $\mathcal{B}$ denotes the set of positions where the indices of $h_{j_0}$ get mapped under $\mu$, then this includes the last $q_N$ indices as a proper subset. 
Using $\lambda_{\max}(h_{j_0}) \le \kappa$, we get that 
$\Vert \mu (\otimes_{j \in \mathcal{I}_3} h_{j}) \otimes_{q_N} h_N \Vert_F^2 \le \kappa^2 \cdot \Vert \otimes_{j \in \mathcal{I}_3 \setminus \{j_0\}} h_{j}\Vert_F^2. $
However, $\Vert \otimes_{j \in \mathcal{I}_3 \setminus \{j_0\}} h_{j}\Vert_F^2. =1$, and hence we get $\Vert g^{\mathbf{r}, \boldsymbol{\sigma}}_{N} \Vert_F^2 \le \kappa^2$.
 Using the fact that $\Vert g_j \Vert_F=1$ for all $j>N$ and applying Fact~\ref{fact:contraction}, we get that $|g^{\mathbf{r}, \boldsymbol{\sigma}}_{t}|\le \kappa$. This finishes the proof.
\end{proof}

\begin{claim}
Let $(\mathbf{r}, \boldsymbol{\sigma})$ be an aligned pair. Then, $g^{\mathbf{r}, \boldsymbol{\sigma}}_{N}$ is just dependent on the covariance matrix $C$ (and the pair $(\mathbf{r}, \boldsymbol{\sigma})$). 
\end{claim}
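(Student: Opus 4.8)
The plan is to prove the slightly stronger statement that for an aligned pair $(\mathbf{r},\boldsymbol{\sigma})$ whose associated graph has no unmatched vertices in $L$ (the only pairs that contribute to $\mathbf{E}[\prod_i p_i]$), the tensor $g^{\mathbf{r},\boldsymbol{\sigma}}_t$ equals a product of full inner products $\langle h_a,h_b\rangle$ dictated by the colour-matching of the graph; since $\langle h_a,h_b\rangle=C(a,b)$ whenever $h_a,h_b$ have the same order, this exhibits $g^{\mathbf{r},\boldsymbol{\sigma}}_t$ (and every intermediate $g^{\mathbf{r},\boldsymbol{\sigma}}_i$, hence the quantity named in the claim) as a function of $C$ and of $(\mathbf{r},\boldsymbol{\sigma})$ alone. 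First I would fix the notation from the proof of Claim~\ref{clm:approx-error2}: let $\mathcal{I}_1=\{1\}\cup\{1<i\le t:\ r_{i-1}=0\}$ be the set of non-contraction indices and $\mathcal{I}_2=\{1<i\le t:\ r_{i-1}\neq 0\}$ the contraction indices, recalling $r_{i-1}=q_i$ for $i\in\mathcal{I}_2$ since $\mathbf{r}\in\mathcal{S}_z$. Alignment together with the absence of unmatched vertices forces the colour classes to pair up: there is a bijection $\nu:\mathcal{I}_2\to\mathcal{I}_1$ such that every colour-$i$ vertex (these live in $R$) is matched only to colour-$\nu(i)$ vertices (which live in $L$), and conversely. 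A vertex count of each colour class then gives $q_i=q_{\nu(i)}$ for all $i\in\mathcal{I}_2$, so $\langle h_i,h_{\nu(i)}\rangle$ is a genuine complete contraction of two symmetric order-$q_i$ tensors, equal to $C(i,\nu(i))$.

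The core step is an induction on $i$ establishing that
\[
g^{\mathbf{r},\boldsymbol{\sigma}}_i=\Big(\prod_{a\in\mathcal{I}_2,\ a\le i}\langle h_a,h_{\nu(a)}\rangle\Big)\cdot\mu_i\Big(\bigotimes_{j\in\mathcal{J}_i}h_j\Big),
\]
where $\mathcal{J}_i\subseteq\mathcal{I}_1$ is the set of non-contraction colours $\le i$ not yet consumed by a contraction and $\mu_i$ is some permutation of the indices. The base case $g^{\mathbf{r},\boldsymbol{\sigma}}_1=h_1$ is immediate. When $r_{i-1}=0$ we have $g^{\mathbf{r},\boldsymbol{\sigma}}_i=\sigma_{i-1}(g^{\mathbf{r},\boldsymbol{\sigma}}_{i-1}\otimes h_i)$, which is again of this form with $\mathcal{J}_i=\mathcal{J}_{i-1}\cup\{i\}$ and the scalar prefactor unchanged. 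When $r_{i-1}=q_i$ the graph description shows that, by alignment, the last $q_i$ surviving indices of $g^{\mathbf{r},\boldsymbol{\sigma}}_{i-1}$ are exactly the (permuted) $q_{\nu(i)}=q_i$ indices coming from $h_{\nu(i)}$; contracting them against $h_i$ therefore factors off the scalar $\langle h_i,h_{\nu(i)}\rangle$ and leaves a permutation of $\bigotimes_{j\in\mathcal{J}_{i-1}\setminus\{\nu(i)\}}h_j$, using that $h_{\nu(i)}$ is symmetric so the order in which its indices are contracted is irrelevant. This gives the claimed form with $\mathcal{J}_i=\mathcal{J}_{i-1}\setminus\{\nu(i)\}$. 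Taking $i=t$ and using that no vertices are unmatched forces $\mathcal{J}_t=\emptyset$, whence $g^{\mathbf{r},\boldsymbol{\sigma}}_t=\prod_{a\in\mathcal{I}_2}\langle h_a,h_{\nu(a)}\rangle=\prod_{a\in\mathcal{I}_2}C(a,\nu(a))$, which depends only on $C$ and on the combinatorial data $(\mathbf{r},\boldsymbol{\sigma})$ (which determine $\mathcal{I}_2$ and $\nu$); the permutations $\sigma_i$ influence which indices are contracted but not the final value.

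Finally I would assemble Lemma~\ref{lem:monomial} from this. Taking expectations in \eqref{eq:Ito1} annihilates every term $\gamma_{\mathbf{r}}I_{m^{\mathbf{r}}_{t-1}}(g^{\mathbf{r}}_t)$ with $m^{\mathbf{r}}_{t-1}>0$, so $\mathbf{E}[\prod_i p_i]=\sum_{\mathbf{r}:\ m^{\mathbf{r}}_{t-1}=0}\gamma_{\mathbf{r}}\,g^{\mathbf{r}}_t$ with $g^{\mathbf{r}}_t=|\Lambda_{\mathbf{r}}|^{-1}\sum_{\boldsymbol{\sigma}\in\Lambda_{\mathbf{r}}}g^{\mathbf{r},\boldsymbol{\sigma}}_t$. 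I split each $g^{\mathbf{r},\boldsymbol{\sigma}}_t$ three ways: if $\mathbf{r}\notin\mathcal{S}_z$ then $\Vert g^{\mathbf{r}}_t\Vert_F\le\kappa$ by Proposition~\ref{prop:approx-error2}; if $\mathbf{r}\in\mathcal{S}_z$ and $(\mathbf{r},\boldsymbol{\sigma})$ is unaligned then $\Vert g^{\mathbf{r},\boldsymbol{\sigma}}_t\Vert_F\le\kappa$ by Claim~\ref{clm:approx-error2}; and the remaining aligned terms are the scalars $\prod_{a\in\mathcal{I}_2}C(a,\nu(a))$ computed above. Defining $F(C,q_1,\dots,q_t)$ to be the sum of the $\gamma_{\mathbf{r}}|\Lambda_{\mathbf{r}}|^{-1}$-weighted aligned scalars, the triangle inequality together with $\sum_{\mathbf{r}}|\gamma_{\mathbf{r}}|\le 2^{t(q_1+\dots+q_t+1)}$ from Claim~\ref{clm:bound-easy1} yields $|\mathbf{E}[\prod_i p_i]-F(C,q_1,\dots,q_t)|\le 2^{t(q_1+\dots+q_t+1)}\kappa$. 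The main obstacle I anticipate is the graph-to-contraction bookkeeping in the inductive step: one must verify cleanly that in an aligned configuration ``the last $q_i$ vertices of $L$'' really form a complete monochromatic block coinciding with all indices of a single $h_j$ — this is precisely the negation of Events A and B of the previous claim — and tracking how the permutations $\sigma_i$ move these blocks is the delicate part.
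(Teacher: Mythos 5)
Your proposal is correct and takes essentially the same route as the paper: the paper's own proof simply asserts that alignment yields $g^{\mathbf{r},\boldsymbol{\sigma}}_{t}=\prod_{a}\langle h_a,h_{\pi(a)}\rangle$ for a pairing $\pi$ determined by $(\mathbf{r},\boldsymbol{\sigma})$, and you supply the inductive bookkeeping (tracking the surviving factors $\bigotimes_{j\in\mathcal{J}_i}h_j$ and peeling off a full contraction $\langle h_i,h_{\nu(i)}\rangle$ at each step with $r_{i-1}=q_i$) that justifies that product formula. The additional assembly of Lemma~\ref{lem:monomial} at the end matches the paper's argument as well.
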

\begin{proof}
Since $(\mathbf{r}, \boldsymbol{\sigma})$ is an aligned pair, it means that the set $\{1, \ldots, t\}$ can be partitioned into two sets given by $L$ and $R$ and a bijection $\pi : L \rightarrow R$  such that 
$g^{\mathbf{r}, \boldsymbol{\sigma}}_{t} = \prod_{a \in L} \langle h_a, h_{\pi(a)} \rangle$. Note that the mapping $\pi$ is just a function of $(\mathbf{r}, \boldsymbol{\sigma})$ and once $\pi$ is fixed, $g^{\mathbf{r}, \boldsymbol{\sigma}}_{t} $ is just a function of the matrix $C$. 
\end{proof}
\begin{proofof}{Lemma~\ref{lem:monomial}}
Let $B = \{(\mathbf{r}, \boldsymbol{\sigma}): \mathbf{r} \in \mathcal{S}_z \ \textrm{and} \ (\mathbf{r}, \boldsymbol{\sigma}) \ \textrm{ is aligned}\}$.
Applying (\ref{eq:Ito1}), we get that
\[
\bigg| \mathbf{E}[\prod_{i=1}^t p_i] - \frac{1}{|\Lambda_{\mathbf{r}}|}\sum_{(\mathbf{r}, \boldsymbol{\sigma}) \in B} \gamma_{\mathbf{r}} g^{\mathbf{r}, \boldsymbol{\sigma}}_{t}\bigg| \leq
\sum_{\mathbf{r} \not \in \mathcal{S}_z} |\gamma_{\mathbf{r}} \cdot g^{\mathbf{r}}_{t} | +  \frac{1}{|\Lambda_{\mathbf{r}}|}\sum_{(\mathbf{r}, \boldsymbol{\sigma}) \not \in B :\mathbf{r} \in \mathcal{S}_z} |\gamma_{\mathbf{r}} \cdot g^{\mathbf{r}, \boldsymbol{\sigma}}_{t} |.
\]
Note that $\frac{1}{|\Lambda_{\mathbf{r}}|}\sum_{(\mathbf{r}, \boldsymbol{\sigma}) \in B} \gamma_{\mathbf{r}} g^{\mathbf{r}, \boldsymbol{\sigma}}_{t}$ is dependent just on $C$ and $\mathbf{r}$. Further, applying Claim~\ref{clm:approx-error2}, Proposition~\ref{prop:approx-error2} and Claim~\ref{clm:bound-easy1}, we see that the right hand side can be bound by
$2^{t(q_1 + \ldots + q_t +1)} \cdot \kappa$. This finishes the proof. 
\end{proofof}
\end{document}